\newtheorem{thm}{Theorem}
\newcommand{\ds}{\displaystyle}
\def \a{\alpha} \def \b{\beta} \def \g{\gamma} \def \d{\delta}
\def \t{\theta}   
\def \s{\sigma} \def \l{\lambda} \def \z{\zeta} \def \o{\omega}
\def \O{\Omega} \def\E{\mathbf{E}} \def \F{\mathcal{F}} \def\P{\mathcal{P}}
 \def \r{\rho}
\def \k{\kappa}  \def \G{\Gamma}
\def \L{\Lambda}
\def\R{\mathbb{R}}
\def\To{\mathbb{T}}
\def \ra{\rightarrow}
\def \id{\bold 1}
\def\mkm{\mathfrak{m}}
\def\ob{\mathring{B}}
\def\mct{\mathcal{T}}
\def\mcr{\mathcal{R}}
\def\mcd{\mathcal{D}}
\def\mbl{\mathbb{L}}
\def\la{\left\langle}\def\ra{\right\rangle}
\def\lp{\left\{}\def\rp{\right\}}
\def\dh{\|\cdot\|}
\def\H{\mathcal{H}}
\newtheorem{theorem}{Theorem}[section]
\newtheorem{remark}{Remark}[section]
\newtheorem{lemma}[theorem]{Lemma}
\newtheorem{proposition}[theorem]{Proposition}
\newtheorem{corollary}[theorem]{Corollary}
\numberwithin{equation}{section}
\begin{document}

\subjclass{Primary: 60H15, 60F10, 76D05, 37A05} \keywords{Navier-Stokes Equations, periodic Ruelle-Perron-Frobenius theorem, Donsker-Varadhan large deviation principle, degenerate noise, time inhomogeneous Markov processes.}

\author{Rongchang Liu} \address[Rongchang Liu] {  Department of Mathematics\\
University of Arizona\\Tucson, AZ 85721, USA}
 \email[R.~Liu]{lrc666@math.arizona.edu}

\author{Kening Lu} \address[Kening Lu] {  School of Mathematics\\
Sichuan University\\
Chengdu, Sichuan 610064, PR China}
\email[k.~Lu]{klu@math.byu.edu}

\title[Large deviations for 2D stochastic Navier-Stokes Equations driven by a periodic force and a degenerate noise]{Large deviations for 2D stochastic Navier-Stokes Equations driven by a periodic force and a degenerate noise}

\pagestyle{plain}

\begin{abstract} {
We consider  the incompressible 2D Navier-Stokes equations on the torus, driven by a deterministic time periodic force and a noise that is white in time and degenerate in Fourier space. The main result is twofold. 

Firstly, we establish  a Ruelle-Perron-Frobenius type theorem for the time inhomogeneous Feynman-Kac evolution operators with regular  potentials associated with the stochastic Navier-Stokes system. The theorem characterizes asymptotic behaviors of the Feynman-Kac operators in terms of the periodic family of  principal eigenvalues and corresponding unique eigenvectors. The proof involves a time inhomogeneous version of Ruelle's lower bound technique. 

Secondly, utilizing this Ruelle-Perron-Frobenius type theorem and a Kifer's criterion, we establish a Donsker-Varadhan type large deviation principle with a nontrivial good rate function for the occupation measures of the time inhomogeneous solution processes. 
} \end{abstract}

\maketitle

\baselineskip 14pt

\tableofcontents

\section{Introduction}

The 2D Navier-Stokes equations subject to a time periodic force  has been extensively studied over the years, dating back to Serrin \cite{Ser59}, Yudovich  \cite{Yud60} and  many others, see Galdi and  Kyed \cite{GK18} for recent progress. It was proved by Da Prato and Debussche \cite{DD08} that the periodically forced 2D Navier-Stokes system with a non-degenerate noise has an exponentially mixing periodic invariant measure. Furthermore, it was proved recently \cite{LL22} that when the noise is degenerate, the strong law of large numbers and central limit theorem hold.  In particular, the central limit theorem describes the small deviations from the periodic invariant measure. 

\vskip0.05in

The aim of the current paper is to investigate the probabilities of large deviations from the periodic invariant measure, for the periodically forced 2D Navier-Stokes equations with a degenerate white in time noise. To the best of our knowledge, this work seems to be the first one that establishes Donsker-Varadhan type large deviation results for the occupation measures of the time inhomogeneous solution processes solving a given SPDE. 
In what follows, we will briefly describe our results (Theorems \ref{theorem-A} and \ref{theorem-B} below), while leave the technical preliminaries to the next section. 

\vskip0.05in

Consider the incompressible 2D Navier-Stokes equations on the torus $\To^2:=\R^2/2\pi\mathbb{Z}^2$ in the following vorticity form
\begin{align}\label{NS-intro}
dw(t, x) + B(\mathcal{K} w, w) (t, x)d t = \nu\mathrm{\mathrm{\Delta}} w(t, x)dt  + f(t, x)dt + \sum_{i=1}^d g_idW_i(t), \quad t>s, \quad w(s) = w_0,
\end{align}
for $s\in \R, x\in\To^2$ and some integer $d\geq 1$. Here $w$ is the vorticity field, $\mathcal{K}$ is the Biot-Savart integral operator and $B(\mathcal{K} w, w) =(\mathcal{K} w)\cdot\nabla w $ is the nonlinearity. The phase space is chosen as $H:= \left\{w \in \mathrm{L}^2\left(\mathbb{T}^{2}, \mathbb{R}\right): \int_{\mathbb{T}^2} w d x=0\right\}$ whose norm is denoted by $\|\cdot\|$ and the initial $w_0\in H$. We also denote $H_{s}=H^s\cap H$ and the corresponding norm as $\left\|\cdot\right\|_s$ where $H^s$ is the usual Sobolev space of order $s$. The deterministic force  $f$ is $2\pi$-periodic in $t$. The noise $W = (W_1, W_2, \cdots, W_d)$ is a two-sided $\R^d$-valued standard Wiener process over the sample space $(\O, \mathcal{F}, \mathbf{P})$ where $\mathbf{P}$ is the Wiener measure,  and the noise coefficients $\{g_i\}$ are smooth elements of $H$.

\vskip0.05in

We denote the solution to \eqref{NS-intro} by $\Phi_{s, t}(w_0)$. To introduce the Feynman-Kac operators in our time inhomogeneous setting, we utilize the time symbols formulation used in deterministic systems, see Chepyzhov and Vishik \cite{CV02a} for example. To be precise, we consider a family of Navier-Stokes systems indexed by time symbols $h\in\To: = \R/2\pi\mathbb{Z}$, 
\begin{align}\label{NS-Family}
dw(t, x) + B(\mathcal{K} w, w) (t, x)d t = \nu\mathrm{\mathrm{\Delta}} w(t, x)dt  + f(\b_th,  x)dt + \sum_{i=1}^d g_idW_i(t), \, t>s, \, w(s) = w_0,
\end{align}
whose solution is denoted by $\Phi_{s, t, h}(w_0)$. Here $\b_th = h+t$ is the circle rotation on $\To$.
Note that \eqref{NS-Family} when $h=0$ is the original equation \eqref{NS-intro}. 

\vskip0.05in

Now for every potential $V\in C_b(H\times \To)$, the associated Feynman-Kac operator $P_{s, t, h}^V: C_b(H)\to C_b(H)$ corresponding to \eqref{NS-Family} with time symbol $h\in\To$ is given by
\begin{align}\label{intro-Feynman-Kac}
	P_{s, t, h}^V\phi(w) = \E\left[ \exp \left(\int_s^tV(\Phi_{s, r, h}(w), \b_rh)dr\right)\phi(\Phi_{s, t, h}(w))\right], \, \phi\in C_b(H), w\in H. 
\end{align}
It naturally induces by duality an action $P_{s, t, h}^{V*}$ on the space $\mathcal{M}_{+}(H)$ of non-negative finite Borel measures on $H$, which can be regarded as an averaged transfer operator.

\vskip0.05in

Since the paper is working with a degenerate noise, we recall the Lie bracket condition from Hairer and Mattingly \cite{HM11}. Define the set $A_{\infty}$ by setting 
\begin{align}\label{A-infty}
	A_1=\{g_l : 1\leq l \leq d\}, A_{k+1} = A_{k}\cup \{\widetilde{B}(u,v): u, v\in A_{k}\}, A_{\infty} = \overline{\mathrm{span}(\cup_{k\geq 1} A_{k})},
\end{align}
where $\widetilde{B}(u,w)=-B(\mathcal{K}u,w)-B(\mathcal{K}w,u)$ is  the symmetrized nonlinear term. The assumptions for our main results will be the H\"ormander condition $A_{\infty}=H$ and regularity conditions on the force $f$.  

\vskip0.05in

\vskip0.1in
\noindent
\textbf{Ruelle-Perron-Frobenius type theorem for the Feynman-Kac operators}
\vskip0.1in

In the study of an infinite one dimensional lattice gas model, Ruelle \cite{Rue68} proved his Perron-Frobenius type theorem for the corresponding transfer operator with H\"older regular potentials. This theorem is now known as Ruelle-Perron-Frobenius (RPF) theorem and has been widely used and extended in the study of dynamical systems and thermodynamic formalism, see Baladi \cite{Bal00}, Bowen \cite{Bow08},  Climenhaga and Pesin \cite{CP17} and references therein.  Ruelle's ideas 
in \cite{Rue68} were then used by Kuksin and Shirikyan \cite{KS00} to show unique ergodicity and asymptotic stability of Markov operators for a class of dissipative PDEs.
Later the same ideas were applied by Jakšić et al. \cite{JNPS15} to the discrete time Feynman-Kac semigroups associated with the models considered in \cite{KS00} for which an RPF type theorem was proved in a  compact setting. An extension to the non-compact setting was obtained by the same authors in \cite{JNPS18}, which was further extended to continuous time by Martirosyan and Nersesyan \cite{MN18}. All the existing results work with time homogeneous Feynman-Kac semigroups.  

\vskip0.05in

Our Theorem \ref{theorem-A} below gives a version of the RPF type theorem for the time inhomogeneous Feynman-Kac evolution operators of \eqref{NS-intro}, which is effective in establishing LDP for the time inhomogeneous solution processes.  Here the effectiveness means that the eigen-elements are continuous on the time symbols which enables us to establish 
the LDP for the original system \eqref{NS-intro} from Theorem \ref{theorem-A}. Below $\P(H)$ is the space of probability measures on $H$. 

\begin{thm}[RPF Type Theorem]\label{theorem-A}
 Assume $A_{\infty}=H$ and $f\in C(\R, H_2)$ is $2\pi$-periodic and Lipschitz. For each potential $V\in C_b^1(H\times\To)$ there exists a unique family of eigen-triple of the Feynman-Kac operators \eqref{intro-Feynman-Kac} associated with the Navier-Stokes family \eqref{NS-Family}, 
\begin{align*}
 	(\G^V, \l^V, F^V)\in C(\To, \P(H))\times C(\To,\R)\times C(H\times\To),
\end{align*}
with $F^V$ strictly positive and normalized by $\G^V$ such that the following properties hold:
\begin{enumerate}
 	\item Eigenmeasure/eigenfunction property with positive eigenvalue $\l_{t, h, V}=\exp\left(\int_0^t\l^V(\b_rh)dr\right)$, 
\begin{align}\label{Thm-A-eigen-property}
\begin{split}
	&P_{0, t, h}^{V*}\G^V(h) = \l_{t, h, V}\G^V(\b_th), \qquad \text{(eigenmeasure property)} \\
	&P_{0, t, h}^{V}F^V(\cdot, \b_th)(w) = \l_{t, h, V}F^V(\cdot, h), \quad \text{(eigenfunction property)} 
\end{split}
\end{align}
for all $h\in\To$ and $t\geq 0$. 
\item Forward and pullback stability: for any $h\in\To$, $\phi\in C_b(H)$ and $w\in H$, 
\begin{align}\label{Thm-A-RPF-Stablity}
\begin{split}
	&\lim_{t\to\infty}\left|\l^{-1}_{t, h, V}P_{0, t, h}^V\phi(w) - \la \G^V(\b_th), \phi\ra F^V(w, h)\right| = 0, \\
	&\lim_{t\to\infty}\left|\l^{-1}_{t, \b_{-t}h, V}P_{0, t, \b_{-t}h}^V\phi(w) - \la \G^V(h), \phi\ra F^V(w, \b_{-t}h)\right| = 0. 
\end{split}
\end{align}
\end{enumerate}
\end{thm}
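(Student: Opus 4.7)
The plan is to reduce the time-inhomogeneous problem to a time-homogeneous RPF theorem by focusing on the period map $P_{0, 2\pi, h}^V$ and then propagating via the cocycle $P_{s, t, h}^V = P_{s, u, h}^V \circ P_{u, t, h}^V$ together with the shift identity $P_{a, 2\pi+a, h}^V = P_{0, 2\pi, \b_a h}^V$. Since $\b_{2\pi} h = h$, for each fixed $h \in \To$ the operator $P_{0, 2\pi, h}^V$ is a bona fide time-homogeneous Feynman-Kac operator on $C_b(H)$. I would verify the hypotheses of the noncompact Feynman-Kac RPF theorem of Jak\v{s}i\'c--Nersesyan--Peliti--Shirikyan \cite{JNPS18}: the Feller property and a Lyapunov bound (from energy dissipation of 2D Navier--Stokes), uniform irreducibility and an asymptotic strong Feller/coupling estimate (from the H\"ormander condition $A_\infty = H$ and the Hairer--Mattingly machinery used in \cite{LL22}), and the regularity of $V$ and $f$ needed to run Ruelle's lower-bound argument. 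This produces, for each $h$, a normalized eigen-triple $(\mu_h, \kappa(h), F_h)$ with $F_h > 0$ and $\la \mu_h, F_h \ra = 1$, together with a spectral gap of the form
\begin{equation*}
\bigl|\kappa(h)^{-n} P_{0, 2\pi n, h}^V \phi(w) - \la \mu_h, \phi \ra F_h(w)\bigr| \le C(h)\, \gamma(h)^n \|\phi\|_\infty (1 + \|w\|^2), \qquad n \ge 1.
\end{equation*}

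\textbf{Constancy of $\kappa$ and the continuous cocycle.} Combining the shift identity with the cocycle yields the intertwining $P_{0, a, h}^V \circ P_{0, 2\pi, \b_a h}^V = P_{0, 2\pi, h}^V \circ P_{0, a, h}^V$ for all $a \ge 0$. Applied to $F_{\b_a h}$, this shows that $P_{0, a, h}^V F_{\b_a h}$ is a principal eigenfunction of $P_{0, 2\pi, h}^V$ with eigenvalue $\kappa(\b_a h)$, so uniqueness of the principal eigenvalue forces $\kappa \equiv \kappa_V$ on $\To$ and provides a positive scalar $c(t, h)$ with $P_{0, t, h}^V F_{\b_t h} = c(t, h) F_h$; the analogous dual statement yields $(P_{0, t, h}^V)^* \mu_h = c(t, h)\, \mu_{\b_t h}$ for the same scalar. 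The $t$-cocycle identity $c(t_1 + t_2, h) = c(t_1, h)\, c(t_2, \b_{t_1} h)$, together with $c(2\pi, h) = \kappa_V$, uniquely determines a continuous $\l^V \colon \To \to \R$ with $c(t, h) = \exp(\int_0^t \l^V(\b_r h)\, dr) = \l_{t, h, V}$. Setting $\G^V(h) := \mu_h$ and $F^V(\cdot, h) := F_h$ then gives \eqref{Thm-A-eigen-property}.

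\textbf{Stability and continuity in $h$.} For the forward limit in \eqref{Thm-A-RPF-Stablity}, I write $t = 2\pi n + t'$ with $t' \in [0, 2\pi)$ and factor
\begin{equation*}
\l_{t, h, V}^{-1} P_{0, t, h}^V \phi \;=\; \l_{t', h, V}^{-1}\, \kappa_V^{-n}\, P_{0, 2\pi n, h}^V \bigl(P_{0, t', h}^V \phi\bigr),
\end{equation*}
then apply the spectral gap from the first step to $\psi := P_{0, t', h}^V \phi$ (uniformly bounded for $t' \in [0, 2\pi)$); the eigenmeasure identity produces $\la \G^V(\b_t h), \phi \ra F^V(w, h)$ in the limit. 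The pullback version follows by the symmetric argument on the dual side, using the reverse factorization. Continuity of $\G^V$, $F^V$, and $\l^V$ in $h$ comes from continuity of $h \mapsto P_{0, 2\pi, h}^V$ (via Lipschitz dependence of $\Phi_{0, 2\pi, h}$ on $h$, using the Lipschitz and $H_2$-regularity assumptions on $f$) combined with the spectral gap, which yields stability of the principal eigen-projection under continuous perturbation and, by compactness of $\To$, upgrades the per-$h$ gap to a uniform one.

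\textbf{Main obstacle.} The heart of the argument is the first step, and specifically the adaptation of Ruelle's lower-bound technique to the Feynman--Kac (non-mass-preserving) operator $P_{0, 2\pi, h}^V$ on the noncompact phase space $H$ under degenerate noise. The required estimate of the form $\kappa_V^{-n} P_{0, 2\pi n, h}^V \mathbf{1}(w) \ge \delta(\|w\|) > 0$ has to marry the Malliavin/asymptotic-coupling machinery of Hairer--Mattingly with exponential control of the Feynman--Kac weight, and the uniformity in the time symbol $h$ — indispensable both for the continuity statements and for the assembly of the continuous cocycle $\l^V$ — is the principal technical novelty relative to the time-homogeneous results \cite{JNPS18,MN18}.
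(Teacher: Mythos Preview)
Your period-map reduction is a genuinely different route from the paper's, and the core observation---that $P_{0,2\pi,h}^V$ is a \emph{time-homogeneous} Feynman--Kac operator because $\beta_{2\pi}h=h$, so the classical $L^1$-non-expansiveness and Ruelle's lower-bound argument apply directly---neatly sidesteps what the paper singles out as its main technical difficulty. The paper instead discretizes at integer times (so $\beta_k h\neq h$ and the iteration remains inhomogeneous), constructs the eigenmeasure by a Schauder--Tychonoff fixed-point argument on curves $\To\to\mathcal P(H)$, and then develops a time-inhomogeneous variant of Ruelle's technique in which global non-expansiveness is replaced by a ``local monotonicity along diagonals $m+n=k_j$'' (Proposition~\ref{proof-stability-L1}). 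Your intertwining argument that $\kappa(\beta_a h)=\kappa(h)$ is correct and is confirmed by the paper's formula $\lambda_{2\pi,h,V}=\exp\bigl(\int_0^{2\pi}\lambda^V(\beta_r h)\,dr\bigr)$, which is manifestly $h$-independent.

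Where your proposal is thinnest is the continuity-in-$h$ step. Invoking ``stability of the principal eigen-projection under continuous perturbation and compactness of $\To$'' is circular as written: you use a uniform gap to get continuity and compactness-plus-continuity to get the uniform gap. The repair is to observe that the RPF inputs---the growth condition, uniform irreducibility, and uniform Feller estimates in the paper's Appendix---are already uniform in $h$, which yields an \emph{a priori} uniform gap; continuity of $\Gamma^V$ and $F^V$ then follows by passing to the limit in the uniformly convergent RPF expansion. This is essentially what the paper does in Proposition~\ref{proposition-uniqueness-eigen-data}, approached from a different direction. You should also say how you extract the \emph{continuous} density $\lambda^V$ from the cocycle $c(t,h)$: the paper does this by differentiating $\lambda_{t,h,V}=\langle P_{0,t,h}^V\mathbf 1,\Gamma^V(h)\rangle$ directly, and your route needs the same step. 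In summary, your approach buys conceptual economy and leans on existing homogeneous machinery; the paper's approach buys a framework that does not depend on exact periodicity and is designed to extend to, e.g., the quasi-periodic forcing of \cite{LL22}, where no period map exists.
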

Theorem \ref{theorem-A} is a simplified version of Theorem \ref{asymptotic-behavior-feynman-kac}, which will be  proved in section \ref{Asymptotic-behavior-discrete-time}. Note that when $h=0$ the theorem gives the RPF type theorem for the Feynman-Kac operators associated with \eqref{NS-intro}. Theorem \ref{theorem-A} is proved by first showing a weak version (without continuous dependence on time symbols)  for the discrete time restrictions and then passing to the whole continuous time.

\vskip0.05in

A major step in proving the weak version is to show the forward $L^1$ stability after one obtains the existence of the eigen-elements. In \cite{KS00,JNPS18}, this was proved by combining the uniform Feller property and irreducibility with Ruelle's lower bound technique, which relies on an $L^1$ non-expansive property. Due to the time inhomogeneity, such non-expansiveness is not available in the current setting. The approach we take in the proof provides a way to tailor the core of Ruelle's lower bound technique in this situation, to obtain the desired forward  $L^1$ stability.  The impact of time inhomogeneity is also reflected throughout the process. 

\vskip0.05in

The passage from the weak version to the whole continuous time  is accomplished through the eigen-properties and uniqueness. Then we establish the continuity of eigen-triple and the representation of the eigenvalues through a common $\l^V$. 
The continuity of the eigenmeasure (and eigenvalue) is obtained by controlling the lower bound of the evaluation of eigenmeasure along the eigenfunction.
The continuity of the eigenfunction is proved by combining the eigenfunction property \eqref{Thm-A-eigen-property} and the time continuity of the Feynman-Kac family, as well as the dynamical structure of the time inhomogeneity. 

\vskip0.05in

\vskip0.1in
\noindent
\textbf{Donsker-Varadhan type large deviation principle}
\vskip0.1in

For time homogeneous Markov processes,  asymptotic statistical behaviors can be described by
the strong law of large numbers and central limit theorem when there is a mixing invariant measure, see Kuksin and Shirikyan \cite{KS12} for example. 
In particular, the central limit theorem characterizes fluctuations of the occupation measures around the invariant measure. A more detailed analysis enables one to derive a LDP that quantifies the exponentially decaying rates of the probabilities of large deviations from the invariant measure,
see for example the pioneering works of Donsker and Varadhan \cite{DV01,DV02}.

\vskip0.05in

For time inhomogeneous Markov processes, the asymptotic statistical profile depends on the time inhomogeneity. 
It was proved recently \cite{LL22} that the statistical dynamics of the time inhomogeneous solution processes for the Navier-Stokes equations \eqref{NS-intro} is characterized by an exponential mixing periodic invariant measure $\G\in C(\To, \P(H))$. In addition, one has the following strong law of large numbers and central limit theorem:
\begin{align*}
	&\lim_{t\to\infty}\frac1t\int_0^t\big(\phi(\Phi_{0, r}) - \la\G(\b_r0), \phi \ra \big)dr\to 0, \, \text{ almost surely, }\\
	&\lim_{t\to\infty}\frac{1}{\sqrt{t}}\int_0^t\big(\phi(\Phi_{0, r}) -  \la\G(\b_r0), \phi \ra \big)dr = N(0, \s^2), \, \text{ in distribution, }
\end{align*}
where recall that $\b_r$ is the circle rotation \eqref{NS-Family}  and $\Phi_{0, r}$ is the solution to \eqref{NS-intro}. Here $N(0, \s^2)$ is a centered normal random variable with variance $\s^2$. The central limit theorem describes the distribution of the fluctuations around the periodic invariant measure. 

\vskip0.05in 

Our Theorem \ref{theorem-B} below gives quantitative information on the probabilities of large deviations from the periodic invariant measure. Denote the occupation measure by
\begin{align*}
	\z_{t} &= \frac1t\int_{0}^t\d_{(\Phi_{0, r}, \b_r0)}dr, 
\end{align*}
which is a family of  random probability measures on $H\times\To$. In the following theorem $\P(H\times\To)$ is the space of probability measures on $H\times\To$ with the topology of weak convergence.

\begin{thm}[Large Deviation Principle]\label{theorem-B}
 Assume $A_{\infty}=H$ and $f\in C(\R, H_2)$ is $2\pi$-periodic and Lipschitz. Then there is a non-trivial good rate function $I: \P(H\times\To)\to[0, \infty]$ such that for every initial vorticity $w\in H$, we have 
\begin{align*}
	-\inf_{\mu\in \mathring{B}}I(\mu)\leq \liminf_{t\to\infty}\frac1t\log\mathbf{P}_{w}\{\z_{t}\in B\}\leq \limsup_{t\to\infty}\frac1t\log \mathbf{P}_{w}\{\z_{t}\in B\}\leq -\inf_{\mu\in \overline{B}}I(\mu),
\end{align*}
for any Borel subset $B$ in $\P(H\times\To)$, with $\mathring{B}$ and $ \overline{B}$ its interior and closure. 
\end{thm}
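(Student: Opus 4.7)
The plan is to apply Kifer's criterion for the LDP, using Theorem \ref{theorem-A} to compute the limiting logarithmic moment generating functional of the occupation measures. For any $V\in C_b^1(H\times\To)$, by definition of the Feynman-Kac operator,
\begin{align*}
\E_w\exp(t\la\z_t,V\ra) = \E_w\exp\left(\int_0^t V(\Phi_{0,r}(w),\b_r 0)\,dr\right) = P^V_{0,t,0}\id(w).
\end{align*}
Theorem \ref{theorem-A} with $\phi=\id$ and $h=0$ gives $\l_{t,0,V}^{-1}P^V_{0,t,0}\id(w)\to F^V(w,0)$ as $t\to\infty$, since $\la\G^V(\b_t 0),\id\ra\equiv 1$. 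Combined with $2\pi$-periodicity and continuity of $\l^V$, I obtain
\begin{align*}
Q(V):=\lim_{t\to\infty}\frac{1}{t}\log\E_w\exp(t\la\z_t,V\ra) = \frac{1}{2\pi}\int_0^{2\pi}\l^V(s)\,ds,
\end{align*}
independently of the initial vorticity $w\in H$ (using the strict positivity $F^V(w,0)>0$).

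Next I would verify the hypotheses of Kifer's criterion in the form used by Jak\v{s}i\'c--Nersesyan--Pillet--Shirikyan and Martirosyan--Nersesyan: (a) the limit $Q(V)$ exists on a determining class; (b) exponential tightness of $\{\z_t\}_{t\geq 0}$ in $\P(H\times\To)$; (c) uniqueness of the equilibrium state for every $V$, equivalently Gateaux differentiability of the convex functional $Q$. Item (a) is established above for the determining class $C_b^1(H\times\To)$. For (b), the standard exponential moment estimates for the 2D stochastic Navier-Stokes system provide bounds of the form $\E_w\exp\bigl(\eta\int_0^t\|w(r)\|_1^2\,dr\bigr)\leq C_\eta e^{Ct}$ for sufficiently small $\eta>0$; combined with the compact embedding $H_1\hookrightarrow H$, this yields exponential tightness of the $H$-marginals, the $\To$-marginal being trivially tight by compactness of $\To$.

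For (c), I would perform a first-variation analysis of the eigen-triple: perturbing $V\mapsto V+sW$ and differentiating the eigenmeasure/eigenfunction relations \eqref{Thm-A-eigen-property} identifies the directional derivative of $\l^V(h)$ as the integral of $W$ against the ``twisted'' probability measure on $H\times\To$ constructed from $F^V$ and $\G^V$. Uniqueness of the eigen-triple in Theorem \ref{theorem-A} then promotes this pointwise-in-$h$ calculation to Gateaux differentiability of $Q$, and Kifer's criterion produces the LDP with good rate function
\begin{align*}
I(\mu)=\sup_{V\in C_b^1(H\times\To)}\bigl(\la\mu,V\ra - Q(V)\bigr),
\end{align*}
its goodness (compactness of sublevel sets) following from the exponential tightness in (b). Non-triviality comes from the fact that $Q$ is not affine: $\l^V$ depends non-linearly on $V$ due to the mixing/spectral information in Theorem \ref{theorem-A}; in particular $I$ vanishes only at the ``periodic invariant measure'' $\frac{dh}{2\pi}\otimes\G(h)$ from \cite{LL22}, and is strictly positive on suitable perturbations thereof.

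The main obstacle will be step (c): upgrading the spectral information of Theorem \ref{theorem-A}, which is naturally pointwise in $h\in\To$, into uniform-in-$h$ differentiability of $V\mapsto \l^V(\cdot)$ through the time-inhomogeneous eigen-dynamics. The continuity of the eigen-triple in $h$ guaranteed by Theorem \ref{theorem-A} is the key input, but quantitative estimates on the speed of convergence in \eqref{Thm-A-RPF-Stablity}, uniform over bounded sets of potentials (a form of uniform spectral gap), appear to be required in order to pass from pointwise to uniform differentiability and to control the equilibrium state of the averaged functional $Q$.
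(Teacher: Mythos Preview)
Your overall architecture is correct and matches the paper: compute the pressure $Q(V)$ from Theorem~\ref{theorem-A} via Birkhoff averaging of $\l^V$, establish exponential tightness from the enstrophy estimate, and then invoke a Kifer-type criterion. The paper carries this out in Theorem~\ref{Kifer-condition}.

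The substantive divergence is in step~(c). You propose to obtain uniqueness of equilibrium states by proving Gateaux differentiability of $V\mapsto Q(V)$ through a first-variation analysis of the eigen-triple, and you correctly flag that this would require quantitative, uniform-in-$h$ control on the convergence in \eqref{Thm-A-RPF-Stablity} (essentially a uniform spectral gap over bounded families of potentials). The paper sidesteps this entirely: instead of differentiating, it performs Doob's $h$-transform with the eigenfunction $F^V$ to produce an auxiliary \emph{Markov} evolution family $T^V_{0,t,h}$ (and its homogenization $\mct_t^V$), shows that $I^V(\mu)=I(\mu)+Q(V)-\la V,\mu\ra$ is the rate function of this auxiliary process, and proves that any zero of $I^V$ is an invariant measure of $\mct_t^V$ via the Donsker--Varadhan variational inequality $\inf_{\phi>0}\la\mbl_V\phi/\phi,\mu\ra\geq 0$. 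Uniqueness of the invariant measure of $\mct_t^V$ is then read off directly from the \emph{uniqueness} part of Theorem~\ref{theorem-A} (the eigenmeasure is unique), with no differentiability needed. What the paper's route buys is that only the qualitative continuity of the eigen-triple in $h$ is required (to get Feller and time-continuity of $\mct_t^V$, so that the generator domain is determining), not the uniform quantitative estimates you anticipate needing. Your route is in principle viable but would be substantially harder to close in this time-inhomogeneous setting.

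Your non-triviality sketch is too thin. Saying ``$Q$ is not affine'' is the goal, not the argument. The paper argues by contradiction: if $I$ were trivial then $Q(V)=\la V,\g\ra$ for all $V$, forcing $\l_{t,h,V_\g}$ to grow subexponentially for the centered potential $V_\g(w,h)=V(w)-\la V,\G(h)\ra$; but the central limit theorem of \cite{LL22} produces a concrete $V\in C_b^1(H)$ with nonzero asymptotic variance $\sigma_V^2$, which forces $P^{V_\g}_{0,t,h}\id_H(w)$ to grow exponentially, contradicting \eqref{Thm-A-RPF-Stablity}. The existence of such a $V$ is itself nontrivial and uses a martingale-approximation argument together with irreducibility.
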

Theorem \ref{theorem-B} is a simplified version of Theorem \ref{LDP}, which will be  proved in section \ref{Proof-main-theorems}. It immediately implies probability bounds for large deviations from the unique periodic invariant measure $\G$. Indeed, for any Borel set $E\subset \R$ and observable function $\phi\in C_b(H)$, define the Borel subset $B$ of $\P(H\times\To)$, 
\[B:= \lp\mu\in \P(H\times\To): \la \mu, \phi_{\G}\ra \in E\rp,\]
where $\phi_{\G}(w, h) = \phi(w) - \la\G(h),\phi\ra$ and $\la \mu, \phi\ra$ denotes the integral of a function $\phi$ with respect to a measure $\mu$. Note that 
\begin{align*}
	\z_{t}\in B\text{ is equivalent to }  \la \z_{t}, \phi_{\G}\ra=\frac{1}{t}\int_{0}^{t}\big(\phi(\Phi_{0, r}) - \la \G(\b_r0),\phi\ra\big)dr \in E.
\end{align*}
Thus by Theorem \ref{theorem-B} one obtains when $t\to\infty$,
\begin{align*}
	e^{-tL^-}\lesssim \mathbf{P}_{w}\lp\frac{1}{t}\int_{0}^{t}\big(\phi(\Phi_{0, r}) - \la \G(\b_r0),\phi\ra\big)dr\in E\rp \lesssim e^{-tL^+}, 
\end{align*}
where $L^{\pm}=L^{\pm}(\phi_{\G}, E)\geq 0$ are constants determined by the rate function. 

\vskip0.05in

The theorem is proved through a combination of the RPF type Theorem \ref{theorem-A} and a Kifer's criterion established by Jakšić et al. \cite{JNPS18}. It is known that existence of the pressure function 
and exponential tightness imply the large deviation upper bound with its Legendre transform as the rate function. Such existence in the time homogeneous case is a direct consequence of the stability of Feynman-Kac semigroups but more effort is needed here. The existence here follows by a combination of the forward stability \eqref{Thm-A-RPF-Stablity}, integral representation for $\l_{t, h, V}$ in \eqref{Thm-A-eigen-property}, and the following Birkhoff ergodic theorem for circle rotation with unique invariant Lebesgue measure $m$, 
\begin{align*}
	\lim_{t\to\infty}\frac1t\int_0^t\l^V(\b_r0)dr = \int_{\To}\l^V(h)m(dh), 
\end{align*}
which is ensured by the continuity of $\l^V$ established in Theorem \ref{theorem-A}. We emphasize that if there is no continuity then Birkhoff's ergodic theorem is not necessarily true for the time symbol $0$ and therefore the existence of the  pressure function for the given system \eqref{NS-intro} would become an issue.

\vskip0.05in

The large deviation lower bound requires additionally the uniqueness of equilibrium states for a rich class of regular potentials. This is usually done by connecting the equilibrium state with the unique invariant measure of an auxiliary Markov semigroup obtained from the Feynman-Kac semigroup by Doob's $h$-transform. 
We follow this approach but additional issues arise due to time inhomogeneity. Firstly, the Doob's $h$-transform of the inhomogeneous Feynman-Kac family is still inhomogeneous. To establish the uniqueness of invariant measure for its homogenization $\mct_t^V$, we make a correspondence between their invariant measures, so that we can apply the uniqueness of eigen-triple in Theorem \ref{theorem-A}.
Secondly, to utilize the Donsker-Varadhan functional of $\mct_t^V$ to establish an effective connection between equilibrium states and invariant measures, we show that the domain of the infinitesimal generator of $\mct_t^V$ is rich enough to be determining. This involves a verification of the Feller property of $\mct_t^V$, where the continuity on time symbols of the eigen-triple plays a crucial role. 

\vskip0.05in

We end the introduction with a brief review on the Donsker-Varadhan type large deviations from a stationary distribution for 2D Navier-Stokes equations. It is worth mentioning that the first such LDP for SPDEs was given by Gourcy \cite{Gou07a,Gou07b}. By assuming that the white in time noise is sufficiently irregular in space, he established the LDP for occupation measures of the stochastic Burgers and Navier-Stokes equations, using a general result of Wu \cite{Wu01} that requires the strong Feller property. Utilizing the approach that  combines the asymptotic behavior of Feynman-Kac semigroups with a Kifer type criterion, Jakšić et al. \cite{JNPS15} established the first result on LDP for dissipative SPDEs when the strong Feller property does not hold, including stochastic Navier-Stokes equations perturbed by a smooth bounded random kick force. 
The authors in  \cite{JNPS15} extended their work to the case of smooth unbounded random kick forces in \cite{JNPS18}, 
which was extended by Martirosyan and Nersesyan \cite{MN18,Ner19} to the continuous time case, giving a proof of the LDP for the 2D Navier-Stokes equation with a non-degenerate white in time noise that is  smooth in space. Combining this approach with Malliavin calculus, Nersesyan, Peng and Xu \cite{NPX23} proved the LDP in the case when there is no deterministic force and the white noise is  degenerate in Fourier space. The LDP for 2D Navier-Stokes equations with a space-time localised noise was also considered in Peng and Xu \cite{PX22} by using the same approach. The solution processes considered by these works are time homogeneous.

\vskip0.2in

\centerline{\textbf{Notations}}

\vskip0.05in

Let $X$ be a Banach space and $Y$ a polish space. $\To=\R/(2\pi)\mathbb{Z}$ is the $1$-dimensional torus with Lebesgue measure $m$ normalized as a probability measure. 
\begin{itemize}
\item $\id_{A}$ is the indicator function of a set $A$.  We use $m_V, M_V$ to denote the infimum and supremum of a bounded function $V$ on its whole domain. 
\item We denote $B_r^X(a)$ the closed ball in $X$ centered at $a$ with radius $r$. When $a=0$ is the origin, we simply denote the ball by $B_r^X$. The open ball is denoted by $\ob_r^X(a)$ (or $\ob_r^X$ if $a=0$). 
\item $C_b(Y)$ (respectively $B_b(Y)$) is the space of bounded continuous (measurable) functions on $Y$ with the usual supremum norm $\|\cdot\|_{\infty}$.  $C_b^1(X)$ is the subset of $C_b(X)$ that is Fréchet differentiable with a bounded continuous derivative. 

\item For any measurable function $\mathfrak{m}\geq 1$ on $X$, $C_{\mathfrak{m}}(X\times\To)$ (respectively $L_{\mathfrak{m}}^{\infty}(X\times\To)$) is the space of continuous (measruable) functions $f$ on $X\times\To$ such that 
\[\|f\|_{L_{\mathfrak{m}}^{\infty}} = \sup_{(x, h)\in X\times\To}\frac{|f(x, h)|}{\mathfrak{m}(x)}<\infty.\]
  
\item The space of regular potential functions $C_b^{1, 0}(X\times\To)$ is the set of functions $f\in C_b(X\times\To)$ that are continuously Fr\'echet differentiable in $x\in X$ and continuous in $h$ such that 
\[\sup_{h\in \To}\|\nabla f(\cdot, h)\|_{\infty}+ \sup_{x\in X, h_1\neq h_2}\frac{|f(x, h_1)-f(x, h_2)|}{\k(|h_1-h_2|)}<\infty,\]
for some $\k: [0, \infty)\to[0, \infty)$ that is continuous, increasing with $\k(0)=0$. 
\item $\mathcal{M}_+(Y)$ is the space of non-negative finite Borel measures on $Y$ with the topology of weak convergence. Given any bounded measurable function $f$ on $Y$ and $\mu\in \mathcal{M}_+(Y)$, we write 
\[\langle\mu, f\rangle = \langle f, \mu\rangle = \int_{Y}f(y)\mu(dy), \quad \text{ and } |f|_{\mu}=\int_{Y}|f(y)|\mu(dy).\]
\item $\P(Y)\subset \mathcal{M}_+(Y)$ is the space of probability measures on $Y$. $\P_{\mkm}(Y)$ is the set of probability measures $\mu$ such that  $\langle \mu, \mkm\rangle <\infty$. For $\eta, M>0$, we denote 
\[\mathcal{P}_{\eta, M}(H) = \left\{\mu\in \mathcal{P}(H): \int_{H}e^{\eta\|w\|^2}\mu(dw)\leq M\right\}. \]

\item $H:= \left\{w \in \mathrm{L}^2\left(\mathbb{T}^{2}, \mathbb{R}\right): \int_{\mathbb{T}^2} w d x=0\right\}$, where the norm is denoted by $\left\|\cdot\right\|$ and the inner product is $\langle\cdot,\cdot\rangle$. $H_{s}=H^s\cap H$ and the corresponding norms $\left\|\cdot\right\|_s$ by $\|w\|_{s}=\left\|\left(-\mathrm{\Delta}\right)^{s/2} w\right\|$. 

\item For any subset $A\subset H$, we denote $(A, \dh)$ the topological space with subspace topology induced from $H$. And for any $a\in H$, we denote $B_{R}^{H, A}(a): =B_R^H(a)\cap A$ (or $B_{R}^{H, A}$ if $a = 0$) as the closed ball in $(A, \dh)$. 
\item Lyapunov functions on $H$: $\mathfrak{m}_{\eta}(w) = e^{\eta\|w\|^2}$ and $\mkm_q(w) = 1+\|w\|^{2q}$, for $w\in H$ and $\eta, q>0$. 
\item $\lceil a\rceil$ is the smallest integer greater than or equal to $a$. $\phi^+$ (respectively $\phi^-$) is the positive (negative) part of a function. 
\item $\Phi_{s, t, h}$ (respectively $\Phi_{s, t} = \Phi_{s, t, 0}$) is the solution to equations \eqref{Settings-NS-family} (equations \eqref{NS}). $P_{s, t, h}^V$ is the Feynman-Kac operator \eqref{setting-Feynman-Kac} with potential $V$. $\H_t$ is the homogenized process \eqref{setting-homogenized-process} and $\P_t^V$ is its Feynman-Kac semigroup \eqref{setting-homogenized-feynman-kac}. $P_{s, t, h}=P_{s, t, h}^0$ and $\P_t=\P_t^0$ are the corresponding Markov operators. 
\end{itemize}

\section{Settings and main results}
Recall from the introduction that the equation under investigation is
\begin{align}\label{NS}
dw(t, x) + B(\mathcal{K} w, w) (t, x)d t = \nu\mathrm{\mathrm{\Delta}} w(t, x)dt  + f(t, x)dt + \sum_{i=1}^d g_idW_i(t), \quad t>s, \quad w(s) = w_0,
\end{align}
for $s\in \R, x\in\To^2$ and some integer $d\geq 1$.

\vskip0.05in

The standard $d$-dimensional two-sided  Brownian motion $W = (W_1, W_2, \cdots, W_d)$ is obtained as follows.  Let $W^{\pm}(t)$ be two independent standard $d$-dimensional Brownian motion, then  define
\begin{align*}
W(t) :=\left\{\begin{array}{rr}
W^{+}(t) \text{ , } t\geq 0,\\
W^{-}(-t) \text{ , } t < 0 .\\
\end{array}
\right.
\end{align*}
The sample space is denoted by  $(\mathrm{\mathrm{\Omega}} ,\mathcal{F},\mathbf{P})$, where $\mathrm{\Omega} = \left\{\omega\in C(\mathbb{R}, \mathbb{R}^d): \omega (0)=0\right\}$ endowed with the compact open topology, $\mathcal{F}$ is the Borel $\sigma$-algebra and $\mathbf{P}$  is the Wiener measure associated with the Brownian motion $W$. Denote by $\mathcal{F}_t: = \s(W(u)-W(v): -\infty<u, v\leq t)$ the filtration of $\sigma$-algebras  generated by  $W$.  The coefficients of the noise $\lp g_i\rp\subset \bigcap_{s>0} H_{s}$. 

\vskip0.05in

For the deterministic force, we assume that $f\in C_b(\R, H_2)$ is $2\pi$-periodic and Lipschitz. The well-posedness of \eqref{NS} under the above conditions is classical \cite{KS12}. For every $s\in\R$ and $w_0\in H$, the solution belongs to $C([s, \infty); H)\cap L_{\mathrm{loc}}^2((s, \infty); H_3)$ almost surely and it generates a random dynamical system. We denote the solution at time $t\geq s$ by $\Phi_{s, t}$. 

\subsection{Time symbols and Feynman-Kac operators}
Denote by $\b_t$ the circle rotation on the torus $\To = \R/2\pi\mathbb{Z}$, i.e., 
\begin{align*}
	\b_th=t+h \mod 2\pi, h\in\To, t\in\R. 
\end{align*}
We consider a family of Navier-Stokes equations indexed by time symbols $h\in\To$ which is obtained from \eqref{NS} by replacing $f(t, x)$ with $f(\b_th, x)$,
\begin{align}\label{Settings-NS-family}
dw(t, x) + B(\mathcal{K} w, w) (t, x)d t = \nu\mathrm{\mathrm{\Delta}} w(t, x)dt  + f(\b_th,  x)dt + \sum_{i=1}^d g_idW_i(t), \, t>s, \, w(s) = w_0,
\end{align}
with corresponding solution $\Phi_{s, t, h}(w_0)$. Note that the symbol $h=0$ corresponds to the original equation \eqref{NS}. For any $s\in\mathbb{R}, t\geq 0, (w, h)\in H\times\To$, It follows from the uniqueness of solution that 
\begin{align}\label{translation-identity-solution}
\Phi_{s, s+t, h}(\o, w) = \Phi_{0, t, \b_sh}(\t_s\o, w), \, \mathbf{P}\text{-}\mathrm{a.s.}, 
\end{align}
where $\theta$ is the Wiener shift. The coupled process 
\begin{align}\label{setting-homogenized-process}
	\H_t(w, h) = (\Phi_{0, t, h}(w), \b_th)
\end{align}
is a homogeneous Markov process on $H\times\To$.  

\vskip0.05in

For every symbol $h\in\To$ and potential $V\in C_b(H\times \To)$, the associated Feynman-Kac operator $P_{s, t, h}^V: C_b(H)\to C_b(H)$ corresponding to \eqref{Settings-NS-family} is defined by 
\begin{align}\label{setting-Feynman-Kac}
	P_{s, t, h}^V\phi(w) = \E\left[ \exp \left(\int_s^tV(\Phi_{s, r, h}(w), \b_rh)dr\right)\phi(\Phi_{s, t, h}(w))\right], \, \phi\in C_b(H), w\in H. 
\end{align}
It naturally induces by duality an action $P_{s, t, h}^{V*}$ on the space $\mathcal{M}_{+}(H)$ of non-negative finite Borel measures on $H$, 
\begin{align*}
	\la P_{s, t, h}^{V*}\mu, \phi\ra = \la \mu, P_{s, t, h}^V\phi\ra, \, \mu\in \mathcal{M}_{+}(H), \phi\in C_b(H). 
\end{align*}
It also satisfies the following evolution relation by the Markov property 
\begin{align*}
	P_{u, t, h}^V = P_{u, s, h}^VP_{s, t, h}^V, \, \text{ for all } u\leq s\leq t, \, h\in\To.
\end{align*}
The  translation relation \eqref{translation-identity-solution} of solutions yields the following translation identity of $P_{s, t, h}^V$,
\begin{align}\label{translation-identity}
	P_{s, s+t, h}^V = P_{0, t, \b_sh}^V, \, s\in\mathbb{R}, t\geq 0, h\in\To.
\end{align}
Combining this with the evolution relation we obtain the following cocycle property 
\begin{align}\label{Feynman-Kac-cocycle}
	P_{0, s+t, h}^V = P_{0, s, h}^VP_{0, t, \b_sh}^V, \, s, t\geq 0, h\in\To.
\end{align}
It is also convenient to use the homogenized Feynman-Kac semigroup $\P_t^V$ acting on $C_b(H\times\To)$ as 
\begin{align}\label{setting-homogenized-feynman-kac}
	\P_t^V\phi(w, h) = P_{0, t, h}^V\phi(\cdot, \b_th)(w), \, t\geq 0, (w, h)\in H\times\To, \phi\in C_b(H\times\To). 
\end{align}

\vskip0.05in

The Feynman-Kac operators with vanishing potential are the usual Markov operators, which we denote by $P_{s, t, h}:=P^0_{s, t, h}$, as well as $\P_{t}:=\P_{t}^0$.

\subsection{Main theorems}

\vskip0.05in

The following two theorems  are our main results under the standing assumption: 
\begin{align}\label{assumption}
	f\in C(\R, H_{2}) \text{ is }2\pi\text{-periodic and Lipschitz};  g_i\in H_{\infty}, \forall 1\leq i\leq d;  \text{ and } A_{\infty} = H, 
\end{align}
where the H\"ormander bracket condition $A_{\infty}=H$ is defined as in \eqref{A-infty}. 

\vskip0.05in

The first result is the following time inhomogeneous Ruelle-Perron-Frobenius type theorem. 
\begin{theorem}\label{asymptotic-behavior-feynman-kac}
Let $V\in C_b^{1,0}(H\times\To)$. Then there is a unique family of eigen-triple 
$(\G^V, \l^V, F^V)$ for the Feynman-Kac evolution operators $P_{0, t, h}^V$, such that
\[(\G^V, \l^V, F^V)\in C(\To, \P(H))\times C(\To,\R)\times C(H\times\To),\]
 with $F^V$ strictly positive everywhere. Moreover, the following properties hold: 
\begin{enumerate}
\item Normalization property: $\la\G^V(h), F^V(\cdot, h)\ra=1$ for all $h\in\To$. \\
Eigenmeasure/eigenfunction property, 
\begin{align*}
	&P_{0, t, h}^{V*}\G^V(h) = \l_{t, h, V}\G^V(\b_th), \qquad \text{(eigenmeasure property)} \\
	&P_{0, t, h}^{V}F^V(\cdot, \b_th)(w) = \l_{t, h, V}F^V(\cdot, h), \quad \text{(eigenfunction property)} 
\end{align*}
with positive eigenvalue 
\begin{align}\label{eigen-value-integral-representation}
\l_{t, h, V}=\exp\left(\int_0^t\l^V(\b_rh)dr\right),\, \text{ and } \l^V(h) = \la V(\cdot, h), \G^V(h)\ra, 
\end{align}
for all $h\in\To$ and $t\geq 0$. 
\item Stability property: for any $\eta\in (0, \eta_0)$ and $\phi\in C_{\mkm_{\eta}}(H\times\To)$, and $M, R>0$, denoting 
\[\widetilde{\phi}(w, h) = \phi(w, h) - \la\phi(\cdot, h), \G^V(h)\ra F^V(w, h), \quad (w, h)\in H\times\To, \]
then we have for every $h\in\To$ and $R>0$, 
\begin{enumerate}
\item Pullback stability: 
\begin{align*}
	&\lim_{t\to\infty}\left| \l_{t, \b_{-t}h, V}^{-1}P_{0, t, \b_{-t}h}^{V}\widetilde{\phi}(\cdot, h)\right|_{C_b(B_{R}^H)} = 0,\\
	&\lim_{t\to\infty}\left|  \l_{t, \b_{-t}h, V}^{-1}P_{0, t, \b_{-t}h}^{V}\widetilde{\phi}(\cdot, h)\right|_{\G^V(\b_{-t}h)} = 0,\\
	&\lim_{t\to\infty}\sup_{\mu\in\P_{\eta^{\prime}, M}(H)}\left|\la  \l_{t, \b_{-t}h, V}^{-1}P_{0, t, \b_{-t}h}^{V}\widetilde{\phi}(\cdot, h), \mu\ra\right| = 0, \, \forall \eta^{\prime}\in (\eta, \eta_0);
\end{align*}
\item Forward stability: 
\begin{align}\notag
	&\lim_{t\to\infty}\left| \l_{t, h, V}^{-1}P_{0, t, h}^{V}\widetilde{\phi}(\cdot, \b_th)\right|_{C_b(B_{R}^H)} = 0,\\\notag
	&\lim_{t\to\infty}\left| \l_{t, h, V}^{-1}P_{0, t, h}^{V}\widetilde{\phi}(\cdot, \b_th)\right|_{\G^V(h)} = 0,\\\label{main-thereom-forward-mxing}
	&\lim_{t\to\infty}\sup_{\mu\in\P_{\eta^{\prime}, M}(H)}\left|\la  \l_{t, h, V}^{-1}P_{0, t, h}^{V}\widetilde{\phi}(\cdot, \b_th), \mu\ra\right| = 0, \, \forall \eta^{\prime}\in (\eta, \eta_0).
\end{align}
\end{enumerate}
\end{enumerate}
\end{theorem}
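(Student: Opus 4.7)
My plan is to follow the two-stage strategy announced in the introduction: first establish a weak RPF theorem for the discrete-time Feynman-Kac cocycle $\{P^V_{0,n,h}\}_{n\in\mathbb{N}}$ and then interpolate to continuous time and to continuous dependence on $h$. For the discrete step I would work on the lifted space $C(\To,\P(H))\times C(H\times\To)$ and look for a normalized fixed point. Existence of an eigenmeasure family $\G^V\in C(\To,\P(H))$ with $P^{V*}_{0,1,h}\G^V(h)\propto\G^V(\b_1 h)$ would follow from a Leray--Schauder argument applied to the normalized dual map, on an invariant weakly compact convex subset of $C(\To,\P_{\eta,M}(H))$ supplied by the exponential Lyapunov function $\mkm_\eta$ and the boundedness of $V$. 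The eigenfunction family $F^V$ is obtained as an accumulation point of the normalized forward iterates $\l_{n,h,V}^{-1}P^V_{0,n,h}\id$, the necessary precompactness coming from the uniform Feller property that in this degenerate noise setting follows from the Malliavin-based estimates driven by $A_\infty=H$; strict positivity of $F^V$ is a consequence of the Hörmander-based irreducibility. At this stage the homogenized semigroup $\P_t^V$ on $H\times\To$ is a useful auxiliary: it is time-homogeneous, so asymptotic behaviour there can be transferred back to the family $\{P^V_{0,t,h}\}$ via \eqref{setting-homogenized-feynman-kac}.

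The main obstacle is the forward stability $\l^{-1}_{t,h,V}P^V_{0,t,h}\widetilde\phi(\cdot,\b_th)\to 0$, since the arguments of \cite{KS00,JNPS18} rely on an $L^1$-non-expansiveness of the twisted dual operator in a reference measure that fails once the operator depends on the symbol. I would replace that ingredient by an inhomogeneous coupling argument in the spirit of Ruelle's lower bound: the uniform Feller property together with irreducibility should yield, for every $R>0$, constants $\r>0$ and $n_0\in\mathbb{N}$ such that for every $h\in\To$ and any two probability measures supported on $B_R^H$ the Radon--Nikodym derivative of the normalized $n_0$-step Feynman-Kac pushforwards is bounded below by $\r$ on a set of positive $\G^V(\b_{n_0}h)$-mass. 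Iterating this along the cocycle, where the symbol drifts by $\b_{n_0}$ at each step and hence stays inside the compact space $\To$ of uniformly controlled constants, gives exponential contraction of centered observables on $C_b(B_{R}^H)$, which is precisely the first forward stability statement. The $\G^V$-norm version follows by integrating against $\G^V(h)$, while the version uniform over $\mu\in\P_{\eta',M}(H)$ follows by truncating to large balls and controlling the tails through the Gaussian Lyapunov estimate $\la\mu,\mkm_{\eta'}\ra\le M$. The pullback statements are obtained by the symmetric argument with the roles of eigenmeasure and eigenfunction swapped.

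Finally, once the discrete RPF and stability are in hand, I would define $\G^V(\b_t h):=\l^{-1}_{t,h,V}P^{V*}_{0,t,h}\G^V(h)$ and, symmetrically, $F^V(w,\b_{-t}h):=\l^{-1}_{t,\b_{-t}h,V}P^V_{0,t,\b_{-t}h}F^V(\cdot,h)(w)$ for $t\in[0,1)$, with the eigenvalue cocycle fixed by the normalization $\la\G^V(h),F^V(\cdot,h)\ra=1$. Positivity of $F^V$ yields a uniform lower bound for this pairing, from which the continuity of $\G^V$ in $h$ follows via the weak continuity of $t\mapsto P^{V*}_{0,t,h}$; the continuity of $F^V$ is then read off from its eigenfunction representation together with the joint continuity of $(t,h)\mapsto P^V_{0,t,h}\phi$. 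Differentiating the eigenvalue cocycle at $t=0$ and using the infinitesimal relation $\partial_t P^V_{0,t,h}\id|_{t=0}=V(\cdot,h)$ gives $\l^V(h)=\la V(\cdot,h),\G^V(h)\ra$ and the integral representation \eqref{eigen-value-integral-representation}. Uniqueness of the continuous family is forced by the uniqueness of the discrete eigen-triple at $h=0$ together with the cocycle. The three forms of continuous-time stability reduce to the discrete ones through a bounded interpolation in $t\in[0,1)$, using the boundedness of $V$ to control the ratio $\l_{t,h,V}/\l_{\lfloor t\rfloor,h,V}$ and the Lyapunov function $\mkm_\eta$ to upgrade the estimates on $B_R^H$ to the $\G^V$- and $\P_{\eta',M}(H)$-uniform versions.
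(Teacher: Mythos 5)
Your high-level architecture (discrete-time RPF first, then interpolation to continuous $t$ and to continuity in $h$) matches the paper's, and you correctly identify the roles of a fixed-point argument, a Krylov--Bogolyubov-type construction, and a Ruelle lower bound. But there are three concrete gaps. First, you propose to obtain $F^V$ as an accumulation point of the \emph{iterates} $\l_{n,h,V}^{-1}P^V_{0,n,h}\id$; accumulation points of that sequence need not satisfy the eigenfunction equation, because passing $P^V_{0,1,h}$ through a subsequential limit along $n_j$ produces a quantity along $n_j+1$, whose subsequential limit is not identified. The paper instead uses the Ces\`aro averages $F_k=\frac1k\sum_{\ell<k}\l_{\ell,h,V}^{-1}P^V_{0,\ell,h}\id$, precisely so that the mismatched boundary term is $O(1/k)$ and the eigenfunction property passes to the limit. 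Second, you want to locate the eigenmeasure family as a Leray--Schauder fixed point directly in $C(\To,\P_{\eta,M}(H))$, but there is no a priori equicontinuity in $h$ to make the image of $S_1^V$ relatively compact in that sup-topology. The paper avoids this by working in the pointwise-topology closure $\overline{C_p(\To,\P_{\eta,A}(H))}$, where Tychonoff provides compactness cheaply, and recovers continuity of $\G^V$ \emph{a posteriori} from the pullback stability (Corollary \ref{St-to-Gamma} and Proposition \ref{proposition-uniqueness-eigen-data}).

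Third, your forward-stability mechanism is framed as a Radon--Nikodym lower bound between Feynman--Kac pushforwards; in the degenerate-noise setting these pushforwards have no reason to be mutually absolutely continuous (there is no strong Feller property), so that statement cannot be taken literally. The paper's replacement for the lost $L^1$-non-expansiveness is subtler than ``the symbol stays in a compact $\To$ so the constants are uniform'': the operator $\l_{k,h,V}^{-1}P^V_{0,k,h}$ twisted by $F^V$ is only \emph{locally} monotone along the diagonal $m+n=k$ (inequality \eqref{g_k-non-increasing}), so the contradiction argument must run over the index set $K=\bigcup_j\{(m,n):m+n=k_j\}$, and the contraction step is a Harnack-type inequality on the \emph{functions} $g_k^{\pm}$ restricted to $B_R^H$ (estimate \eqref{lower-bound-Tlg+-}), not a minorization of measures. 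Your proposal does not address this loss of global monotonicity, which is in fact the main new idea this inhomogeneous setting requires. The remaining steps you sketch (normalization pairing bounded below, continuity of $F^V$ from the eigenfunction representation, differentiating the eigenvalue cocycle to get \eqref{eigen-value-integral-representation}, interpolation to real $t$ using boundedness of $V$) do track the paper's Section \ref{subsection-proof-main-theorem-Feynman-Kac}.
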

\begin{remark}Here are some remarks about the theorem.
\begin{enumerate}
\item In fact, we have 
\[(\G^V, F^V)\in C(\To, \P_{\mkm_{\eta}}(H))\times C_{\mkm_q}(H\times\To),\]
for $\eta\in(0, \eta_0)$ and some $q=q(V)\geq 1$, where $\eta_0$ is from Proposition \ref{bounds}. See Proposition \ref{Existence-of-Eigenmeasure} and Proposition \ref{Existence-of-Eigenfunction}. 
\item If the potential $V=0$ then $(\G^V, \l^V, F^V) = (\G, 0, 1)$, where $\G$ is the unique periodic invariant measure of  the Markov operators $P_{0, t, h}$ of system \eqref{NS}. In this vein, Theorem \ref{asymptotic-behavior-feynman-kac} extends the unique ergodicity obtained in \cite{LL22} to general Feynman-Kac evolution operators. 
\end{enumerate}
\end{remark}

The second theorem is a Donsker-Varadhan type large deviation principle for occupation measures of the time inhomogeneous solution processes. Denote the occupation measure by
\begin{align}\label{occupation-measure}
	\z_{t, h} &= \frac1t\int_{0}^t\d_{(\Phi_{0, r, h}, \b_rh)}dr, 
\end{align}
which is a family of  random probability measures on $H\times\To$.

\begin{theorem}\label{LDP}
For any $\eta\in (0, \eta_0)$ and $M>0$, the family of random probability measures $\z_{t, h}$ satisfies the LDP for any $h\in\To$, uniformly with respect to initial measures $\s\in \P_{\eta, M}(H)$, i.e., there is a non-trivial good rate function $I: \P(H\times\To)\to[0, \infty]$ that does not depend on $\eta, M$ such that for any $h\in\To$, we have 
\begin{align*}
	&\limsup_{t\to\infty}\frac1t\log \sup_{\s\in \P_{\eta, M}(H)}\mathbf{P}_{\s}\{\z_{t, h}\in F\}\leq -\inf_{\mu\in F}I(\mu),\\
	&\liminf_{t\to\infty}\frac1t\log \sup_{\s\in \P_{\eta, M}(H) }\mathbf{P}_{\s}\{\z_{t, h}\in G\}\geq -\inf_{\mu\in G}I(\mu),
\end{align*}
for any closed set $F$ and open set $G$ in $\P(H\times\To)$. 
\end{theorem}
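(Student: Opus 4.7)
The plan is to combine the Ruelle-Perron-Frobenius information supplied by Theorem \ref{asymptotic-behavior-feynman-kac} with a Kifer-type criterion in the spirit of \cite{JNPS18}, which reduces the LDP to three ingredients: (i) exponential tightness of the family $\{\z_{t,h}\}$ in $\P(H\times\To)$, uniformly in the initial measure $\s\in\P_{\eta,M}(H)$; (ii) existence of a pressure functional
\[Q(V) = \lim_{t\to\infty}\frac{1}{t}\log \E_\s\bigl[\exp(t\la \z_{t,h}, V\ra)\bigr]\]
for every $V\in C_b^{1,0}(H\times\To)$, with the limit uniform in $\s\in\P_{\eta,M}(H)$ and independent of $h\in\To$; and (iii) uniqueness of an equilibrium state $\mu_V$ associated to each such $V$, which upgrades the upper bound to a full LDP whose good rate function is the Legendre transform $I=Q^*$.

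For (i), exponential tightness follows from the standard exponential Lyapunov bound $\E_\s \mkm_{\eta}(\Phi_{0,t,h}(w))\lesssim 1$ uniformly in $t\geq 0$ together with compactness of the sublevel sets of $\mkm_{\eta}$ in $H$ and compactness of $\To$. For (ii), one rewrites the exponential moment as the Feynman-Kac integral $\la \s, P_{0,t,h}^V\id\ra$ and invokes the forward stability \eqref{main-thereom-forward-mxing} together with the integral representation \eqref{eigen-value-integral-representation} to obtain
\[\frac{1}{t}\log \la \s, P_{0,t,h}^V\id\ra = \frac{1}{t}\int_0^t \l^V(\b_rh)\,dr + o(1),\]
the $o(1)$ being uniform in $\s\in\P_{\eta,M}(H)$. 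The continuity of $\l^V$ on $\To$ supplied by Theorem \ref{asymptotic-behavior-feynman-kac} is essential here, as it allows Birkhoff's ergodic theorem for the uniquely ergodic rotation $\b_t$ to hold pointwise for every starting symbol, yielding
\[Q(V) = \int_{\To}\l^V(h)\,m(dh),\]
independent of $\s$ and $h$. Kifer's criterion combined with (i) then delivers the LDP upper bound.

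For (iii) and the matching lower bound, I would apply Doob's $h$-transform to the Feynman-Kac family using $F^V$ and $\l_{t,h,V}$, producing a time inhomogeneous Markov family, and then homogenize it via the coupled process $\H_t$ into a homogeneous Markov semigroup $\mct_t^V$ on $H\times\To$. The eigenrelations in Theorem \ref{asymptotic-behavior-feynman-kac} immediately force the measure
\[\mu_V(dw,dh) = F^V(w,h)\,\G^V(h)(dw)\,m(dh)\]
to be invariant under $\mct_t^V$. I would then establish a bijective correspondence between invariant measures of $\mct_t^V$ and periodic families of invariant measures of the Doob-transformed inhomogeneous family, so that the uniqueness clause of Theorem \ref{asymptotic-behavior-feynman-kac} forces $\mu_V$ to be the only invariant measure of $\mct_t^V$. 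The Donsker-Varadhan variational principle applied to $\mct_t^V$ then identifies $\mu_V$ as the unique equilibrium state for $V$ and closes the lower bound.

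The main technical obstacle will be the analysis of the homogenized Doob transform $\mct_t^V$: verifying its Feller property on $C_b(H\times\To)$, showing that the domain of its infinitesimal generator is rich enough to be determining, and running the Donsker-Varadhan variational principle in this degenerate infinite dimensional time inhomogeneous setting. All three rely in an essential way on the joint continuity of $(\G^V,\l^V,F^V)$ in the time symbol established in Theorem \ref{asymptotic-behavior-feynman-kac}; without it the homogenization step would break down at the level of the generator and the candidate equilibrium $\mu_V$ could not be pinned down by the variational principle. Finally, the uniformity in $\s\in\P_{\eta,M}(H)$ throughout is inherited from the Lyapunov-weighted form of the stability estimates \eqref{main-thereom-forward-mxing}.
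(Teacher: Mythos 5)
Your reduction to Kifer's criterion via the three ingredients — exponential tightness, existence of the pressure functional (using the RPF stability plus Birkhoff's ergodic theorem for the rotation, relying on continuity of $\l^V$ on $\To$), and uniqueness of equilibrium states via Doob's $h$-transform and homogenization — matches the paper's Theorem \ref{Kifer-condition} and Step 1 of the proof. Your description of the role the joint continuity of $(\G^V,\l^V,F^V)$ plays in making the generator domain determining and the homogenization coherent is also on target.

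However, there are two gaps. First, a technical one in your exponential tightness argument: the sublevel sets of $\mkm_{\eta}(w) = e^{\eta\|w\|^2}$ are merely norm-bounded, not compact, in $H$. The paper instead takes $R(w,h) = \eta\nu\|w\|_1^2$ and uses the enstrophy estimate \eqref{enstrophy}; compactness of $\{R\leq \alpha\}$ then comes from the compact embedding $H_1\hookrightarrow H$ and compactness of $\To$. Your Lyapunov bound in $H$ alone does not yield exponential tightness in $\P(H\times\To)$.

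Second, and more substantially, you do not address the non-triviality of the rate function $I$, which is part of the theorem's conclusion and occupies two of the three steps in the paper's proof. The paper argues by contradiction: if $I$ were trivial (equal to $0$ at the periodic stationary measure $\g$ and $+\infty$ elsewhere), then $Q(V) = \la V,\g\ra$ for every $V$, so for centered potentials $V_{\g}$ the eigenvalue $\l_{t,h,V_{\g}}$ would grow subexponentially; combined with the forward stability \eqref{main-thereom-forward-mxing} this forces $\E_w\exp\!\bigl(\int_0^t V_{\g}(\H_r)\,dr\bigr)$ to grow subexponentially for every such $V$. This contradicts the central limit theorem from \cite{LL22} as soon as one exhibits an observable with nonzero asymptotic variance, which the paper does via an explicit cutoff function, a martingale approximation, and a deterministic control argument showing the solution can stay in a fixed ball with positive probability. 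None of this appears in your sketch, and without it the conclusion that $I$ is non-trivial is unproven.
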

\begin{remark}
Here are some remarks about the theorem. 
\begin{enumerate}
\item  The rate function $I$ is the Legendre transformation of the pressure function $Q$, 
\begin{align}\label{rate-function}
	I(\mu)=
	\left\{\begin{array}{cc}
		\ds\sup_{V\in C_b(H\times\To)}\left(\la V, \mu\ra - Q(V)\right), &\mu\in\P(H\times\To),\\
		\qquad+\infty, &\text{otherwise.}
	\end{array}\right.
\end{align}
Here the pressure function $Q: C_b(H\times\To)\to \mathbb{R}$ is given by the limit  
\begin{align}\label{main-results-Q-ae-limit}
Q(V) = \lim_{t\to\infty}\frac{1}{t}\sup_{\s\in \P_{\eta, M}(H)}\log\E_{\s}\exp\la tV, \zeta_{t, h}\ra, \text{ for } V\in C_b(H\times\To),
\end{align}
where the limit $Q(V) $ does not depend on initial data $\s\in \P_{\eta, M}(H)$ and $h\in\To$. The function $Q$ is $1$-Lipschitz and convex. In particular,  
\[Q(V) = \int_{\To}\l^V(h)m(dh), \text{ for } V\in C_b^{1,0}(H\times\To),  \]
where  $\l^V\in C(\To, \R)$ is the eigenvalue obtained in Theorem \ref{asymptotic-behavior-feynman-kac}.  See Theorem \ref{Kifer-condition}.  
\item For any Borel set $E\subset \R$ and $\phi\in C_b(H)$, define the set 
\[K(\phi_{\G}, E):= \lp\mu\in \P(H\times\To): \la\phi_{\G}, \mu\ra \in E\rp,\]
where $\G\in C(\To, \P_{\mkm_{\eta}}(H))$ is the unique periodic invariant measure of the Navier-Stokes system \eqref{NS} obtained in \cite{LL22} and $\phi_{\G}(w, h) = \phi(w) - \la\phi,\G(h)\ra$. Note that 
\begin{align*}
\z_{t, 0}\in K(\phi_{\G}, E) \text{ is equivalent to }  \la\phi_{\G}, \z_{t, 0}\ra=\frac{1}{t}\int_{0}^{t}\left(\phi(\Phi_{0, r}) - \int_{H}\phi(w)\G(\b_r0)\right)dr \in E.
\end{align*}
Thus by Theorem \ref{LDP} one obtains a version of the LDP involving the unique periodic invariant measure for solution processes of \eqref{NS}:
\begin{align*}
	&\limsup_{t\to\infty}\frac1t\log\mathbf{P}_{\s}\lp\frac{1}{t}\int_{0}^{t}\left(\phi(\Phi_{0, r}) - \int_{H}\phi(w)\G(\b_r0)\right)dr\in E\rp\leq -L^+(\phi_{\G}, E),\\
	&\liminf_{t\to\infty}\frac1t\log\mathbf{P}_{\s}\lp\frac{1}{t}\int_{0}^{t}\left(\phi(\Phi_{0, r}) - \int_{H}\phi(w)\G(\b_r0)\right)dr\in E\rp\geq -L^-(\phi_{\G}, E),
\end{align*}
for any  $\s\in\P_{\eta, M}(H)$, where 
\begin{align*}
	L^-(\phi_{\G}, E) = \inf_{\mu\in K(\phi_{\G}, \mathring{E})}I(\mu), \,\text{ and } L^+(\phi_{\G}, E) = \inf_{\mu\in K(\phi_{\G}, \overline{E})}I(\mu).
\end{align*}
The central limit theorem proved in \cite{LL22} implies that $\sqrt{t}\la\phi_{\G}, \z_{t, 0}\ra$ is asymptotically Gaussian, which describes small deviations. Here the LDP gives quantitative information on the probabilities of large deviations. 
\end{enumerate}
\end{remark}

\section{A Ruelle-Perron-Frobenius type theorem}\label{Asymptotic-behavior-discrete-time}
The aim of this section is to prove the Ruelle-Perron-Frobenius type Theorem \ref{asymptotic-behavior-feynman-kac} on the asymptotic behavior of the Feynman-Kac evolution operators $P_{0, t, h}^V$ in terms of the corresponding eigen-triple. We first show the existence of eigen-triple of the discrete time operators $P_{0, k, h}^V, k\in\mathbb{N}$ in the first subsection \ref{subsection-Existence of eigen-triple}. Then in subsection \ref{subsection-Mixing, uniqueness and regularity} we prove the forward and pullback stability of the eigen-triple for $P_{0, k, h}^V$. Subsection \ref{subsection-uniqueness-regularity-discrete-time} is devoted to the proof of uniquenesses and certain regularity properties of the eigen-triple. The proof of Theorem \ref{asymptotic-behavior-feynman-kac} will be given in subsection \ref{subsection-proof-main-theorem-Feynman-Kac}.

\vskip0.05in
\subsection{Existence of eigen-triple}\label{subsection-Existence of eigen-triple}In this subsection we establish the existence of eigen-triple for the discrete time operators $P_{0, k, h}^V, k\in\mathbb{N}$. In Proposition \ref{Existence-of-Eigenmeasure} we prove the existence of eigenmeasure and eigenvalue by combining the idea of pullback and a Schauder-Tychonoff's fixed point theorem. Then in Proposition \ref{Existence-of-Eigenfunction} we construct the corresponding eigenfunction by utilizing the eigenmeasure and eigenvalue, as well as a Krylov–Bogolyubov averaging procedure. 

\vskip0.05in

For $\eta, A>0$, let $\overline{C_p(\To, \mathcal{P}_{\eta, A}(H))}$ be the closure of $C(\To, \mathcal{P}_{\eta, A}(H))$ with respect to the topology of point-wise convergence. Using the idea of pull-back we  define the following operator family 
\begin{align}\label{the-semigroup}
S_t^V(\g)(h): = \frac{P_{0, t, \b_{-t}h}^{V*}\g(\b_{-t}h)}{\big(P_{0, t, \b_{-t}h}^{V*}\g(\b_{-t}h)\big)(H)}, \quad (t, h)\in\mathbb{R}_{+}\times\To, \g\in \overline{C_p(\To, \mathcal{P}_{\eta, A}(H))}.
\end{align}
It is straightforward to verify that this family of operators has the semigroup property, whose fixed point gives the desired eigenmeasure.
 
\begin{proposition}[Existence of Eigenmeasure and eigenvalue]\label{Existence-of-Eigenmeasure}
Assume $V\in C_b^{1,0}(H\times\To)$ and $\eta\in(0,\eta_0)$.  Then there is a constant $A>0$ such that the operator $S_1^V$ maps $\overline{C_p(\To, \mathcal{P}_{\eta, A}(H))}$  into itself, in which it has a fixed point $\G^V$. Denote $\l_{k, h, V}=P_{0, k, h}^{V*}\G^V(h)(H)$. Then $\G^V$ has the following properties: 
\begin{enumerate}
\item Eigenmeasure property: 
\[P_{0, k, h}^{V*}\G^V(h) =\l_{k, h, V}\G^V(\b_kh), \, (k, h)\in\mathbb{N}\times\To.\]
Cocycle property of eigenvalues $\l_{k, h, V}$: 
\begin{align}\label{eigenvalue-cocycle}
\l_{m+\ell, h, V} = \l_{m, \b_{\ell}h, V}\l_{\ell, h, V}, \text{ and } \l_{m, h, V} = \prod_{k=0}^{m-1}\l_{1, \b_kh, V}.
\end{align}

\item Support property: for each $h\in\To$, the support of $\G^V(h) $ is $H$. In addition, for any $\r, r>0$, there is a number $p = p(\r, r, V)>0$ such that 
\begin{align}\label{support-property}
\inf_{(w, h)\in B_{\r}^{H_2}\times\To}\G^V(h)(B_{r}^{H, H_2}(w))\geq p. 
\end{align}
Moreover, there is a large $L_0$ such that for any $L\geq L_0$, there holds 
 \begin{align}\label{support-property-1}
\inf_{(w, h)\in B_{\r}^{H_2}\times\To}\G^V(h)(B_{r}^{H, B_L^{H_2}}(w))\geq p/2. 
\end{align}
\item Concentration property: for each $h\in\To$, the measure $\G^V(h)$ is concentrated on $H_2$, i.e. $\G^V(h) (H_2) = 1$.  There is a constant $C$ such that for any $R>0$,
\begin{align}\label{concentration-property}
\sup_{h\in\To}\G^V(h)(H\setminus B_{R}^{H_2})\leq \frac{C}{R^2}.
\end{align}

\end{enumerate}

\end{proposition}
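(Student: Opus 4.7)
The strategy is to obtain $\G^V$ as a fixed point of $S_1^V$ via the Schauder--Tychonoff theorem, and then to read off the announced properties from the fixed-point equation combined with the cocycle \eqref{Feynman-Kac-cocycle}. For the self-map step I would check that, for $A$ large, $S_1^V$ maps $\overline{C_p(\To,\P_{\eta,A}(H))}$ into itself. The denominator in \eqref{the-semigroup} is bounded below by $e^{m_V}$, while for the numerator the exponential moment estimate of Foias--Prodi type provided by Proposition \ref{bounds} should yield $\E e^{\eta\|\Phi_{0,1,h}(w)\|^2}\le \k e^{\eta\|w\|^2}+C$ for some $\k<1$ when $\eta\in(0,\eta_0)$, and integration against $\g$ gives
\[
\la\mkm_{\eta}, S_1^V(\g)(h)\ra \le e^{M_V-m_V}(\k A+C)\le A
\]
once $A$ is chosen large enough in terms of $V$. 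The fiber $\P_{\eta,A}(H)$ is tight and hence weakly compact, so by Tychonoff the product $\prod_{h\in\To}\P_{\eta,A}(H)$ is compact in the point-wise weak topology, and $\overline{C_p(\To,\P_{\eta,A}(H))}$ is a closed convex subset of it. Continuity of $S_1^V$ in this topology reduces to weak continuity of $\g\mapsto P_{0,1,\b_{-1}h}^{V*}\g$ fiber by fiber, which follows from the Feller property of $P_{0,1,h}^V$ on $C_b(H)$ together with the uniform exponential moment used to truncate unbounded test functions and to control the normalizing denominator. Schauder--Tychonoff then produces the fixed point $\G^V$.

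Setting $\l_{1,g,V}:=\bigl(P_{0,1,g}^{V*}\G^V(g)\bigr)(H)$ turns the identity $S_1^V(\G^V)=\G^V$ into $P_{0,1,g}^{V*}\G^V(g)=\l_{1,g,V}\G^V(\b_1 g)$, which is the eigenmeasure property at $k=1$; iterating via \eqref{Feynman-Kac-cocycle} and reading off the total mass on each side yields both the eigenmeasure property for every $k\ge 1$ and the multiplicative cocycle \eqref{eigenvalue-cocycle}. For the support property I would then write the pull-back form
\[
\G^V(h)(A)=\l_{k,\b_{-k}h,V}^{-1}\int_H \E\!\left[\exp\!\Bigl(\int_0^k V(\Phi_{0,r,\b_{-k}h}(w'),\b_{r-k}h)\,dr\Bigr)\id_{A}(\Phi_{0,k,\b_{-k}h}(w'))\right]\G^V(\b_{-k}h)(dw'),
\]
combine the upper bound $\l_{k,\b_{-k}h,V}\le e^{kM_V}$ with the lower bound $e^{km_V}$ on the exponential weight, and invoke the approximate controllability of the 2D Navier--Stokes flow under the H\"ormander condition $A_\infty=H$ (a Hairer--Mattingly type input) to produce a uniform lower bound on $\mathbf{P}(\Phi_{0,k,\b_{-k}h}(w')\in B_r^{H,H_2}(w))$ over $(w,h)\in B_\r^{H_2}\times\To$ and $w'$ in a sufficiently large ball; the mass of $\G^V(\b_{-k}h)$ on the complement of that ball is Chebyshev-controlled by the $\mkm_\eta$-moment. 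The strengthening \eqref{support-property-1} then follows by taking $L_0$ large enough that the concentration bound \eqref{concentration-property} subtracts at most $p/2$. For the concentration (3), the eigenmeasure identity at $k=1$ combined with Chebyshev in $H_2$ gives
\[
\G^V(h)(H\setminus B_R^{H_2})\le R^{-2}\l_{1,\b_{-1}h,V}^{-1}\int_H\E\!\left[e^{\int_0^1 V\,dr}\|\Phi_{0,1,\b_{-1}h}(w')\|_2^2\right]\G^V(\b_{-1}h)(dw'),
\]
and the standard instantaneous $L^2\to H^2$ smoothing for 2D Navier--Stokes, yielding $\E\|\Phi_{0,1,h}(w')\|_2^2\le C(1+\|w'\|^{2q})$ for some $q$, is integrable against $\G^V(\b_{-1}h)\in\P_{\eta,A}(H)$, giving both $\G^V(h)(H_2)=1$ and the $R^{-2}$ decay.

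The main obstacle I anticipate is the self-map verification: the multiplicative overhead $e^{M_V-m_V}$ from the Feynman--Kac weight competes with the Lyapunov factor $\k$, and for $V$ with large oscillation the estimate cannot be closed on $S_1^V$ directly. The natural remedy is to first run Schauder--Tychonoff on $S_T^V$ for $T$ large, where the Lyapunov contraction in Proposition \ref{bounds} is exponentially small in $T$ and therefore wins, and then transfer the resulting eigen-relation down to $k=1$ by combining the semigroup structure of $S_t^V$ with a uniqueness argument for the normalized eigen-pair. A secondary subtlety is that the point-wise closure $\overline{C_p(\To,\P_{\eta,A}(H))}$ can contain families with merely Borel-measurable $h$-dependence, so the continuity of $S_1^V$ must be verified fiberwise at such limits; this is safe because the $h$-dependence enters $S_1^V$ only via composition with a single circle rotation, but the verification must be carried out carefully.
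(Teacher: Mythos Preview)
Your overall strategy matches the paper's, but there is a genuine gap in the compactness step. You assert that $\P_{\eta,A}(H)$ is tight and hence weakly compact, so that Tychonoff makes the product $\prod_{h\in\To}\P_{\eta,A}(H)$ compact. This is false: the uniform bound $\int e^{\eta\|w\|^2}\mu(dw)\le A$ only controls the mass outside closed balls of $H$, and closed balls in the infinite-dimensional Hilbert space $H$ are not compact. Thus $\P_{\eta,A}(H)$ is not tight, the product is not compact, and Schauder--Tychonoff cannot be invoked on that basis. The paper fixes this by proving that the \emph{image} $S_1^V(\overline{C_p(\To,\P_{\eta,A}(H))})$ is precompact: one step of the dynamics smooths into $H_2$, and the bound $\int_H\|w\|_2^2\,S_1^V(\g)(h)(dw)\le C$ coming from \eqref{contraction-polynomial} gives fiberwise tightness via the compact embedding $H_2\hookrightarrow H$. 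Schauder--Tychonoff is then applied in the form ``closed convex set, continuous map with precompact image''. Note that this same $H_2$-moment for the image is exactly what you later use for the concentration property, so the ingredient is already in your outline; it just needs to be placed at the fixed-point step as well.

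On the self-map obstacle you flag: the paper avoids the competition between $e^{M_V-m_V}$ and the Lyapunov contraction without passing to $S_T^V$ for large $T$. The point is that \eqref{eq: est1} gives the multiplicative-in-the-exponent bound $\E e^{\eta\|\Phi_{0,1,h}(w)\|^2}\le Ce^{\eta e^{-\nu}\|w\|^2}$, not an additive one. By Jensen, $\int e^{\eta e^{-\nu}\|w\|^2}\g(dw)\le\bigl(\int e^{\eta\|w\|^2}\g(dw)\bigr)^{e^{-\nu}}\le A^{e^{-\nu}}$, so the self-map condition becomes $Ce^{M_V-m_V}A^{e^{-\nu}}\le A$, i.e.\ $A^{1-e^{-\nu}}\ge Ce^{M_V-m_V}$, which is solvable for large $A$ regardless of the oscillation of $V$. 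Your $S_T^V$ workaround would also succeed, but is unnecessary. The remaining parts of your proposal (eigenmeasure and cocycle by iteration of the $k=1$ relation, support via irreducibility combined with a tail estimate, concentration via $H_2$-smoothing and Chebyshev) are essentially the paper's arguments.
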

\begin{proof}
The proof is divided into the following four steps. 

\vskip0.05in

{\it Step 1: Show that  $S_1^V$ maps  $C(\To, \mathcal{P}_{\eta, A}(H))$ into itself for appropriately chosen $A$}. Let $\g\in C(\To, \mathcal{P}_{\eta, A}(H))$ and recall from the notation that $m_V, M_V$ the infimum and supremum of $V$.  We first observe that 
\begin{align}\label{two-sided-bounds}
\begin{split}
0<e^{ m_V}\leq &\big(P_{0, 1, \b_{-1}h}^{V*}\g(\b_{-1}h)\big)(H) \leq e^{M_V}, \forall h\in \To. 
\end{split}
\end{align}
By estimate \eqref{eq: est1} and Jensen's inequality, 
\begin{align*}
\int_{H} e^{\eta\|w\|^2}\left(S_1^V(\g)(h)\right)(dw) 
	& =\frac{ \int_{H} \E\left[\exp\left(\int_0^1V\big(\H_r(w, \b_{-1}h)\big)dr+ \eta\|\Phi_{0,1,\b_{-1}h}(w)\|^2\right)\right]\g(\b_{-1}h)(dw)}{\big(P_{0, 1, \b_{-1}h}^{V*}\g(\b_{-1}h)\big)(H)}\\
	&\leq e^{M_V-m_V}\int_{H} \E e^{\eta\|\Phi_{0,1,\b_{-1}h}(w)\|^2}\g(\b_{-1}h)(dw)\\
	&\leq C e^{M_V-m_V}\int_{H} e^{\eta e^{-\nu }\|w\|^2}\g(\b_{-1}h)(dw)\\
	&\leq C e^{M_V-m_V}\left(\int_{H} e^{\eta\|w\|^2}\g(\b_{-1}h)(dw)\right)^{e^{-\nu }}\leq C A^{e^{-\nu }} e^{(M_V-m_V)},
\end{align*}
therefore if we choose $A\geq \exp\left(\frac{\ln C + M_V-m_V}{1-e^{-\nu }}\right)$ then $S_1^V(\g)(h)\in  \mathcal{P}_{\eta, A}(H)$ for every $h\in\To$. 

\vskip0.05in
 
It remains to show the continuity, that is $S_1^V(\g)(h_n)\to S_1^V(\g)(h)$ weakly if $h_n\to h$. Let $\phi$ be any bounded Lipschitz function on $H$. Since $V\in C_b^{1,0}(H\times\To)$, and $|\b_rh_n - \b_rh|=|h_n-h|$, one has 
\begin{align}\label{Lipschitz-modulo-Lyapunov}
\begin{split}
&\left|P_{0, 1, h_n}^V\phi(w) -P_{0, 1, h}^V\phi(w) \right|\\
	&\begin{aligned}\leq 
		&\E\left|e^{\int_0^1 V(\H_{r}(w, h_n))dr}\left(\phi(\Phi_{0, 1, h_n}(w))- \phi(\Phi_{0, 1, h}(w))\right)\right|\\
      		&\qquad+ \E\left|\left(e^{\int_0^1V(\H_{r}(w, h_n))dr}-e^{\int_0^1V(\H_{r}(w, h))dr}\right)\phi(\Phi_{0,1, h}(w))\right| 		     
	\end{aligned}\\
	&\leq C(V, \phi)\E\left(\|\Phi_{0, 1, h_n}(w)-\Phi_{0, 1, h}(w)\|+ \int_0^1\left|V(\H_{r}(w, h_n))-V(\H_{r}(w, h))\right|dr\right)\\
	&\leq C(V, \phi)\left(\E\|\Phi_{0, r, h_n}(w)-\Phi_{0, r, h}(w)\|+\k(|h_n-h|)\right)\leq C(V, \phi)e^{\eta\|w\|^2}(|h_n-h|+\k(|h_n-h|)),
\end{split}
\end{align}
for some modulus of continuity $\k$, where we used estimate \eqref{continuousonhull} and Lipschitz continuity of $f$ in the last step. Hence 
\begin{align}\label{eigen-measure-continuity-1}
\begin{split}
	\left|\Big\langle \g(h_n), P_{0, 1, h_n}^V\phi -P_{0, 1, h}^V\phi\Big\rangle\right| 
	&\leq C\int_H e^{\eta\|w\|^2}\g(h_n)(dw)(|h_n-h|+\k(|h_n-h|))\\
	&\leq C(|h_n-h|+\k(|h_n-h|)). 
\end{split}
\end{align}
Since $ P_{0, 1, h}^V$ is Feller, $ P_{0, 1, h}^V\phi \in C_b(H)$. By continuity of $\g$, we have  
\begin{align}\label{eigen-measure-continuity-2}
\left|\Big\langle \g(h_n) - \g(h), P_{0, 1, h}^V\phi\Big\rangle\right| \to 0, \quad n\to\infty. 
\end{align}
Therefore by \eqref{eigen-measure-continuity-1} and \eqref{eigen-measure-continuity-2}, we have 
\[\left|\Big\langle \g(h_n), P_{0, 1, h_n}^V\phi\Big\rangle -\Big\langle \g(h),  P_{0, 1, h}^V\phi\Big\rangle\right| \to 0, \quad n\to\infty, \]
which means $P_{0, 1, h_n}^{V*} \g(h_n)$ converges weakly to $ P_{0, 1, h}^V\g(h)$ since $\phi$ was chosen arbitrarily.  Hence 
\[h\to  P_{0, 1, h}^V\g(h)\]
is continuous. As a result, $S_1^V(\g)(h)$ is continuous in $h\in\To$. So $S_1^V$  maps  $C(\To, \mathcal{P}_{\eta, A}(H))$ into itself as desired. 

\vskip0.05in

{\it Step 2: Prove that $S_1^V$  maps $\overline{C_p(\To, \mathcal{P}_{\eta, A}(H))}$  into itself}. We claim that $S_1^V$  is a continuous map from $\mathcal{P}_{\eta, A}(H)^{\To}$ to itself with respect to the topology of point-wise convergence. Then it will follow that $S_1^V$  maps $\overline{C_p(\To, \mathcal{P}_{\eta, A}(H))}$  into itself.  Foy any $h\in\To$, let $\Pi_{h}$ be the canonical projection of $\mathcal{P}_{\eta, A}(H)^{\To}$  to the $h$ component, which is $\mathcal{P}_{\eta, A}(H)$. 

Note that $P_{0, t, \b_{-1}h}^{V*}$ is a continuous map from $\mathcal{P}_{\eta, A}(H)$ to itself, and 
\[F_1: \quad \g\to P_{0, 1, \b_{-1}h}^{V*}\g(\b_{-1}h) = P_{0, 1, \b_{-1}h_0}^{V*}\Pi_{\b_{-1}h}\g\]
is the composition of $P_{0, 1, \b_{-1}h}^{V*}$ with the projection $\Pi_{\b_{-1}h}$. Hence it is a continuous map from 
$\mathcal{P}_{\eta, A}(H)^{\To}$  to $\mathcal{P}_{\eta, A}(H)$.  Similarly, we see that 
\[F_2: \quad\g\to \frac{1}{P_{0, 1, \b_{-1}h}^{V*}\g(\b_{-1}h) (H)}\]
is a continuous map from $\mathcal{P}_{\eta, A}(H)^{\To}$  to $\R$ since $P_{0, 1, \b_{-1}h}^{V*}\g(\b_{-1}h) (H) $ is bounded away from zero by \eqref{two-sided-bounds}. Observe that the product map 
\[(a, \mu) \to a\mu\]
is continuous, hence $\Pi_{h}S_1^V$ as the composition of the product map with  $(F_2, F_1)$ is continuous. Since $h$ is arbitrary, we have that $S_1^V$ from $\mathcal{P}_{\eta, A}(H)^{\To}$ to itself is continuous.

\vskip0.1in

{\it Step 3: Existence of a fixed point}. We show that the image of $S_1^V$  is precompact and conclude the existence of a fixed point by Schauder-Tychonoff's fixed point theorem. In view of the well-known Tychonoff's compactness theorem, to show precompactness, we only need to show that $\left\{(S_1^V\g)(h): \g\in \overline{C_p(\To, \mathcal{P}_{\eta, A}(H))}\right\}$ is tight for every $h\in\To$. From estimate \eqref{contraction-polynomial}, one has 
\begin{align*}
&\int_{H}\|w\|_2^2\left(S_1^V\g(h)\right)(dw)\\
	& =\frac{1}{P_{0, 1, \b_{-1}h}^{V*}\g(\b_{-1}h)(H)} \int_H\E\left[\exp\left(\int_0^1V(\H_r(w, \b_{-1}h))dr\right)\|\Phi_{0, 1, 		\b_{-1}h}(w)\|_2^2\right]\g(\b_{-1}h)(dw)\\
	&\leq e^{M_V-m_V}\int_{H}Ce^{\eta\|w\|^2}\g(\b_{-1}h)(dw)\leq C. 
\end{align*}
 Hence 
\begin{align}\label{fixed-point-tail-estimate}
	 S_1^V\g(h)\{\|w\|_2>R\}\leq R^{-2}\int_{H}\|w\|_2^2\left(S_1^V\g(h)\right)(dw)\leq R^{-2}C. 
\end{align}
Since $\{\|w\|_2\leq R\}$ is compactly embedded in $H$,   we know that $\left\{(S_1^V\g)(h): \g\in \overline{C_p(\To, \mathcal{P}_{\eta, A}(H))}\right\}$  is tight. 

\vskip0.05in

Now observe that $\overline{C_p(\To, \mathcal{P}_{\eta, A}(H))}$ is a nonempty closed convex subset of the  locally convex Hausdorff topological vector space of the maps from $\To$ to the space of finite signed measures on $H$, and $ S_1^V$ is a continuous mapping of $\overline{C_p(\To, \mathcal{P}_{\eta, A}(H))}$ into a compact subset. Hence the Schauder-Tychonoff's theorem (See Appendix of \cite{BV62} or Theorem 10.1 in \cite{Pat19}) implies the existence of a fixed point $\G^V$ of $ S_1^V$ in $\overline{C_p(\To, \mathcal{P}_{\eta, A}(H))}$. 

\vskip0.05in

{\it Step 4: Prove that the fixed point has the desired properties in items (1)-(3)}. The eigenmeasure property and cocycle property directly follow from the definition of $S_k^V$.  Indeed, Since  $S_k^V$ has the semigroup property, $\G^V$ is also a fixed point of $S_m^V$, $m\in  \mathbb{N}$, i.e., 
\begin{align*}
 P_{0, m, \b_{-m}h}^{V*}\G^V(\b_{-m}h) = P_{0, m, \b_{-m}h}^{V*}\G^V(\b_{-m}h)(H)\G^V(h), \quad\forall h\in\To.
\end{align*}
Replacing $h$ by $\b_mh$, one obtains 
\begin{align*}
 P_{0, m, h}^{V*}\G^V(h) = P_{0, m, h}^{V*}\G^V(h)(H)\G^V(\b_mh), \quad\forall h\in\To.
\end{align*}
Combining this with the cocycle property of the inhomogeneous Feynman-Kac evolution operators, we have 
\begin{align*}
 P_{0, m, h}^{V*}\G^V(h) &=  P_{0, m-1, \b_1h}^{V*} P_{0, 1, h}^{V*}\G^V(h) = P_{0, m-2, \b_2h}^{V*} P_{0, 1, \b_1h}^{V*}P_{0,1, h}^{V*}\G^V(h)(H) \G^V(\b_1h)\\
 & = P_{0, m-3, \b_3h}^{V*} P_{0, 1, \b_2h}^{V*}\G^V(\b_2h) P_{0,1, \b_1h}^{V*}\G^V(\b_1h)(H)  P_{0,1, h}^{V*}\G^V(h)(H)\\
 &\cdots\\
 & = \prod_{k=0}^{m-1}P_{0,1, \b_kh}^{V*}\G^V(\b_kh)(H) \G^V(\b_mh).
\end{align*}
The desired cocycle property then follows since $ \G^V(\b_mh)(H) = 1$. 

\vskip0.05in 

The concentration property follows from \eqref{fixed-point-tail-estimate}. To show \eqref{support-property}, we use the uniform irreducibility from Proposition \ref{Uniform-Irreducibility}. From the concentration inequality \eqref{concentration-property}, we can choose an $R$ large such that 
\[\inf_{h\in \To}\G^V(h)(B_R^H)\geq 1/2. \]
 Choose $m$ large such that $m>2\ln R$. Then it follows from \eqref{two-sided-bounds} and Proposition \ref{Uniform-Irreducibility} that 
\begin{align*}
\G^V(h)(B_{r}^{H, H_2}(w)) &= \int_{H}\id_{B_{r}^{H, H_2}(w)}(u)(S_{m}^V\G^V)(h) (dw)\\
& =  \int_{H}\id_{B_{r}^{H, H_2}(w)}(u) \frac{P_{0, m, \b_{-m}h}^{V*}\G^V(\b_{-m}h)(du)}{\big(P_{0, m, \b_{-m}h}^{V*}\G^V(\b_{-m}h)\big)(H)}\\
&\geq  e^{-mM_V} \int_{H} P_{0, m, \b_{-m}h}^{V}(u, B_{r}^{H, H_2}(w))\G^V(\b_{-m}h)(du)\\
&\geq  e^{-mM_V} \int_{B_{R}^H} P_{0, m, \b_{-m}h}^{V}(u, B_{r}^{H, H_2}(w))\G^V(\b_{-m}h)(du)\geq \frac{p}{2}e^{-mM_V}, 
\end{align*}
where $p = p(r,\r)$ is from Proposition \ref{Uniform-Irreducibility}.  Hence \eqref{support-property} is proved. The proof of \eqref{support-property-1} is similar by combining \eqref{irreducible-large-H2ball}.  In particular we have shown that $\G^V(h)$ has a  positive measure for any nondegenerate ball in $H$, so its support is $H$.  The proof is complete. 
\end{proof}

\vskip0.05in 

We now prove the existence of an eigenfunction corresponding to the eigenvalue and eigenmeasure from the previous proposition. The scheme of the proof by using a Krylov–Bogolyubov averaging procedure is a time inhomogeneous version of the arguments developed in \cite{JNPS18}. 
\begin{proposition}[Existence of Eigenfunction]\label{Existence-of-Eigenfunction}
Assume $A_{\infty} = H $ and $V\in C_b^{1,0}(H\times\To)$. Then the family of functions 
\begin{align*}
	F_{k}(w, h): = \frac{1}{k}\sum_{\ell=0}^{k-1}\frac{P_{0,\ell, h}^V\id_H(w)}{\l_{\ell, h, V}}, \quad (k, h)\in \mathbb{N}\times\To
\end{align*}
is uniformly equicontinuous on $(B_R^{H_2}, \dh)$ for any $R>0$. So there is a subsequence converges to a function $F^V\in L_{\mathfrak{m}_{q}}^{\infty}(H\times\To)$ for $q=q(V)$ as in Proposition \ref{Growth-Condition}. Moreover,  $F^V$ satisfies the following 
\begin{enumerate}
\item Strict positivity: for every $(w, h)\in H\times\To$, 
	\begin{align}\label{strict-positive}
		F^V(w, h)>0.
	\end{align}
\item Eigenfunction property:
	\begin{align}\label{eigenfunction_property}
		P_{0, m, h}^VF^V(\cdot, \b_mh)(w) =\l_{m, h, V}F^V(w, h), \quad (w, h)\in H\times\To, m\in \mathbb{N}.
	\end{align}
\item Normalization property: 
	\begin{align}\label{Normalization-property}
		\langle F^V(\cdot, h), \G^V(h)\rangle = 1, \quad h\in\To. 
	\end{align}
\item Equicontinuity: for any $R>0$ the restriction of the family $\lp F^V(\cdot, h)\rp_{h\in\To}$ to $(B_R^{H_2},\dh)$ is equicontinuous. 
\end{enumerate}
\end{proposition}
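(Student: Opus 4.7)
The plan is a time-inhomogeneous Krylov--Bogolyubov averaging, adapting the homogeneous scheme of \cite{JNPS18} to the cocycle setting at hand. The argument breaks into four tasks: (i) a Lyapunov bound and $h$-uniform equicontinuity for the averages $F_k$ on $(B_R^{H_2}, \dh)$; (ii) extraction of a limit $F^V$ via Arzel\`a--Ascoli and a diagonal procedure; (iii) the eigenfunction and normalization identities via a Ces\`aro boundary estimate; (iv) strict positivity via irreducibility and the eigenfunction identity.

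For the Lyapunov control, an appropriate growth bound (Proposition \ref{Growth-Condition}) combined with the lower bound $\l_{\ell, h, V}\geq e^{\ell m_V}$ coming from \eqref{two-sided-bounds} and the cocycle identity \eqref{eigenvalue-cocycle} yields some $q=q(V)\geq 1$ with $P_{0, \ell, h}^V\id_H(w)/\l_{\ell, h, V}\leq C\mkm_q(w)$ uniformly in $(\ell, h)$, hence $F_k\leq C\mkm_q$. For equicontinuity, the main input is the smoothing of the Navier--Stokes flow: for each fixed $\ell\geq 1$ the map $w\mapsto \Phi_{0, \ell, h}(w)$ is Fr\'echet differentiable in $H$ with derivative controlled in terms of $\|w\|_2$, and together with the $C^1$-regularity of $V$ this gives that $w\mapsto P_{0, \ell, h}^V\id_H(w)$ is locally Lipschitz on $H_2$-bounded sets with a modulus that is uniform in $h\in\To$; dividing by $\l_{\ell, h, V}$ and Ces\`aro averaging transfers this to a modulus of continuity for $F_k$ that is uniform in both $k$ and $h$. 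Since $B_R^{H_2}$ is compact in $(H,\dh)$ by Rellich, Arzel\`a--Ascoli and a diagonal procedure over $R\in\mathbb{N}$ extract a subsequence $F_{k_n}$ converging uniformly on every $(B_R^{H_2}, \dh)\times\To$ to some $F^V$; the restriction is extended to all of $H\times\To$ by declaring
\[F^V(w, h):=\l_{1, h, V}^{-1}\,P_{0, 1, h}^V F^V(\cdot, \b_1 h)(w),\qquad w\in H\setminus H_2,\]
which is well defined since $\Phi_{0, 1, h}(w)\in H_2$ almost surely and preserves the $L_{\mkm_q}^\infty$-bound.

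For the eigenfunction identity, the cocycle property \eqref{eigenvalue-cocycle} gives
\[P_{0, m, h}^V F_k(\cdot,\b_m h)(w)=\l_{m, h, V}\cdot\frac{1}{k}\sum_{\ell=m}^{k+m-1}\frac{P_{0, \ell, h}^V\id_H(w)}{\l_{\ell, h, V}},\]
which differs from $\l_{m, h, V}F_k(w, h)$ by a Ces\`aro boundary term of $2m$ summands each bounded by $C\mkm_q(w)$; dividing by $k$ and passing to the limit along $k_n$ yields \eqref{eigenfunction_property}. For normalization, the eigenmeasure property of Proposition \ref{Existence-of-Eigenmeasure} gives $\la P_{0, \ell, h}^V\id_H,\G^V(h)\ra=\l_{\ell, h, V}$, so $\la F_k(\cdot, h),\G^V(h)\ra=1$ for every $k$; uniform integrability from the $\mkm_q$-domination and the exponential moment of $\G^V(h)$ (available from Step 1 of the proof of Proposition \ref{Existence-of-Eigenmeasure} together with \eqref{concentration-property}) lets us pass to the limit and obtain \eqref{Normalization-property}. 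For strict positivity, since $F^V\geq 0$ and $\la F^V(\cdot, h),\G^V(h)\ra=1>0$, the continuity established in task (i) together with the support property \eqref{support-property} places $F^V\geq\d>0$ on some ball $B_r^{H, H_2}(w_0)$, and the uniform irreducibility of Proposition \ref{Uniform-Irreducibility} forces $P_{0, m, h}^V\id_{B_r^{H, H_2}(w_0)}(w)>0$ for $m$ sufficiently large at every $w\in H$, so \eqref{eigenfunction_property} propagates positivity to every $(w, h)$. The equicontinuity of $\{F^V(\cdot, h)\}_{h\in\To}$ on $(B_R^{H_2}, \dh)$ is then inherited directly from the $h$-uniform modulus of $\{F_k\}$.

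The main obstacle I anticipate is the $h$-uniform equicontinuity of $F_k$ on $(B_R^{H_2}, \dh)$: the smoothing estimate for $\Phi_{0, \ell, h}(w)$ must be tracked carefully in $\ell$, so that after normalizing by $\l_{\ell, h, V}$ and Ces\`aro averaging the modulus of continuity remains uniform in the averaging length. The time inhomogeneity prevents the straightforward semigroup manipulations available in the homogeneous setting of \cite{JNPS18} and replaces them by a fully cocyclic bookkeeping through \eqref{eigenvalue-cocycle}, which is the structural ingredient that lets the entire scheme close.
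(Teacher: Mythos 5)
The proposal correctly identifies the Krylov--Bogolyubov averaging scheme and reproduces the paper's Steps 2 and 3 (Ces\`aro boundary estimate for the eigenfunction identity, extension to $H\setminus H_2$ via the one--step operator, normalization by dominated convergence, positivity via irreducibility). However, there is a genuine gap in task (i), and it is not cosmetic: it concerns precisely the obstacle you flag at the end without resolving.

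You claim that the Lyapunov bound
\[
\frac{P_{0, \ell, h}^V\id_H(w)}{\l_{\ell, h, V}}\leq C\,\mkm_q(w)\quad\text{uniformly in }(\ell, h)
\]
follows from Proposition \ref{Growth-Condition} together with $\l_{\ell, h, V}\geq e^{\ell m_V}$. That inference does not work. Proposition \ref{Growth-Condition} gives $P_{0, \ell, h}^V\id_H(w)\leq C\,\|P_{0, \ell, h}^V\id_H\|_{R_0}\,\mkm_q(w)$, and $\|P_{0, \ell, h}^V\id_H\|_{R_0}$ can grow like $e^{\ell M_V}$, so $\|P_{0, \ell, h}^V\id_H\|_{R_0}/\l_{\ell, h, V}$ is a priori of order $e^{\ell(M_V-m_V)}$, which is unbounded unless $V$ is constant. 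The missing ingredient is the nontrivial estimate
\[
\sup_{\ell, h}\frac{\|P_{0,\ell, h}^V\id_H\|_{R_0}}{\l_{\ell, h, V}}\leq C,
\]
which the paper derives by a qualitatively different route: one first shows (via the uniform Feller property of Proposition \ref{Uniform-Feller}) that the normalized family $P_{0,\ell, h}^V\id_H(\cdot)/\|P_{0,\ell, h}^V\id_H\|_{R_0}$ is uniformly equicontinuous on $(B_{R}^{H_2},\dh)$, so that the pointwise infimum $g=\inf_{\ell, h}$ of this family is a continuous nonnegative function of unit sup-norm on $B_{R_0}^{H_2}$, hence strictly positive on some $H_2$-ball; integrating against $\G^V(h)$ and invoking the support and concentration properties of the eigenmeasure (\eqref{support-property-1}, \eqref{concentration-property}) then yields the lower bound on $\l_{\ell, h, V}/\|P_{0,\ell, h}^V\id_H\|_{R_0}$ uniformly in $(\ell, h)$.

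The same missing ingredient undermines your equicontinuity argument. Fr\'echet differentiability of the flow with a derivative controlled by $\|w\|_2$ yields a Lipschitz constant for $w\mapsto P_{0,\ell,h}^V\id_H(w)$ that grows exponentially in $\ell$, and there is no reason for this growth to cancel exactly against $\l_{\ell,h,V}$ after division; deterministic smoothing alone does not give a modulus of continuity uniform in $\ell$. What is actually needed is the hypoelliptic gradient estimate in Proposition \ref{Uniform-Feller} (proved via Malliavin calculus), which controls $\nabla P_{0,\ell, h}^V\phi$ relative to $\|P_{0,\ell, h}^V\id_H\|_{R_0}$ uniformly in $\ell, h$; once that is in hand, equicontinuity of the normalized family is automatic, and combining it with the bound $\|P_{0,\ell, h}^V\id_H\|_{R_0}/\l_{\ell, h, V}\leq C$ above transfers the uniform modulus to $F_k$. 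In short, the scheme of tasks (ii)--(iv) is sound and matches the paper's Steps 1--3, but the foundational estimates in task (i) are not established by the means you propose; the uniform Feller property and the resulting bound \eqref{Bound-summand-Fk} are the crux of the proof and must be brought in explicitly.
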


\begin{proof}
The proof is divided into the following three steps. 

\vskip0.05in

{\it Step 1: Existence of $F^V\in L_{\mathfrak{m}_{q}}^{\infty}(H_2\times\To)$ satisfying items (3) and (4)}. By uniform Feller property from Proposition \ref{Uniform-Feller}, we have that 
\begin{align*}
	\frac{P_{0,\ell, h}^V\id_H(w)}{\|P_{0,\ell, h}^V\id_H\|_{R_0}} 
	= \frac{\quad \frac{P_{0,\ell, h}^V\id_H(w)}{\l_{\ell, h, V}}\quad }{\quad  \frac{\|P_{0,\ell, h}^V\id_H\|_{R_0}}{\l_{\ell, h, V}}\quad }
\end{align*}
is uniformly equicontinuous on $(B_R^{H_2},\dh)$ for any $R>0$. If one can show the existence of a constant $C=C(R_0)$ such that 
\begin{align}\label{Bound-summand-Fk}
	\sup_{\ell, h}\frac{\|P_{0,\ell, h}^V\id_H\|_{R_0}}{\l_{\ell, h, V}}\leq C,
\end{align}
then $\{F_k\}$ will be uniformly equicontinuous.  

\vskip0.05in

Let $g(w) = \ds\inf_{\ell, h} \frac{P_{0,\ell, h}^V\id_H(w)}{\|P_{0,\ell, h}^V\id_H\|_{R_0}}, w\in H_2$. Since the infimum of a uniform equicontinuous and bounded family is still continuous, $g\in C(B_{R}^{H_2},\dh)$ for any $R>0$. Note that $g$ is non-negative and each $\frac{P_{0,\ell, h}^V\id_H}{\|P_{0,\ell, h}^V\id_H\|_{R_0}}$ is on the unit sphere in $C(B_{R_0}^{H_2},\dh)$, hence there must be a point $w_{0}\in B_{R_0}^{H_2}$ and a constant $a>0$ such that $g(w_0)> 2a$. Then for each $R\geq R_0$,
by the fact that $g\in C(B_{R}^{H_2},\dh)$, there is $r>0$ such that $g(w)>a$ for any $w\in B_{r}^{H, B_{R}^{H_2}}(w_0)$.  By choosing $R$ large, the support property \eqref{support-property-1} implies the existence of a constant $c>0$ such that 
\[\inf_{h}\G^V(h)(B_{r}^{H, B_{R}^{H_2}}(w_0))\geq c. \]
Since $\G^V(h)$ is concentrated on $H_2$ for every $h\in\To$, one has 
\begin{align*}
\|P_{0,\ell, h}^V\id_H\|_{R_0}^{-1}\l_{\ell, h, V}
	&=\|P_{0,\ell, h}^V\id_H\|_{R_0}^{-1}\int_{H}P_{0,\ell, h}^V\id_H(w)\G^V(h)(dw)\\
	&\geq \int_{H_2}g(w)\G^V(h)(dw)\geq \int_{B_{r}^{H, B_{R}^{H_2}}(w_0)}g(w)\G^V(h)(dw)\\
	&\geq ca.
\end{align*}
This verifies the uniform bound \eqref{Bound-summand-Fk}. Hence $\lp F_k(\cdot, h)\rp$ is uniformly equicontinuous on $(B_R^{H_2},\dh)$ for any $R>0$. It is also uniformly bounded on any ball $(B_R^{H_2},\dh)$ by the growth condition from Proposition \ref{Growth-Condition}. 

\vskip0.05in

By the Arzelà–Ascoli theorem, there is a subsequence $\{F_{k_j}(\cdot, h)\}$ and a function $F^V(\cdot, h):H_2\to\mathbb{R}_+$ that is continuous when restricted on $(B_R^{H_2},\dh)$, such that 
\begin{align}\label{eigenfun-convergence}
	\lim_{j\to\infty}\sup_{w\in B_R^{H_2}}|F_{k_j}(w, h) - F^V(w, h) | = 0, \quad \forall R>0. 
\end{align}
The growth condition from Proposition \ref{Growth-Condition} implies that 
\begin{align*}
	\frac{\qquad \frac{\|P_{0,\ell, h}^V\id_H\|_{L_{\mathfrak{m}_{q}}^{\infty}}}{\l_{\ell, h, V}}\qquad }{\frac{\|P_{0,\ell, h}^V\id_H\|_{R_0}}{\l_{\ell, h, V}}}\leq \frac{\|P_{0,\ell, h}^V\mathfrak{m}_{q}\|_{L_{\mathfrak{m}_{q}}^{\infty}}}{\|P_{0,\ell, h}^V\id_H\|_{R_0}}\leq M_q. 
\end{align*}
Together with boundedness \eqref{Bound-summand-Fk}, this shows that 
\begin{align}\label{bound-summand-Fk1}
	\frac{P_{0,\ell, h}^V\id_H(w)}{\l_{\ell, h, V}} \leq C_{R_0}M_q\mkm_q (w), \quad w\in H, 
\end{align}
where the right-hand side is $\G^{V}(h)$ integrable. Hence Lebesgue's dominated convergence theorem and eigenmeasure property from Proposition \ref{Existence-of-Eigenmeasure} yield
\begin{align*}
	\langle F^V(\cdot, h), \G^V(h)\rangle = \lim_{j\to\infty}\langle F_{k_j}(\cdot, h), \G^V(h)\rangle = 1, \quad \forall h\in\To,
\end{align*}
which verifies the normalization property \eqref{Normalization-property}. It also follows from \eqref{eigenfun-convergence} and \eqref{bound-summand-Fk1} that $F^V\in L_{\mathfrak{m}_{q}}^{\infty}(H_2\times\To)$. Since for each $R>0$, the family $\{F_k(\cdot, h)\}_{h\in\To}$ is uniformly equicontinuous on the ball $(B_R^{H_2}, \dh)$, one deduces item (4) about equicontinuity of $\{F^V(\cdot, h)\}_{h\in\To}$. 

\vskip0.05in

{\it Step 2: Proof of $F^V\in L_{\mathfrak{m}_{q}}^{\infty}(H\times\To)$ and item (2) on the eigenfunction property}. We first show that the eigenfunction property is true for $w\in H_2$. By cocycle property \eqref{eigenvalue-cocycle}, 
\begin{align*}
	(\P_1F_{k_j})(w, h) 
	&= \frac{1}{k_j}\sum_{\ell=0}^{k_j-1}P_{0,1,h}^V\left(\frac{P_{0, \ell, \b_1h}^V\id_{H}(\cdot)}{\l_{\ell, \b_1h, V}}\right)(w)\\
	& = \frac{\l_{1, h, V}}{k_j}\sum_{\ell=0}^{k_j-1}\frac{P_{0, \ell+1, h}^V\id_{H}(w)}{\l_{1, h, V}\l_{\ell, \b_1h, V}}\\
	& =\frac{\l_{1, \b_1h, V}}{k_j}\sum_{\ell=0}^{k_j-1}\frac{P_{0, \ell, h}^V\id_{H}(w)}{\l_{\ell, h, V}} -\frac{\l_{1, \b_1h, V}}{k_j}\left(1+\frac{P_{0, k_j, h}^V\id_{H}(w)}{\l_{k_j, h, V}}\right)
\end{align*}
It follows from the construction of $F^V(w, h)$  and the boundedness from \eqref{bound-summand-Fk1} that the right-hand side converges to $\l_{1, \b_1h, V}F^V(w, h)$ as $j\to\infty$. The left-hand side converges to 
\[\P_1F^V(w, h) = P_{0,1,h}^VF^V(\cdot, \b_1h)(w)\]
by dominated convergence theorem and the fact that $w_{0,1,h}\in H_2$. This proves the eigenfunction property for $m=1$ and $(w, h)\in H_2\times\To$. The general case for $m\geq 2$ and $(w, h)\in H_2\times\To$ follows from cocycle property \eqref{eigenvalue-cocycle}.  By the regularizing property of the Navier-Stokes solution process, for any $(w,h)\in H\times\To$, the function $\frac{P_{0,1, h}^VF^V(\cdot, \b_1h)(w)}{\l_{1, h, V}}$ is well defined and satisfies (similar to the proof of \eqref{bound-summand-Fk1})
\begin{align*}
	\frac{P_{0,1, h}^VF^V(\cdot, \b_1h)(w)}{\l_{1, h, V}}\leq C(F^V, R_0)M_q\mkm_q(w), \quad w\in H. 
\end{align*}
 Define $F^V(w, h) = \frac{P_{0,1, h}^VF^V(\cdot, \b_1h)(w)}{\l_{1, h, V}}$ for those $(w, h)\in H\times\To\backslash H_2\times\To$. Then $F^V\in L_{\mathfrak{m}_{q}}^{\infty}(H\times\To)$ is a nonnegative function satisfying the eigenfunction property for any $(w, h)\in H\times\To$.  Since $\G^V(h)$ is concentrated on $H_2$, the definition on $H\setminus H_2$ does not affect the normalization identity \eqref{Normalization-property}. 
 
 \vskip0.05in
 
{\it Step 3: Proof of item (1) on the strict positivity}. Let $R\geq 1$, and $(w_0, h_0)\in B_R^H\times\To$. Since $\la F^V(\cdot, h), \G^V(h)\ra =1$, and $\G^V(h)$ is concentrated on $H_2$, for every $h\in\To$ there is a point $w_h\in H_2$ and a number $c = c(w_h)>0$ such that $F^V(w_h, h)\geq c$. To apply the irreducibility from Proposition \ref{Uniform-Irreducibility}, we choose an $R>0$ such that $w_0\in B_R^H$, and integer $\ell>2\ln R$. Then we choose $L>L_0(\ell, R)$ as in \eqref{irreducible-large-H2ball} large such that $w_{\b_{\ell}h_0}\in B_{L}^{H_2}$. Since  $F^V(\cdot, \b_{\ell}h_0)$ is continuous on $(B_{L}^{H_2}, \dh)$,  there is a radius $r>0$ such that 
\[F^V(w, \b_{\ell}h_0)\geq c/2 \text{ for any } w\in B_r^{H, B_L^{H_2}}(w_{\b_{\ell}h_0}).\]
By the eigenfunciton property \eqref{eigenfunction_property} and boundedness of $V$, we have, 
\begin{align*}
	F^V(w_0, h_0)
	 = \frac{P^V_{0, \ell, h_0}F^V(w_0, \b_{\ell}h_0)}{\l_{\ell, h_0, V}}
	&\geq e^{-\ell M_V}\int_{B_r^{H, B_L^{H_2}}(w_{\b_{\ell}h_0})}F^V(w, \b_{\ell}h_0)P^V_{0, \ell, h_0}(w_0, du)\\
	&\geq ce^{-\ell M_V}P^V_{0, \ell, h_0}(w_0, B_r^{H, B_L^{H_2}}(w_{\b_{\ell}h_0}))>0
\end{align*}
by \eqref{irreducible-large-H2ball}. 

 \vskip0.05in
 
The proof of Proposition \ref{Existence-of-Eigenfunction} is complete. 
\end{proof}

\subsection{Forward and pullback stability for discrete time}\label{subsection-Mixing, uniqueness and regularity}

This subsection is devoted to the proof of the forward stability in the following Theorem \ref{fd-convergence} for the eigen-triple of the discrete time Feynman-Kac evolution operators $P_{0, k, h}^V$. The pullback stability is stated in Theorem \ref{prop-convergence-discrete-time}  at the end of the section where the proof is omitted since it is similar to the proof of forward stability by combining the eigenfunction/eigenmeasure property in the pullback sense. 

\begin{theorem}[Forward stability the eigen-triple]\label{fd-convergence}
Let $\eta\in (0, \eta_0)$ and $V\in C_b^{1,0}(H\times\To)$. For any $\phi\in C_{\mkm_{\eta}}(H\times\To)$, $M, R>0, h\in\To$, and $\eta^{\prime}\in (\eta, \eta_0)$ we have
\begin{align}\label{fd-convergence-BH}
	&\lim_{k\to\infty}\left| \l_{k, h, V}^{-1}P_{0, k, h}^{V}\phi(\cdot, \b_kh) - \left\langle \phi(\cdot, \b_kh), \G^V(\b_kh)\right\rangle F^V(\cdot, h)\right|_{C_b(B_{R}^H)} = 0,\\\label{fd-convergence-Gh}
	&\lim_{k\to\infty}\left| \l_{k, h, V}^{-1}P_{0, k, h}^{V}\phi(\cdot, \b_kh) - \left\langle \phi(\cdot, \b_kh), \G^V(\b_kh)\right\rangle F^V(\cdot, h)\right|_{\G^V(h)} = 0,\,  \\\label{fd-covergence-eigenmeasure}
	&\lim_{k\to\infty}\sup_{\mu\in\P_{\eta^{\prime}, M}(H)}\left|\la  \l_{k, h, V}^{-1}P_{0, k, h}^{V*}\mu , \varphi(\cdot, \b_kh)\ra - \langle F^V(\cdot, h), \mu\rangle \la\G^V(\b_kh), \varphi(\cdot, \b_kh)\ra\right| = 0, 
\end{align}
\end{theorem}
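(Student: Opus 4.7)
I would attack the three convergences in three stages. First prove the $L^1(\G^V(h))$ statement \eqref{fd-convergence-Gh} by a time-inhomogeneous adaptation of Ruelle's lower bound argument; then upgrade to the sup-norm version \eqref{fd-convergence-BH} using the uniform Feller property to turn $L^1$-smallness into pointwise smallness on bounded balls; and finally push to the uniform version \eqref{fd-covergence-eigenmeasure} over $\P_{\eta',M}(H)$ by tail truncation. Throughout I would first work with bounded $\phi$ and remove that by a cutoff at $B_N^H$ together with the uniform weighted estimate $R_k^h\mkm_\eta\le C\mkm_\eta$ coming from the growth condition (Proposition~\ref{Growth-Condition}); here $R_k^h\psi:=\l_{k,h,V}^{-1}P^V_{0,k,h}\psi$ denotes the normalized operator, which by the eigenfunction property \eqref{eigenfunction_property} satisfies $R_k^hF^V(\cdot,\b_kh)=F^V(\cdot,h)$ and by the eigenmeasure property satisfies $\la R_k^h\psi,\G^V(h)\ra=\la\psi,\G^V(\b_kh)\ra$.

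\textbf{Adapted Ruelle lower bound.} For bounded $\phi$ I set
\[
\psi_k(w):=\phi(w,\b_kh)-\la\phi(\cdot,\b_kh),\G^V(\b_kh)\ra\,F^V(w,\b_kh),\qquad g_k:=R_k^h\psi_k,
\]
so that $\la\psi_k,\G^V(\b_kh)\ra=0$ and the left-hand side of \eqref{fd-convergence-Gh} equals $|g_k|_{\G^V(h)}$. Splitting $\psi_k=\psi_k^+-\psi_k^-$, I would combine the uniform irreducibility (Proposition~\ref{Uniform-Irreducibility}) with the support property \eqref{support-property-1} and the strict positivity \eqref{strict-positive} of $F^V$ on bounded $H_2$-sets to produce, for some integer $n=n(R_0,V)$ and $\alpha\in(0,1)$ both independent of $h,k$, a pointwise lower bound of the form
\[
R_n^{\b_{k-n}h}\psi_k^\pm(w)\ge \alpha\,\la\psi_k^\pm,\G^V(\b_kh)\ra\,F^V(w,\b_{k-n}h)\qquad\text{for all } w\in B_{R_0}^H.
\]
Feeding this into the cocycle $R_k^h=R_{k-n}^hR_n^{\b_{k-n}h}$, using $R_{k-n}^hF^V(\cdot,\b_{k-n}h)=F^V(\cdot,h)$, and exploiting the vanishing of $\la\psi_k^+-\psi_k^-,\G^V(\b_kh)\ra$, a short computation delivers an oscillation contraction $|g_k|_{\G^V(h)}\le(1-\alpha c)\,|g_{k-n}'|_{\G^V(\b_nh)}$ for a related zero-mean residue $g_{k-n}'$. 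The constant $c>0$ is a uniform lower bound on $\G^V(h)(B_{R_0}^H)$, which comes from the concentration estimate \eqref{concentration-property}. Iteration yields the geometric decay that is \eqref{fd-convergence-Gh}.

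\textbf{From \eqref{fd-convergence-Gh} to \eqref{fd-convergence-BH} and \eqref{fd-covergence-eigenmeasure}.} For \eqref{fd-convergence-BH} I peel off a small time $m$ via $R_k^h=R_m^hR_{k-m}^{\b_mh}$. On $(B_R^{H,H_2},\dh)$ the uniform Feller property (Proposition~\ref{Uniform-Feller}) makes the family $\{R_m^h\}_{h\in\To}$ equicontinuous in a quantitative way, so that $R_m^h$ applied to a function small in $|\cdot|_{\G^V(\b_mh)}$ is small in sup-norm on $B_R^H$, up to a vanishing equicontinuity remainder controlled by the weighted growth bound. The initial iterate of $R_{k-m}^{\b_mh}\psi_{k-m,\b_mh}$ is small in $|\cdot|_{\G^V(\b_mh)}$ by \eqref{fd-convergence-Gh}, which gives \eqref{fd-convergence-BH}. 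For \eqref{fd-covergence-eigenmeasure} the exponential tail $\mu(H\setminus B_R^H)\le Me^{-\eta'R^2}$ valid for $\mu\in\P_{\eta',M}(H)$, together with the weighted bound $R_k^h\mkm_{\eta'}\le C\mkm_{\eta'}$ (Proposition~\ref{Growth-Condition}, using $\eta'<\eta_0$), reduces matters to the bulk integral over $B_R^H$, where \eqref{fd-convergence-BH} closes the argument uniformly in $\mu$.

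\textbf{Main obstacle.} The adapted Ruelle lower bound is the technical heart of the proof. The homogeneous version relies crucially on the $L^1(\G^V)$-nonexpansiveness of $P^{V*}$ against the \emph{invariant} measure, but here the family $\{\G^V(h)\}_{h\in\To}$ genuinely moves under the shift $h\mapsto\b_\ell h$ that appears in the cocycle $R^h_{k+\ell}=R^h_\ell R^{\b_\ell h}_k$. The substitute must be a pointwise lower bound that interacts cleanly with this shift and with the corresponding variation of $F^V$; supplying such an estimate while preserving uniformity in $h$ and in $k$ is precisely the ``tailoring of the core of Ruelle's lower bound technique'' in the time-inhomogeneous setting that the introduction highlights.
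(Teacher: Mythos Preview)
Your three-stage architecture ($L^1\to$ sup-norm on balls $\to$ uniform over $\P_{\eta',M}$, with a later cutoff to pass from bounded to $C_{\mkm_\eta}$ observables) matches the paper's. The genuine gap is in the heart of the argument: the claimed direct lower bound
\[
R_n^{\b_{k-n}h}\psi_k^\pm(w)\ge \alpha\,\la\psi_k^\pm,\G^V(\b_kh)\ra\,F^V(w,\b_{k-n}h),\qquad w\in B_{R_0}^H,
\]
with $\alpha$ \emph{independent of $k$}, does not follow from irreducibility, the support property, and strict positivity of $F^V$ alone. The mechanism you have available is: concentration of $\G^V(\b_kh)$ on a fixed $H_2$-ball $B_L^{H_2}$ up to a tail $\varepsilon(L)$, plus uniform equicontinuity of $\{\psi_k^\pm\}$ on $(B_L^{H_2},\dh)$, gives a point $w_k$ with $\psi_k^\pm(w_k)\ge \la\psi_k^\pm,\G^V(\b_kh)\ra-\varepsilon(L)$, and hence a nearby ball where $\psi_k^\pm$ is at least this large. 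Feeding that ball into the irreducibility estimate yields a lower bound for $R_n\psi_k^\pm$ of the form $c(\la\psi_k^\pm,\G^V\ra-\varepsilon)$, \emph{not} $\alpha\la\psi_k^\pm,\G^V\ra$. The additive loss $\varepsilon$ is harmless only when $\la\psi_k^\pm,\G^V\ra$ is bounded below by a fixed threshold; when it is small you get no contraction at that step. Your ``Iteration yields the geometric decay'' therefore does not go through as a direct argument.

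The paper resolves exactly this by arguing by contradiction (Proposition~\ref{proof-stability-L1}): assume $|g_{k_j}(\cdot,h)|_{\G^V(h)}\ge a$ along a subsequence; the local monotonicity $|g_k(\cdot,h)|_{\G^V(h)}\le|g_{k-l}(\cdot,\b_lh)|_{\G^V(\b_lh)}$ then forces $|g_m(\cdot,\b_nh)|_{\G^V(\b_nh)}\ge a$ for every $(m,n)$ with $m+n=k_j$. This uniform lower bound along the whole diagonal is what makes the localization step work with a $k$-independent constant and produces a Harnack-type inequality with an explicit tail error $\varepsilon_0(R)$. The iteration then gives $|g_{k_j}|_{\G^V(h)}\le C\delta(R)^{l_j}+C'\varepsilon_0(R)A(R)$, and one first chooses $R$ to make the second term small, then $j$ large for the first, reaching a contradiction. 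Two further points your sketch elides: the equicontinuity of $\{g_k(\cdot,h)\}$ that the localization step needs is available only for $\phi\in C_b^1(H\times\To)$ (Lemma~\ref{lemma-equicontinuity-gk}), so you must start there and pass to $C_b$ and then $C_{\mkm_\eta}$ by approximation afterwards; and the $L^1\to$ sup-norm upgrade in the paper is not ``$R_m^h$ sends $L^1$-small to sup-small'' but rather a contradiction using the equicontinuity of $g_k$ together with the support property \eqref{support-property-1}, followed by a one-step smoothing $B_R^{H_2}\to B_R^H$.
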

This theorem is proved in subsection \ref{subsection-proof-forward-stability}, after we establish the $L^1$-forward stability  for differentiable observable functions in subsection \ref{subsection-proof-forward-stable-L1} through a version of Ruelle's lower bound technique. Before proceeding, let's introduce some notations and a lemma. 

\vskip0.05in

Fix $h\in\To$ for which the dependence on $h$ will be dropped unless stated throughout the proof. Set 
\begin{align}\label{def-g}
	g(w, h) = \phi(w, h) - \left\langle \phi(\cdot, h), \G^V(h)\right\rangle F^V(w, h)
\end{align}
and 
\begin{align}\label{def-gk}
	g_k(w, h) = \frac{(\P_k^Vg)(w, h)}{\l_{k, h, V}}.
\end{align}
It follows from the eigenfunction property \eqref{eigenfunction_property} that for every $h\in\To$, 
\begin{align}\label{def-gk-1}
	g_k(w, h) = \frac{P_{0, k, h}^{V}\phi(\cdot, \b_kh)(w)}{\l_{k, h, V}} - \left\langle \phi(\cdot, \b_kh), \G^V(\b_kh)\right\rangle F^V(w, h).
\end{align}
and 
\begin{align}\label{vanishing-meaning}
\la g_k(\cdot, h), \G^V(h)\ra=0, \text{ and } \la g_k^+(\cdot, h), \G^V(h)\ra = \la g_k^{-}(\cdot, h), \G^V(h)\ra = \frac12|g_k(\cdot, h)|_{\G^V(h)}. 
\end{align}
by combining the eigenmeasure property from Proposition \ref{Existence-of-Eigenmeasure}. The following observation is useful. 
\begin{lemma}\label{lemma-equicontinuity-gk}
Let $V \in C_b^{1,0}(H \times \mathbb{T})$, $\phi\in C_{b}^1(H\times\To)$ and $q$ be the number from Proposition \ref{Existence-of-Eigenfunction}. Then the family $g_k(\cdot, h)$ satisfies
\begin{align*}
	\{g_k(\cdot, h)\}_{(k, h)\in \mathbb{N}\times\To}\subset L_{\mathfrak{m}_{q}}^{\infty}(H\times\To),  
\end{align*}
and is uniformly equicontinuous on $C_b(B_R^{H_2}, \dh)$  for any $R>0$.
\end{lemma}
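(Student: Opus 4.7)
The plan is to exploit the compact representation
\[g_k(w,h)=\frac{P_{0,k,h}^V g(\cdot,\b_kh)(w)}{\l_{k,h,V}},\]
which comes from linearity of $P_{0,k,h}^V$ together with the eigenfunction property \eqref{eigenfunction_property} applied to the $F^V$ piece of the definition \eqref{def-g}. Since $\phi$ is bounded and $F^V\in L^\infty_{\mkm_q}(H\times\To)$ with norm independent of $h$ by Proposition \ref{Existence-of-Eigenfunction}, the source function satisfies $|g(w,h)|\le C\mkm_q(w)$ uniformly in $h$. The lemma is then reduced to two properties of $\l_{k,h,V}^{-1}P_{0,k,h}^V$ acting on functions of growth $\mkm_q$: a pointwise upper bound and uniform equicontinuity on balls of $H_2$.

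For the $L^\infty_{\mkm_q}$ bound, I would apply Proposition \ref{Growth-Condition} to get
\[|g_k(w,h)|\;\le\; C\,\frac{P_{0,k,h}^V\mkm_q(w)}{\l_{k,h,V}}\;\le\; CM_q\,\frac{\|P_{0,k,h}^V\id_H\|_{R_0}}{\l_{k,h,V}}\,\mkm_q(w),\]
and then use the uniform upper bound on the ratio $\|P_{0,k,h}^V\id_H\|_{R_0}/\l_{k,h,V}$ already established as \eqref{Bound-summand-Fk} inside the proof of Proposition \ref{Existence-of-Eigenfunction}. This yields $\|g_k(\cdot,h)\|_{L^\infty_{\mkm_q}}\le C'$ uniformly in $(k,h)\in\mathbb{N}\times\To$.

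For the uniform equicontinuity on $(B_R^{H_2},\dh)$, I would split $g_k$ as in \eqref{def-gk-1}. The summand $\la\phi(\cdot,\b_kh),\G^V(\b_kh)\ra F^V(\cdot,h)$ has a scalar coefficient bounded by $\|\phi\|_\infty$, and item (4) of Proposition \ref{Existence-of-Eigenfunction} supplies the uniform equicontinuity of $\{F^V(\cdot,h)\}_{h\in\To}$ on $(B_R^{H_2},\dh)$. For the remaining summand $\l_{k,h,V}^{-1}P_{0,k,h}^V\phi(\cdot,\b_kh)$, the uniform Feller property (Proposition \ref{Uniform-Feller}) gives that the family $\|P_{0,k,h}^V\id_H\|_{R_0}^{-1}P_{0,k,h}^V\phi(\cdot,\b_kh)$ is uniformly equicontinuous on $(B_R^{H_2},\dh)$; multiplying by the two-sided-bounded scalar $\|P_{0,k,h}^V\id_H\|_{R_0}/\l_{k,h,V}$ from Step 1 transfers the equicontinuity to this term.

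The main obstacle I anticipate is invoking the uniform Feller property with the general test function $\phi(\cdot,\b_kh)$ rather than the constant $\id_H$ that appears in the proof of Proposition \ref{Existence-of-Eigenfunction}. If Proposition \ref{Uniform-Feller} is formulated only for the constant observable, the standard extension to $C_b^1$ observables is needed, which can be carried out by decomposing $\phi$ into nonnegative $C_b^1$ pieces and re-running the uniform Feller argument, leaning on the cocycle property \eqref{Feynman-Kac-cocycle} and the two-sided ratio bounds above, analogous to the corresponding step in \cite{JNPS18,MN18}.
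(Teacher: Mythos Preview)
Your proposal is correct and follows essentially the same approach as the paper: both split $g_k$ via \eqref{def-gk-1}, handle the $F^V$ summand using item (4) of Proposition \ref{Existence-of-Eigenfunction} together with the uniform bound on the scalar $\la\phi(\cdot,\b_kh),\G^V(\b_kh)\ra$, and handle the Feynman--Kac summand by combining Proposition \ref{Uniform-Feller} with the ratio bound \eqref{Bound-summand-Fk}. Your anticipated obstacle is not an obstacle at all: Proposition \ref{Uniform-Feller} is stated for arbitrary $\phi\in C_b^1(H\times\To)$ and already asserts uniform Lipschitz continuity of $\|P_{0,t,h}^V\id_H\|_{R_0}^{-1}P_{0,t,h}^V\phi(\cdot,\b_th)$ on $B_R^H$, with constants depending only on $R,\|\phi\|_\infty,\|\nabla\phi\|_\infty,\|V\|_\infty,\|\nabla V\|_\infty$, so no extension argument is needed.
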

\begin{proof}
In view of \eqref{bound-summand-Fk1} and the fact that $\phi\in C_b^1(H\times\To)$, we have 
\begin{align*}
\frac{P_{0, k, h}^{V}\phi(\cdot, \b_kh)(w)}{\l_{k, h, V}}\leq \|\phi\|_{\infty}\frac{P_{0, k, h}^{V}\id_{H}(w)}{\l_{k, h, V}}\leq C\mkm_q(w),
\end{align*}
where the constant $C$ does not depend on $h, k$. In addition, $\left\langle \phi(\cdot, \b_kh), \G^V(\b_kh)\right\rangle$ is bounded uniformly for  $(k, h)\in \mathbb{N}\times\To$. It then follows from Proposition \ref{Existence-of-Eigenfunction} that the family 
\[\left\{\left\langle \phi(\cdot, \b_kh), \G^V(\b_kh)\right\rangle F^V(\cdot, h)\right\}_{(k, h)\in \mathbb{N}\times\To}\]
 is uniformly equicontinuous on $B_R^{H_2}$ and 
 \[\left|\left\langle \phi(\cdot, \b_kh), \G^V(\b_kh)\right\rangle F^V(w, h)\right|\leq C \mkm_q(w).\]
 As a consequence (note that the following \eqref{uniform-bound-gk} is still true if $\phi\in C_b(H\times\To)$), 
 \begin{align}\label{uniform-bound-gk}
  \sup_{(k, h)\in \mathbb{N}\times\To}|g_{k}(w, h)|\leq C \mkm_q(w). 
 \end{align}
 Furthermore, the uniform bound \eqref{Bound-summand-Fk} and the uniform Feller property in Proposition \ref{Uniform-Feller} imply that the family 
 \[\left\{\frac{P_{0, k, h}^{V}\phi(\cdot, \b_kh)}{\l_{k, h, V}}\right\}_{(k, h)\in \mathbb{N}\times\To}\]
  is also uniformly equicontinuous on $(B_R^{H_2}, \dh)$. Therefore so is $\{g_k(\cdot, h)\}_{(k, h)\in \mathbb{N}\times\To}$.  

\end{proof}

\subsubsection{Forward stability in $L^1$ norm} \label{subsection-proof-forward-stable-L1}

This subsection is devoted to prove the following proposition on forward stability in $L^1$ norm for differentiable observable functions.  

\begin{proposition}\label{proof-stability-L1}
For $V \in C_b^{1,0}(H \times \mathbb{T})$, $\phi\in C_{b}^1(H\times\To)$, and $h\in\To$, we have 
\begin{align}\label{equation-forward-stability-L1}
	\lim _{k \rightarrow \infty}\left|\lambda_{k, h, V}^{-1} P_{0, k, h}^V \phi\left(\cdot, \beta_k h\right)-\left\langle\phi\left(\cdot, \beta_k h\right), \Gamma^V\left(\beta_k h\right)\right\rangle F^V(\cdot, h)\right|_{\Gamma^V(h)}=0.
\end{align}
\end{proposition}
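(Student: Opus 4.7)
The plan is to adapt Ruelle's lower bound technique to the time inhomogeneous setting. Define $c_k(h) := |g_k(\cdot, h)|_{\G^V(h)}$ with $g_k$ as in \eqref{def-gk}--\eqref{def-gk-1}; the target is $c_k(h) \to 0$. From the Feynman-Kac cocycle \eqref{Feynman-Kac-cocycle}, the eigenvalue cocycle \eqref{eigenvalue-cocycle}, and the eigenfunction property \eqref{eigenfunction_property}, a direct computation yields the recursion
\[
g_{k+m}(\cdot, h) = \l_{m, h, V}^{-1}\, P_{0, m, h}^V\, g_k(\cdot, \b_m h),\qquad k, m\ge 0,\ h\in\To.
\]
Splitting $g_k = g_k^+ - g_k^-$, using $|a-b| = a+b - 2\min(a,b)$, and integrating against $\G^V(h)$ via the eigenmeasure property from Proposition \ref{Existence-of-Eigenmeasure} together with the vanishing mean \eqref{vanishing-meaning} then produces the exact identity
\[
c_{k+m}(h) = c_k(\b_m h) - 2\, D_{k, m}(h),
\]
where
\[
D_{k, m}(h) := \l_{m,h,V}^{-1}\!\int_H \min\bigl(P_{0,m,h}^V g_k^+(\cdot,\b_m h)(w),\; P_{0,m,h}^V g_k^-(\cdot,\b_m h)(w)\bigr)\,\G^V(h)(dw)\ge 0.
\]
In particular $\widetilde c_k := \sup_{h\in\To} c_k(h)$ is non-increasing and converges to some $\widetilde c^* \geq 0$; it is enough to show $\widetilde c^* = 0$, which yields convergence uniform in $h$.

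Assume toward contradiction $\widetilde c^* > 0$, and fix $m\in\mathbb{N}$ to be chosen large below (depending only on the uniform bound $|g_k|\le C\mkm_q$ from Lemma \ref{lemma-equicontinuity-gk} and the irreducibility timescale of the Navier-Stokes flow). Pick $h_k\in\To$ with $c_{k+m}(h_k)\ge \widetilde c_{k+m} - 1/k$. Monotonicity of $\widetilde c_k$ gives $c_{k+m}(h_k)\to\widetilde c^*$, and the exact identity forces
\[
c_k(\b_m h_k)\to \widetilde c^*,\qquad D_{k,m}(h_k)\to 0.
\]
By compactness of $\To$ and Lemma \ref{lemma-equicontinuity-gk}, combined with Arzelà–Ascoli and a diagonal extraction over the balls $(B_R^{H_2}, \dh)$, I pass to a subsequence so that $h_k \to h^*$ in $\To$ and $g_k(\cdot, \b_m h_k) \to g^*$ uniformly on each $(B_R^{H_2}, \dh)$. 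Combining the pointwise domination $|g_k|\le C\mkm_q$, the tail bound \eqref{concentration-property}, and the continuity of $\G^V$ in $h$ from Proposition \ref{Existence-of-Eigenmeasure}, this transfers to
\[
|g^*|_{\G^V(\b_m h^*)} = \widetilde c^* > 0,\qquad \la g^*, \G^V(\b_m h^*)\ra = 0,
\]
the second relation inherited from \eqref{vanishing-meaning}; consequently $g^{*,+}$ and $g^{*,-}$ each carry strictly positive $\G^V(\b_m h^*)$-mass, concentrated in some $B_R^{H_2}$ for $R$ fixed a priori.

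The main obstacle is the Ruelle lower bound itself, applied in the limit. Using strict positivity of $F^V$ (Proposition \ref{Existence-of-Eigenfunction}), the support properties \eqref{support-property}--\eqref{support-property-1}, and the uniform irreducibility \eqref{irreducible-large-H2ball} of the Navier-Stokes flow, one chooses $m$ large enough that $P_{0,m,h^*}^V$ spreads both $g^{*,\pm}$ onto a common neighborhood of some $w_0\in H_2$, producing
\[
\min\bigl(P_{0,m,h^*}^V g^{*,+}(w),\; P_{0,m,h^*}^V g^{*,-}(w)\bigr) \ge \a,\qquad w\in B_r^{H}(w_0),
\]
for some $\a, r>0$, with $\G^V(h^*)(B_r^{H}(w_0)) > 0$ by \eqref{support-property}. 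Transferring this bound back to the prelimit $D_{k,m}(h_k)$ via the equicontinuity of Lemma \ref{lemma-equicontinuity-gk} and the symbol-continuity of $P_{0,m,\cdot}^V$ appearing in \eqref{Lipschitz-modulo-Lyapunov} yields $\liminf_k D_{k,m}(h_k) > 0$, contradicting $D_{k,m}(h_k)\to 0$. This last transfer is the modification of Ruelle's technique forced by time inhomogeneity that is announced in the introduction: the standard $L^1$ non-expansiveness with respect to a single invariant measure is no longer available, so the lower bound has to be extracted uniformly as both the symbol $h_k$ and the pre-iterate $g_k$ drift simultaneously.
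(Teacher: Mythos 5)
Your key identity
\begin{align*}
c_{k+m}(h) = c_k(\b_m h) - 2 D_{k,m}(h), \qquad D_{k,m}(h)=\l_{m,h,V}^{-1}\int_H \min\bigl(P_{0,m,h}^V g_k^{+}(\cdot,\b_m h),\ P_{0,m,h}^V g_k^{-}(\cdot,\b_m h)\bigr)\,d\G^V(h),
\end{align*}
is correct and is a cleaner way of packaging the local monotonicity \eqref{g_k-non-increasing} used in the paper, and the ensuing monotonicity of $\widetilde c_k=\sup_h c_k(h)$ is a sound observation. However, there is a genuine gap in the way you close the argument. You extract $g^*$ by Arzelà--Ascoli and then claim $|g^*|_{\G^V(\b_m h^*)}=\widetilde c^*$, invoking ``the continuity of $\G^V$ in $h$ from Proposition \ref{Existence-of-Eigenmeasure}.'' Proposition \ref{Existence-of-Eigenmeasure} does \emph{not} give continuity: the fixed point is constructed in $\overline{C_p(\To,\mathcal{P}_{\eta,A}(H))}$, the closure of $C(\To,\mathcal{P}_{\eta,A}(H))$ under \emph{pointwise} convergence, whose elements need not be continuous. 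Continuity of $h\mapsto\G^V(h)$ is only established in Proposition \ref{proposition-uniqueness-eigen-data}, and its proof there relies on Corollary \ref{St-to-Gamma}, which in turn uses Theorem \ref{prop-convergence-discrete-time}, which is proved from Theorem \ref{fd-convergence} and hence from the very Proposition \ref{proof-stability-L1} you are proving. Your argument is circular as written. (Even granting continuity, passing $|g_k(\cdot,\b_m h_k)|_{\G^V(\b_m h_k)}\to|g^*|_{\G^V(\b_m h^*)}$ needs care, since $\id_{B_R^{H_2}}|g_k|$ is not continuous on $H$, so weak convergence of the measures does not apply directly.)

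The detour through the limit object $g^*$ is what forces the circularity, and it is also unnecessary. All the ingredients you need to lower-bound $D_{k,m}(h_k)$ hold uniformly in $h$ and do not require continuity of $\G^V$: the two-sided concentration \eqref{concentration-property} and the $L_{\mkm_q}^\infty$ domination from Lemma \ref{lemma-equicontinuity-gk} give, from $c_k(\b_m h_k)\to\widetilde c^*>0$ and \eqref{vanishing-meaning}, points $w_k^{\pm}\in B_{L_1}^{H_2}$ with $g_k^{\pm}(w_k^{\pm},\b_m h_k)\geq\widetilde c^*/8$ for all large $k$; uniform equicontinuity promotes this to a lower bound on a ball $B_r^{H,B_{L_0}^{H_2}}(w_k^{\pm})$ with $r$ independent of $k,h$; uniform irreducibility (Proposition \ref{Uniform-Irreducibility}, with $m\geq 2\ln R+\ell_0$ chosen after $R$) then gives $\l_{m,h_k,V}^{-1}P_{0,m,h_k}^V g_k^{\pm}(\cdot,\b_m h_k)(w)\geq e^{-mM_V}\,\widetilde c^* p_0/8$ for all $w\in B_R^H$, and integrating the min against $\G^V(h_k)$ over $B_R^H$ (whose mass is $\geq 1/2$ for $R$ large, uniformly in $h$) bounds $D_{k,m}(h_k)$ below by a positive constant, contradicting $D_{k,m}(h_k)\to 0$. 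Working directly with the prelimit objects $g_k(\cdot,\b_m h_k)$ in this way avoids any appeal to continuity of $\G^V$.

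With that repair your argument becomes a genuinely different organization of essentially the same raw material as the paper: the paper turns the same concentration--equicontinuity--irreducibility package into a local Harnack inequality \eqref{lower-bound-Tlg+-} and iterates it, producing the quantitative bound \eqref{gkj-contraction-2} with a geometric factor $\d(R)^{l_j-1}$ and a tail error $\k(R)$, then optimizes first over $R$ and then over $j$; yours instead isolates the dissipation $D_{k,m}$ and shows that it cannot vanish while $\widetilde c^*>0$. The paper's route yields an explicit (if $R$-dependent) contraction rate, whereas yours is shorter and arguably more transparent, at the cost of losing the explicit rate. Both routes ultimately rest on Lemma \ref{lemma-equicontinuity-gk}, the concentration and support properties of $\G^V$, and Proposition \ref{Uniform-Irreducibility}.
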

\begin{proof}
We prove the convergence by contradiction. The proof is divided into four steps.

\vskip0.05in

{\it Step 1: Lower bounds from contradiction assumption}.  Assume \eqref{equation-forward-stability-L1} does not hold. Then there is a constant $a>0$ and a subsequence $k_j\to\infty$ such that 
\begin{align}\label{contradiction-assumption}
	|g_{k_j}(\cdot, h)|_{\G^V(h)}\geq a.   
\end{align}
Due to the time inhomogeneity, here we do not have non-expansiveness of the sequence $g_k(\cdot, h)$ under $L^1(\G^V(h))$ norm.  Instead, the cocycle property \eqref{eigenvalue-cocycle} implies 
\begin{align}\label{g_k-invariance}
	g_k(w, h) = \frac{P^V_{0, l, h}g_{k-l}(\cdot, \b_{l}h)(w)}{\l_{l, h, V}}, \, \text{ for all }0\leq l\leq k,
\end{align}
which together with the eigenmeasure property from Proposition \ref{Existence-of-Eigenmeasure} yields
\begin{align}\label{g_k-non-increasing}
	|g_k(\cdot, h)|_{\G^V(h)}\leq |g_{k-l}(\cdot, \b_{l}h)|_{\G^V(\b_{l}h)}, \, \text{ for all }0\leq l\leq k, h\in\To.
\end{align}
This ``local" monotonicity along the line $x+y=k$ and \eqref{contradiction-assumption} imply that 
\begin{align*}
	|g_{m}(\cdot, \b_{n}h)|_{\G^V(\b_nh)}\geq a, \, \text{ for all }(m, n)\in K,
\end{align*}
where $K=\bigcup_{j\geq 1}K_j$ with 
\begin{align*}
	K_j=\lp(m, n)\in\mathbb{Z}^2: m, n\geq 0 \text{ and } m+n=k_j\rp.
\end{align*}
Combining with \eqref{vanishing-meaning}, we have 
\begin{align}\label{lower-bound-gpm-integral}
	|g_{m}^{\pm}(\cdot, \b_{n}h)|_{\G^V(\b_nh)}\geq a/2, \, \text{ for all }(m, n)\in K. 
\end{align}
Since $\G^V(h)$ is concentrated on $H_2$ for every $h$, we have from \eqref{concentration-property} and Lemma \ref{lemma-equicontinuity-gk} that 
\begin{align*}
	|g_{m}^{\pm}(\cdot, \b_{n}h)|_{\G^V(\b_nh)} 
	&= \int_{H_2}g_{m}^{\pm}(w, \b_{n}h)\G^V(\b_nh)(dw)\\
	&\leq \int_{B_{L_1}^{H_2}}g_{m}^{\pm}(w, \b_{n}h)\G^V(\b_nh)(dw)+C\int_{H_2\setminus B_{L_1}^{H_2}}\mkm_q(w)\G^V(\b_nh)(dw)\\
	&\leq \int_{B_{L_1}^{H_2}}g_{m}^{\pm}(w, \b_{n}h)\G^V(\b_nh)(dw)+a/4, 
\end{align*}
for some large $L_1$ independent of $m, n, h$.  This and \eqref{lower-bound-gpm-integral} imply the existence of $w_{m, n}^{\pm}\in B_{L_1}^{H_2}$ such that 
\begin{align*}
	g_{m}^{\pm}(w_{m, n}^{\pm}, \b_{n}h)\geq a/4
\end{align*}
for all $(m, n)\in K$.  Let $r_0$ be the radius from \eqref{UI-1} and $\ell_0=\lceil 2\ln r_0\rceil$, and $L_0=L_0(\ell_0, r_0)$ as in Proposition \ref{Uniform-Irreducibility}. Since the family $\lp g_{m}^{\pm}(, \b_{n}h)\rp_{ (m, n)\in K}$ is uniformly equicontinuous on $(B_{L_0}^{H_2}, \dh)$ by Lemma \ref{lemma-equicontinuity-gk}, there is a radius $r=r(L_0)$ independent of $m, n, h$ such that 
\begin{align}\label{lower-bound-gpm}
	g_{m}^{\pm}(w, \b_{n}h)\geq a/8, \text{ for all } w \in B_{r}^{H, B_{L_0}^{H_2}}(w_{m, n}^{\pm}). 
\end{align}
By \eqref{irreducible-large-H2ball} we have $p_0 = p_0(L_1, r)$ independent of $m, n$ such that 
\begin{align*}
	P^V_{0, \ell_0, h}(w_0, B_{r}^{H, B_{L_0}^{H_2}}(w_{m, n}^{\pm}))
	\geq \inf_{(w_0, h, w)\in B_{r_0}^{H_2}\times\To\times B_{L_1}^{H_2}}P^V_{0, \ell_0, h}(w_0, B_{r}^{H,
	B_{L_0}^{H_2}}(w))\geq p_0/2. 
\end{align*}
for all $m, n\in K$. Hence for any $\ell\geq 0$, by the Chapman-Kolmogorov relation, we have 
\begin{align*}
	P^V_{0, \ell+\ell_0, h}(w, B_{r}^{H, B_{L_0}^{H_2}}(w_{m, n}^{\pm})) 
	&= \int_{H}P^V_{\ell, \ell+\ell_0, h}(u, B_{r}^{H, B_{L_0}^{H_2}}(w_{m, n}^{\pm}))P^V_{0, \ell, h}(w, du)\\
	&\geq \int_{B_{r_0}^{H_2}}P^V_{0, \ell_0, \b_{\ell}h}(u, B_{r}^{H, B_{L_0}^{H_2}}(w_{m, n}^{\pm}))P^V_{0, \ell, h}(w, du
	\geq \frac12p_0P^V_{0, \ell, h}(w, B_{r_0}^{H_2})
\end{align*}
for all $w\in H$. As a result, for any $R\geq 1$, by taking $\ell = \lceil 2\ln R\rceil +\ell_0$, one obtains  by  \eqref{UI-1} that 
\begin{align}\label{lower-bound-ell0}
	P^V_{0, \ell, h}(w, B_{r}^{H, B_{L_0}^{H_2}}(w_{m, n}^{\pm})) \geq \frac12p_0P^V_{0, \ell-\ell_0, h}(w,
	B_{r_0}^{H_2})\geq p_0/4, \,\forall (w, h)\in B_R^H\times\To, (m, n)\in K. 
\end{align}

\vskip0.05in

{\it Step 2: A local Harnack type inequality from the lower bounds}. Let $R\geq 1$ and  $\ell = \lceil 2\ln R\rceil +\ell_0$. Then for each $j\geq 1$, we can write $k_j = {l}_j\ell+r_j$ where $l_j, r_j$ are nonnegative integers such that $l_j\to\infty$ as $j\to\infty$ and $ r_j<\ell$. We now establish an estimate which roughly states that the action of the Feynman-Kac evolution operators  on $g_{m}^{\pm}(\cdot, \b_{n}h)$ satisfies a local Harnack type inequality as long as $(m,n)\in K$. See \cite{KS00,JNPS18} in the time homogeneous case.

\vskip0.05in

Fix any $j$ such that $l_j\geq 2$. Note that for any $0\leq p < l_j$, we have $(k_j-(p+1)\ell, (p+1)\ell)\in K_j$. Let $w_0^{\pm} = w^{\pm}_{k_j-(p+1)\ell, (p+1)\ell}$ be the corresponding points as in \eqref{lower-bound-gpm}. Assume without loss of generality that $V\geq 0$ and recall that $\ds M_V = \sup_{w\in H}V(w)$. Then by boundedness of $V$, and estimates \eqref{lower-bound-gpm}, \eqref{lower-bound-ell0},
\begin{align*}
	\begin{split}
		\frac{P^V_{0, \ell, \b_{p\ell}h}g_{k_j-(p+1)\ell}^{\pm}(\cdot, \b_{(p+1)\ell}h)(w)}{\l_{\ell, \b_{p\ell}h, V}}
		&=\l_{\ell, \b_{p\ell}h, V}^{-1}\int_{H}g_{k_j-(p+1)\ell}^{\pm}(u, \b_{(p+1)\ell}h)P^V_{0, \ell, \b_{p\ell}h}(w, du)\\
		&\geq e^{-\ell M_V}\int_{B_r^{H, B_{L_0}^{H_2}}(w_{0}^{\pm})}g_{k_j-(p+1)\ell}^{\pm}(u, \b_{(p+1)\ell}h)P^V_{0, \ell, \b_{p\ell}h}(w, du)\\
		&\geq Ce^{-\ell M_{V}},
	\end{split}
\end{align*}
for all $w\in B_R^H$, where $C=p_0a/32$ is independent of $p, \ell, R, k_j$. On the other hand, from \eqref{Bound-summand-Fk},\eqref{uniform-bound-gk} and the growth condition from Proposition \ref{Growth-Condition} we deduce 
\begin{align}\label{bound-Tlg-by-meta}
	\begin{split}
		\frac{P^V_{0, \ell, \b_{p\ell}h}g_{k_j-(p+1)\ell}^{\pm}(\cdot, \b_{(p+1)\ell}h)(w)}{\l_{\ell, \b_{p\ell}h, V}}
		&\leq C\l_{\ell, \b_{p\ell}h, V}^{-1}P^V_{0, \ell, \b_{p\ell}h}\mkm_{q}(w)\\
		&\leq C\l_{\ell, \b_{p\ell}h, V}^{-1}\|P^V_{0, \ell, \b_{p\ell}h}\id_{H}\|_{R_0}\mkm_q(w)\\
		&\leq C\mkm_q(R),
	\end{split}
\end{align}
for all $w\in B_R^H$, where the constant $C\geq 1$ is independent of $p, \ell, R, k_j$.  As a result, we obtain a local Harnack type inequality, 
\begin{align*}
	\sup_{w\in B^{H}_R}\frac{P^V_{0, \ell, \b_{p\ell}h}g_{k_j-(p+1)\ell}^{\pm}(\cdot, \b_{(p+1)\ell}h)(w)}{\l_{\ell, \b_{p\ell}h, V}}	
	\leq A(R) \inf_{w\in B^{H}_R}\frac{P^V_{0, \ell, \b_{p\ell}h}g_{k_j-(p+1)\ell}^{\pm}(\cdot, \b_{(p+1)\ell}h)(w)}{\l_{\ell, 	
	\b_{p\ell}h, V}},
\end{align*}
where since $\ell = \lceil 2\ln R\rceil +\ell_0$, 
\begin{align}\label{eq-A(R)}
A(R) = C\mkm_{q}(R)e^{\ell M_V}\leq CR^{2M_V}\mkm_{q}(R), \quad C \text{ a constant independent of }p, \ell, R, k_j.
\end{align}
 It then follows by the eigenmeasure property from Proposition \ref{Existence-of-Eigenmeasure} that 
\begin{align*}
	|g_{k_j-(p+1)\ell}^{\pm}(\cdot, \b_{(p+1)\ell}h)|_{\G^V(\b_{(p+1)\ell}h)}
	& = \int_{H}\frac{P^V_{0, \ell, \b_{p\ell}h}g_{k_j-(p+1)\ell}^{\pm}(\cdot, \b_{(p+1)\ell}h)(w)}{\l_{\ell, \b_{p\ell}h, V}}\G^V(\b_{p\ell}h)(dw)\\
	&\leq A(R) \inf_{w\in B^{H}_R}\frac{P^V_{0, \ell, \b_{p\ell}h}g_{k_j-(p+1)\ell}^{\pm}(\cdot, \b_{(p+1)\ell}h)(w)}{\l_{\ell, 	
	\b_{p\ell}h, V}} + I_0,
\end{align*}
where 
\begin{align}\label{epsilon-R}
\begin{split}
	I_0&= \int_{H\setminus B^{H}_R} \frac{P^V_{0, \ell, \b_{p\ell}h}g_{k_j-(p+1)\ell}^{\pm}(\cdot, \b_{(p+1)\ell}h)(w)}{\l_{\ell, \b_{p\ell}h, V}}\G^V(\b_{p\ell}h)(dw)\\
	 &\leq C\sup_{h\in\To}\int_{H\setminus B^{H}_R} \mkm_{q}(w)\G^V(h)(dw):=  \varepsilon_0(R) 
\end{split}
\end{align}
by following the same estimates as in \eqref{bound-Tlg-by-meta}. Hence we arrive at 
\begin{align}\label{lower-bound-Tlg+-}
	\frac{P^V_{0, \ell, \b_{p\ell}h}g_{k_j-(p+1)\ell}^{\pm}(\cdot, \b_{(p+1)\ell}h)(w)}{\l_{\ell, 	
	\b_{p\ell}h, V}} - \frac{|g_{k_j-(p+1)\ell}^{\pm}(\cdot, \b_{(p+1)\ell}h)|_{\G^V(\b_{(p+1)\ell}h)}}{A(R)} +  \frac{\varepsilon_0(R)}{A(R)}\geq 0, 
\end{align}
for all $R\geq 1$, $\ell = \lceil 2\ln R\rceil +\ell_0$, $w\in B_{R}^{H}, j\geq 1, 0\leq p<l_j$, where $k_j=l_j\ell+r_j$. 

\vskip0.05in

{\it Step 3: An iteration scheme based on the local Harnack type inequality}. Now we prove that $|g_{k_j}(\cdot, h)|_{\G^V(h)}$ can be made as arbitrarily small by taking $j$ sufficiently large through an iteration scheme by utilizing the estimate \eqref{lower-bound-Tlg+-}. 
It follows from \eqref{g_k-invariance}, \eqref{vanishing-meaning} and \eqref{epsilon-R} that 
\begin{align}\label{iteration-p=1-0}
\begin{split}
	&|g_{k_j}(\cdot, h)|_{\G^V(h)} = \left|\frac{P^V_{0, \ell, h}g_{k_j-\ell}(\cdot, \b_{\ell}h)}{\l_{\ell, h, V}}\right|_{\G^V(h)}\\
	&\leq \int_{B^{H}_R}\left|\frac{P^V_{0, \ell, h}g_{k_j-\ell}(\cdot, \b_{\ell}h)(w)}{\l_{\ell, h, V}}\right|\G^V(h)(dw) + \varepsilon_0(R)\\
         &\begin{aligned}
         		  =\int_{B^{H}_R}\Big(\frac{P^V_{0, \ell, h}g_{k_j-\ell}^{+}(\cdot, \b_{\ell}h)}{\l_{\ell, h, V}}&-\frac{P^V_{0, \ell, h}g_{k_j-\ell}^{-}(\cdot,
          	\b_{\ell}h)}{\l_{\ell, h, V}}\\
          	&- \frac{|g_{k_j-\ell}^{+}(\cdot, \b_{\ell}h)|_{\G^V(\b_{\ell}h)}}{A(R)} +\frac{|g_{k_j-\ell}^{-}(\cdot, \b_{\ell}h)|_{\G^V(\b_{\ell}h)}}					{A(R)}\Big)\G^V(h)(dw) + \varepsilon_0(R)
	\end{aligned}\\
	&\begin{aligned}
		 \leq\int_{B^{H}_R}\Big(
		 \Big|\frac{P^V_{0, \ell, h}g_{k_j-\ell}^{+}(\cdot, \b_{\ell}h)}{\l_{\ell, h, V}}&- \frac{|g_{k_j-\ell}^{+}(\cdot, \b_{\ell}h)|_{\G^V(\b_{\ell}h)}}{A(R)}\Big|\\
		  & +\Big|\frac{P^V_{0, \ell, h}g_{k_j-\ell}^{-}(\cdot, \b_{\ell}h)}{\l_{\ell, h, V}}- \frac{|g_{k_j-\ell}^{-}(\cdot, \b_{\ell}h)|_{\G^V(\b_{\ell}h)}}{A(R)}\Big|
		  \Big)\G^V(h)(dw) + \varepsilon_0(R).
	\end{aligned}
\end{split}
\end{align}
Since for $c\geq 0$, inequality $a\geq -c$ implies $|a|\leq a+2c$, we have by \eqref{lower-bound-Tlg+-} with $p=0$ that 
\begin{align*}
	&\int_{B^{H}_R}\Big|\frac{P^V_{0, \ell, h}g_{k_j-\ell}^{\pm}(\cdot, \b_{\ell}h)}{\l_{\ell, h, V}}- \frac{|g_{k_j-\ell}^{\pm}(\cdot, \b_{\ell}h)|_{\G^V(\b_{\ell}h)}}{A(R)} \Big|\G^V(h)(dw) \\
	&\leq \int_{B^{H}_R}\Big(\frac{P^V_{0, \ell, h}g_{k_j-\ell}^{\pm}(\cdot, \b_{\ell}h)}{\l_{\ell, h, V}}- \frac{|g_{k_j-\ell}^{\pm}(\cdot, \b_{\ell}h)|_{\G^V(\b_{\ell}h)}}{A(R)} \Big)\G^V(h)(dw) + 2A(R)^{-1}\varepsilon_0(R)\\
	&=\int_{B^{H}_R}\Big(\frac{P^V_{0, \ell, h}g_{k_j-\ell}^{\pm}(\cdot, \b_{\ell}h)}{\l_{\ell, h, V}} \Big)\G^V(h)(dw) - \frac{|g_{k_j-\ell}^{\pm}(\cdot, \b_{\ell}h)|_{\G^V(\b_{\ell}h)}}{A(R)}\G^V(h)(B_R^H) + 2A(R)^{-1}\varepsilon_0(R).
\end{align*}
Combining this estimate with \eqref{iteration-p=1-0}, and the eigenfunction property from Proposition \ref{Existence-of-Eigenmeasure} and \eqref{vanishing-meaning}, we deduce
\begin{align}\label{pre-iteration}
\begin{split}
	|g_{k_j}(\cdot, h)|_{\G^V(h)}
	\leq &\int_{H}\Big(\frac{P^V_{0, \ell, h}g_{k_j-\ell}^{+}(\cdot, \b_{\ell}h)}{\l_{\ell, h, V}}+\frac{P^V_{0, \ell, h}g_{k_j-\ell}^{-}(\cdot, \b_{\ell}h)}{\l_{\ell, h, V}} \Big)\G^V(h)(dw) \\
	& - \frac{\G^V(h)(B_R^H)}{A(R)}\Big(|g_{k_j-\ell}^{+}(\cdot, \b_{\ell}h)|_{\G^V(\b_{\ell}h)}+|g_{k_j-\ell}^{-}(\cdot, \b_{\ell}h)|_{\G^V(\b_{\ell}h)}\Big) +4A(R)^{-1}\varepsilon_0(R)+\varepsilon_0(R)\\
	&\leq a_0(R)|g_{k_j-\ell}(\cdot, \b_{\ell}h)|_{\G^V(\b_{\ell}h)} + \k(R)
\end{split}
\end{align}
where 
\begin{align}\label{kappa-R}
	a_0(R) = 1- A(R)^{-1}\G^V(h)( B^{H}_R), \text{ and } \k(R) = (4A(R)^{-1}+1)\varepsilon_0(R). 
\end{align}
Apply the same estimates as in \eqref{iteration-p=1-0}-\eqref{pre-iteration} to $|g_{k_j-\ell}(\cdot, \b_{\ell}h)|_{\G^V(\b_{\ell}h)}$ by using \eqref{lower-bound-Tlg+-} with $p=1$ we have
\begin{align*}
	|g_{k_j-\ell}(\cdot, \b_{\ell}h)|_{\G^V(\b_{\ell}h)}\leq a_1(R)|g_{k_j-2\ell}(\cdot, \b_{2\ell}h)|_{\G^V(\b_{2\ell}h)}+ \k(R),
\end{align*}
where 
\[a_1(R) = 1- A(R)^{-1}\G^V(\b_{\ell}h)( B^{H}_R). \]
By iteration through using \eqref{lower-bound-Tlg+-} for $0\leq p<l_j$, one obtains
\begin{align}\label{gkj-contraction-1}
	|g_{k_j}(\cdot, h)|_{\G^V(h)}\leq \d_{l_j-1} |g_{r_j}(\cdot, \b_{l_j\ell}h)|_{\G^V(\b_{l_j\ell}h)}+ \eta_{l_j}\k(R)\leq C\d_{l_j-1}+\eta_{l_j}\k(R),
\end{align}
where 
\begin{align*}
	\d_{m} = \prod_{i=0}^{m}a_i, \, \eta_{l_j} = 1+\sum_{m=0}^{l_j-2}\d_{m}, \, \text{ and } a_i = a_i(R) = 1- A(R)^{-1}\G^V(\b_{i\ell}h)( B^{H}_R), 
\end{align*}
and $C$ does not depend on the parameters $k_j, h, \ell, R, l_j, r_j$. Note that in the last inequality of \eqref{gkj-contraction-1}, we used Lemma \ref{lemma-equicontinuity-gk} and the integrability of $\mkm_q$ with respect to $\G^V$ from Proposition \ref{Existence-of-Eigenmeasure}. 

\vskip0.05in

{\it Step 4: Deriving a contradiction}. By the concentration property \eqref{concentration-property}, we can find $R_0>0$ such that for all $R\geq R_0$, 
\[\G^V(h)( B^{H}_R)\geq 1/2, \quad\forall h\in \To. \]
Therefore 
\begin{align*}
	0<a_i\leq 1- \frac12 A(R)^{-1}:=\d(R)<1. 
\end{align*}
Combining this with \eqref{gkj-contraction-1}, the formula of $\k(R)$ from \eqref{kappa-R}, $A(R)$ from \eqref{eq-A(R)} and $\varepsilon_0(R)$ from \eqref{epsilon-R}, we know 
\begin{align}\label{gkj-contraction-2}
\begin{split}
	|g_{k_j}(\cdot, h)|_{\G^V(h)}
	&\leq C\d(R)^{l_j-1}+  \frac{ \k(R)}{1-\d(R)} = C\d(R)^{l_j-1}+  2(4+A(R))\varepsilon_0(R) \\
	&\leq C\left(\d(R)^{l_j-1}+ R^{2M_V}\mkm_{q}(R)\sup_{h\in\To}\int_{H\setminus B_{R}^{H}}\mkm_{q}(w)\G^V(h)(dw)\right), 
\end{split}
\end{align}
for any $R\geq R_0$. By Proposition \ref{Existence-of-Eigenmeasure} and Markov's inequality, we find 
\begin{align}\label{gkj-contraction-3}
\begin{split}
	\int_{H\setminus B_{R}^{H}}\mkm_{q}(w)\G^V(h)(dw)&\leq \left(\int_{H}\mkm_{q}^2(w)\G^V(h)(dw)\right)^{1/2}\left(\G^V(h)(H\setminus B_{R}^{H})\right)^{1/2}\\
	&\leq C\left(\G^V(h)\left\{\|w\|^{6(M_V+q)}\geq R^{6(M_V+q)}\right\}\right)^{1/2}\\
	&\leq \frac{C}{R^{3(M_V+q)}}.
\end{split}
\end{align}
Combining \eqref{gkj-contraction-2} and \eqref{gkj-contraction-3}, for any given $\varepsilon>0$, one can first choose $R$ large to make 
\[R^{2M_V}\mkm_{q}(R)\sup_{h\in\To}\int_{H\setminus B_{R}^{H}}\mkm_{q}(w)\G^V(h)(dw)<\frac{\varepsilon}{2C},\]
then choose $j$ large such that 
\[\d(R)^{l_j-1}<\frac{\varepsilon}{2C},\]
which result in $|g_{k_j}(\cdot, h)|_{\G^V(h)}<\varepsilon$. This contradicts with \eqref{contradiction-assumption}. As a result, convergence \eqref{equation-forward-stability-L1} must be true.  
\end{proof}

\subsubsection{Proof of Theorem \ref{fd-convergence}} \label{subsection-proof-forward-stability}

In this subsection we complete the proof of the forward stability as in Theorem \ref{fd-convergence}, by combining the $L^1$-stability established in Proposition \ref{proof-stability-L1},  the uniform equicontinuity and standard approximation arguments. 

\vskip0.05in
\begin{proof}[Proof of Theorem \ref{fd-convergence}]
 We first show that Proposition \ref{proof-stability-L1} implies \eqref{fd-convergence-BH} for $\phi\in C_{b}^1(H\times\To)$. Then we prove them for $\phi\in C_{\mkm_{\eta}}(H\times\To)$ through approximation arguments. The convergence \eqref{fd-covergence-eigenmeasure} is given at the end. The proof is divided into the following four steps. 

\vskip0.05in

{\it Step 1: Convergence \eqref{fd-convergence-BH} in $C_b(B_R^{H_2}, \dh)$ for $\phi\in C_{b}^1(H\times\To)$}. This is proved by a contradiction argument. Indeed, if there is $R>0$ such that convergence \eqref{fd-convergence-BH} in  $C_b(B_R^{H_2}, \dh)$ does not hold, then there are $a, r>0$ and a subsequence $k_j$ and $\{w_{k_j}\}\subset B_R^{H_2}$ such that 
\[|g_{k_j}(w, h)|\geq a>0,  \forall w\in B_{r}^{H, B_{L}^{H_2}}(w_{k_j}),\quad k\in\mathbb{N}\]
by uniform equicontinuity. Here $r>0$ is independent of $h, k$ and $L$ is any large constant from \eqref{support-property-1}.  The support property \eqref{support-property-1} then implies that 
\begin{align*}
|g_{k_j}(w, h)|_{\G^V(h)}\geq a\G^V(h)(B_{r}^{H, B_{L}^{H_2}}(w_{k_j}))\geq a\inf_{(w, h)\in B_R^{H_2}\times\To}\G^V(h)(B_{r}^{H, B_{L}^{H_2}}(w))>0
\end{align*}
for all $j\geq 1$, which contradicts with the convergence \eqref{fd-convergence-Gh}.

\vskip0.05in

The proof of {\it Step 1} is complete. 

\vskip0.05in

{\it Step 2: Convergence of \eqref{fd-convergence-BH} for $\phi\in C_{b}^1(H\times\To)$ in $C_b(B_R^{H})$}.  First observe that by \eqref{g_k-invariance}, one has for any $R>0$, 
\begin{align*}
	|g_{k+1}(w, h)| &= \left|\int_H g_{k}(u, \b_1h)\frac{P^V_{0, 1, h}(w, du)}{\l_{0, 1,  h}}\right|\\
	&\leq \int_{B_R^{H_2}}\left| g_{k}(u, \b_1h)\right|\frac{P^V_{0, 1, h}(w, du)}{\l_{0, 1,  h}}+\int_{H\setminus B_R^{H_2}}\left| g_{k}(u, \b_1h)\right|\frac{P^V_{0, 1, h}(w, du)}{\l_{0, 1,  h}}\\
	&:=I_1 + I_2.
\end{align*}
By \eqref{bound-summand-Fk1} and the uniform convergence $\ds\sup_{u\in B_R^{H_2}}|g_{k}(u, \b_1h)|\to 0$ from {\it Step 1}, one has for any $r>0$,
\begin{align*}
\sup_{w\in B_r^{H}}I_1
	&\leq \sup_{u\in B_R^{H_2}}|g_{k}(u, \b_1h)|\sup_{w\in B_r^{H}}\frac{P^V_{0, 1, h}\id_H(w)}{\l_{0, 1,  h}}\\
	&\leq C_{R_0}M_q\mkm_q (r)\sup_{u\in B_R^{H_2}}|g_{k}(u, \b_{-k}h)| \to 0, \quad k\to \infty.
\end{align*}
Now it follows from \eqref{uniform-bound-gk}, \eqref{contraction-polynomial} and Markov's inequality that 
\begin{align*}
	I_2&\leq C\int_{H\setminus B_R^{H_2}}\mkm_q(u)\frac{P^V_{0, 1, h}(w, du)}{\l_{0, 1,  h}}\\
	&\leq CR^{-1}\left(\E_w\mkm_q^2(\Phi_{0, 1, \b_1h})\right)^{1/2}\left(\E_{w}\|\Phi_{0, 1, \b_1h}\|_2^2\right)^{1/2}\leq CR^{-1}e^{\eta\|w\|^2}.
\end{align*}
Hence 
\[\sup_{w\in B_r^{H}}I_2\to 0, \quad R\to\infty. \]
As a consequence, we deduce for any $r>0$,
\[\sup_{w\in B_r^{H}}|g_k(w, h)|\to 0, \quad k\to\infty.\]
The proof of {\it Step 2} is complete. 

\vskip0.05in

{\it Step 3: Convergence \eqref{fd-convergence-BH} and \eqref{fd-convergence-Gh} for $\phi\in C_{\mkm_{\eta}}(H\times\To)$}. We only show the proof of these convergences for $\phi\in C_{b}(H\times\To)$.  The proof for $\phi\in C_{\mkm_{\eta}}(H\times\To)$ through a cut-off procedure is similar. 

\vskip0.05in

By Theorem A.8 in \cite{FK06}, the space $C_{b}^1(H)$ is dense in $C_{b}(H)$ when restricting on any compact subsets of $H$. Therefore by viewing $\{\phi(\cdot, \b_kh)\}_{k\geq 0}$ as an element in $\ell^{\infty}(\mathbb{N}, C_b(H))$,  there is a sequence $\phi_m = \{\phi_m(\cdot, \b_kh)\}_{k\geq 0}\in \ell^{\infty}(\mathbb{N}, C_b(H))$ such that $\ds\sup_{m\geq1}\|\phi_m\|_{\ell^{\infty}(\mathbb{N}, C_b(H))}<\infty$ and $\phi_m(\cdot, \b_kh)\to\phi(\cdot, \b_kh)$ uniformly for $k$ and uniformly on compact subsets of $H$ as $m\to\infty$. Denote by $g_{k, m}$ the function obtained by replacing $\phi(\cdot, \b_kh)$  with  $\phi_m(\cdot, \b_{k}h)$ in \eqref{def-gk-1}. And denote by $\ds\|\varphi\|_{r} = \sup_{w\in B_{r}^{H}}|\varphi(w)|$. Then in view of \eqref{def-gk-1}, we have 
\begin{align*}
	\|g_k(\cdot, h)\|_{r}\leq \|g_{k, m}(\cdot, h)\|_{r} &+ \sup_{h\in\To}\|F^V(\cdot, h)\|_{r}\la|\phi(\cdot, \b_kh)-\phi_m(\cdot, \b_kh)|, \G^V(\b_kh)\ra \\
	&+\frac{\|P_{0, k, h}^V(\phi-\phi_m)(\cdot, \b_kh)\|_r}{\l_{k, h, V}}.
\end{align*}
It follows from Proposition \ref{Existence-of-Eigenfunction} that $\ds\sup_{h\in\To}\|F^V(\cdot, h)\|_{r}<\infty$. Also from \eqref{concentration-property}, we have for any $R\geq 1$, 
\begin{align*}
	&\la|\phi(\cdot, \b_kh)-\phi_m(\cdot, \b_kh)|, \G^V(\b_kh)\ra \\
	&= \la|\phi(\cdot, \b_kh)-\phi_m(\cdot, \b_kh)|\id_{B_{R}^{H_2}}, \G^V(\b_kh)\ra  + \la|\phi(\cdot, \b_kh)-\phi_m(\cdot, \b_kh)|\id_{H\setminus B_{R}^{H_2}}, \G^V(\b_kh)\ra \\
	&\leq |\phi(\cdot, \b_kh)-\phi_m(\cdot, \b_kh)|_{C_b(B_{R}^{H_2})} + C\la\id_{H\setminus B_{R}^{H_2}}, \G^V(\b_kh)\ra\\
	&\leq |\phi(\cdot, \b_kh)-\phi_m(\cdot, \b_kh)|_{C_b(B_{R}^{H_2})}+ CR^{-2},
\end{align*}
where $C$ is independent of $k, m, R$. Therefore for any $\varepsilon>0$, by first choosing $R$ large and then choose $m$ large, we can find $M_1>0$ such that for all $m\geq M_1$ and $k\geq0$, 
\begin{align}\label{step3-1}
	\la|\phi(\cdot, \b_kh)-\phi_m(\cdot, \b_kh)|, \G^V(\b_kh)\ra <\varepsilon/3.
\end{align}
Let $L\geq 1$. By the uniform bounds \eqref{Bound-summand-Fk}, \eqref{bound-summand-Fk1} and the growth condition for the function $N_2$ in Proposition \ref{Growth-Condition}, we have for $w\in B_r^H$, 
\begin{align*}
\frac{|P_{0, k, h}^V(\phi-\phi_m)(\cdot, \b_kh)(w)|}{\l_{k, h, V}} 
	& \leq \frac{|P_{0, k, h}^V\id_{B_{L}^{H_2}}(\phi-\phi_m)(\cdot, \b_kh)(w)|}{\l_{k, h, V}}+\frac{|P_{0, k, h}^V\id_{H\setminus B_{L}^{H_2}}(\phi-\phi_m)(\cdot, \b_kh)(w)|}{\l_{k, h, V}}\\
	&\leq  |\phi(\cdot, \b_kh)-\phi_m(\cdot, \b_kh)|_{C_b(B_{L}^{H_2})}\frac{\|P_{0, k, h}^V\id_{H}\|_{r}}{\l_{k, h, V}}+C\frac{\left\|P_{0, k, h}^V\id_{H\setminus B_{L}^{H_2}}\right\|_{r}}{\l_{k, h, V}}\\
	&\leq C\left(|\phi(\cdot, \b_kh)-\phi_m(\cdot, \b_kh)|_{C_b(B_{L}^{H_2})}+ L^{-2}\frac{\left\|P_{0, k, h}^VN_2\right\|_{r}}{\l_{k, h, V}}\right)\\
	&\leq C\left(|\phi(\cdot, \b_kh)-\phi_m(\cdot, \b_kh)|_{C_b(B_{L}^{H_2})}+ L^{-2}\frac{\left\|P_{0, k, h}^V\id_{H}\right\|_{R_0}}{\l_{k, h, V}}\right)\\
	&\leq C(|\phi(\cdot, \b_kh)-\phi_m(\cdot, \b_kh)|_{C_b(B_{L}^{H_2})}+ L^{-2}),
\end{align*}
where the constant $C$ does not depend on $L, k, m$. Hence by first choosing $L$ large and then choosing $m$ large, we can find $M_2>0$ such that for all $m\geq M_2$ and $k\geq 0$
\begin{align}\label{step3-2}
	\sup_{w\in B_r^H}\frac{\|P_{0, k, h}^V(\phi-\phi_m)(\cdot, \b_kh)\|_r}{\l_{k, h, V}}<\varepsilon/3.
\end{align}
The result from {\it Step 2} implies that for $m=\max\{M_1, M_2\}$, one can find $K>0$ such that for all $k\geq K$, 
\[\|g_{k, m}(\cdot, h)\|_{r}<\varepsilon/3.\]
Combining this with \eqref{step3-1} and \eqref{step3-2} we obtain the desired convergence \eqref{fd-convergence-BH} for $\phi\in C_{b}(H\times\To)$.  The convergence \eqref{fd-convergence-Gh} follows from \eqref{fd-convergence-BH} and the concentration property \eqref{concentration-property} and the fact that $\G^V(h)\in \P_{\eta, A}(H)$ from Proposition \ref{Existence-of-Eigenmeasure}. 

\vskip0.05in

The proof of {\it Step 3} is complete. 

\vskip0.05in

{\it Step 4: Proof of convergence \eqref{fd-covergence-eigenmeasure}}. Let $\phi\in C_{\mkm_{\eta}}(H\times\To)$ and $\eta^{\prime}\in(\eta, \eta_0)$. Then by the uniform bound \eqref{Bound-summand-Fk} and growth condition from Proposition \ref{Growth-Condition}, we have 
\begin{align*}
\frac{P_{0, k, h}^V\phi(w, h)}{\l_{k, h, V}}
	&\leq C\frac{P_{0, k, h}^V\mkm_{\eta}(w)}{\l_{k, h, V}}\\
	&=C\frac{P_{0, k, h}^V\mkm_{\eta}(w)}{\|P_{0, k, h}^V\id_{H}\|_{R_0}}\frac{\|P_{0, k, h}^V\id_{H}\|_{R_0}}{\l_{k, h, V}}\leq C\mkm_{\eta}(w). 
\end{align*}
Since $F^V\in L_{\mathfrak{m}_{q}}^{\infty}(H\times\To)$, one has $g_k\in L_{\mkm_{\eta}}^{\infty}(H\times\To)$, where $g_k$ is the function obtained from \eqref{def-gk} with $\phi\in C_{\mkm_{\eta}}(H\times\To)$.  Hence by H\"older's inequality and Markov inequality, one has 
\begin{align}\label{inequality-step4}
\begin{split}
\la |g_k(\cdot, h)|, \mu\ra 
	&= \la |g_k(\cdot, h)|\id_{B_{R}^H}, \mu\ra  +  \la |g_k(\cdot, h)|\id_{H\setminus B_{R}^H}, \mu\ra \\
	&\leq |g_k(\cdot, h)|_{C_b(B_{R}^H)} + C\la \mkm_{\eta}\id_{H\setminus B_{R}^H}, \mu\ra\\
	&\leq |g_k(\cdot, h)|_{C_b(B_{R}^H)} + C\la \mkm_{\eta^{\prime}}, \mu\ra^{\eta/\eta^{\prime}} \la\id_{H\setminus B_{R}^H}, \mu\ra^{1-\eta/\eta^{\prime}} \\
	&\leq |g_k(\cdot, h)|_{C_b(B_{R}^H)} + CR^{-1}
\end{split}
\end{align}
for all $\mu\in\P_{\eta^{\prime}, M}(H)$, where $C$ is a constant independent of $k, h, R$. Therefore by first taking $k\to\infty$ and then $R\to\infty$, we obtain the desired convergence \eqref{fd-covergence-eigenmeasure}.

\vskip0.05in 

The proof of Theorem \ref{fd-convergence} is complete. 
\end{proof}

Following the same idea, one can prove the following pullback stability. Note that  since we do not have uniform in $h\in\To$ convergence in Theorem \ref{fd-convergence}, the following theorem is not a direct consequence of it.  However, since in the pullback case we have a nice global monotonicity structure, the proof is actually simpler than that of Theorem \ref{fd-convergence}.

\begin{theorem}[Pullback stability of the eigen-triple]\label{prop-convergence-discrete-time}
Let $\eta\in (0, \eta_0)$ and $V\in C_b^{1,0}(H\times\To)$. For any $\phi\in C_{\mkm_{\eta}}(H\times\To)$, $M, R>0, h\in\To$, and $\eta^{\prime}\in (\eta, \eta_0)$ we have
\begin{align}\label{convergence-BH}
	&\lim_{k\to\infty}\left| \l_{k, \b_{-k}h, V}^{-1}P_{0, k, \b_{-k}h}^{V}\phi(\cdot, h) - \left\langle \phi(\cdot, h), \G^V(h)\right\rangle F^V(\cdot, \b_{-k}h)\right|_{C_b(B_{R}^H)} = 0,\\\label{convergence-Gh}
	&\lim_{k\to\infty}\left|  \l_{k, \b_{-k}h, V}^{-1}P_{0, k, \b_{-k}h}^{V}\phi(\cdot, h) - \left\langle \phi(\cdot, h), \G^V(h)\right\rangle F^V(\cdot, \b_{-k}h)\right|_{\G^V(\b_{-k}h)} = 0,\\\label{covergence-eigenmeasure}
	&\lim_{k\to\infty}\sup_{\mu\in\P_{\eta^{\prime}, M}(H)}\left|\la  \l_{k, \b_{-k}h, V}^{-1}P_{0, k, \b_{-k}h}^{V*}\mu , \phi(\cdot, h)\ra - \langle F^V(\cdot, \b_{-k}h), \mu\rangle \la\G^V(h), \phi(\cdot, h)\ra\right| = 0.
\end{align}
\end{theorem}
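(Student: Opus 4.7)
The plan is to mirror the four-step strategy used for Theorem \ref{fd-convergence} but exploit the fact that in the pullback case the ``local'' invariance \eqref{g_k-invariance} is replaced by a genuine global monotonicity. First I would set
\[
\widetilde{g}_k(w,h)=\l_{k,\b_{-k}h,V}^{-1}P_{0,k,\b_{-k}h}^{V}\phi(\cdot,h)(w)-\la\phi(\cdot,h),\G^V(h)\ra F^V(w,\b_{-k}h),
\]
and, using the cocycle \eqref{Feynman-Kac-cocycle}, the eigenvalue cocycle \eqref{eigenvalue-cocycle} with $h\mapsto \b_{-k}h$, and the eigenfunction property \eqref{eigenfunction_property}, derive the key identity
\[
\widetilde{g}_k(w,h)=\l_{\ell,\b_{-k}h,V}^{-1}P^{V}_{0,\ell,\b_{-k}h}\widetilde{g}_{k-\ell}(\cdot,h)(w),\qquad 0\le\ell\le k.
\]
Integrating against $\G^V(\b_{-k}h)$ and invoking the eigenmeasure property (which sends $\l_{\ell,\b_{-k}h,V}^{-1}P^{V*}_{0,\ell,\b_{-k}h}\G^V(\b_{-k}h)$ to $\G^V(\b_{-(k-\ell)}h)$) yields the global monotonicity
\[
|\widetilde{g}_k(\cdot,h)|_{\G^V(\b_{-k}h)}\le|\widetilde{g}_{k-\ell}(\cdot,h)|_{\G^V(\b_{-(k-\ell)}h)},\qquad 0\le\ell\le k,
\]
together with $\la\widetilde{g}_k(\cdot,h),\G^V(\b_{-k}h)\ra=0$, so $|\widetilde{g}_k^{\pm}(\cdot,h)|_{\G^V(\b_{-k}h)}=\tfrac12|\widetilde{g}_k(\cdot,h)|_{\G^V(\b_{-k}h)}$.

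Next I would prove the $L^1$-stability \eqref{convergence-Gh} for $\phi\in C_b^{1}(H\times\To)$ by contradiction. Suppose $|\widetilde{g}_k(\cdot,h)|_{\G^V(\b_{-k}h)}\ge a>0$ along some subsequence; by the global monotonicity above the bound actually holds for \emph{every} $k$, which is the simplification over the forward case where only a shifted quadrant $K_j$ was available. Applying the same concentration \eqref{concentration-property}, the uniform equicontinuity of $\{\widetilde{g}_k(\cdot,h)\}$ on $(B_R^{H_2},\dh)$ (an exact analogue of Lemma \ref{lemma-equicontinuity-gk} obtained from the uniform Feller property and growth bounds), and the uniform irreducibility via \eqref{UI-1} and \eqref{irreducible-large-H2ball} applied along the pullback chain $\{\b_{-k}h\}$, I get pointwise lower bounds $\widetilde{g}_k^{\pm}\ge a/8$ on balls $B_r^{H,B_{L_0}^{H_2}}(w_{k}^{\pm})$ and the same local Harnack-type inequality as in \eqref{lower-bound-Tlg+-}, with parameters $A(R),\varepsilon_0(R),\k(R)$ identical to \eqref{eq-A(R)}--\eqref{kappa-R}. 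The iteration from \eqref{pre-iteration}--\eqref{gkj-contraction-2} then goes through verbatim, but now along the single sequence $k=l\ell+r$ (no need to pass to a subsequence with two indices), giving
\[
|\widetilde{g}_k(\cdot,h)|_{\G^V(\b_{-k}h)}\le C\d(R)^{l-1}+C\k(R),
\]
and letting first $R\to\infty$ and then $k\to\infty$ contradicts the lower bound $a>0$.

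Having \eqref{convergence-Gh} in hand for $\phi\in C_b^1$, I would derive uniform convergence on $B_R^{H_2}$ (hence on $B_R^{H}$) by repeating Steps 1--2 of the proof of Theorem \ref{fd-convergence}: equicontinuity plus the support property \eqref{support-property-1} turns an $L^1$-smallness into a uniform smallness on $H_2$-balls, and one step of $P^V$ combined with the regularizing bound \eqref{contraction-polynomial} transfers this to $H$-balls. Approximation of $\phi\in C_{\mkm_{\eta}}(H\times\To)$ by $C_b^1$ functions through the density result of \cite{FK06} combined with the growth estimates from Proposition \ref{Growth-Condition} and the uniform bound \eqref{Bound-summand-Fk} gives \eqref{convergence-BH} and \eqref{convergence-Gh} in full generality, exactly as in Step 3 of the forward proof; the only change is that every occurrence of $\b_k h$ is replaced by $h$ and of $h$ is replaced by $\b_{-k}h$. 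Finally, \eqref{covergence-eigenmeasure} follows from \eqref{convergence-BH}, the uniform Lyapunov bound for $\widetilde{g}_k$, and the H\"older/Markov splitting in \eqref{inequality-step4}, uniformly over $\mu\in\P_{\eta^{\prime},M}(H)$.

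The principal technical obstacle is, as in the forward case, the Ruelle lower bound argument; however, because the global monotonicity relieves us from the two-index bookkeeping with $K_j$ and because the measure against which we test is shifted by the same amount as the symbol, the proof is cleaner and the irreducibility need only be applied along the pullback orbit $\{\b_{-k}h\}_{k\ge 0}$ rather than all intermediate symbols.
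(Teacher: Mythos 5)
Your proposal is correct and follows essentially the same route as the paper: you exploit the global monotonicity of $k\mapsto|g_k(\cdot,\beta_{-k}h)|_{\Gamma^V(\beta_{-k}h)}$ to get the contradiction lower bound for \emph{all} $k$ rather than only along a two-index set $K_j$, then run the Harnack/iteration argument and the approximation steps from Theorem~\ref{fd-convergence} verbatim with the symbol shifted to $\beta_{-k}h$. This is exactly the simplification the paper alludes to when it says the rest of the argument is omitted because it is similar to (and simpler than) the forward case.
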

\begin{proof}
Replacing $h$ by $\b_{-k}h$ in \eqref{g_k-non-increasing}, we have 
\begin{align*}
	|g_k(\cdot, \b_{-k}h)|_{\G^V(\b_{-k}h)}\leq |g_{k-1}(\cdot, \b_{-(k-1)}h)|_{\G^V(\b_{-(k-1)}h)}, \, \text{ for all } k,
	h\in\To.
\end{align*}
This means for each fixed $h$, the sequence $|g_k(\cdot, \b_{-k}h)|_{\G^V(\b_{-k}h)}, \, k\in\mathbb{N}$ is non-increasing. This global non-expansive structure implies that if the convergence \eqref{convergence-Gh} does not hold, then there is a constant $a>0$ such that 
\[|g_k(\cdot, \b_{-k}h)|_{\G^V(\b_{-k}h)}\geq a\]
for all $k$.  The rest of the proof for Theorem \ref{prop-convergence-discrete-time} is similar to that of Theorem \ref{fd-convergence}, hence we omit the details. 
\end{proof}

\vskip0.05in

The following corollary states that uniformly for initial measures in 
\[\overline{\G}:=\lp\G^V(h), h\in\To\rp,\]
the trajectory of $S_k^V$ is attracted by $\G^V$, which is useful in proving the continuity of eigenmeasure. 

\begin{corollary}\label{St-to-Gamma}
For each $V\in C_b^{1,0}(H\times\To), \phi\in C_b(H)$, we have the following convergence 
\begin{align*}
\lim_{k\to\infty}\sup_{\mu\in\overline{\G}}\left|\frac{\la  P_{0, k, \b_{-k}h}^{V*}\mu, \phi\ra}{ P_{0, k, \b_{-k}h}^{V*}\mu(H)}-  \la\G^V(h), \phi\ra\right|=0,
\end{align*}
for every $h\in\To$. 
\end{corollary}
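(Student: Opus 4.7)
The plan is to apply the pullback convergence \eqref{covergence-eigenmeasure} of Theorem \ref{prop-convergence-discrete-time} to both the numerator and the denominator after dividing through by $\l_{k,\b_{-k}h,V}$. Since $\overline{\G}\subset \P_{\eta, A}(H)$ for any $\eta\in(0,\eta_0)$ by Proposition \ref{Existence-of-Eigenmeasure}, and any $\phi\in C_b(H)$, viewed as independent of the time symbol, lies in $C_{\mkm_{\eta}}(H\times\To)$, Theorem \ref{prop-convergence-discrete-time} applies uniformly in $\mu\in\overline{\G}$.

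Setting $\widetilde{\phi}=\phi-\la \G^V(h),\phi\ra$, I would rewrite the quantity of interest as
\[
\frac{\la P_{0, k, \b_{-k}h}^{V*}\mu, \phi\ra}{P_{0, k, \b_{-k}h}^{V*}\mu(H)}=\la\G^V(h),\phi\ra+\frac{\la\mu,\l_{k,\b_{-k}h,V}^{-1}P^V_{0,k,\b_{-k}h}\widetilde{\phi}\ra}{\la\mu,\l_{k,\b_{-k}h,V}^{-1}P^V_{0,k,\b_{-k}h}\id_H\ra}.
\]
Applying \eqref{covergence-eigenmeasure} to $\widetilde{\phi}$, whose $\G^V(h)$-average vanishes, shows that the numerator of the error term tends to zero uniformly in $\mu\in\overline{\G}$. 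Applied with test function $\id_H$, the same equation gives that the denominator equals $\la F^V(\cdot,\b_{-k}h),\mu\ra+o(1)$ uniformly in $\mu\in\overline{\G}$. The crux, which I expect to be the main obstacle, is the uniform lower bound
\[
c_0:=\inf_{(h_1,h_0)\in\To^2}\la F^V(\cdot,h_1),\G^V(h_0)\ra>0,
\]
which can only use the ingredients available at this stage (in particular, neither $h\mapsto \G^V(h)$ nor $h\mapsto F^V(\cdot,h)$ has yet been shown continuous, so a direct compactness argument is unavailable).

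To establish this bound I would argue in three steps. First, the growth estimate $F^V\leq C\mkm_q$ from Proposition \ref{Existence-of-Eigenfunction} together with the uniform exponential moment $\G^V(h)\in \P_{\eta, A}(H)$ and the tail bound \eqref{concentration-property} give (via Cauchy--Schwarz) that $\int_{H\setminus B_R^{H_2}} F^V(w, h_1)\,\G^V(h_1)(dw)\leq 1/2$ for $R$ large enough, uniformly in $h_1\in\To$; combined with the normalization $\la F^V(\cdot,h_1),\G^V(h_1)\ra=1$, this produces a point $w_{h_1}\in B_R^{H_2}$ with $F^V(w_{h_1}, h_1)\geq 1/2$. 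Second, the uniform equicontinuity of $\{F^V(\cdot,h)\}_{h\in\To}$ on $(B_L^{H_2},\dh)$ from Proposition \ref{Existence-of-Eigenfunction} yields a radius $\delta>0$, independent of $h_1$, such that $F^V(\cdot,h_1)\geq 1/4$ on $B_{\delta}^{H,B_L^{H_2}}(w_{h_1})$ for any $L\geq R$. Third, choosing $L\geq L_0(R,\delta)$ so that the uniform support property \eqref{support-property-1} applies at the ball anchor $w_{h_1}\in B_R^{H_2}$, one gets $\G^V(h_0)\bigl(B_{\delta}^{H,B_L^{H_2}}(w_{h_1})\bigr)\geq p/2$ uniformly in $h_0\in\To$. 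Integrating produces $\la F^V(\cdot,h_1),\G^V(h_0)\ra\geq p/8>0$, which is the desired lower bound. With $c_0>0$ in hand, the denominator is eventually at least $c_0/2$ uniformly in $\mu\in\overline{\G}$, and the uniform convergence of the numerator of the error term to zero concludes the proof.
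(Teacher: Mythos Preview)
Your proof is correct and follows essentially the same route as the paper: divide numerator and denominator by $\l_{k,\b_{-k}h,V}$, invoke the pullback convergence \eqref{covergence-eigenmeasure} uniformly over $\overline{\G}\subset\P_{\eta,A}(H)$, and reduce the problem to a uniform positive lower bound on $\la F^V(\cdot,\b_{-k}h),\mu\ra$ for $\mu\in\overline{\G}$, which you obtain exactly as the paper does via the tail estimate, the equicontinuity of $\{F^V(\cdot,h)\}_h$ on $(B_L^{H_2},\dh)$, and the support property \eqref{support-property-1}. The only cosmetic difference is that you subtract $\la\G^V(h),\phi\ra$ first and work with $\widetilde{\phi}$, whereas the paper bounds the two pieces $I_1,I_2$ separately and then combines them via the triangle inequality; the two are algebraically equivalent. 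One small quantifier slip: in your second step you assert that the radius $\delta$ works ``for any $L\geq R$'', but equicontinuity on $(B_L^{H_2},\dh)$ in principle gives $\delta=\delta(L)$. The correct order (which the paper also uses, if somewhat implicitly) is to fix $L$ first---large enough for both the tail bound and for \eqref{support-property-1} to apply---and only then extract $\delta=\delta(L)$; since the threshold $L_0$ in \eqref{support-property-1} is in fact independent of the radius parameter (it comes from \eqref{irreducible-large-H2ball}, where $L_0=L_0(\ell,R)$ depends only on the time step and the initial $H$-ball), no circularity arises.
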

\begin{proof}
Let $\phi\in C_b(H)$ and $\eta\in(0, \eta_0)$.  Convergence \eqref{covergence-eigenmeasure} implies that 
\begin{align}\label{St-to-Gamma-1}
	I_1:= \left|\la  \l_{k, \b_{-k}h, V}^{-1}P_{0, k, \b_{-k}h}^{V*}\mu , \phi\ra - \langle F^V(\cdot, \b_{-k}h), \mu\rangle \la\G^V(h), \phi\ra\right| \to 0
\end{align}
uniformly for $\mu\in\P_{\eta, A}(H)$ as $k\to\infty$, where $A$ is from Proposition \ref{Existence-of-Eigenmeasure}. In particular, taking $\phi=\id_H$, then 
\begin{align}\label{St-to-Gamma-2}
	I_2:=\left| \l_{k, \b_{-k}h, V}^{-1}P_{0, k, \b_{-k}h}^{V*}\mu(H) - \la F^V(\cdot, \b_{-k}h), \mu\ra\right| \to 0
\end{align}
uniformly for $\mu\in\P_{\eta, A}(H)$ as well. Combining \eqref{St-to-Gamma-1} and \eqref{St-to-Gamma-2} with triangle inequality, we have 
\begin{align}\label{St-to-Gamma-3}
	\varepsilon_{k, h, \mu}
	:= \left|\frac{\la  P_{0, k, \b_{-k}h}^{V*}\mu, \phi\ra}{\l_{k, \b_{-k}h, V}}-  \frac{P_{0, k, \b_{-k}h}^{V*}\mu(H)}{\l_{k, \b_{-k}h, V}}\la\G^V(h),
	\phi\ra\right|\leq I_1+I_2\left|\la\G^V(h), \phi\ra\right| \to 0
\end{align}
uniformly for $\mu\in\P_{\eta, A}(H)$ as $k\to\infty$. 

\vskip0.05in

We now claim that $\l_{k, \b_{-k}h, V}^{-1}P_{0, k, \b_{-k}h}^{V*}\mu(H)$ can be bounded away from 0 for large $k$ through the convergence \eqref{St-to-Gamma-2} and equicontinuity of the eigenfunction family $F^V(\cdot, h), h\in\To$. Since $\G^V(h)$ is concentrated on $H_2$ for every $h$, by the normalizing equality \eqref{Normalization-property} and the fact $F^V\in L_{\mathfrak{m}_{q}}^{\infty}(H\times\To)$ from Proposition \ref{Existence-of-Eigenfunction}, we have for any $L>0$
\begin{align*}
	1 = \la F^V(\cdot, h), \G^V(h)\ra 
	& = \la F^V(\cdot, h)\id_{B_{L}^{H_2}}, \G^V(h)\ra + \la F^V(\cdot, h)\id_{H_2\setminus B_{L}^{H_2}}, \G^V(h)\ra\\
	&\leq \la F^V(\cdot, h)\id_{B_{L}^{H_2}}, \G^V(h)\ra + \la \mkm_q\id_{H_2\setminus B_{L}^{H_2}}, \G^V(h)\ra\\
	&\leq \la F^V(\cdot, h)\id_{B_{L}^{H_2}}, \G^V(h)\ra + CL^{-1}. 
\end{align*}
In the last inequality we used H\"older inequality, the concentration property \eqref{concentration-property} and $\G^V(h)$ integrability of $\mkm_q^2$, where $C$ is independent of $h$. Taking $L\geq L_0$ sufficiently large where $L_0$ is from \eqref{support-property-1}, we have 
\[\la F^V(\cdot, h)\id_{B_{L}^{H_2}}, \G^V(h)\ra\geq 1/2, \, h\in\To.\]
This implies the existence of $w_h\in B_{L}^{H_2}$ such that $F^V(w_h, h)\geq 1/2$. Proposition \ref{Existence-of-Eigenfunction} implies that the family $F^V(\cdot, h), h\in\To$ is uniformly equicontinuous on $(B_{L}^{H_2}, \dh)$. Hence there is a radius $r=r(L)$ independent of $h$ such that 
\begin{align*}
	F^V(w, h)\geq 1/4, \quad \text{ for all } w\in B_{r}^{H, B_{L}^{H_2}}(w_h), h\in\To. 
\end{align*}
Combining this with \eqref{support-property-1}, we have 
\begin{align}\label{lower-bound-eigenfunction-eigenmeasure}
\la F^V(\cdot, \b_{-k}h), \mu\ra\geq \frac14 \mu(B_{r}^{H, B_{L}^{H_2}}(w_h))\geq p/8, 
\end{align}
for any $\mu\in \overline{\G}$, $k\geq 0, h\in\To$, where $p=p(L)$ is independent of $h, k, \mu$.  In view of \eqref{St-to-Gamma-2}, we obtain the existence of a large $K$ (may depend on $h$)  such that 
\begin{align}
	\inf_{\mu\in\overline{\G}}\l_{k, \b_{-k}h, V}^{-1}P_{0, k, \b_{-k}h}^{V*}\mu(H)\geq p/16. 
\end{align}
for all $k\geq K$. Combining this with \eqref{St-to-Gamma-3} and the fact $\overline{\G}\subset \P_{\eta, A}(H)$, we obtain the desired convergence 
\begin{align*}
	\sup_{\mu\in\overline{\G}}\left|\frac{\la  P_{0, k, \b_{-k}h}^{V*}\mu, \phi\ra}{ P_{0, k, \b_{-k}h}^{V*}\mu(H)}-  \la\G^V(h), \phi\ra\right|
	&=\sup_{\mu\in\overline{\G}}\frac{\l_{k, \b_{-k}h, V}}{P_{0, k, \b_{-k}h}^{V*}\mu(H)}\varepsilon_{k, h, \mu}\\
	&\leq \frac{16}{p}\sup_{\mu\in\overline{\G}}\varepsilon_{k, h, \mu}\to 0,\, k\to\infty.
\end{align*}
The proof is complete. 
\end{proof}

\vskip0.05in

\subsection{Uniqueness and regularity of the eigen-triple}\label{subsection-uniqueness-regularity-discrete-time}

We now prove uniqueness of eigen-triple and corresponding regularity. The uniqueness of eigenmeasure and eigenvalue follows from the pullback convergence in Theorem \ref{prop-convergence-discrete-time}, while the uniqueness of eigenfunction follows from the forward convergence in Theorem \ref{fd-convergence}. The continuous dependence of eigenmeassure on time symbols $h\in\To$ follows from Corollary \ref{St-to-Gamma}, which implies continuity of eigenvalue $\l_{k, h, V}$. The regularity of eigenfunction $F^V$ on $w\in H$ follows from the forward convergence  \eqref{fd-convergence-BH} and uniform Lipschitz property from Proposition \ref{Uniform-Feller}.  Continuity of eigenfunction on time symbols $h\in\To$  will be proved in subsection \ref{subsection-proof-main-theorem-Feynman-Kac}. 
\begin{proposition}\label{proposition-uniqueness-eigen-data}
For $V\in C_b^{1,0}(H\times\To)$, the eigen-triple $(\G^V, \l_{k, h, V}, F^V)$ of the Feynman-Kac evolution operator $P_{0, k, h}^V$ is unique. In addition, 
\[h\to \G^V(h), \, \text{ and } h\to\l_{k, h, V},\]
are continuous for every $k$. Moreover, the family $\lp F^V(\cdot, h), h\in\To\rp$ is uniformly Lipschitz on $B_{R}^{H}$ for any $R>0$ with a uniform bound  $L=L(R, V)$ on the Lipschitz constants independent of $h\in\To$. 
\end{proposition}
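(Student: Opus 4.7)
I will extract the four claims from the stability results just established. For \emph{uniqueness of $(\G^V, \l_{k, h, V})$}, let $(\tilde\G, \tilde\l)$ be any other eigenmeasure/eigenvalue family with $\tilde\G(h) \in \P_{\eta, A}(H)$ (so that $\tilde\G(\b_{-k}h)$ is admissible in the pullback convergence). Substituting $\mu = \tilde\G(\b_{-k}h)$ into \eqref{covergence-eigenmeasure} and using the tilde eigenmeasure identity $P^{V*}_{0, k, \b_{-k}h}\tilde\G(\b_{-k}h) = \tilde\l_{k, \b_{-k}h, V}\tilde\G(h)$ gives, for every $\phi \in C_b(H)$,
\[
\frac{\tilde\l_{k, \b_{-k}h, V}}{\l_{k, \b_{-k}h, V}}\la \tilde\G(h), \phi\ra \;-\; \la F^V(\cdot, \b_{-k}h), \tilde\G(\b_{-k}h)\ra \la \G^V(h), \phi\ra \;\to\; 0.
\]
Specializing $\phi = \id_H$ forces the two scalars $a_k := \tilde\l_{k, \b_{-k}h, V}/\l_{k, \b_{-k}h, V}$ and $b_k := \la F^V(\cdot, \b_{-k}h), \tilde\G(\b_{-k}h)\ra$ to satisfy $a_k - b_k \to 0$. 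Since $b_k$ is bounded below uniformly in $k$ by the type of argument leading to \eqref{lower-bound-eigenfunction-eigenmeasure} (using strict positivity of $F^V$ and the support/concentration estimates \eqref{support-property-1}--\eqref{concentration-property}, which any $S_1^V$-fixed point inherits), so is $a_k$, and dividing out identifies $\tilde\G(h) = \G^V(h)$; the eigenvalue equality then follows. For \emph{uniqueness of $F^V$}, the eigenfunction identity yields $\tilde F(w, h) = \l^{-1}_{k, h, V} P^V_{0, k, h}\tilde F(\cdot, \b_kh)(w)$; applying the forward convergence \eqref{fd-convergence-BH} with $\phi = \tilde F$ and using the normalization $\la \tilde F(\cdot, \b_kh), \G^V(\b_kh)\ra = 1$ (now unambiguous by uniqueness of $\G^V$) sends the right-hand side to $F^V(w, h)$ uniformly on $B_R^H$, so $\tilde F = F^V$.

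For \emph{continuity of $\G^V$} at a point $h_0 \in \To$, fix $\phi \in C_b(H)$ and $\varepsilon > 0$ and, by Corollary \ref{St-to-Gamma}, pick $k$ large enough that
\[
\sup_{\mu \in \overline{\G}}\left|\frac{\la P^{V*}_{0, k, \b_{-k}h_0}\mu, \phi\ra}{P^{V*}_{0, k, \b_{-k}h_0}\mu(H)} - \la \G^V(h_0), \phi\ra\right| < \varepsilon.
\]
For $h_n \to h_0$, set $\mu_n = \G^V(\b_{-k}h_n) \in \overline{\G}$; by the eigenmeasure property, $\la \G^V(h_n), \phi\ra$ equals the analogous ratio with $\b_{-k}h_0$ replaced by $\b_{-k}h_n$ in the \emph{operator}. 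The difference of the two ratios at the fixed measure $\mu_n$ reduces, via the two-sided bound \eqref{two-sided-bounds} on the normalizers, to controlling $\bigl|\la \mu_n, P^V_{0, k, \b_{-k}h_n}\phi - P^V_{0, k, \b_{-k}h_0}\phi\ra\bigr|$; iterating the one-step Lipschitz-in-$h$ estimate \eqref{Lipschitz-modulo-Lyapunov} $k$ times bounds this by $C(k, \phi, V)\la \mu_n, \mkm_{\eta}\ra(|h_n - h_0| + \k(|h_n - h_0|))$, which tends to $0$ as $n\to\infty$ uniformly thanks to $\mu_n \in \P_{\eta, A}(H)$. Hence $\limsup_n |\la \G^V(h_n) - \G^V(h_0), \phi\ra| \leq \varepsilon$, and letting $\varepsilon \to 0$ gives the desired weak continuity.

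Continuity of $\l_{k, h, V} = \la \G^V(h), P^V_{0, k, h}\id_H\ra$ then follows from the splitting
\[
\l_{k, h_n, V} - \l_{k, h_0, V} = \la \G^V(h_n), P^V_{0, k, h_n}\id_H - P^V_{0, k, h_0}\id_H\ra + \la \G^V(h_n) - \G^V(h_0), P^V_{0, k, h_0}\id_H\ra,
\]
the first piece going to zero by \eqref{Lipschitz-modulo-Lyapunov} together with the uniform exponential moments of $\G^V(h_n)$, and the second by the weak continuity just proved, where the unboundedness of $P^V_{0, k, h_0}\id_H$ is handled by truncation using the polynomial growth of Proposition \ref{Growth-Condition} against those same uniform moments. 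For the \emph{uniform Lipschitz property of $F^V(\cdot, h)$ on $B_R^H$}, Proposition \ref{Uniform-Feller} supplies a Lipschitz constant for each normalized term $P^V_{0, \ell, h}\id_H/\l_{\ell, h, V}$ on $B_R^H$ that is independent of $(\ell, h)$ (the two-sided control \eqref{Bound-summand-Fk} on the normalizers lets this be transferred from the uniform Feller bounds on $P^V_{0, \ell, h}\id_H$ itself). Averaging preserves the constant for $F_k(\cdot, h)$, and it is inherited by the Arzel\`a--Ascoli limit $F^V(\cdot, h)$ constructed in Proposition \ref{Existence-of-Eigenfunction}.

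The \textbf{main obstacle} will be the continuity of $\G^V$, since the initial measure $\mu_n = \G^V(\b_{-k}h_n)$ and the transfer operator $P^{V*}_{0, k, \b_{-k}h_n}$ move simultaneously with $h_n$; the decisive ingredients are the uniform-in-$\mu$ form of Corollary \ref{St-to-Gamma} (which absorbs the $\mu_n$-dependence at a scale chosen before $h_n \to h_0$ is used) and the uniform exponential-moment bound on $\overline{\G}$ (which forces the finite-$k$ Lipschitz-in-$h$ correction to zero at a rate independent of $n$).
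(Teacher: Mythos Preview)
Your plan is correct and follows essentially the same route as the paper: uniqueness of $(\G^V,\l)$ via the pullback convergence applied to a competing eigenmeasure, uniqueness of $F^V$ via the forward convergence, continuity of $\G^V$ via Corollary~\ref{St-to-Gamma} combined with the Lipschitz-in-$h$ estimate \eqref{Lipschitz-modulo-Lyapunov}, continuity of $\l_{k,h,V}$ from that of $\G^V$, and the uniform Lipschitz property of $F^V(\cdot,h)$ from Proposition~\ref{Uniform-Feller} plus \eqref{Bound-summand-Fk}. Two cosmetic remarks: $P^V_{0,k,h_0}\id_H$ is already bounded by $e^{kM_V}$, so no truncation is needed for the eigenvalue continuity; and for the Lipschitz property on $B_R^H$ it is cleaner (as the paper does) to invoke the forward convergence \eqref{fd-convergence-BH} with $\phi=\id_H$ directly rather than the Ces\`aro averages $F_k$, since the latter were shown to converge only on $(B_R^{H_2},\|\cdot\|)$ in Proposition~\ref{Existence-of-Eigenfunction}.
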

\begin{proof}
The proof is divided into three steps. We first show uniqueness at the first step. Then we prove continuity of eigenmeasure and eigenvalue at the second step. The continuity of eigenfunction is given at the last step. 

\vskip0.05in

{\it Step 1: Uniqueness}. Suppose there is another eigen-triple $(\widetilde{\G}^V, \widetilde{\l}_{k, h, V}, \widetilde{F}^V)$. We show the uniqueness by first proving $\G^V= \widetilde{\G}^V$. 

\vskip0.05in

Noting $\widetilde{\G}^V$ must have the same support property as $\G^V$ in Proposition \ref{Existence-of-Eigenmeasure}.  This fact together with the convergence \eqref{convergence-BH} imply that 
\begin{align*}
\lim_{k\to\infty}|g_k(\cdot, \b_{-k}h)|_{\widetilde{\G}^V(\b_{-k}h)}=0, 
\end{align*}
where $g_k$ is given as in \eqref{def-gk}. Since $\left| \la g_k(\cdot, \b_{-k}h), \widetilde{\G}^V(\b_{-k}h)\ra\right|\leq |g_k(\cdot, \b_{-k}h)|_{\widetilde{\G}^V(\b_{-k}h)}$, it follows that 
\begin{align}\label{convergence-tilde-Gamma}
\lim_{k\to\infty}\la g_k(\cdot, \b_{-k}h), \widetilde{\G}^V(\b_{-k}h)\ra =0.
\end{align}
Taking in particular $\phi=\id_{H}$ in $g_k$, using the facts $\widetilde{\G}^V(H) = \G^V(H)=1$ for all $h\in\To$ and $P_{0, k, \b_{-k}h}^{V*}\widetilde{\G}^V(\b_{-k}h)=\widetilde{\l}_{k, \b_{-k}h, V}\widetilde{\G}^V(h)$, we have 
\begin{align}\label{convergence-1H-1}
\la g_k(\cdot, \b_{-k}h), \widetilde{\G}^V(\b_{-k}h)\ra = \frac{\widetilde{\l}_{k, \b_{-k}h, V}}{\l_{k, \b_{-k}h, V}}- \la F^V(\cdot, \b_{-k}h), \widetilde{\G}^V(\b_{-k}h) \ra \to 0, \quad k\to\infty, 
\end{align}
by \eqref{convergence-tilde-Gamma}. Similarly by exchanging the role of eigen-triple we have 
\begin{align}\label{convergence-1H-2}
\frac{\l_{k, \b_{-k}h, V}}{\widetilde{\l}_{k, \b_{-k}h, V}}- \la \widetilde{F}^V(\cdot, \b_{-k}h), \G^V(\b_{-k}h) \ra \to 0, \quad k\to\infty, 
\end{align}

\vskip0.05in

Observe that $F^V$ is strictly positive everywhere by Proposition \ref{Existence-of-Eigenfunction}. We claim that 
\[\liminf_{k\to\infty}\la F^V(\cdot, \b_{-k}h), \widetilde{\G}^V(\b_{-k}h) \ra>0.\]
Indeed if this is not true, then by \eqref{convergence-1H-1} we have 
\[\liminf_{k\to\infty} \frac{\widetilde{\l}_{k, \b_{-k}h, V}}{\l_{k, \b_{-k}h, V}} = 0. \] 
This contradicts with the limit \eqref{convergence-1H-2}, since as an eigenfunction $\widetilde{F}^V\in L_{\mathfrak{m}_{q}}^{\infty}(H\times\To)$ implies that $\la \widetilde{F}^V(\cdot, \b_{-k}h), \G^V(\b_{-k}h) \ra$ is uniformly bounded. Hence the claim is proved. 

\vskip0.05in

Combining the claim with \eqref{convergence-1H-1}, one has
\begin{align*}
\L_{k}:= \frac{\widetilde{\l}_{k, \b_{-k}h, V}}{\l_{k, \b_{-k}h, V}}\la F^V(\cdot, \b_{-k}h), \widetilde{\G}^V(\b_{-k}h) \ra^{-1} \to 1, \quad k\to\infty, 
\end{align*}
which together with \eqref{convergence-tilde-Gamma} imply that for every $\phi\in C_b(H)$, 
\begin{align*}
\left|\la \phi, \widetilde{\G}^V(h) \ra - \la \phi, \G^V(h) \ra \right| &= \lim_{k\to\infty}\left|\L_k\la \phi, \widetilde{\G}^V(h) \ra- \la\phi, \G^V(h) \ra\right| \\
&=\lim_{k\to\infty}\la F^V(\cdot, \b_{-k}h), \widetilde{\G}^V(\b_{-k}h) \ra^{-1}\left|\la g_k(\cdot, \b_{-k}h), \widetilde{\G}^V(\b_{-k}h)\ra\right| = 0. 
\end{align*}
Hence the eigenmeasure, as the fixed point of  the semigroup $S_k^V$ in Proposition \ref{Existence-of-Eigenmeasure},  is unique. The corresponding eigenvalue $\l_{k, h, V}$ is uniquely determined.  Furthermore, the eigenfunction $F^V$ satisfying the normalization condition $\la F^V(h), \G^V(h)\ra =1$ is also uniquely determined by the forward convergence in Theorem \ref{fd-convergence} by taking $\phi=\id_{H}$ in \eqref{fd-convergence-BH}. 

\vskip0.05in

{\it Step 2: Continuity of eigenmeasure and eigenvalue}. Let $\phi$ be any bounded Lipschitz function on $H$ and $h_0\in\To$. We would like to show 
\[\la \G^V(h_n), \phi\ra\to\la \G^V(h_0), \phi\ra, \, \text{ as long as } h_n\to h,\] 
by combining Corollary \ref{St-to-Gamma} and Lipschitz continuity of $h\to P_{0, k, h}^V\phi(w)$ modulo the Lyapunov function $\mkm_{\eta}$ and growth in $k$. 

\vskip0.05in

To be specific, define 
\[\g_{k}^{\varphi}(h_1, h_2) = \la P_{0, k, \b_{-k}h_1}^{V*}\G^V(\b_{-k}h_2), \varphi\ra, \, \varphi\in C_b(H), h_1, h_2\in\To. \]
Note that by Proposition \ref{Existence-of-Eigenmeasure}, one has
\[\g_{k}^{\id_H}(h, h)=\l_{k, \b_{-k}h, V}\,  \text{ and }\, \g_{k}^{\phi}(h, h) = \g_{k}^{\id_H}(h, h)\la\G^V(h), \phi\ra.\]
Also observe that 
\begin{align}\label{continuity-gamma-1}
\begin{split}
	&\left|\la \G^V(h_n), \phi\ra\ - \la \G^V(h_0), \phi\ra\right| \\
	&\leq\left| \la \G^V(h_n), \phi\ra\ - \frac{\g_{k}^{\phi}(h_0, h_n)}{\g_{k}^{\id_H}(h_0, h_n)}\right| + \left|\frac{\g_{k}^{\phi}(h_0, h_n)}{\g_{k}^{\id_H}(h_0, h_n)} - \la \G^V(h_0), \phi\ra\right|: = I_{1, k} + I_{2, k}. 
\end{split}
\end{align}
It follows from Corollary \ref{St-to-Gamma} that for any $\varepsilon>0$, there is a large $K=K(h_0)$ independent of $h_n$ such that for all $k\geq K$, 
\begin{align}\label{continuity-gamma-2}
I_{2, k}<\varepsilon/2.
\end{align}
Since $V\in C_b^{1,0}(H\times\To)$, estimate as in \eqref{Lipschitz-modulo-Lyapunov} and the fact $\G^V(h)\in\P_{\eta, A}(H)$ yield the existence of some modulus of continuity $\k$ and a constant $C=C(K, \phi, V)$ such that 
\begin{align*}
	\left|\g_{K}^{\phi}(h_n, h_n)-\g_{K}^{\phi}(h_0, h_n)\right| &= \left|\la P_{0, K, \b_{-K}h_n}^V\phi(w) -P_{0, K, \b_{-K}h_0}^V\phi(w), \G^V({\b_{-K}h_n})\ra \right|\\
	&\leq C\la\mkm_{\eta}, \G^V(\b_{-K}h_n)\ra(|h_n-h_0|+ \k(|h_n-h_0|))\\
	&\leq C(|h_n-h_0|+ \k(|h_n-h_0|)), 
\end{align*}
for any Lipschitz $\phi\in C_b(H)$, where we used the fact that $|\b_{-K}h_n-\b_{-K}h| = |h_n-h|$. Therefore by noting that $e^{km_V}\leq \g_{k}^{\id_{H}}(h_1, h_2)\leq e^{kM_V}$ and $|\g_{k}^{\phi}(h_1, h_2)|\leq \|\phi\|_{\infty}e^{kM_V}$ for any $h_1, h_2$, we have 
\begin{align*}
	I_{1, K}\leq &\frac{\big|\g_{K}^{\id_H}(h_0, h_n)-\g_{K}^{\id_H}(h_n, h_n)\big|\big|\g_{K}^{\phi}(h_n, h_n)\big|}{\g_{K}^{\id_H}(h_0, h_n)\g_{K}^{\id_H}(h_n, h_n)}+\frac{\left|\g_{K}^{\id_H}(h_n, h_n)\right|\left|\g_{K}^{\phi}(h_n, h_n) -\g_{K}^{\phi}(h_0, h_n)\right|}{\g_{K}^{\id_H}(h_0, h_n)\g_{K}^{\id_H}(h_n, h_n)}\\
	&\leq C(K,\phi, V)(|h_n-h_0|+ \k(|h_n-h_0|)).
\end{align*}
As a result, there is an $N = N(K, \varepsilon, \phi, V)$ such that for all $n\geq N$,  
\begin{align}\label{continuity-gamma-3}
I_{1, K} < \varepsilon/2. 
\end{align}
Hence $\G^V\in C(\To, \P_{\eta, A}(H))$ by combining \eqref{continuity-gamma-1}-\eqref{continuity-gamma-3} and arbitrariness of bounded Lipschitz function $\phi$. The continuity of $h\to\l_{k, h, V}$ follows from the fact that 
\begin{align*}
	\l_{k, h, V} = \la P_{0, k, h}^V\id_H, \G^V(h)\ra
\end{align*}
and continuity of $h\to\G^V(h)$. 

\vskip0.05in

{\it Step 3: Continuity of eigenfunction}. 
The uniform Lipschitz continuity of the eigenfunction family follows from the convergence \eqref{fd-convergence-BH} with $\phi = \id_{H}$, the uniform Lipschitz property from Proposition \ref{Uniform-Feller} with $\phi = \id_{H}$, and the uniform boundedness \eqref{Bound-summand-Fk}. 

\vskip0.05in

The proof of Proposition \ref{proposition-uniqueness-eigen-data} is complete. 
\end{proof}
\subsection{Proof of Theorem \ref{asymptotic-behavior-feynman-kac}}\label{subsection-proof-main-theorem-Feynman-Kac}
\begin{proof}[Proof of Theorem \ref{asymptotic-behavior-feynman-kac}]The proof is divided into three steps. We first show the uniqueness of eigen-triple and then prove their continuity. Finally we  prove the forward and pullback stability  properties. 

\vskip0.05in

{\it Step 1: Uniqueness of eigen-triple for the continuous time family $P^V_{0, t, h}$}. Let $\l_{t, h, V} = P_{0, t, h}^{V*}\G^V(h)(H)$. We first show that  $\G^V$ is the unique eigenmeasureof the continuous time Feynman-Kac evolution operators $P_{0, t, h}^V$ with eigenvalue $\l_{t, h, V}$.  For any $t_0>0$, we apply the process in the previous three subsections to the discrete time Feynman-Kac evolution operators $P_{0, kt_0, h}^V, k\geq 0$, which yields the existence of unique eigen-triple $(\G_{t_0}^V, \widetilde{\l}_{kt_0, h, V}, F_{t_0}^V)$. Note that $\G_{t_0}^V$ is the unique fixed point of the operator $S_{t_0}^V$ as in \eqref{the-semigroup}, i.e.,
\[S_{t_0}^V\G_{t_0}^V=\G_{t_0}^V.\]
By the semigroup property of $S_t^V$, we have 
\begin{align*}
S_1^V(S_{t_0}^V\G^V)=S_{t_0}^V(S_1^V\G^V) = S_{t_0}^V\G^V, 
\end{align*}
implying  that $S_{t_0}^V\G^V$ is a fixed point of $S_1^V$, which is unique by Proposition \ref{proposition-uniqueness-eigen-data}. Hence $S_{t_0}^V\G^V=\G^V$,  which in turn shows that $\G^V$ is a fixed point of $S_{t_0}^V$.  Therefore by uniqueness 
\[\G_{t_0}^V=\G^V, \quad \text{and } \widetilde{\l}_{t_0, h, V} = \l_{t_0, h, V}.\]  
Since $t_0$ is arbitrary, we obtain that $\G^V$, as the unique fixed point of the semigroup $S_t^V$, is the unique eigenmeasure of $P_{0, t, h}^V$ with eigenvalue $\l_{t, h, V}$. 

\vskip0.05in

The uniqueness of eigenfunction is similar. Indeed, let $\widetilde{F}(w, h) = \l_{t_0, h, V}^{-1}P_{0, t_0, h}^VF^V(w, \b_{t_0}h)$. Then by using eigenfunction property of $F^V$ and cocycle property of $\l_{t, h, V}$,  one can verify in a straightforward manner that 
\[\la \widetilde{F}(h), \G^V(h)\ra=1, \text{ and } P_{0, k, h}^V\widetilde{F}(w, \b_kh) = \l_{k, h, V}\widetilde{F}(w, h), \quad (w, h)\in H\times\To.\]
By uniqueness of eigenfunction from Proposition \ref{proposition-uniqueness-eigen-data}, one has $\widetilde{F}=F^V$. Hence 
\[P_{0, t_0, h}^VF^V(w, \b_{t_0}h)=\l_{t_0, h, V}F^V(w, h),\]
which means that $F^V$ is an eigenfunction of the family $\{P_{0, kt_0, h}^V\}$. Therefore $F^V = F_{t_0}^V$ by uniqueness. 

\vskip0.05in

With the invariance (eigenmeasure property) of $\G^V$ in the whole time horizon, we are now able to show the integral representation \eqref{eigen-value-integral-representation} for the eigenvalue through a differentiation procedure. Indeed, since $r\to H_r$ is continuous almost surely,  one has  
\begin{align*}
	\frac{d}{dt}\l_{t, h, V} 
	&= \frac{d}{dt}\int_{H}\E_{(w, h)}\left[e^{\int_0^tV(\H_r)dr}\right]\G^V(h)(dw)\\
	&=\int_{H}\E_{(w, h)}\left[e^{\int_0^tV(\H_r)dr}V(H_t)\right]\G^V(h)(dw)\\
	&= \la V(\cdot, \b_th), P^{V*}_{0, t, h}\G^V(h)\ra = \l_{t, h, V}\la V(\cdot, \b_th), \G^V(\b_th)\ra. 
\end{align*}
Since $\l_{0, h, V}=1$, by integrating the above equation we obtain the desired representation
\begin{align*}
	\l_{t, h, V}  = \exp\left(\int_0^t\l^V(\b_rh)dr\right), \, \l^V(h) = \la V(\cdot, h), \G^V(h)\ra. 
\end{align*}

\vskip0.05in

The proof of {\it Step 1} is complete. 

\vskip0.05in

{\it Step 2: Continuity of the eigen-triple}. It follows from Proposition \ref{proposition-uniqueness-eigen-data} that 
$\G^V\in C(\To, \P(H))$. The continuity  of eigenvalue follows from the fact that $V\in C_b^{1, 0}(H\times\To)$ and the continuity of eigenmeasure once we note that 
\begin{align*}
	\left|\l^V(h_n) - \l^V(h)\right| &=\left|\la V(\cdot, h_n), \G^V(h_n)\ra - \la V(\cdot, h), \G^V(h)\ra\right|\\
	&\leq \left|\la V(\cdot, h_n), \G^V(h_n)\ra - \la V(\cdot, h), \G^V(h_n)\ra\right|+\left|\la V(\cdot, h), \G^V(h_n)\ra - \la V(\cdot, h), \G^V(h)\ra\right|\\
	&\leq C\k(|h_n-h|)+\left|\la V(\cdot, h), \G^V(h_n)\ra - \la V(\cdot, h), \G^V(h)\ra\right|\to 0, 
\end{align*}
as long as $h_n\to h$, where $\k$ is some modulus of continuity coming from the fact $V\in C_b^{1, 0}(H\times\To)$. 

\vskip0.05in

Since $\lp F^V(\cdot, h), h\in\To\rp$ is uniformly Lipschitz on any bounded ball in $H$ by Proposition \ref{proposition-uniqueness-eigen-data},  we only need to show that $h\to F^V(w, h)$ is continuous for every $w\in H$, which will imply that $F^V\in C(H\times\To)$. To that end,  it suffices to show that 
\begin{align}\label{time-continuity-eigenfunction-1}
	t\in \R\to F^V(w, \b_th) \, \text{ is  continuous for every } (w, h)\in H\times\To. 
\end{align}
Indeed, since $\To = [0, 2\pi)$ and $\b_{t}h=h+t\mod 2\pi$, we have $h=\b_h0$ for $h\in [0, 2\pi)$. Hence
\begin{align*}
	h\to F^V(w, h) = F^V(w, \b_h0),
\end{align*}
which is continuous by time continuity \eqref{time-continuity-eigenfunction-1}. 

\vskip0.05in

Therefore it remains to show \eqref{time-continuity-eigenfunction-1}. By the eigenfunction property, we have 
\begin{align}\label{time-continuity-eigenfunction-property-1}
	P_{0, t, \b_{-t}h}^VF^V(\cdot, h)(w) = \l_{t, \b_{-t}h, V}F^V(w, \b_{-t}h), \, \forall h\in\To, t\geq 0. 
\end{align}
By the integral representation \eqref{eigen-value-integral-representation}, we see that $\l_{t, \b_{-t}h, V}$ is continuous in time. Hence we only need to show that $P_{0, t, \b_{-t}h}^VF^V(\cdot, h)(w)$ is time continuous. 

\vskip0.05in

Let $\tau\geq 0$, for notational simplicity denoting $F(w) = F^V(w, h), w\in H$ and 
\begin{align*}
	\Xi_{t, h}^V :=\exp\left( \int_0^tV(\H_r(w, h))dr\right) = \exp\left( \int_0^tV(\Phi_{0, r, h}(w), \b_rh)dr\right). 
\end{align*}
Note that with $\E = \E_{w}$, 
\begin{align}\label{time-continuity-1-1}
\begin{split}
	&P_{0, t, \b_{-t}h}^VF^V(\cdot, h)(w) - P_{0, \tau, \b_{-\tau}h}^VF^V(\cdot, h)(w)\\
	&=\E\Xi_{t, \b_{-t}h}^VF(\Phi_{0, t, \b_{-t}h})  - \E\Xi_{\tau, \b_{-\tau}h}^VF(\Phi_{0, \tau, \b_{-\tau}h}) \\
	&=\E\left[(\Xi_{t, \b_{-t}h}^V-\Xi_{\tau, \b_{-\tau}h}^V)F(\Phi_{0, t, \b_{-t}h})\right]  + \E\left[\Xi_{\tau, \b_{-\tau}h}^V(F(\Phi_{0, t, \b_{-t}h}) -F(\Phi_{0, \tau, \b_{-\tau}h}))\right]\\
	&:=I_1 + I_2. 
\end{split}
\end{align}
Since $F^V\in L^{\infty}_{\mkm_q}(H\times\To)$ and is strictly positive everywhere by Proposition \ref{Existence-of-Eigenfunction}, one has 
\begin{align*}
	|I_1|&\leq \E\left[\left|\Xi_{t, \b_{-t}h}^V-\Xi_{\tau, \b_{-t}h}^V\right|F(\Phi_{0, t, \b_{-t}h})\right]+\E\left[\left|\Xi_{\tau, \b_{-t}h}^V-\Xi_{\tau, \b_{-\tau}h}^V\right|F(\Phi_{0, t, \b_{-t}h})\right]\\
	&\leq C\left(\E\left[\left|\int_{\tau}^tV(\H_r(w, \b_{-t}h)) dr\right|\mkm_q\right]+ \E\left[\int_{0}^{\tau}\left|V(\H_r(w, \b_{-t}h))-V(\H_r(w, \b_{-\tau}h))\right| dr\mkm_q\right]\right)\\
	&\leq C\left(|t - \tau| \E\mkm_q + \k(|\b_{-t}h -\b_{-\tau}h |) \E\mkm_q  + \int_0^{\tau}\E\left[\mkm_q\|\Phi_{0, r, \b_{-t}h}-\Phi_{0, r, \b_{-\tau}h}\|\right] dr\right)\\
	&\leq C\left(|t - \tau| +\k(|\b_{-t}h -\b_{-\tau}h |) + |\b_{-t}h -\b_{-\tau}h | \right)
\end{align*}
for some modulus of continuity $\k$ coming from the fact $V\in C_b^{1, 0}(H\times\To)$ and the constant $C=C(t, \tau, V)>0$. Here in the second inequality we used the boundedness of $V, t, \tau$ and the Lipschitz continuity of the exponential function $e^x$ on some bounded interval determined by $m_V, M_V, t, \tau$.  In the third inequality we used the fact $V\in C_b^{1, 0}(H\times\To)$. In the last step we used H\"older's inequality and \eqref{contraction-polynomial}, and \eqref{continuousonhull} together with the Lipschitz continuity of $f$. Hence $I_1\to 0$ as $t\to\tau$. 

\vskip0.05in

Now observe that 
\begin{align*}
	|I_2|&\leq e^{\tau\|V\|_{\infty}}\E\left|F(\Phi_{0, t, \b_{-t}h}) -F(\Phi_{0, \tau, \b_{-\tau}h})\right|\\
	&\leq C\left(\E\left|F(\Phi_{0, t, \b_{-t}h}) -F(\Phi_{0, t, \b_{-\tau}h})\right|+\E\left|F(\Phi_{0, t, \b_{-\tau}h}) -F(\Phi_{0, \tau, \b_{-\tau}h})\right|\right)\\
	&:=C(I_{2,1}+I_{2,2}). 
\end{align*}
Let $\O_{R} = \lp\o\in \O: \Phi_{0, t, \b_{-t}h}, \Phi_{0, t, \b_{-\tau}h}\in B_{R}^H\rp$. It follows from Proposition \ref{proposition-uniqueness-eigen-data} that $F$ is Lipschitz continuous on $B_{R}^H$ with some Lipschitz constant $L(R)$. Therefore by $F^V\in L^{\infty}_{\mkm_q}(H\times\To)$,  and \eqref{continuousonhull} together with the Lipschitz continuity of $f$, and noting that 
\[1\leq R^{-2q}\left(\|\Phi_{0, t, \b_{-t}h}\|^{2q}+\|\Phi_{0, t, \b_{-\tau}h}\|^{2q}\right)\leq R^{-2q}\left(\mkm_q(\Phi_{0, t, \b_{-t}h}) + \mkm_q(\Phi_{0, t, \b_{-\tau}h})\right), \text{ on }\O_{R}^c, \]
one obtains 
\begin{align*}
	I_{2,1} &= \E\left[\id_{\O_R}\left|F(\Phi_{0, t, \b_{-t}h}) -F(\Phi_{0, t, \b_{-\tau}h})\right|\right] + \E\left[\id_{\O_R^c}\left|F(\Phi_{0, t, \b_{-t}h}) -F(\Phi_{0, t, \b_{-\tau}h})\right|\right]\\
	&\leq L(R)\E\left\|\Phi_{0, t, \b_{-t}h} -\Phi_{0, t, \b_{-\tau}h}\right\| + C\E\left[\id_{\O_R^c}\left(\mkm_q(\Phi_{0, t, \b_{-t}h}) + \mkm_q(\Phi_{0, t, \b_{-\tau}h})\right)\right]\\
	&\leq C\Big(L(R)|\b_{-t}h -\b_{-\tau}h|+R^{-2q}\E\left(\mkm_{2q}(\Phi_{0, t, \b_{-t}h}) + \mkm_{2q}(\Phi_{0, t, \b_{-\tau}h})\right)\Big)\\
	&\leq C\Big(L(R)|\b_{-t}h -\b_{-\tau}h|+R^{-2q}\mkm_{2q}(w)\Big),
\end{align*}
where we used \eqref{contraction-polynomial} at the last step. Therefore by first choosing $R$ large and then letting $t\to\tau$, we have $I_{2,1}\to 0 $. 

\vskip0.05in

Since each sample path of the solution $\Phi_{0, t, \b_{-\tau}h}, t\geq 0$ is  continuous in $t$, and $F^V\in L^{\infty}_{\mkm_q}(H\times\To)$ implies 
\[ \E\left|F(\Phi_{0, t, \b_{-\tau}h}) -F(\Phi_{0, \tau, \b_{-\tau}h})\right| \leq \E\left(\mkm_q(\Phi_{0, t, \b_{-\tau}h}) + \mkm_q(\Phi_{0, \tau, \b_{-\tau}h})\right)\leq C\mkm_q(w), \, t\geq0, \]
therefore by Lebesgue's dominated convergence theorem, we have $I_{2, 2}\to 0$ as $t\to\tau$. As a result $I_2\to 0$ as $t\to\tau$.

\vskip0.05in

In view of \eqref{time-continuity-1-1} we obtain the time continuity of $P_{0, t, \b_{-t}h}^VF^V(\cdot, h)(w)$. Therefore by \eqref{time-continuity-eigenfunction-property-1}, the function 
\begin{align}\label{time-continuity-2-1}
	t\to F^V(w, \b_{-t}h) =  \l_{t, \b_{-t}h, V}^{-1}P_{0, t, \b_{-t}h}^VF^V(\cdot, h)(w), \, t\geq 0
\end{align}
is continuous for every $(w, h)\in H\times\To$. The continuity \eqref{time-continuity-eigenfunction-1} for any $t\in\R$ follows by shifting (replacing $h$ by $\b_sh$ for any $s>0$ in \eqref{time-continuity-2-1}). 

\vskip0.05in

The proof of {\it Step 2} is complete. 

\vskip0.05in

{\it Step 3: Proof of the stability properties of Theorem \ref{asymptotic-behavior-feynman-kac}}. Let $g_t(w, h)$ be the function obtained from \eqref{def-gk} by replacing $k$ with $t$. The proof of the forward convergence in this case is exactly the same as the proof of Theorem \ref{fd-convergence}. Namely, we first show as in Proposition \ref{proof-stability-L1}  the convergence of $g_t$ for $\phi\in C_b^1(H\times\To)$ in $L^1(\G^V(h))$ by utilizing the unique eigen-triple and the local monotonicity \eqref{g_k-non-increasing} which is also valid for continuous time. Then we proceed as in subsection \ref{subsection-proof-forward-stability}. 

\vskip0.05in

The pullback convergence in $L^1(\G^V(\b_{-t}h))$ follows from the convergence \eqref{convergence-Gh} in discrete time and the global monotonicity of $t\to \la|g_t(\cdot, \b_{-t}h)|, \G^V(\b_{-t}h)\ra$, where the later follows  by the semigroup property of $\P_t^V$, and cocycle property of the eigenvalue, as well as the eigenmeasure property.  The other pullback stability  properties in Theorem \ref{asymptotic-behavior-feynman-kac} then follow from  convergence in $L^1(\G^V(\b_{-t}h))$ and similar arguments as in subsection \ref{subsection-proof-forward-stability}. 

\vskip0.05in

The proof of Theorem \ref{asymptotic-behavior-feynman-kac} is complete. 

\end{proof}

\section{A Donsker-Varadhan type large deviation principle}\label{Proof-main-theorems}

This section is devoted to the proof of Theorem \ref{LDP}. We prove this theorem in the first subsection \ref{subsection-proof-theorem-ldp}. Then in subsection \ref{subsection-proof-zero-invariant-measure} we prove several lemmas that are used in the proof of Theorem \ref{LDP}. 

\subsection{Proof of Theorem \ref{LDP}}\label{subsection-proof-theorem-ldp}
In this subsection, we prove the main result on the large deviation bounds in Theorem \ref{LDP} by combining the results of Theorem \ref{asymptotic-behavior-feynman-kac} and the Kifer type criterion in Theorem \ref{Kifer-Type-Criterion} from the Appendix, where the later was established in \cite{JNPS18}. 

\vskip0.05in

More specifically, we first give a proof of  Theorem \ref{LDP}  in subsection \ref{subsection-proof-ldp} by assuming the following Theorem \ref{Kifer-condition}, which gives a verification of the conditions in Kifer's Theorem \ref{Kifer-Type-Criterion}. The proof of Theorem \ref{Kifer-condition} is given in subsection \ref{subsection-proof-verify-kifer}, which is  based on Theorem \ref{asymptotic-behavior-feynman-kac}. 
\begin{theorem}\label{Kifer-condition}
Assume $A_{\infty} = H$ and $f\in C_b(\R, H_2)$ is Lipschitz.  Let $\eta\in(0, \eta_0)$ and $M>0$. 
\begin{enumerate}
\item For any $V\in C_b(H\times\To)$, the following limit called the pressure function exists
\begin{align}\label{LDP-condition-1}
	Q(V) := \lim_{t\to\infty}\frac{1}{t}\sup_{\s\in \P_{\eta, M}(H)}\log\E_{\s}\exp\la tV, \zeta_{t, h}\ra,
\end{align}
and independent of $h, \eta, M$. Also $Q(V) $ is a convex $1$-Lipschitz function such that $Q(C) =C$ for any $C\in\mathbb{R}$. In particular for $V\in C_b^{1,0}(H\times\To)$, 
\begin{align}\label{Q(V)-when-V-regular}
	Q(V) = \int_{\To}\l^V(h)m(dh), 
\end{align}
where  $\l^V\in C(\To, \R)$ is the eigenvalue obtained in Theorem \ref{asymptotic-behavior-feynman-kac}. 

\item There is a function $R: H\times\To\to [0, \infty]$ with compact level sets $\{(w, h)\in H\times\To: R(w, h)\leq \alpha\}$ for any $\alpha\geq 0$ such that 
\begin{align}\label{exponential-tightness}
\E_{\s}\exp\la tR, \zeta_{t, h}\ra\leq Ce^{ct}, \quad \s\in \P_{\eta, M}(H), 
\end{align}
for some positive constants $c, C$. 

\item For each $V\in C_b^{1,0}(H\times\To)$,  there is a unique equilibrium state $\mu_V\in\mathcal{P}(H\times\To)$, i.e., 
\begin{align}\label{definition-equilibrium-state}
Q(V) = \langle V, \mu_V\rangle - I(\mu_V).
\end{align}
Here $I$ is the Legendre transform of $Q$ as in \eqref{rate-function}.

\end{enumerate}
\end{theorem}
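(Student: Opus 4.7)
\medskip

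\noindent\textbf{Proof plan.} The strategy is to verify the three items by leveraging Theorem \ref{asymptotic-behavior-feynman-kac} together with exponential moment estimates for the Navier--Stokes flow, and then pass from regular potentials $V\in C_b^{1,0}(H\times\To)$ to general $V\in C_b(H\times\To)$ by density and the $1$-Lipschitz property.

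For item (1), I first fix $V\in C_b^{1,0}(H\times\To)$ and take $\phi=\id_H$ in the forward stability \eqref{main-thereom-forward-mxing}. Since $F^V$ is strictly positive and bounded below on bounded sets, and $\sigma\in\P_{\eta,M}(H)$ is controlled by the Lyapunov function $\mkm_{\eta'}$, this yields
\begin{align*}
\frac{1}{t}\log\E_\sigma\exp\la tV,\zeta_{t,h}\ra=\frac{1}{t}\log\la\sigma, P^V_{0,t,h}\id_H\ra=\frac{1}{t}\log\l_{t,h,V}+o(1),
\end{align*}
uniformly in $\sigma$. By \eqref{eigen-value-integral-representation}, this equals $t^{-1}\int_0^t\l^V(\b_rh)\,dr+o(1)$. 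Because $\l^V$ is continuous on $\To$ (Theorem \ref{asymptotic-behavior-feynman-kac}) and Lebesgue measure $m$ is the unique invariant measure for the circle rotation $\b_t$, Birkhoff's ergodic theorem applies pointwise \emph{for every} $h\in\To$, giving the limit $Q(V)=\int_\To\l^V(h)\,m(dh)$, independent of $h,\eta,M$. As emphasized in the introduction, the continuity of $\l^V$ is crucial here---without it the Birkhoff average at the specific symbol $0$ need not converge. The extension to general $V\in C_b(H\times\To)$ uses that $V\mapsto t^{-1}\log\E_\sigma e^{\la tV,\zeta_{t,h}\ra}$ is $1$-Lipschitz in sup norm, together with density of $C_b^{1,0}(H\times\To)$ in $C_b(H\times\To)$ (uniformly on compact subsets, combined with a Lyapunov cut-off as in subsection \ref{subsection-proof-forward-stability}). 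Convexity of $Q$ follows from H\"older's inequality and $Q(C)=C$ is immediate.

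For item (2), I take $R(w,h):=c\|w\|_1^2$ with $c>0$ sufficiently small. Its sublevel sets are compact in $H\times\To$ by the compact embedding $H_1\hookrightarrow H$ and compactness of $\To$. The bound \eqref{exponential-tightness} then reduces to the classical 2D Navier--Stokes exponential enstrophy estimate
\begin{align*}
\E_w\exp\Big(c\int_0^t\|\Phi_{0,r,h}(w)\|_1^2\,dr\Big)\leq Ce^{ct}e^{\eta\|w\|^2},
\end{align*}
which belongs to the preliminary estimates invoked as Proposition \ref{bounds}. Integrating against $\sigma\in\P_{\eta,M}(H)$ yields \eqref{exponential-tightness}.

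For item (3), I follow the Doob $h$-transform scheme sketched in the introduction. Define the inhomogeneous Markov evolution
\begin{align*}
\widetilde P^V_{0,t,h}\phi(w):=\frac{P^V_{0,t,h}(F^V(\cdot,\b_th)\phi)(w)}{\l_{t,h,V}F^V(w,h)},
\end{align*}
and its homogenization $\mct_t^V\phi(w,h):=\widetilde P^V_{0,t,h}\phi(\cdot,\b_th)(w)$ on $H\times\To$. The continuous dependence of the eigen-triple on $h$ from Theorem \ref{asymptotic-behavior-feynman-kac}, together with the Feller property of $P^V_{0,t,h}$, yields the Feller property of $\mct_t^V$. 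One then sets up a bijection between invariant measures of $\mct_t^V$ and periodic families of ``eigen-invariant'' measures of $\widetilde P^V_{0,t,h}$, so that uniqueness of the RPF eigen-triple translates into uniqueness of the invariant measure $\mu_V$ of $\mct_t^V$. The equilibrium identity \eqref{definition-equilibrium-state} is obtained by identifying $I(\mu)$ with the Donsker--Varadhan functional of $\mct_t^V$ on the rich domain provided by the Feller property.

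\textbf{Main obstacle.} The hard part will be item (3), specifically verifying that $\mct_t^V$ has the Feller property with a domain of infinitesimal generator rich enough to determine invariant measures via the Donsker--Varadhan functional, and carefully constructing the bijection between inhomogeneous and homogenized invariant measures. The time inhomogeneity prevents a direct application of the standard homogeneous arguments used in \cite{JNPS18,MN18}, and the continuity of the eigen-triple on time symbols (the delicate output of Theorem \ref{asymptotic-behavior-feynman-kac}) is indispensable here. Items (1) and (2) are comparatively soft once one has the integral representation of $\l_{t,h,V}$ together with the continuity of $\l^V$, and the classical enstrophy estimate for the 2D Navier--Stokes system.
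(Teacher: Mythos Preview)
Your proposal is correct and follows essentially the same approach as the paper. For item (1) you use forward stability with $\phi=\id_H$, the integral representation \eqref{eigen-value-integral-representation}, and Birkhoff's theorem applied via continuity of $\l^V$, then extend by the $1$-Lipschitz property---exactly the paper's argument; for item (2) you take $R(w,h)=c\|w\|_1^2$ and invoke the enstrophy estimate \eqref{enstrophy}, again matching the paper; for item (3) your Doob $h$-transform construction $\mct_t^V$, the correspondence between its invariant measures and the eigen-triple, and the use of the Donsker--Varadhan functional on a rich generator domain are precisely the paper's three steps (Lemmas \ref{invarinat-measure}, \ref{zero-is-invariant-measure}, \ref{lemma-special-zero-for-auxiliary-pressure}). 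One small sharpening: the paper does not identify $I$ itself with the Donsker--Varadhan functional of $\mct_t^V$; rather it introduces the auxiliary pressure $Q^V(U)=Q(V+U)-Q(V)$ and its Legendre transform $I^V$, shows that equilibrium states for $V$ are exactly the zeros of $I^V$, and then proves (via $U_\phi=-\mbl_V\phi/\phi$ with $\phi$ in the generator domain $\mcd_+$) that zeros of $I^V$ must be $\mct_t^V$-invariant---but this is a refinement of wording rather than a different idea.
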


\subsubsection{Proof of Theorem \ref{LDP}}\label{subsection-proof-ldp}

\begin{proof}The proof is divided into three steps. 

\vskip0.05in

{\it Step 1: Link Theorem \ref{Kifer-condition} to Theorem \ref{Kifer-Type-Criterion}}. Note that the three conditions of Kifer's Theorem \ref{Kifer-Type-Criterion} are verified by  Theorem \ref{Kifer-condition}, where we take the directed set 
\[\Theta=(0, \infty)\times \P_{\eta, M}(H), \]
equipped with an order $\prec$ defined by $(t_1, \s_1)\prec(t_2, \s_2)$ if $t_1\leq t_2$.  And  for each fixed $h\in\To$, we set 
\begin{align*}
	\z_{\theta} = \zeta_{t, h}, \quad r(\theta)= t, \, \text{ for every } \theta = (t, \s)\in\Theta, 
\end{align*}
where $\z_{t, h}$ is the occupation measure defined as \eqref{occupation-measure}, which is a family of random probability measures defined on the probability space $(\O_{\theta}, \mathcal{F}_{\theta}, \mathbf{P}_{\theta}) = (\O, \mathcal{F}, \mathbf{P}_{\s})$.  Hence  Theorem \ref{Kifer-Type-Criterion} implies the desired large deviation bounds with the good rate function 
\begin{align}\label{rate-function-proof}
	I(\mu)=
	\left\{\begin{array}{cc}
		\ds\sup_{V\in C_b(H\times\To)}\left(\la V, \mu\ra - Q(V)\right), &\mu\in\P(H\times\To),\\
		\qquad+\infty, &\text{otherwise.}
	\end{array}\right.
\end{align}
It remains to show that the good rate function is nontrivial. 

\vskip0.05in

{\it Step 2: Prove the non-triviality of the rate function}.  Assume by contradiction that $I(\g) = 0$ and $I(\mu) =+\infty$ for $\mu\in \P_{m}(H\times\To)\setminus\{\g\}$, where $\g=\G(h)m(dh)$ is the unique invariant measure of the homogenized process $\mathcal{H}_t$, and $\G\in C(\To, \P(H))$ is the continuous periodic invariant measure of $\Phi_{0, t, h}$\cite{LL22}. Note that $\G$ is also the unique eigenmeasure of $P_{0, t, h}^V$ in the case when the potential $V=0$. Since $I$ is the Legendre transform of $Q$, it follows from \eqref{rate-function-proof} that 
\begin{align*}
	Q(V) = \sup_{\mu\in\P(H\times\To)}\left(\la V, \mu\ra-I(\mu)\right),  \quad V\in C_b(H\times\To).
\end{align*}
The assumption of triviality of $I$ implies that 
\begin{align}\label{nontrivial-rate-function-1}
	Q(V) = \la V, \g\ra \text{ for any }V\in C_b(H\times\To). 
\end{align}

\vskip0.05in

To each $V\in C_b^1(H)$ we associate 
\[V_{\g}(w, h):= V(w) - \la V, \G(h)\ra.\]
It has been shown in \cite{LL22} that $\G\in C^{r}(\To, \P(H))$ for some $r\in (0, 1)$, where $\P(H)$ is equipped with a Wasserstein metric weighted by the Lyapunov function $\mkm_{\eta}$. Hence $V_{\g}\in C_{b}^{1, 0}(H\times\To)$ for each $V\in C_b^1(H)$.  Since $\la  V_{\g}, \g\ra = 0$, by \eqref{Q(V)-when-V-regular} and \eqref{nontrivial-rate-function-1}, we have 
\begin{align}\label{nontrivial-rate-function-2}
	\int_{\To}\l^{V_{\g}}(h)m(dh) = 0, \, V\in C_b^1(H). 
\end{align}
Combining this with the integral representation \eqref{eigen-value-integral-representation} and Birkhoff's ergodic theorem, we have 
\begin{align}\label{nontrivial-rate-function-3}
	\lim_{t\to\infty}\frac1t\log\l_{t, h, V_{\g}} = \lim_{t\to\infty}\frac1t\int_0^t\l^{V_{\g}}(\b_{rh})dr = \int_{\To}\l^{V_{\g}}(h)m(dh) = 0, 
\end{align}
which implies that $\l_{t, h, V_{\g}}$ has a subexponential growth in $t$.

\vskip0.05in

The forward convergence \eqref{main-thereom-forward-mxing} in Theorem \ref{asymptotic-behavior-feynman-kac} with $\phi=\id_{H}$ and $\mu=\d_{w}$ implies that 
\begin{align}\label{nontrivial-rate-function-4}
	\lim_{t\to\infty}\l_{t, h, V_{\g}}^{-1} P^{V_{\g}}_{0, t, h}\id_H(w) = F^{V_{\g}}(w, h), \, w\in H, 
\end{align}
where we note that 
\begin{align}\label{nontrivial-rate-function-5}
	P^{V_{\g}}_{0, t, h}\id_H(w)= \mathbf{E}_{w}\exp\left(\int_0^t\big(V(\Phi_{0, r, h}) - \la V, \G(\b_rh)\ra\big) dr\right). 
\end{align}
On the other hand, the central limit theorem proved in  \cite{LL22} shows that 
\begin{align*}
	\frac{1}{\sqrt{t}}\int_0^t\big(V(\Phi_{0, r, h}) - \la V, \G(\b_rh)\ra\big) dr \text{ is asymptotically Gaussian with variance } \s_V^2. 
\end{align*}
Hence if one can show the existence of $V\in C_b^1(H)$ such that $\s_V^2\neq 0$, then \eqref{nontrivial-rate-function-5} will imply that $P^{V_{\g}}_{0, t, h}\id_H(w)$ grows exponentially fast in $t$, which will contradicts with the subexponential growth \eqref{nontrivial-rate-function-3} and the convergence \eqref{nontrivial-rate-function-4}.  This contradiction then shows that the rate function $I$ is nontrivial. 

\vskip0.05in

{\it Step 3: Prove the existence of an observable function with nonzero asymptotic variance.} The rest of the proof is devoted to show the existence of such a function $V$ that makes $\s_V^2\neq 0$ through a contradiction argument as in \cite{KS12} Proposition 4.1.4. For any $R>0$, we consider a $C_b^1$ function $V_{R}: H\to \R$ defined as follows: $0\leq V_{R}\leq 1$ and 
\begin{align}\label{define-V_R}
	V_{R}(w) = 1, \, \text{ when } w\in B_{R}^H, \quad \text{ and } V_{R}(w) = 0, \, \text{ when } w\in H\setminus B_{R+1}^H. 
\end{align}
Since $\G$ is the unique eigenmeasure corresponding to $V=0$, it also has the support property as in Proposition \ref{Existence-of-Eigenmeasure}. Hence there is a $0<p=p(R)<1$ does not depend on $h\in\To$ such that 
\begin{align*}
	\G(h)(B_{R+1}^H)\leq p<1, \, h\in\To, 
\end{align*}
which implies $ \la V_{R}, \G(h)\ra\leq p$. As a result, one has 
\begin{align}\label{lower-bound-V_R}
	V_{\g, R}(w, h):= V_{R}(w) - \la V_{R}, \G(h)\ra\geq 1-p>0, \, \forall w\in B_{R}^H, h\in\To.
\end{align}
 
\vskip0.05in

Assume by contradiction that  $\s_{V_{\g, R}}^2=0$.  Recall the martingale approximation as in \cite{LL22}, 
\begin{align}\label{nontrivial-rate-function-8}
	\int_{0}^NV_{\g, R}(\H_r)dr = M_N - \chi(\H_N) + \chi(\H_0), 
\end{align}
where $M_N  =\int_{0}^NV_{\g, R}(\H_r)dr +\chi(\H_N) - \chi(\H_0) $ is the Dynkin martingale and the corrector $\chi$ is defined as (Note that $\P_{t} = \P_{t}^V$  with $V=0$)
\[\chi(w, h) = \int_{0}^{\infty}\P_{t}V_{\g, R}(w, h)dr, \, (w, h)\in H\times\To, \]
which is bounded modulo $e^{\eta\|w\|^2}$, see \cite{LL22}. We claim that the assumption $\s_{V_{\g, R}}^2=0$ implies
\begin{align}\label{nontrivial-rate-function-7}
	\mathbf{P}_{\g}\lp M_N =0,\, \forall N\geq 0 \rp=1. 
\end{align}
Indeed, by boundedness of $\chi$ and integrability of  $e^{\eta\|w\|^2}$, we have 
\begin{align*}
	\E_{\g}M_N^2 &\leq 3\E_{\g}\left(\int_{0}^NV_{\g, R}(\H_r)dr\right)^2 + 3\E_{\g}|\chi(\H_N)|+ 3\E_{\g}|\chi(\H_0)|\\
	&\leq C\left(\E_{\g}\left(\int_{0}^NV_{\g, R}(\H_r)dr\right)^2+1\right).
\end{align*}
Therefore by Proposition 4.12 in \cite{LL22} on the asymptotic variance $\s_{V_{\g, R}}^2$, we have 
\begin{align}\label{nontrivial-rate-function-6}
	\lim_{N\to\infty}\frac1N\E_{\g}M_N^2 = 0, \text{ since } \lim_{N\to\infty}\frac1N\E_{\g}\left(\int_{0}^NV_{\g, R}(\H_r)dr\right)^2 = \s_{V_{\g, R}}^2 =0. 
\end{align}
Since $\g$ is the invariant measure of the homogenized process, for the martingale difference sequence $Z_k = M_k-M_{k-1}$, one has $\E_{\g}Z_k^2 = \E_{\g}Z_1^2$ for all $k\geq 1$ and  
\begin{align*}
	\E_{\g}M_N^2 = \E_{\g}\left(\sum_{k=1}^NZ_k\right)^2 = \E_{\g}\sum_{k=1}^NZ_k^2 = N\E_{\g}Z_1^2. 
\end{align*}
Combining this with \eqref{nontrivial-rate-function-6} we obtain $\E_{\g}Z_k^2=0$ for all $k$. Hence $M_N=0$ almost surely for all $N$ and the claim \eqref{nontrivial-rate-function-7} is proved. 

\vskip0.05in

We are now ready to derive a contradiction. Note that by deterministic version of the estimate \eqref{contraction-polynomial}, there is a constant $C>1$ independent of $h\in\To$ such that the deterministic solution $\Phi_{0, t, h}^d(w)$ to equation \eqref{Settings-NS-family} without noise satisfies (see also \cite{CV02a})
\begin{align*}
	\|\Phi_{0, t, h}^d(w)\|^2\leq e^{-\nu t}\|w\|^2 + C, \, \forall w\in H, h\in\To. 
\end{align*}
Hence by choosing $R=2C$ in the definition \eqref{define-V_R} of $V_{R}$, we have that for any initial condition $(w, h)\in B_1^H\times \To$, the deterministic solution $\Phi_{0, t, h}^d(w)\in B_{R}^H$ for all $t\geq 0$. Therefore for any initial $(w, h)\in B_1^H\times \To$ and $N\geq 0$, there is a small positive probability (corresponding to those Brownian sample paths in a small neighborhood of the origin), such that the solution $\Phi_{0, t, h}(w)$ to the noisy system \eqref{Settings-NS-family} always stays in $B_{R}^H$ for $t\in [0, N]$. Namely, we have 
\begin{align*}
	\mathbf{P}_{(w, h)}\lp \H_r\in B_{R}^H\times\To, \text{ for } 0\leq r\leq N \rp>0, \, (w, h)\in B_1^H\times \To. 
\end{align*}
Since $\g(B_1^H\times \To)>0$, one has 
\begin{align*}
	\mathbf{P}_{\g}\lp \H_r\in B_{R}^H\times\To, \text{ for } 0\leq r\leq N \rp>0. 
\end{align*}
In view of \eqref{lower-bound-V_R}, we obtain for any $N\geq 0$, 
\begin{align*}
	\mathbf{P}_{\g}\lp \int_{0}^NV_{\g, R}(\H_r)dr \geq N(1-p)\rp>0.
\end{align*}
This combined with \eqref{nontrivial-rate-function-7}, and boundedness of the corrector $\chi$ over $B_{R}^H\times\To$, imply that \eqref{nontrivial-rate-function-8} cannot hold with $\mathbf{P}_{\g}$ probability $1$. This contradiction shows that  $\s_{V_{\g, R}}^2\neq0$. 

\vskip0.05in 

The proof is complete. 
\end{proof}

\vskip0.05in 

\subsubsection{Proof of Theorem \ref{Kifer-condition}}\label{subsection-proof-verify-kifer}
We now supply a proof of Theorem \ref{Kifer-condition} based on the results of Theorem \ref{asymptotic-behavior-feynman-kac}. 

\vskip0.05in 

{\it Proof of Items (1) and (2) in Theorem \ref{Kifer-condition}}. 
Note that Item (2) in Theorem \ref{Kifer-condition} directly follows from \eqref{enstrophy} with $R(w, h) = \eta\nu\|w\|_1^2$. The level set $\{(w, h)\in H\times\To: R(w, h)\leq \alpha\}$ for any $\alpha\geq 0$ is compact since $\To$ is compact and the embedding $H_1\to H$ is compact. 

\vskip0.05in 

Item (1) of Theorem \ref{Kifer-condition} follows from the forward convergence in Theorem \ref{asymptotic-behavior-feynman-kac}, together with Birkhoff's ergodic theorem and a $buc$-approximation argument. Indeed, taking $\phi = \id_H$ in the forward convergence \eqref{main-thereom-forward-mxing} in Theorem \ref{asymptotic-behavior-feynman-kac}, and noting the strict positivity of eigenfunction $F^V$,  we have for $V\in C_b^{1,0}(H\times\To)$, 
\begin{align}\label{item-1-1}
	\lim_{t\to\infty}\left|\frac1t\log\l_{t, h, V}-\frac1t\log\la P_{0, t, h}^{V}\id_H, \mu\ra \right| = 0, 
\end{align}
uniformly for $\mu\in\P_{\eta, M}(H)$. The integral representation \eqref{eigen-value-integral-representation} and Birkhoff's ergodic theorem, as well as the continuity of $h\to\l^V(h)$ imply that 
\begin{align}\label{item-1-2}
	\lim_{t\to\infty}\frac1t\log\l_{t, h, V} = \lim_{t\to\infty}\frac1t\int_0^t\l^V(\b_rh)dr = \int_{\To}\l^V(h)m(dh). 
\end{align}
A combination of \eqref{item-1-1} and \eqref{item-1-2} verifies  \eqref{LDP-condition-1} for $V\in C_b^{1,0}(H\times\To)$ and \eqref{Q(V)-when-V-regular} in Item (1) of Theorem \ref{Kifer-condition}. The existence of the limit in \eqref{LDP-condition-1} for general $V\in C_b(H\times\To)$ follows from a $buc$-approximation argument as in \cite{JNPS18}, where the required  exponential tightness has been verified in Item (2). The Lipschitz continuity is derived from the limit definition of $Q$. Indeed, note that for any $\s\in\P_{\eta, M}(H)$, 
\begin{align*}
	\E_{\s}\exp\la tV_1, \zeta_{t, h}\ra &= \E_{\s}\exp\big(\la t(V_1-V_2), \zeta_{t, h}\ra+\la tV_2, \zeta_{t, h}\ra\big)\\
	&\leq e^{t\|V_1-V_2\|_{\infty}}\E_{\s}\exp\la tV_2, \zeta_{t, h}\ra. 
\end{align*}
By exchanging the role of $V_1, V_2$, we have 
\begin{align*}
	\left|\frac1t\log\E_{\s}\exp\la tV_1, \zeta_{t, h}\ra-\frac1t\log\E_{\s}\exp\la tV_2, \zeta_{t, h}\ra\right| 
	\leq \|V_1-V_2\|_{\infty}, 
\end{align*}
which by taking limit, implies the  Lipschitz continuity of $Q$. The convexity follows from H\"older's inequality. The proof of Item (1) is then complete. 

\vskip0.05in 

{\it Proof of Item (3) in Theorem \ref{Kifer-condition}}. The remainder of this subsection is then devoted to prove Item (3) on uniqueness of equilibrium state in Theorem \ref{Kifer-condition}, which is accomplished through the following three steps. Fix any $V\in C_b^{1, 0}(H\times\To)$. 

\vskip0.05in

{\it Step 1: The auxiliary Markov evolution operators defined through Doob's $h$-transform is uniquely ergodic}. 
Define the following auxiliary evolution family of Markov operators $T^V_{0, t, h}$ acting on $C_b(H)$, 
\begin{align}\label{definition-Doob-Markov-cocycle}
	T^V_{0, t, h}\phi(w) = \l_{t, h, V}^{-1}F^V(w, h)^{-1}P_{0, t, h}^V(\phi F^V(\cdot, \b_th))(w), \, \text{ for } (w, h)\in H\times\To, \phi\in C_b(H). 
\end{align}
This is an analogue of the Doob's $h$-transform in our current setting. See \cite{BGL14} and references therein for an introduction about the Doob's $h$-transform, and \cite{JNPS18,Ner19} for the application in proving the uniqueness of equilibrium state in the time homogeneous case. 

\vskip0.05in

The operator $T^V_{0, t, h}$ is Markov since $T^V_{0, t, h}\id_H=\id_H$ by the eigenfunction property of $F^V$. It follows from the integral representation \eqref{eigen-value-integral-representation} that the eigenvalue satisfies the cocycle property 
\[\l_{t+s, h, V} = \l_{t, \b_sh, V}\l_{s, h, V}, \, \text{ for } s, t\geq 0, h\in \To.\]
Base on this and the cocycle property \eqref{Feynman-Kac-cocycle} of the Feynman-Kac operators, it is straightforward to check that the Markov operators $T^V_{0, t, h}$ satisfy the cocycle property as well, 
\begin{align}\label{Doob-Markov-cocycle}
		T_{0, s+t, h}^V = T_{0, s, h}^VT_{0, t, \b_sh}^V, \, s, t\geq 0, h\in\To.
\end{align}
In addition, one directly verifies that the induced operator $\mct^V_t$ acting on $C_{b}(H\times\To)$ by 
\begin{align}\label{Doob-Markov-semigroup}
	\mct^V_t\phi(w, h) = T^V_{0, t, h}\phi(\cdot, \b_th)(w), \, \phi\in C_{b}(H\times\To), 
\end{align}
is a Markov semigroup, whose restriction on $C_b(H)$ yields $T^V_{0, t, h}$. 

\vskip0.05in

Note that the action of $\mct^V_t$ on $C_b(\To)$ is just the Koopman operator for the circle rotation. Hence any invariant measure $\mu$ of $\mct^V_t$ must have marginal $m$ on $\To$.  By disintegration theorem, the disintegration $\mu(h), h\in\To$ of $\mu$ is uniquely determined up to an $m$-null set and $\mu = \mu(h)m(dh)$. One can check that $\mu(h)$ is an invariant measure of the evolution family $T_{0, t, h}^{V*}$, where $T_{0, t, h}^{V*}$ is the dual of $T_{0, t, h}^{V}$. Namely, for every $t\geq 0$, 
\begin{align}\label{invariant-measure-Doob-Markov-cocycle}
	T_{0, t, h}^{V*}\mu(h)  = \mu(\beta_th), \quad m\text{-a.e } h\in \To. 
\end{align}
Conversely, any measurable family of probability measures $\mu(h), h\in\To$ satisfying \eqref{invariant-measure-Doob-Markov-cocycle} gives an invariant measure  $\mu(h)m(dh)$ of $\mct^V_t$. In this vein, unique ergodicity of Markov evolution family \eqref{definition-Doob-Markov-cocycle} is equivalent to the unique ergodicity of its homogenization \eqref{Doob-Markov-semigroup} and hence we will not distinguish them. 

\vskip0.05in

The following lemma says that the semigroup $\mct^V_t$  is uniquely ergodic with unique invariant measure $F^V(h)\G^V(h)m(dh)\in \P(H\times\To)$, which is a consequence of the uniqueness of eigenmeasure from Theorem \ref{asymptotic-behavior-feynman-kac}. 

 \begin{lemma}\label{invarinat-measure}
Let $\g_V(h) = F^V(\cdot, h)\G^V(h)$. Then $\g_V(h)m(dh)$ is the unique invariant measure of the semigroup $\mct^V_t$. 
\end{lemma}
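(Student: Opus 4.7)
The plan splits into checking invariance and establishing uniqueness.

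For invariance, I would verify directly that $T^{V*}_{0,t,h}\g_V(h) = \g_V(\b_th)$ for every $t\ge 0$ and $h\in\To$. Given $\phi\in C_b(H)$, unwinding \eqref{definition-Doob-Markov-cocycle} yields
$$
\la\g_V(h),T^V_{0,t,h}\phi\ra = \l_{t,h,V}^{-1}\la\G^V(h),P^V_{0,t,h}(\phi F^V(\cdot,\b_th))\ra = \l_{t,h,V}^{-1}\la P^{V*}_{0,t,h}\G^V(h),\phi F^V(\cdot,\b_th)\ra,
$$
and the eigenmeasure identity $P^{V*}_{0,t,h}\G^V(h)=\l_{t,h,V}\G^V(\b_th)$ from Theorem \ref{asymptotic-behavior-feynman-kac} collapses this to $\la\G^V(\b_th),\phi F^V(\cdot,\b_th)\ra=\la\g_V(\b_th),\phi\ra$. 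The normalization $\la F^V(\cdot,h),\G^V(h)\ra=1$ makes $\g_V(h)m(dh)$ a probability measure on $H\times\To$.

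For uniqueness, let $\mu$ be any invariant probability measure of $\mct^V_t$. Because $\mct^V_t$ restricts to the Koopman operator of the rotation $\b_t$ on functions of $h$ alone, the marginal of $\mu$ on $\To$ is $\b_t$-invariant and hence equal to $m$ by unique ergodicity of the rotation; disintegrate $\mu=\mu(h)m(dh)$. Applying the forward stability of $P^V_{0,t,h}$ from Theorem \ref{asymptotic-behavior-feynman-kac} to the test function $\phi F^V\in C_{\mkm_q}(H\times\To)$, combined with the eigenfunction identity $\l_{t,h,V}^{-1}P^V_{0,t,h}F^V(\cdot,\b_th)(w)=F^V(w,h)$, produces
$$
F^V(w,h)\bigl[\mct^V_t\phi(w,h)-\la\phi(\cdot,\b_th),\g_V(\b_th)\ra\bigr]\longrightarrow 0,\qquad t\to\infty,
$$
uniformly on $B_R^H$ for every $R>0$. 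Since $F^V$ is continuous and strictly positive, division gives the same pointwise convergence for $\mct^V_t\phi(w,h)-\la\phi(\cdot,\b_th),\g_V(\b_th)\ra$ on every $B_R^H\times\To$.

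Now I would time-average: by the previous display together with Birkhoff's theorem for the uniquely ergodic rotation applied to the continuous function $h\mapsto\la\phi(\cdot,h),\g_V(h)\ra$ (continuous because $\G^V\in C(\To,\P(H))$ couples with the polynomial-growth, jointly continuous $F^V$ from Theorem \ref{asymptotic-behavior-feynman-kac}), one obtains pointwise in $(w,h)$
$$
\frac{1}{T}\int_0^T\mct^V_t\phi(w,h)\,dt\longrightarrow\int_{\To}\la\phi(\cdot,h'),\g_V(h')\ra\,m(dh')=\la\phi,\g_Vm\ra.
$$
Because $\mct^V_t$ is Markov, the bound $|\mct^V_t\phi|\le\|\phi\|_\infty$ gives dominated convergence for free, and invariance of $\mu$ then yields
$$
\la\phi,\mu\ra=\lim_{T\to\infty}\int\frac{1}{T}\int_0^T\mct^V_t\phi(w,h)\,dt\,d\mu(w,h)=\la\phi,\g_Vm\ra,
$$
which forces $\mu=\g_V(h)m(dh)$. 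The main technical point is the absence of any a priori moment information on $\mu$; this is circumvented by absorbing the weight $F^V$ into the test function so that the asymptotic limit depends only on $\g_V$, and by exploiting the Markov normalization $|\mct^V_t\phi|\le\|\phi\|_\infty$ which makes the dominated convergence step automatic.
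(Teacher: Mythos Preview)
Your proof is correct, and it takes a genuinely different route from the paper's. The paper proves uniqueness \emph{structurally}: given another invariant measure $\mu=\gamma(h)m(dh)$, it forms $\gamma_0(h)=F^V(\cdot,h)^{-1}\gamma(h)$, checks from the definition of $T^V_{0,t,h}$ that $\gamma_0$ satisfies the eigenmeasure relation $P^{V*}_{0,t,h}\gamma_0(h)=\l_{t,h,V}\gamma_0(\b_th)$, and then invokes the uniqueness of the eigenmeasure $\G^V$ established in Theorem~\ref{asymptotic-behavior-feynman-kac} to conclude $\gamma_0=\G^V$, hence $\gamma=\g_V$. Your argument is instead \emph{dynamical}: you feed $\phi F^V$ into the forward stability of Theorem~\ref{asymptotic-behavior-feynman-kac} to obtain pointwise convergence of $\mct_t^V\phi$ toward $\la\phi(\cdot,\b_th),\g_V(\b_th)\ra$, then Ces\`aro-average and apply Birkhoff's theorem for the rotation to land on the constant $\la\phi,\g_Vm\ra$, and finally pass the limit under $\mu$ by dominated convergence. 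The paper's approach is shorter because it recycles a uniqueness statement already proved; yours is somewhat heavier machinery (forward stability plus ergodic averaging) but has the merit of exhibiting directly that $\g_Vm$ attracts time averages, and it sidesteps the need to verify that the inverse Doob transform of an arbitrary invariant disintegration lands back in the class of eigenmeasures to which the uniqueness theorem applies. One small remark: the continuity of $h\mapsto\la\phi(\cdot,h),\g_V(h)\ra$ you invoke for Birkhoff is not strictly needed, since the $\To$-marginal of $\mu$ is $m$ and Birkhoff for the bounded measurable function already gives $m$-a.e.\ (hence $\mu$-a.e.) convergence, which suffices for the dominated convergence step.
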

\begin{proof}
We first verify that $\g_V(h)m(dh)$ is indeed invariant under $\mct^V_t$.  Since $\G^V$ is the eigenmeasure of $P_{0, t, h}^V$, one has 
\begin{align*}
	\la T_{0, t, h}^{V}g, \g_V(h)\ra 
	& =\la \l_{t, h, V}^{-1}F^V(\cdot, h)^{-1}(P_{0, t, h}^Vg(\cdot)F^V(\cdot, \b_th)), F^V(\cdot, h)\G^V(h)\ra \\
	& = \la\l_{t, h, V}^{-1}g(\cdot)F^V(\cdot, \b_th), P_{0, t, h}^{V*}\G^V(h)\ra = \la g, F^V(\cdot, \b_th)\G^V(\b_th)\ra,
\end{align*}
for every $g\in C_b(H)$. Hence $\g_V$ is an invariant measure for $T_{0, t, h}^{V*}$, i.e., 
\[T_{0, t, h}^{V*}\g_V(h)  = \g_V(\beta_th).\]
Therefore $\g_V(h)m(dh)$  is an invariant measure for the semigroup $\mct^V_t$. 

\vskip0.05in

We next show that this invariant measure is unique. Suppose that $\mu\in \P(H\times\To)$ is a different invariant measure  of the semigroup $\mct^V_t$,  then it must have marginal $m$ on $\To$ and its unique disintegration which we denote by $\g(h)$ satisfies \eqref{invariant-measure-Doob-Markov-cocycle}, i,e., 
\[T_{0, t, h}^{V*}\g(h)  = \g(\beta_th), \quad m\text{-a.e } h\in \To.\]
We claim that $\g$ is necessarily equal to $\g_V$ up to a null set. Let $\g_0(h) = F^V(\cdot, h)^{-1}\g(h)$ and $\phi\in C_b(H)$. It follows from the invariance of $\g$ that 
\begin{align*}
	\la\g_0(\b_th), \phi F^V(\cdot, \b_th)\ra = \la \g(\b_th), \phi\ra &= \la T_{0, t, h}^{V*}\g(h) , \phi\ra\\
	&= \l_{t, h, V}^{-1}\la F^V(\cdot, h)^{-1}\g(h) , P_{0, t, h}^{V}\left(\phi(\cdot) F^V(\cdot, \b_th)\right)\ra\\
	&=  \l_{t, h, V}^{-1}\la P_{0, t, h}^{V*}\g_0(h) , \phi(\cdot) F^V(\cdot, \b_th)\ra. 
\end{align*}
 Since $\phi$ is arbitrary and $F^V$ is strictly positive, we deduce 
 \[P_{0, t, h}^{V*}\g_0(h) = \l_{t, h, V}\g_0(\b_th), \quad m\text{-a.e } h\in \To.\]
 The uniqueness of eigenmeasure then implies $\g_0 = \G^V$  up to a null set, which in turn shows that $\g = \g_V$ for almost every $h\in\To$. Hence $\g_V(h)m(dh)$ is the unique invariant measure of the semigroup $T_{t}^{V}$. 
 
\vskip0.05in
 
The proof of Lemma \ref{invarinat-measure} is complete. 
\end{proof}

{\it Step 2: Correspondence between equilibrium states and zeros of the auxiliary rate function obtained from the auxiliary Markov evolution operators \eqref{definition-Doob-Markov-cocycle}}. 
To show the uniqueness of equilibrium state associated with $V\in C_b^{1, 0}(H\times\To)$ as in \eqref{definition-equilibrium-state}, we shall establish a correspondence between equilibrium states and the zeros of an auxiliary rate function, where the later is connected with the invariant measure of $\mct_{t}^{V}$. The uniqueness of the invariant measure proved in Lemma \ref{invarinat-measure} thus will imply  the uniqueness of equilibrium state. 

\vskip0.05in

More precisely, consider the auxiliary Feynman-Kac operators $R_{0, t, h}^{V, U}$ associated with $T_{0, t, h}^{V}$  and $U\in C_b(H\times\To)$ given by 
\[R_{0, t, h}^{V, U}g(w) = \l_{t, h, V}^{-1}F^V(w, h)^{-1}(P_{0, t, h}^{V+U}g(\cdot)F^V(\cdot, \b_th))(w), \quad g\in C_b(H), (w, h)\in H\times\To,\]
which induces as in \eqref{Doob-Markov-semigroup} its homogenization $\mcr_{t}^{V, U}$ acting on $C_{b}(H\times\To)$:
\begin{align}\label{auxiliary-Feynman-Kac-semigroup}
	\mcr_{t}^{V, U}g(w, h) = \l_{t, h, V}^{-1}F^V(w, h)^{-1}(P_{0, t, h}^{V+U}g(\cdot, \b_th)F^V(\cdot, \b_th))(w), \quad g\in C_{b}(H\times\To). 
\end{align}
We claim that the following limit is well defined, which gives the corresponding auxiliary pressure function, 
\begin{align}\label{definition-auxiliary-pressure}
Q^V(U) := \lim_{t\to\infty}\frac1t\log(R_{0, t, h}^{V, U}\id_H)(w), \quad U\in C_b(H\times\To), w\in H, h\in\To, 
\end{align}
which is independent of $w, h$.

\vskip0.05in

Indeed, for $U\in C_b^{1, 0}(H\times\To)$, the forward convergence \eqref{main-thereom-forward-mxing} in Theorem \ref{asymptotic-behavior-feynman-kac} implies that 
\begin{align}\label{auxiliary-pressure-1}
	\lim_{t\to\infty}\left|\l_{t, h, V+U}^{-1}P_{0, t, h}^{V+U}F^V(\cdot, \b_th)(w) -  F^{V+U}(w, h)\la\G^{V+U}(\b_th), F^V(\cdot, \b_th)\ra\right| = 0.
\end{align}
Estimating as in \eqref{lower-bound-eigenfunction-eigenmeasure} and using the properties of eigenfunction $F^V$ and eigenmeasure $\G^{V+U}$ as in Proposition \ref{Existence-of-Eigenfunction} and Proposition \ref{Existence-of-Eigenmeasure}, one can show that 
\begin{align*}
	p_1\leq \liminf_{t\to\infty}\la\G^{V+U}(\b_th), F^V(\cdot, \b_th)\ra\leq \limsup_{t\to\infty}\la\G^{V+U}(\b_th), F^V(\cdot, \b_th)\ra \leq p_2,
\end{align*}
for some constants $p_1, p_2>0$. Therefore convergence \eqref{auxiliary-pressure-1} implies
\begin{align*}
	\lim_{t\to\infty}\left|\l_{t, h, V+U}^{-1}\la\G^{V+U}(\b_th), F^V(\cdot, \b_th)\ra^{-1}P_{0, t, h}^{V+U}F^V(\cdot, \b_th)(w) -  F^{V+U}(w, h)\right| = 0.
\end{align*}
Combining this with Birkhoff's ergodic theorem and continuity of $h\to\l^{V+U}(h)$ and integral representation \eqref{eigen-value-integral-representation} for potential $V+U$, we have 
\begin{align}\label{auxiliary-pressure-2}
\begin{split}
	\lim_{t\to\infty}\frac1t\log P_{0, t, h}^{V+U}F^V(\cdot, \b_th)(w) 
	&= \lim_{t\to\infty}\frac1t\log \l_{t, h, V+U}\\
	&=\lim_{t\to\infty}\frac1t\int_0^t\l^{V+U}(\b_rh)dr = \int_{\To}\l^{V+U}(h)m(dh) = Q(U+V). 
\end{split}
\end{align}
In view of \eqref{definition-auxiliary-pressure} and the definition of $R_{0, t, h}^{V, U}$, limits \eqref{item-1-2} and  \eqref{auxiliary-pressure-2} yield
\begin{align}\label{auxiliary-pressure-3}
 	Q^V(U) = Q(U+V) - Q(V), \, \text{ for } U\in C_b^{1, 0}(H\times\To). 
\end{align}
Therefore the limit \eqref{definition-auxiliary-pressure} is well defined for $U\in C_b^{1, 0}(H\times\To)$. The existence of the limit \eqref{definition-auxiliary-pressure} for general $U\in C_b(H\times\To)$ follows from a $buc$-approximation argument as in the proof of Item (1). The relation \eqref{auxiliary-pressure-3} extends to $U\in C_b(H\times\To)$ as well by Lipschitz continuity of the pressure functions. 

\vskip0.05in

Denote by $I^V(\mu): = \ds\sup_{U\in C_b(H\times\To)}\big(\la U, \mu\ra - Q^V(U)\big)$ the Legendre transformation of $Q^V$, which is the auxiliary rate function. Then relation \eqref{auxiliary-pressure-3} implies
\[I^V(\mu) = I(\mu)+Q(V)-\langle V, \mu\rangle.\]
Hence a measure $\mu_V$ is an equilibrium state of the potential $V$, i.e, 
\[Q(V) = \langle V, \mu_V\rangle - I(\mu_V)\]
if and only if  $I^V(\mu_V)=0$. 

\vskip0.05in

{\it Step 3: The zeros of the auxiliary rate function are invariant measures of the Markov evolution operators \eqref{definition-Doob-Markov-cocycle}}. 
Following the scheme as in \cite{MN18}, one can prove the following lemma that connects the zeros of the auxiliary rate function $I^V$ with the invariant measure of $\mct_{t}^{V}$. The major difference is in proving the time continuity and Feller property of the semigroup $\mct_t^V$ since we are working with a semigroup obtained by coupling the evolution operator \eqref{definition-Doob-Markov-cocycle} with the circle rotation.  The proof is postponed in  the next subsection \ref{subsection-proof-zero-invariant-measure} for the reader's convenience.  
\begin{lemma}\label{zero-is-invariant-measure}
If $I^V(\mu) = 0$ then $\mu$ is an invariant measure of the semigroup  $\mct_{t}^{V}$. 
\end{lemma}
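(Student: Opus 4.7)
The plan is to follow the scheme of Martirosyan--Nersesyan in \cite{MN18}, which links zeros of the auxiliary rate function $I^V$ to invariant measures of the Markov semigroup $\mct_t^V$ through the Donsker--Varadhan variational formula associated with its infinitesimal generator $\mathcal{L}^V$. The core computation proceeds as follows: for any $u\in D(\mathcal{L}^V)$ bounded away from $0$, Dynkin's formula applied to the process $e^{\int_0^t U(\H_s)\,ds}\, u(\H_t)$ with the choice $U=-\mathcal{L}^V u/u$ yields $\mcr_t^{V,U}u=u$, and the two-sided bound
\[
\frac{\inf u}{\sup u}\leq \mcr_t^{V,U}\id_H \leq \frac{\sup u}{\inf u}
\]
forces $Q^V(U)=0$. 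The hypothesis $I^V(\mu)=0$ gives $\langle U,\mu\rangle\leq Q^V(U)$ for every $U\in C_b(H\times\To)$, so specializing to this $U$ yields $\langle \mathcal{L}^V u/u,\mu\rangle\geq 0$ for all admissible $u$; a linearization ($u=1+\epsilon v$, $\epsilon\to 0$) then upgrades this to $\langle \mathcal{L}^V v,\mu\rangle=0$ on $D(\mathcal{L}^V)$, which by the Echeverria--Weiss criterion is equivalent to $\mct_t^{V*}\mu=\mu$ for every $t\geq 0$ provided $D(\mathcal{L}^V)$ is determining.

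The principal novelty relative to \cite{MN18} is that $\mct_t^V$ is the coupling of the Doob-transformed cocycle \eqref{definition-Doob-Markov-cocycle} with the rigid rotation $\beta_t$ on $\To$, so the spatial variable and the time symbol must be treated simultaneously. I would therefore first verify that $\mct_t^V$ is a Feller semigroup on $C_b(H\times\To)$ and that $t\mapsto \mct_t^V\phi(w,h)$ is continuous, by propagating through the formula \eqref{Doob-Markov-semigroup} the joint continuity of $F^V$ on $H\times\To$, its strict positivity, the uniform Lipschitz bounds on $\{F^V(\cdot,h)\}_{h\in\To}$, the continuity of $\l^V$, and the almost sure sample path continuity of $\Phi_{0,t,h}$---all furnished by Theorem \ref{asymptotic-behavior-feynman-kac} and Proposition \ref{proposition-uniqueness-eigen-data}. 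A natural determining subdomain of $D(\mathcal{L}^V)$ is then obtained from smooth cylinder functions on $H\times\To$ divided by $F^V$, whose $\mathcal{L}^V$-image can be computed explicitly in terms of the Feynman--Kac generator of $P_{0,t,h}^V$ and the rotational drift $\partial_h$ on $\To$.

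I expect the main obstacle to be precisely this Feller/determining-domain verification: the multiplicative weight $F^V$ in \eqref{definition-Doob-Markov-cocycle} couples the $H$ and $\To$ variables non-trivially, and one must argue both that $\mct_t^V\phi$ remains jointly continuous on $H\times\To$ and that the resulting class of test functions separates measures sharply enough for the Jensen/Echeverria--Weiss conclusion. The uniform equicontinuity of $\{F^V(\cdot,h)\}_{h\in\To}$ on $H_2$-balls from Proposition \ref{Existence-of-Eigenfunction}, together with the time continuity of $F^V$ established in Step~2 of the proof of Theorem \ref{asymptotic-behavior-feynman-kac}, should suffice to carry out this verification rigorously and complete the proof.
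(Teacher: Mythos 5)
Your high-level scheme matches the paper's: use the Duhamel/Dynkin identity to show $\mcr_t^{V,U_\phi}\phi=\phi$ and hence $Q^V(U_\phi)=0$ for $U_\phi=-\mbl_V\phi/\phi$ with $\phi$ bounded away from $0$ in the generator's domain; combine this with $I^V(\mu)=0$ to get $\la\mbl_V\phi/\phi,\mu\ra\ge 0$; linearize at the identity to upgrade this to $\la\mbl_V\phi,\mu\ra=0$; and conclude invariance via a determining subclass of the domain. You also correctly identify the time-continuity and Feller verification of $\mct_t^V$ as the main inhomogeneity-specific obstacle (this is exactly Lemma~\ref{time-continuity-T_t} in the paper).

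The point where your proposal diverges — and where I think it would break down — is the construction of the determining subdomain. You propose taking smooth cylinder functions on $H\times\To$ and dividing by $F^V$, and then computing $\mbl_V$ explicitly on the result. There are two problems. First, $F^V$ is only known to lie in $L^\infty_{\mkm_q}(H\times\To)$ and to be strictly positive pointwise; it is \emph{not} bounded above and its infimum over all of $H\times\To$ need not be positive. A cylinder function divided by $F^V$ is therefore in general neither bounded nor bounded away from $0$, so it leaves $C_b(H\times\To)$ entirely and a fortiori does not lie in $\mcd(\mbl_V)$, let alone in the positive cone $\mcd_+$ you need for the $U_\phi$-construction. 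Second, $F^V$ is built by a Krylov--Bogolyubov averaging and fixed-point procedure; the regularity established for it is joint continuity and uniform-on-balls Lipschitz continuity in $w$, which is not enough to differentiate through $F^V$ and express $\mbl_V$ of such quotients ``explicitly in terms of the Feynman--Kac generator and $\partial_h$.'' The paper avoids both issues by generating elements of $\mcd_+$ \emph{abstractly} rather than explicitly: for any $U\in C_b(H\times\To)$ with $\inf U>0$, the time-average $\overline{U}_s=\frac1s\int_0^s\mct_r^V U\,dr$ automatically lies in $\mcd_+$ (by the cocycle identity $\mct_t^V\overline{U}_s-\overline{U}_s=\int_0^t\mct_r^V\widehat U_s\,dr$, using only the Feller and time-continuity properties of $\mct_t^V$), and $\overline{U}_s\to U$ pointwise as $s\to 0^+$, so $\mcd_+$ is determining. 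Replacing your explicit quotient construction by this averaging argument is the fix; the rest of your outline then goes through.
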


\vskip0.05in

Now since $I$ is a good rate function, there is at least one equilibrium state for $V$. Hence the zeros of $I^V$ is non-empty. In view of Lemma \ref{invarinat-measure} and Lemma \ref{zero-is-invariant-measure}, we know that the equilibrium state for $V$ is unique, which is the unique invariant measure $F^V(h)\G^V(h)m(dh)$ of $\mct_t^V$. This proves Item (3) in Theorem \ref{Kifer-condition}. 

\vskip0.05in

The proof of Theorem \ref{Kifer-condition} is complete. \qed

\subsection{Proof of auxiliary lemmas}\label{subsection-proof-zero-invariant-measure}

We give a proof of Lemma \ref{zero-is-invariant-measure} in this subsection. We first establish time continuity and Feller property of the semigroup $\mct_t^V$ in subsection \ref{Time-continuity-Feller-property-T}, which is the part that is different from the time homogeneous setting. Then in subsection \ref{proof-of-lemma-zero-is-invariant-measure} we prove Lemma \ref{zero-is-invariant-measure}, by combining the time continuity and Feller property, following the scheme 
as in \cite{MN18}. Recall that $V\in C_{b}^{1, 0}(H\times\To)$ and the semigroup $\mct_{t}^V$ is defined as in \eqref{Doob-Markov-semigroup} through Doob's $h$-transform \eqref{definition-Doob-Markov-cocycle}.

\subsubsection{Time continuity and Feller property of $\mct_t^V$}\label{Time-continuity-Feller-property-T}

The following lemma establishes the time continuity and Feller property of the semigroup. 
\begin{lemma}\label{time-continuity-T_t}
The semigroup $\mct_{t}^V$ has the following two properties:
\begin{enumerate}
\item For any $\phi\in C_b(H\times\To)$, and $(w, h)\in H\times\To$, the function $t\to \mct_{t}^V\phi(w,h)$ is continuous. 
\item For any $t\geq 0$, $\mct_{t}^V$  maps $C_b(H\times\To)$ to itself. 
\end{enumerate}
\end{lemma}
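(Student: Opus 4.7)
\textbf{Proof proposal for Lemma \ref{time-continuity-T_t}.} My plan is to unfold the definition
\[
\mct_{t}^V\phi(w,h) = \lambda_{t,h,V}^{-1}F^V(w,h)^{-1}\,\E\!\left[e^{\int_{0}^{t}V(\Phi_{0,r,h}(w),\beta_r h)\,dr}\,\phi(\Phi_{0,t,h}(w),\beta_t h)\,F^V(\Phi_{0,t,h}(w),\beta_t h)\right],
\]
and to verify continuity factor by factor. Three structural inputs from Theorem \ref{asymptotic-behavior-feynman-kac} will carry almost all of the work: (i) the integral representation $\lambda_{t,h,V}=\exp(\int_0^t\lambda^V(\beta_rh)\,dr)$ with $\lambda^V\in C(\To,\R)$, giving joint continuity in $(t,h)$; (ii) $F^V\in C(H\times\To)$, strictly positive, so $F^V(w,h)^{-1}$ is bounded and continuous on bounded balls; and (iii) the uniform equicontinuity of $\{F^V(\cdot,h)\}_{h\in\To}$ on $(B_R^{H_2},\|\cdot\|)$ together with membership in $L^\infty_{\mkm_q}(H\times\To)$.

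For part (1), after factoring out $\lambda_{t,h,V}^{-1}F^V(w,h)^{-1}$, which is continuous in $t$, it remains to show that $t\mapsto P_{0,t,h}^V\bigl(\phi(\cdot,\beta_th)F^V(\cdot,\beta_th)\bigr)(w)$ is continuous. I will imitate the splitting used in Step~2 of the proof of Theorem \ref{asymptotic-behavior-feynman-kac} to handle $t\mapsto F^V(w,\beta_{-t}h)$: fixing $\tau\ge 0$ and letting $t\to\tau$, write the difference as $I_1+I_2$, where $I_1$ collects changes in the exponential weight and $I_2$ collects changes in the test function $\phi(\cdot,\beta_th)F^V(\cdot,\beta_th)$ and in the solution $\Phi_{0,t,h}$. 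The bound on $I_1$ uses boundedness and $C_b^{1,0}$ regularity of $V$, Lipschitz continuity of $e^x$ on the relevant bounded interval, and the Lyapunov moment bound \eqref{contraction-polynomial}; the bound on $I_2$ splits into (a) a term controlled by the uniform Lipschitz/equicontinuity of $F^V$ on $B_R^{H_2}$ (plus joint continuity of $\phi$) combined with $\E\|\Phi_{0,t,h}-\Phi_{0,\tau,h}\|\to 0$ from path continuity and \eqref{continuousonhull}, and (b) a tail term handled by Markov's inequality together with the $L^\infty_{\mkm_q}$ bound on $F^V$.

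For part (2), fix $t\ge 0$ and $\phi\in C_b(H\times\To)$; I need joint continuity of $(w,h)\mapsto\mct_{t}^V\phi(w,h)$. Again $\lambda_{t,h,V}$ is continuous in $h$ (by (i) above) and $F^V(w,h)^{-1}$ is continuous in $(w,h)$ by (ii), so the task reduces to continuity of $(w,h)\mapsto P_{0,t,h}^V\bigl(\phi(\cdot,\beta_th)F^V(\cdot,\beta_th)\bigr)(w)$. For a sequence $(w_n,h_n)\to(w,h)$, I will bound the difference by estimates analogous to \eqref{Lipschitz-modulo-Lyapunov}: the dependence on $h$ in the exponential and in the integrand $V(\cdot,\beta_r h)$ is handled by the $C_b^{1,0}$-modulus $\kappa$ of $V$; the dependence in the outer test function $\psi_{t,h}:=\phi(\cdot,\beta_th)F^V(\cdot,\beta_th)$ uses joint continuity of $F^V$ together with a truncation argument on $B_R^{H_2}$, where equicontinuity of $\{F^V(\cdot,h)\}$ is uniform in $h$; and the dependence on $(w,h)$ of $\Phi_{0,t,h}(w)$ is supplied by \eqref{continuousonhull} plus Lipschitz continuity of $f$. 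Dominated convergence, with the Lyapunov function $e^{\eta\|w\|^2}\cdot\mkm_q$ as the envelope from \eqref{eq: est1} and $F^V\in L^\infty_{\mkm_q}$, then yields the desired limit.

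The main obstacle I anticipate is the joint $(w,h)$ continuity in part (2), because $h$ enters through three conceptually distinct mechanisms simultaneously: the law of the paths $\Phi_{0,\cdot,h}$, the Feynman-Kac weight through $V(\cdot,\beta_rh)$, and the $h$-dependent test function $\psi_{t,h}$ (itself non-trivial because $F^V$ is only known to be continuous, not explicit). The decoupling is nonetheless routine once the uniform-on-balls equicontinuity of $\{F^V(\cdot,h)\}$ and the tail estimate \eqref{concentration-property} are available, and no result beyond Theorem \ref{asymptotic-behavior-feynman-kac} and the standard SPDE estimates collected in Section 2 should be needed.
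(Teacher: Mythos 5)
Your plan follows the paper's route at every structural juncture: factor out $\lambda_{t,h,V}^{-1}F^V(w,h)^{-1}$ using continuity of the eigen-triple, reduce to continuity of $P^V_{0,t,h}\bigl(\phi(\cdot,\beta_th)F^V(\cdot,\beta_th)\bigr)(w)$, split the difference into an exponential-weight term and a test-function term, and then use the $C_b^{1,0}$ modulus of $V$, the uniform Lipschitz/equicontinuity of $\{F^V(\cdot,h)\}$ on $H_2$-balls, the $L^\infty_{\mkm_q}$ bound on $F^V$, moment estimates, truncation, and dominated convergence. Two issues with the details, one minor and one substantive.

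Minor, for part (1): here $h$ is fixed and only $t$ varies, so \eqref{continuousonhull} — which compares two \emph{time symbols} $h_1,h_2$, not two observation times $t,\tau$ — is not the right estimate, and the quantity $\E\|\Phi_{0,t,h}-\Phi_{0,\tau,h}\|$ is not needed. The paper's $J_2$ term is dispatched by a.s.\ path continuity $t\mapsto\Phi_{0,t,h}(w)$ together with joint continuity of $F_\phi=\phi F^V$ and Lebesgue's dominated convergence with envelope $C\mkm_q$. Your reference to Step~2 of the proof of Theorem~\ref{asymptotic-behavior-feynman-kac} is a slight misalignment: there the pullback $\beta_{-t}h$ makes $h$ move with $t$, so a cross-symbol estimate is genuinely required; here it is not.

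Substantive, for part (2): you propose bounding the difference by estimates "analogous to \eqref{Lipschitz-modulo-Lyapunov}," but that estimate is proved for \emph{Lipschitz} observables, and for $\phi\in C_b(H\times\To)$ that is merely continuous it does not directly apply (the moment bound $\E\|\Phi_{0,t,h_n}(w_n)-\Phi_{0,t,h}(w)\|$ gives nothing for a bare modulus of continuity on an unbounded state space). The paper's proof is therefore a two-stage argument: first establish Feller regularity for Lipschitz $\phi$ — where the uniform-on-balls Lipschitz constant of $F^V$ from Proposition~\ref{proposition-uniqueness-eigen-data} transfers to $F_\phi=\phi F^V$, and the truncation, H\"older/Markov inequalities, and \eqref{continuousonhull}--\eqref{continuous-initial-condition} close the estimate — and then pass to general $\phi\in C_b(H\times\To)$ by a bounded-uniformly-on-compacts approximation by Lipschitz functions, using the $L^\infty_{\mkm_q}$ bound, the strict positivity of $F^V$, and the continuity of $h\mapsto\lambda_{t,h,V}$ to make the tail and normalizing factors uniform. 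Your plan has the right tools (truncation on $B_R^{H_2}$, equicontinuity of $\{F^V(\cdot,h)\}$), but without the explicit Lipschitz reduction and subsequent approximation the argument is incomplete as written.
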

 \begin{proof}
 The proof is divided into the following two steps. 
 
\vskip0.05in

{\it Step 1: Proof of time continuity}. It follows from the integral representation \eqref{eigen-value-integral-representation} for the eigenvalue that $\l_{t, h, V}$ is continuous in time. In view of definition \eqref{definition-Doob-Markov-cocycle} and  \eqref{Doob-Markov-semigroup}, we only need to show the time continuity of the function 
\begin{align}\label{time-continuity-0}
	t\in [0, \infty)\to P^V_{0, t, h}(\phi(\cdot, \b_th)F^V(\cdot, \b_th))(w).
\end{align}

\vskip0.05in

Let's write for convenience $F_{\phi}=\phi F^V$. Since $\phi\in C_b(H\times\To)$, and $F^V\in L_{\mkm_q}^{\infty}(H\times\To)$, we know $F_{\phi}\in L_{\mkm_q}^{\infty}(H\times\To)$ as well. In addition, since $F^V\in C(H\times\To)$ by Theorem \ref{asymptotic-behavior-feynman-kac},  we know that 
\[(w, \tau)\to F^V(w, \b_\tau h) \, \text{ is jointly continuous everywhere}.\]
Therefore so is $F_{\phi}$. 

\vskip0.05in

By triangle inequality, denoting $\Xi_{t}^V = \exp\left(\int_0^tV(\H_r)dr\right)$, 
\begin{align}\label{time-continuity-3}
\begin{split}
	&\Big|P^V_{0, t, h}(\phi(\cdot, \b_th)F^V(\cdot, \b_th))(w) - P^V_{0, \tau, h}(\phi(\cdot, \b_{\tau}h)F^V(\cdot, \b_{\tau}h))(w) \Big|\\
	&\leq \E_{(w, h)}\Big|\Xi_{t}^VF_{\phi}(\H_t) -  \Xi_{\tau}^VF_{\phi}(\H_{t}) \Big|+\E_{(w, h)}\Big|\Xi_{\tau}^VF_{\phi}(\H_{t})  -  \Xi_{\tau}^VF_{\phi}(\H_{\tau}) \Big|\\
	&: = J_1 + J_2. 
\end{split}
\end{align}	
Since $F_{\phi}\in L_{\mkm_q}^{\infty}(H\times\To)$, we obtain 
\begin{align}\label{time-continuity-plus}
\begin{split}
	J_1&\leq C\E_{(w, h)}\left[\Big|\Xi_{t}^V-  \Xi_{\tau}^V \Big|\mkm_q(\Phi_{0, t, h})\right]\\
	&\leq C\E_{(w, h)}\left[\Big|\int_{\tau}^tV(\H_r) dr\Big|\mkm_q(\Phi_{0, t, h})\right]\\
	&\leq C|t-\tau|\mkm_{q}(w),
\end{split}
\end{align}
where in the second step we used the fact $V\in C_{b}^{1, 0}(H\times\To)$, and the Lipschitz continuity of $e^x$ on some bounded interval determined by $m_V, M_V, t, \tau$ and in the last inequality we used \eqref{contraction-polynomial}. Hence $J_1\to0$ as $t\to \tau$.	

\vskip0.05in

On the other hand, 
\begin{align*}
	J_2\leq e^{\tau\|V\|_{\infty}}\E_{w}\Big|F_{\phi}(\Phi_{0, t, h}, \b_th)  -  F_{\phi}(\Phi_{0, \tau, h}, \b_{\tau}h) \Big|
\end{align*}
By time continuity of sample paths of the solution $\Phi_{0, t, h}(w)$ and joint continuity of $(w, t)\to F_{\phi}(w, \b_th)$, we know that 
\begin{align*}
	|F_{\phi}(\Phi_{0, t, h}, \b_th)  -  F_{\phi}(\Phi_{0, \tau, h}, \b_{\tau}h) \Big|\to 0 
\end{align*}
almost surely as $t\to\tau$. In addition, the fact $F_{\phi}\in L_{\mkm_q}^{\infty}(H\times\To)$ implies 
\begin{align*}
	\E_{w}\Big|F_{\phi}(\Phi_{0, t, h}, \b_th)  -  F_{\phi}(\Phi_{0, \tau, h}, \b_{\tau}h) \Big|\leq C\mkm_q(w),
\end{align*}
by \eqref{contraction-polynomial}. Hence by Lebesgue's dominated convergence theorem, we have $J_2\to0$ as $t\to \tau$. 

\vskip0.05in

In view of \eqref{time-continuity-3}, we obtain the desired continuity of \eqref{time-continuity-0}. The proof of {\it Step 1} is complete. 

\vskip0.05in

{\it Step 2: Proof of Feller property}. Fix $t\geq 0$. We first show that for any Lipschitz $\phi\in C_b(H\times\To)$, 
\begin{align}\label{Feller-0}
	(w, h)\in H\times\To\to \mct_t^V\phi(w, h) \, \text{ is continuous. }
\end{align}
Since $(w, h)\to \frac{1}{\l_{t, h, V}F^V(w, h)}$ is continuous by Theorem \ref{asymptotic-behavior-feynman-kac} ,  in view of definitions \eqref{definition-Doob-Markov-cocycle} and  \eqref{Doob-Markov-semigroup}, we only need to show the continuity of the function 
\begin{align}\label{Feller-0.5}
	(w, h)\in H\times\To\to \P_{t}^VF_{\phi}(w, h), 
\end{align}
where $F_{\phi}=\phi F^V$. The main difficulties here are possible unboundedness of $F^V$ and coupling of the Navier-Stokes family \eqref{Settings-NS-family} with the circle rotation. 

\vskip0.05in

Denoting $\Xi_t^V(w, h) = \exp\left(\int_0^tV(\H_r(w, h))dr\right)$, we write 
\begin{align}\label{Feller-I1+I2}
\begin{split}
	\P_{t}^VF_{\phi}(w_n, h_n) - \P_{t}^VF_{\phi}(w, h) &= \E_{(w_n, h_n)}\Xi_t^VF_{\phi}(\H_t) -  \E_{(w, h)}\Xi_t^VF_{\phi}(\H_t)\\
	& = \E\left[\left(\Xi_t^V(w_n, h_n)-\Xi_t^V(w, h)\right)F_{\phi}(\H_t(w_n, h_n))\right]\\
	 &\qquad+ \E\left[\Xi_t^V(w, h)\big(F_{\phi}(\H_t(w_n, h_n)) -F_{\phi}(\H_t(w, h)) \big)\right]\\
	 &:= I_1 + I_2.
\end{split}
\end{align}
We can assume $w_n, w\in B_{R_0}^H$ for some $R_0>0$ and all $n\geq 1$. Note that since $V\in C_b^{1, 0}(H\times\To)$, by the same reasoning as in \eqref{time-continuity-plus}, and the fact $F_{\phi}\in L_{\mkm_q}^{\infty}(H\times\To)$ as well as \eqref{contraction-polynomial}, we have 
\begin{align}\label{Feller-1}
\begin{split}
	|I_1| &\leq C\E\left[\int_0^t\left|V(\H_r(w_n, h_n))-V(\H_r(w, h))\right|dr\mkm_q(\Phi_{0, t, h_n}(w_n))\right]\\
	&\leq C\left(\E\int_0^t\left|V(\H_r(w_n, h_n))-V(\H_r(w, h))\right|^2dr\right)^{\frac12}\left(\E\mkm_{2q}(\Phi_{0, t, h_n}(w_n))\right)^{\frac12}\\
	&\leq C\left(\E\int_0^t\left|V(\H_r(w_n, h_n))-V(\H_r(w, h))\right|^2dr\right)^{\frac12}.
\end{split}
\end{align}
By $V\in C_b^{1, 0}(H\times\To)$ and triangle inequality, we have the existence of some modulus of continuity $\k$ such that 
\begin{align*}
	&\E\int_0^t\left|V(\H_r(w_n, h_n))-V(\H_r(w, h))\right|^2dr \\
	&\leq C\left(\int_0^t\k^2(|\b_rh_n - \b_rh|)dr +\E\int_0^t|\Phi_{0, r, h_n}(w_n) -\Phi_{0, r, h}(w)|^2 dr\right)\\
	&\leq C\left(\k^2(|h_n - h|)+\E\int_0^t|\Phi_{0, r, h_n}(w_n) -\Phi_{0, r, h_n}(w)|^2 dr + \E\int_0^t|\Phi_{0, r, h_n}(w) -\Phi_{0, r, h}(w)|^2 dr\right)\\
	&\leq C\left(\k^2(|h_n - h|) + \|w_n-w\|^2 + |h_n-h|^2\right), 
\end{align*}
where we used \eqref{continuousonhull} with Lipschitz continuity of $f$ and \eqref{continuous-initial-condition} in the last inequity. Combining this with \eqref{Feller-1}, we have 
\begin{align}\label{Feller-I1}
	I_1\leq C\left(\k(|h_n - h|) + \|w_n-w\| + |h_n-h|\right). 
\end{align}

\vskip0.05in

We now estimate $I_2$. The boundedness of $V$ yields 
\begin{align}\label{Feller-I2}
\begin{split}
	e^{-t\|V\|_{\infty}} |I_2|\leq &\E\left|F_{\phi}(\H_t(w_n, h_n)) -F_{\phi}(\H_t(w, h)) \right|\\
	&\leq \E\left|F_{\phi}(\Phi_{0, t, h_n}(w_n), \b_th_n) -F_{\phi}(\Phi_{0, t, h}(w), \b_th_n) \right| \\
	&\qquad +\E\left|F_{\phi}(\Phi_{0, t, h}(w), \b_th_n) -F_{\phi}(\Phi_{0, t, h}(w), \b_th) \right|\\
	&:= I_{21}+I_{22}. 
\end{split}
\end{align}
Note that $\phi\in C_b(H\times\To^d)$ is Lipschitz,  and  by Proposition \ref{proposition-uniqueness-eigen-data}, for any $R>0$, the family $ F^V(\cdot, h), h\in\To$ is uniformly Lipschitz on $B_{R} := B_{R}^H$ with a uniform bound $L=L(R)$ on the Lipschitz constants, therefore so is the the family $F_{\phi}(\cdot, h)=(\phi F^V)(\cdot, h), h\in\To$. In addition, \eqref{continuousonhull} with Lipschitz continuity of $f$ and \eqref{continuous-initial-condition} imply that 
\begin{align}\label{Feller-I21-1}
\begin{split}
	\E\|\Phi_{0, t, h_n}(w_n)-\Phi_{0, t, h}(w)\|&\leq \E\|\Phi_{0, t, h_n}(w_n)-\Phi_{0, t, h_n}(w)\|+\E\|\Phi_{0, t, h_n}(w)-\Phi_{0, t, h}(w)\|\\
	&\leq C\left(\|w_n-w\| + |h_n-h|\right),
\end{split}
\end{align}
with constant $C=C(R_0)$, where we used the fact that $w_n, w\in B_{R_0}^H$ for all $n\geq 1$. Since $F_{\phi} = \id_{B_R}F_{\phi} + \id_{B_R^c}F_{\phi}\in L_{\mkm_q}^{\infty}(H\times\To)$,  we have 
\begin{align}\label{Feller-I21-2}
\begin{split}
	I_{21} &\leq \E\left|(\id_{B_R}F_{\phi})(\Phi_{0, t, h_n}(w_n), \b_th_n) -(\id_{B_R}F_{\phi})(\Phi_{0, t, h}(w), \b_th_n) \right| \\
	&\qquad + \E\left|(\id_{B_R^c}F_{\phi})(\Phi_{0, t, h_n}(w_n), \b_th_n) -(\id_{B_R^c}F_{\phi})(\Phi_{0, t, h}(w), \b_th_n) \right|\\
	&\leq L(R)\E\|\Phi_{0, t, h_n}(w_n)-\Phi_{0, t, h}(w)\|+CP_{0, t, h_n}\left(\id_{B_R^c}\mkm_q\right)(w_n)+ CP_{0, t, h}\left(\id_{B_R^c}\mkm_q\right)(w).
\end{split}
\end{align}
By H\"older's inequality and Markov inequality, as well as \eqref{contraction-polynomial}, we have 
\begin{align}\label{Feller-I21-3}
\begin{split}
	P_{0, t, \mathfrak{h}}\left(\id_{B_R^c}\mkm_q\right)(u)
	&\leq \left(P_{0, t, \mathfrak{h}}\id_{B_R^c}(u)\right)^{1/2}\left(P_{0, t, \mathfrak{h}}\mkm_{2q}(u)\right)^{1/2}\\
	&\leq CR^{-1}P_{0, t, \mathfrak{h}}\|\cdot\|(u)\leq CR^{-1}, 
\end{split}
\end{align}
for all $(u, \mathfrak{h})\in\lp (w, h), (w_n, h_n): n\geq 1\rp$, where $C=C(R_0)$. Hence by combining \eqref{Feller-I21-1}-\eqref{Feller-I21-3}, we obtain 
\begin{align*}
	I_{21}\leq C(R)\left(\|w_n-w\| + |h_n-h|\right) + CR^{-1}, 
\end{align*}
which tends to zero by first making $R$ large and then $n$ large so that $\|w_n-w\| + |h_n-h|\to0$. 

\vskip0.05in

We now look at $I_{22}$. Since for each $u\in H$, we have 
\begin{align*}
	F_{\phi}(u, \b_th_n)\to F_{\phi}(u, \b_th), \, \text{ as }h_n\to h. 
\end{align*}
and $\left|F_{\phi}(u, \b_th_n)- F_{\phi}(u, \b_th)\right|\leq C\mkm_q(u)$ by $L_{\mkm_q}^{\infty}(H\times\To)$, which is $P_{0, t, h}(w, du)$ integrable by \eqref{contraction-polynomial},  hence by Lebesgue's dominated convergence theorem, we have 
\begin{align*}
	I_{22} =\int_{H} \left|F_{\phi}(u, \b_th_n)- F_{\phi}(u, \b_th)\right|P_{0, t, h}(w, du)\to 0\, \text{ as }h_n\to h. 
\end{align*}
In view of \eqref{Feller-I2} we have $|I_2|\to 0$ as long as $(w_n, h_n)\to(w, h)$. 

\vskip0.05in

Combining this with \eqref{Feller-I1+I2} and \eqref{Feller-I1}, we see that continuity of \eqref{Feller-0.5} and hence \eqref{Feller-0} is true for any Lipschitz $\phi\in C_b(H\times\To)$. 

\vskip0.05in

Now for any $\phi\in C_b(H\times\To)$, we can find a sequence $\phi_m$ of bounded Lipschitz functions such that 
\[\phi_{M}: = \sup_{m}\|\phi_m\|_{\infty}<\infty,  \text{ and } \sup_{(w, h)\in K\times\To}|\phi_m(w, h)-\phi(w, h)|\to 0\]
as $m\to \infty$ for any compact $K\subset H$. For any $(w_0, h_0)\in H\times\To$ and $(w_n, h_n)\to (w_0, h_0)$ as $n\to\infty$,  denote $z_n = (w_n, h_n)$ for $n\geq 0$. By triangle inequality we have 
\begin{align}\label{Feller-Cb-0}
\begin{split}
	&\left|\mct_t^V\phi(z_n) - \mct_t^V\phi(z_0)\right|\\
	& \leq \left|\mct_t^V\phi(z_n) - \mct_t^V\phi_m(z_n)\right|+\left|\mct_t^V\phi_m(z_n) - \mct_t^V\phi_m(z_0)\right|+\left|\mct_t^V\phi_m(z_0) - \mct_t^V\phi(z_0)\right|. 
\end{split}
\end{align}
We can assume the existence of $R_0>0$ such that $z_n\in B_{R_0}^H\times\To$ for all $n$. For any $z = (u, \mathfrak{h})\in\lp z_n: n\geq 0\rp$, and $K_{R}: = B_{R}^{H_2}$ which is compact in $H$, we have 
\begin{align}\label{Feller-Cb-1}
\begin{split}
	&\left|\mct_t^V\phi(z) - \mct_t^V\phi_m(z)\right|\\
	&\leq \left|\mct_t^V\left(\id_{K_R}\phi\right)(z) - \mct_t^V\left(\id_{K_R}\phi_m\right)(z)\right|+ \left|\mct_t^V\left(\id_{K_R^c}\phi\right)(z) - \mct_t^V\left(\id_{K_R^c}\phi_m\right)(z)\right|\\
	&\leq \sup_{(w, h)\in K_R\times T}|\phi(w, h) - \phi_m(w, h)| + \frac{C(\|\phi\|_{\infty},\phi_{M},\|V\|_{\infty})}{\l_{t, \mathfrak{h}, V}F^V(u, \mathfrak{h})}P_{0, t, \mathfrak{h}}(\id_{K_R^c}\mkm_q(u)). 
\end{split}
\end{align}
By H\"older's inequality and Markov inequity as in \eqref{Feller-I21-3}, and using \eqref{contraction-polynomial}, we have 
\begin{align}\label{Feller-Cb-2}
	P_{0, t, \mathfrak{h}}(\id_{K_R^c}\mkm_q(u))\leq CR^{-1}, 
\end{align}
with constant $C = C(R_0)$ independent of $u, \mathfrak{h}, R$. Since $h\to \l_{t, h, V}>0$ is continuous, $F^V$ is strictly positive everywhere, and $z_n\to z_0$, we have 
\begin{align*}
	\inf_{(u, \mathfrak{h})\in \lp z_n: n\geq 0\rp}\l_{t, \mathfrak{h}, V}F^V(u, \mathfrak{h})>0. 
\end{align*}
Combining this with \eqref{Feller-Cb-1} and \eqref{Feller-Cb-2}, we have
\begin{align*}
	\left|\mct_t^V\phi(z) - \mct_t^V\phi_m(z)\right|\leq \sup_{(w, h)\in K_R\times T}|\phi(w, h) - \phi_m(w, h)| + CR^{-1}
\end{align*}
for all $z \in\lp z_n: n\geq 0\rp$, where the constant $C$ is independent of $z$ and $R$. In view of \eqref{Feller-Cb-0}, for any $\varepsilon>0$,  we can first choose $R$ large and then take $m$ large so that 
\[\left|\mct_t^V\phi(z_n) - \mct_t^V\phi(z_0)\right|\leq \left|\mct_t^V\phi_m(z_n) - \mct_t^V\phi_m(z_0)\right| + \frac{\varepsilon}{2},\]
for all $n$. Then using continuity of \eqref{Feller-0} for Lipschitz $\phi_m$, we find $N=N(m, R, \varepsilon)$ such that for all $n\geq N$, 
\[\left|\mct_t^V\phi(z_n) - \mct_t^V\phi(z_0)\right|< \varepsilon, \]
which completes the proof of Feller property. 

\vskip0.05in 

The proof of Lemma \ref{time-continuity-T_t} is complete. 

\end{proof}

\subsubsection{Proof of Lemma \ref{zero-is-invariant-measure}}\label{proof-of-lemma-zero-is-invariant-measure}

In this subsection we prove Lemma \ref{zero-is-invariant-measure} by following the scheme in \cite{MN18}. Consider the infinitesimal generator $\mbl_V$ of the Markov semigroup $\mct_{t}^V$. The domain $\mcd(\mbl_V)$ of the generator consists of those functions $\phi\in C_b(H\times\To)$ so that there is some $\varphi\in C_b(H\times\To)$ such that the following is true, 
\begin{align}\label{define-infinitesimal-generator}
	\mct_{t}^V\phi(w, h) = \phi(w, h) + \int_0^t\mct_r^V\varphi(w, h)dr, \, t\geq 0, (w, h)\in H\times\To. 
\end{align}
Therefore for every $\phi\in \mcd(\mbl_V)$, it follows from the time continuity in Lemma \ref{time-continuity-T_t} that the associated function $\varphi$ in the representation \eqref{define-infinitesimal-generator} is unique and it is equal to the following limit, 
\begin{align}
	\varphi(w, h) = \lim_{t\to 0}\frac{\mct_{t}^V\phi(w, h)-\phi(w, h)}{t}, \, (w, h)\in H\times\To. 
\end{align}
We then define $\varphi$ as the image of $\phi$ under the action of  $\mbl_V$ and write $\varphi = \mbl_V\phi$. Hence the integral equation \eqref{define-infinitesimal-generator} can be rewritten as the differential equation
\begin{align*}
	\partial_t\mct_{t}^V\phi = \mct_{t}^V\mbl_V\phi, \, \phi\in \mcd(\mbl_V). 
\end{align*}
It also follows from the definition \eqref{define-infinitesimal-generator} as well as Lemma \ref{time-continuity-T_t} that if $\phi\in \mcd(\mbl_V)$ with $\varphi = \mbl_V\phi$ then $\mct_s^V\phi\in \mcd(\mbl_V)$ with 
\[\mbl_V\mct_s^V\phi = \mct_s^V\varphi =\mct_s^V\mbl_V\phi.\]
The following Lemma \ref{lemma-special-zero-for-auxiliary-pressure} is useful. Its proof is given after we show the proof of Lemma \ref{zero-is-invariant-measure} below. 
\begin{lemma}\label{lemma-special-zero-for-auxiliary-pressure}
Recall the auxiliary pressure function $Q^V$ defined as in \eqref{definition-auxiliary-pressure}.  
\begin{enumerate}
\item The set 
\[\mcd_{+}:=\lp \phi\in\mcd(\mbl_V):\inf_{(w, h)\in H\times\To} \phi(w, h)>0\rp,\]
is determining for $\P(H\times\To)$, i.e., for any $\mu_1, \mu_2\in\P(H\times\To)$, if $\la\phi, \mu_1\ra =\la\phi, \mu_2\ra$ for all $\phi\in\mcd_{+}$ then $\mu_1 = \mu_2$. 
\item For any $\phi\in\mcd_{+}$, we have $Q^V(U_{\phi})=0$ where $U_{\phi} = -\frac{\mbl_V\phi}{\phi}$. 
\end{enumerate}
\end{lemma}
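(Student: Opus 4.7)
The plan is to first recognize the auxiliary Feynman--Kac family $\mcr_t^{V,U}$ from \eqref{auxiliary-Feynman-Kac-semigroup} as the Feynman--Kac perturbation of the Markov semigroup $\mct_t^V$ with potential $U$. Using the eigenfunction property $P_{0,t,h}^V F^V(\cdot, \b_t h) = \l_{t,h,V} F^V(\cdot, h)$ from Theorem \ref{asymptotic-behavior-feynman-kac}, a direct Doob $h$-transform identifies the Markov process $\widetilde{\H}^V$ associated with $\mct_t^V$ through the change of measure
\[
\frac{d\widetilde{\mathbf{P}}_{(w,h)}^V}{d\mathbf{P}_{(w,h)}}\bigg|_{\F_t} = \frac{\exp\big(\int_0^t V(\H_r)\,dr\big)\, F^V(\H_t)}{\l_{t,h,V}\,F^V(w,h)},
\]
and then a direct computation yields the representation $\mcr_t^{V,U} g(w,h) = \E^V_{(w,h)}\big[\exp\big(\int_0^t U(\widetilde{\H}_r^V)\,dr\big)\, g(\widetilde{\H}_t^V)\big]$ for every $U\in C_b(H\times\To)$.

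For part (2), apply this identification to $\phi\in\mcd_{+}$ and $U_\phi=-\mbl_V\phi/\phi$, which is bounded continuous since $\mbl_V\phi\in C_b(H\times\To)$ and $\inf\phi>0$. A Duhamel expansion $\mcr_t^{V,U_\phi}\phi=\mct_t^V\phi+\int_0^t\mcr_r^{V,U_\phi}(U_\phi\mct_{t-r}^V\phi)\,dr$ (obtained from writing $e^{\int_0^t U_\phi}= 1+\int_0^t U_\phi(\widetilde{\H}_r^V) e^{\int_0^r U_\phi(\widetilde{\H}_u^V)du}\,dr$ and applying Fubini) combined with \eqref{define-infinitesimal-generator} shows that $\mcr_t^{V,U_\phi}$ has generator $\mbl_V+U_\phi$ on $\mcd(\mbl_V)$. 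The choice of $U_\phi$ makes $(\mbl_V+U_\phi)\phi\equiv 0$, so that $\mcr_t^{V,U_\phi}\phi(w,h)\equiv \phi(w,h)$ for all $t\geq 0$. Writing $0<c\leq \phi\leq C$ and using positivity and linearity of the auxiliary family, we sandwich
\[
\phi(w,h)/C \;\leq\; \mcr_t^{V,U_\phi}\id_H(w,h) \;\leq\; \phi(w,h)/c,
\]
so $\frac{1}{t}\log\mcr_t^{V,U_\phi}\id_H(w,h)\to 0$ as $t\to\infty$, and \eqref{definition-auxiliary-pressure} delivers $Q^V(U_\phi)=0$.

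For part (1), the strategy is to build a rich subfamily of $\mcd(\mbl_V)$ via resolvents. For $\lambda>0$ and $\psi\in C_b(H\times\To)$, set $R_\lambda\psi(w,h):=\int_0^\infty e^{-\lambda t}\mct_t^V\psi(w,h)\,dt$. The time continuity and Feller property from Lemma \ref{time-continuity-T_t} imply $R_\lambda\psi\in C_b(H\times\To)$, and a Fubini argument together with the semigroup identity $\mct_s^V\mct_t^V=\mct_{s+t}^V$ shows $R_\lambda\psi\in\mcd(\mbl_V)$ with $\mbl_V R_\lambda\psi=\lambda R_\lambda\psi-\psi$. Since constants lie in $\mcd(\mbl_V)$, the shift $R_\lambda\psi+C_\psi\in\mcd_{+}$ whenever $C_\psi>\|R_\lambda\psi\|_\infty$. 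If $\la\phi,\mu_1\ra=\la\phi,\mu_2\ra$ for every $\phi\in\mcd_{+}$, subtracting constants yields $\la R_\lambda\psi,\mu_1\ra=\la R_\lambda\psi,\mu_2\ra$ for every $\psi\in C_b(H\times\To)$, and dominated convergence combined with $\lambda R_\lambda\psi(w,h)\to\psi(w,h)$ as $\lambda\to\infty$ (from pointwise time continuity of $\mct_t^V\psi$ and the bound $\|\mct_t^V\psi\|_\infty\leq\|\psi\|_\infty$) upgrades this to $\la\psi,\mu_1\ra=\la\psi,\mu_2\ra$ for all $\psi\in C_b(H\times\To)$, forcing $\mu_1=\mu_2$.

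The main obstacle is justifying the identity $\mcr_t^{V,U_\phi}\phi\equiv \phi$ in our pointwise $C_b$ setting, which does not sit inside a strongly continuous Banach-space semigroup framework. This is addressed by the Duhamel representation above together with the semigroup property for $\mcr_t^{V,U}$, which in turn follows from the cocycle \eqref{Feynman-Kac-cocycle} for the Feynman--Kac evolution operators and the integral representation of the eigenvalues in \eqref{eigen-value-integral-representation}; once these structural identities are in hand, the remainder of the argument is routine.
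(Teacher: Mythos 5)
Your proof is correct; the main genuine divergence from the paper is in Part (1). The paper constructs elements of $\mcd_+$ by Krylov--Bogolyubov time averages $\overline{U}_s=\frac{1}{s}\int_0^s\mct_r^V U\,dr$ applied to $U\in C_b^+(H\times\To)$, and then lets $s\to 0$ pointwise; you instead use resolvents $R_\lambda\psi=\int_0^\infty e^{-\lambda t}\mct_t^V\psi\,dt$ with $\lambda R_\lambda\psi\to\psi$ as $\lambda\to\infty$. Both arguments rest on exactly the ingredients from Lemma \ref{time-continuity-T_t} (Feller property to get continuity in $(w,h)$, pointwise time continuity to pass to the limit, and $\|\mct_t^V\psi\|_\infty\leq\|\psi\|_\infty$ for domination), and both use that positive constants lie in $\mcd_+$ to shift into the positive cone. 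The paper's averaging construction is slightly shorter because the approximant is over a compact time window, which avoids the improper integral and the tail estimate; your resolvent version is the more classical semigroup-theoretic route and is equally valid here. For Part (2), both you and the paper identify $\mcr_t^{V,U}$ as the Feynman--Kac perturbation of $\mct_t^V$ and derive a Duhamel identity; the paper uses the fixed-point form $\phi_t=\mct_t^V\phi+\int_0^t\mct_r^V(U\phi_{t-r})\,dr$ and checks directly that $\phi_t\equiv\phi$ solves it for $U=U_\phi$, whereas you differentiate from the left, obtain the explicit form $\mcr_t^{V,U}g=\mct_t^Vg+\int_0^t\mcr_r^{V,U}(U\mct_{t-r}^Vg)\,dr$, and then pass through the generator assertion that $\mbl_V+U_\phi$ annihilates $\phi$. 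Your step ``the Duhamel expansion shows the generator is $\mbl_V+U_\phi$ on $\mcd(\mbl_V)$'' compresses a nontrivial Fubini computation (substituting $\mct_{t-r}^V\phi=\phi+\int_0^{t-r}\mct_u^V(\mbl_V\phi)\,du$ into the explicit Duhamel and re-summing); the paper's fixed-point form avoids this by reducing directly to a one-line telescoping check, at the cost of implicitly invoking uniqueness of solutions to a bounded Volterra equation. Neither gap is serious, and the sandwich $m_\phi R_{0,t,h}^{V,U_\phi}\id_H\leq\mcr_t^{V,U_\phi}\phi\leq M_\phi R_{0,t,h}^{V,U_\phi}\id_H$ concluding $Q^V(U_\phi)=0$ is the same in both treatments.
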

\begin{proof}[Poof of Lemma \ref{zero-is-invariant-measure}]
Assume $I^V(\mu) =0$. Since $Q^V(U_{\phi})=0$ and $U_{\phi}\in C_b(H\times\To)$, one has 
\begin{align*}
	0 = I^V(\mu) = \sup_{U\in C_b(H\times\To)}\left(\la U, \mu\ra - Q^V(U)\right)\geq \sup_{\phi\in\mcd_+}\la U_{\phi}, \mu\ra
\end{align*}
This is to say 
\begin{align*}
	 \inf_{\phi\in\mcd_+}\la \frac{\mbl_V\phi}{\phi}, \mu\ra \geq 0. 
\end{align*}
Since $\mct_t^V$ is Markov, we have $\mbl_V\id = 0$, which implies that $\a = $ is a local minimum of the function 
\begin{align*}
	J_{\mu}(\a) := \la \frac{\mbl_V\left(1+\a\phi\right)}{1+\a\phi}, \mu \ra, \, \text{ for any }\phi\in\mcd_+. 
\end{align*}
Therefore there must holds 
\begin{align}\label{proof-zero-is-invariant-measure}
	0 = J_{\mu}'(0) =  \la \mbl_V\phi, \mu\ra, 
\end{align}
for all $ \phi\in\mcd_+$. Since $\mct_s^V\phi\in \mcd_+$ for any $ \phi\in\mcd_+$ and $s\geq 0$, replacing $\phi$  in \eqref{proof-zero-is-invariant-measure} with $\mct_s^V\phi$ and integrate from $0$ to $t$ we obtain 
\begin{align*}
	\la \mct_t^V\phi, \mu\ra = \la \phi, \mu\ra 
\end{align*}
for all $t\geq 0$ and $ \phi\in\mcd_+$. Since $\mcd_+$ is determining by Lemma \ref{lemma-special-zero-for-auxiliary-pressure}, we have $\mct_t^{V*}\mu = \mu$, i.e.,  any $\mu$ satisfying $I^V(\mu) =0$ is invariant under $\mct_t^V$. 

\vskip0.05in

The proof of Lemma \ref{zero-is-invariant-measure} is complete. 

\end{proof}

\begin{proof}[Proof of Lemma \ref{lemma-special-zero-for-auxiliary-pressure}]
The proof is divided into two steps. 

\vskip0.05in

{\it Step 1: Proof of Item (1)}. Let $\mu_1, \mu_2\in\P(H\times\To)$ and assume 
\begin{align}\label{determining-measure-1}
	\la\phi, \mu_1\ra =\la\phi, \mu_2\ra,\,  \text{ for all }\phi\in\mcd_{+}.
\end{align}
It is known that the following set 
\begin{align*}
	C_b^{+}(H\times \To): = \lp U\in C_b(H\times \To): \inf_{(w, h)\in H\times\To}U(w, h)>0\rp
\end{align*}
is determining. We will show that $\mcd_+$ is rich enough to approximate any function in $ C_b^{+}(H\times \To)$.  

\vskip0.05in

 Take any $U\in C_b^{+}(H\times \To)$.  We claim that $\overline{U}_s:=\frac1s\int_0^s\mct_r^VUdr\in \mcd_{+}$ for any $s>0$.  Indeed, since $\mct_r^V$ is conservative, i.e., $\mct_r^V\id = \id$, we have 
\begin{align*}
	0<m_U\leq \overline{U}_s(w, h)\leq M_U,\, (w, h)\in H\times\To, 
\end{align*}
where $m_U, M_U$ are the infimum and supremum of $U$. On the other hand, one has 
\begin{align*}
	\mct_{t}^V\overline{U}_s - \overline{U}_s &= \frac1s\int_0^s\mct_{t+r}^VUdr - \frac1s\int_0^s\mct_r^VUdr\\
	& = \frac1s\int_s^{s+t}\mct_{r}^VUdr - \frac1s\int_0^t\mct_r^VUdr = \int_0^t\mct_r^V\widehat{U}^s dr,
\end{align*}
where $\widehat{U}_s = \frac{\mct_s^VU- U}{s}$. Since $U\in C_b(H\times \To)$, it follows from Lemma \ref{time-continuity-T_t} that $\widehat{U}_s\in C_b(H\times\To)$.  Hence $\overline{U}_s\in \mcd_{+}$  and the claim is proved. 

\vskip0.05in

It then follows from \eqref{determining-measure-1} that 
\begin{align}\label{determining-measure-2}
	\la\overline{U}_s, \mu_1\ra =\la\overline{U}_s, \mu_2\ra,\,  \text{ for all }s>0.
\end{align}
Again the fact $U\in C_b(H\times \To)$ and Lemma \ref{time-continuity-T_t} yield 
\[\lim_{s\to 0}\overline{U}_s(w, h) = U(w, h), \, (w, h)\in H\times\To.\]
Therefore by Lebesgue's dominated convergence theorem and \eqref{determining-measure-2}, one has 
\begin{align*}
	\la U, \mu_1\ra =\la U, \mu_2\ra. 
\end{align*}
Since $U\in C_b^{+}(H\times \To)$ is arbitrary, we conclude $\mu_1 = \mu_2$. 

\vskip0.05in

{\it Step 2: Proof of Item (2)}. We first claim that for any $U, \phi\in C_{b}(H\times\To)$ the trajectory $\phi_t: = \mcr_{t}^{V, U}\phi$ of the semigroup $\mcr_{t}^{V, U}$ defined as in \eqref{auxiliary-Feynman-Kac-semigroup} is the solution to  the following equation
\begin{align}\label{claim-intergal-duhamel}
	\phi_t = \mct_t^V\phi + \int_0^t \mct_r^V(U\phi_{t-r})  dr.
\end{align}
Note that by \eqref{definition-Doob-Markov-cocycle}, \eqref{Doob-Markov-semigroup} and \eqref{auxiliary-Feynman-Kac-semigroup}, denoting $\Xi_{s, t}^V = \exp\left(\int_s^tV(\H_{\tau})d\tau\right)$, and $\E=\E_{(w, h)}$, 
\begin{align*}
	\phi_t - \mct_t^V\phi &= \mcr_{t}^{V, U}\phi - \mct_t^V\phi\\
	& =  \frac{1}{\l_{t, h, V}F^V}\E\left[\Xi_{0,t}^{V}(\Xi_{0, t}^{U}-1)(\phi F^V)(\H_t)\right]\\
	& = \frac{1}{\l_{t, h, V}F^V}\E\left[\Xi_{0,t}^{V}\left(\int_0^t-\frac{d}{dr}\Xi_{r, t}^Udr\right)(\phi F^V)(\H_t)\right]\\
	& = \frac{1}{\l_{t, h, V}F^V}\int_0^t\E\left[\Xi_{0,t}^{V}U(H_r)\Xi_{r, t}^U(\phi F^V)(\H_t)\right]dr.\\
\end{align*}
Therefore by the Markov property, and time homogeneity of $\P_{t}^{V+U}$, as well as cocycle property of the eigenvalue $\l_{t, h, V}$ from the integral representation \eqref{eigen-value-integral-representation}, we deduce 
\begin{align*}
	\phi_t - \mct_t^V\phi &= \frac{1}{\l_{t, h, V}F^V}\int_0^t\E\left[\Xi_{0, r}^{V}U(H_r)\E\left[\Xi_{r, t}^{V+U}(\phi F^V)(\H_t)|\F_{r}\right]\right]dr\\
	&=\frac{1}{\l_{t, h, V}F^V}\int_0^t\E\left[\Xi_{0, r}^{V}U(H_r)\E\left[\Xi_{r, t}^{V+U}(\phi F^V)(\H_t)|\F_{r}\right]\right]dr\\
	&=\int_0^t\frac{1}{\l_{r, h, V}F^V}\E\left[\Xi_{0, r}^{V}U(H_r)F^V(H_r)\frac{1}{\l_{t-r, \b_rh, V} F^V(H_r)}\P_{t-r}^{V+U}(\phi F^V)(\H_r)\right]dr\\
	&=\int_0^t\frac{1}{\l_{r, h, V}F^V}\E\left[\Xi_{0, r}^{V}U(H_r)F^V(H_r)\mcr_{t-r}^{V, U}(\H_r)\right]dr\\
	& = \int_0^t \mct_r^V(U\phi_{t-r})  dr 
\end{align*}
which completes the proof of the claim. 

\vskip0.05in

Now for any $\phi\in\mcd_{+}$, we have 
\begin{align*}
	\mct_{t}^V\phi(w, h) = \phi(w, h) + \int_0^t\mct_r^V\mbl_V\phi(w, h)dr. 
\end{align*}
Combining this and with $U_\phi = -\frac{\mbl_V\phi}{\phi}$ it is straightforward to verify that $\phi_t\equiv \phi$ solves the equation 
\[\phi_t = \mct_t^V\phi + \int_0^t \mct_r^V(U_{\phi}\phi_{t-r})  dr.\]
Hence we have $\mcr_{t}^{V, U_{\phi}}\phi=\phi$ for all $t\geq 0$.  Since $\phi\in\mcd_{+}$, in view of \eqref{auxiliary-Feynman-Kac-semigroup}, there are constants $m_{\phi}, M_{\phi}>0$ such that 
\begin{align*}
	m_{\phi}R_{0, t, h}^{V, U_{\phi}}\id_{H}(w)\leq \mcr_{t}^{V, U_{\phi}}\phi(w, h)\leq M_{\phi}R_{0, t, h}^{V, U_{\phi}}\id_{H}(w).
\end{align*}
Also note 
\begin{align*}
	 \lim_{t\to\infty}\frac1t\log\mcr_{t}^{V, U_{\phi}}\phi(w, h) =  \lim_{t\to\infty}\frac1t\log\phi(w, h)=0. 
\end{align*}
Hence we obtain the desired Item (2)
\begin{align*}
	Q^V(U_{\phi}) = \lim_{t\to\infty}\frac1t\log R_{0, t, h}^{V, U_{\phi}}\id_{H}(w) = 0. 
\end{align*}

\vskip0.05in

The proof is complete. 

\end{proof}

\section*{Appendix}
\setcounter{section}{0}
\renewcommand{\thesection}{\Alph{section}}
\counterwithin{equation}{section}
\renewcommand{\theequation}{\thesection.\arabic{equation}}
\counterwithin{theorem}{section}
\renewcommand{\thetheorem}{\thesection.\arabic{theorem}}

\section{A Kifer type criterion} 

The following Kifer type theorem was established in \cite{JNPS18}, which is based on the work of Kifer \cite{Kif90} in the compact setting. Suppose that $X$ is a polish space, $\Theta$ is a directed set and $\{\zeta_{\theta}, \theta\in\Theta\}$ is a family of random probability measures on $X$ that are defined on some probability spaces $(\O_{\theta}, \mathcal{F}_{\theta}, \mathbf{P}_{\theta})$. $r:\Theta\to\mathbb{R}$ is a given positive function such that $\lim_{\theta\in\Theta}r(\theta)=+\infty$. 
\begin{theorem}[Kifer Type Criterion \cite{JNPS18}]\label{Kifer-Type-Criterion}
Assume that the family $\{\zeta_{\theta}, \theta\in\Theta\}$ satisfies the following two properties: 
\begin{enumerate}
\item The following limit exists for any $V\in C_b(X)$:
\[Q(V) = \lim_{\theta\in\Theta}\frac{1}{r(\theta)}\log\int_{\O_{\theta}}\exp(r(\theta)\la V, \zeta_{\theta}\ra)\mathbf{P}_{\theta}(d\o).\]
\item There is a function $R: X\to [0, \infty]$ with compact level sets $\{x\in X: R(x)\leq a\}$ for all $a\geq 0$ such that 
\[\int_{\O_{\theta}}\exp(r(\theta)\la R, \zeta_{\theta}\ra)\mathbf{P}_{\theta}(d\o)\leq Ce^{cr(\theta)}, \, \theta\in\Theta\]
where $C$ and $c$ are positive constants.
\end{enumerate}
Then $Q$ is $1$-Lipschitz and convex, and its Legendre transform $I$ given by 
\[I(\mu)=\left\{\begin{array}{cc}
\ds\sup_{V\in C_b(X)}\left(\la V, \mu\ra-V(Q)\right), &\mu\in \P(X),\\
\qquad+\infty, &\text{otherwise.}
\end{array}\right.\]
is a good rate function, for which the following upper bound holds for any closed subset $F\in \P(X)$:
\begin{align*}
\limsup_{\theta\in\Theta}\frac{1}{r(\theta)}\log\mathbf{P}_{\theta}\{\zeta_{\theta}\in F\}\leq -\inf_{\mu\in F}I(\mu).
\end{align*}
If in addition, the following property is satisfied: 
\begin{enumerate}[resume]
\item Suppose there exists a vector space $\mathcal{V}\subset C_b(X)$ such that the restrictions of its functions to any compact set $K\subset X$ form a dense subspace in $C(K)$ and for any $V\in\mathcal{V}$ there is a unique equilibrium state, i.e., a unique $\mu_V\in \P(H)$ such that 
\[Q(V) = \la V, \mu_V\ra - I(\mu_V).\]
\end{enumerate}
Then the following lower bound holds for any open subset $G\in\P(X)$: 
\begin{align*}
\liminf_{\theta\in\Theta}\frac{1}{r(\theta)}\log\mathbf{P}_{\theta}\{\zeta_{\theta}\in G\}\geq -\inf_{\mu\in G}I(\mu).
\end{align*}
\end{theorem}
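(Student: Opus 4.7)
The plan is to follow the classical G\"artner--Ellis--Kifer scheme adapted to the polish setting. Directly from the definition, $Q$ is convex (H\"older applied to the exponential moment) and $1$-Lipschitz (since $|\langle V_1-V_2,\zeta_\theta\rangle|\leq \|V_1-V_2\|_\infty$), and these properties pass to the pointwise limit. Its Legendre transform $I$ is then convex and lower semicontinuous on $\P(X)$. The exponential tightness furnished by Condition (2) gives that $K_L := \{\nu\in\P(X): \langle R,\nu\rangle\leq L\}$ is compact in $\P(X)$ (Prokhorov, since sublevel sets of $R$ are compact in $X$), while Markov's inequality yields $\mathbf{P}_\theta\{\zeta_\theta\notin K_L\}\leq Ce^{-(L-c)r(\theta)}$; this will both imply goodness of $I$ and enable the reduction from closed to compact sets in the upper bound.

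For the upper bound on a compact $K\subset\P(X)$: for each $\mu\in K$ and $\delta>0$, choose $V_\mu\in C_b(X)$ with $\langle V_\mu,\mu\rangle - Q(V_\mu)>I(\mu)-\delta$ and cover $\mu$ by the weak-$*$ open set $U_\mu=\{\nu:\langle V_\mu,\nu\rangle>\langle V_\mu,\mu\rangle-\delta\}$. The exponential Chebyshev inequality combined with Condition (1) yields
\begin{align*}
\limsup_\theta\tfrac{1}{r(\theta)}\log\mathbf{P}_\theta\{\zeta_\theta\in U_\mu\}\leq -\bigl(\langle V_\mu,\mu\rangle-\delta\bigr)+Q(V_\mu)\leq -I(\mu)+2\delta.
\end{align*}
A finite subcover together with the union bound handles compact $K$; splitting any closed $F$ as $(F\cap K_L)\cup(F\setminus K_L)$ and sending $L\to\infty$ extends this to all closed sets and forces $\{I\leq\alpha\}\subset K_L$ for $L$ large, so $I$ is good.

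The lower bound under Condition (3) is the main obstacle, and is handled by exponential tilting. For $V\in\mathcal{V}$, define
\begin{align*}
\frac{d\mathbf{Q}_\theta^V}{d\mathbf{P}_\theta}=\frac{\exp\bigl(r(\theta)\langle V,\zeta_\theta\rangle\bigr)}{\mathbb{E}_\theta\exp\bigl(r(\theta)\langle V,\zeta_\theta\rangle\bigr)}.
\end{align*}
Under $\mathbf{Q}_\theta^V$, Conditions (1) and (2) persist with pressure $Q^V(U)=Q(V+U)-Q(V)$, so the upper bound already established gives an LDP with rate $I^V(\nu)=I(\nu)+Q(V)-\langle V,\nu\rangle$, whose unique zero is the equilibrium state $\mu_V$ from Condition (3). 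Hence $\zeta_\theta\to\mu_V$ in $\mathbf{Q}_\theta^V$-probability. For $\mu\in G$ with $I(\mu)<\infty$, inverting the Radon--Nikod\'ym relation on a small neighborhood $U'$ of $\mu_V$ on which $\langle V,\cdot\rangle\leq\langle V,\mu_V\rangle+\delta$ produces
\begin{align*}
\tfrac{1}{r(\theta)}\log\mathbf{P}_\theta\{\zeta_\theta\in G\}\geq -\langle V,\mu_V\rangle+Q(V)-\delta+\tfrac{1}{r(\theta)}\log\mathbf{Q}_\theta^V\{\zeta_\theta\in G\cap U'\},
\end{align*}
and the last term tends to $0$ once $\mu_V\in G$. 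The hard part is the approximation step: producing $V\in\mathcal{V}$ whose equilibrium $\mu_V$ lies in $G$ and satisfies $\langle V,\mu_V\rangle-Q(V)\geq I(\mu)-\delta$. This requires exploiting the density of $\mathcal{V}$ in $C(K)$ for a compact $K\subset X$ carrying most of the mass of $\mu$ (located via exponential tightness), combined with a continuity argument for $V\mapsto\mu_V$ along $\mathcal{V}$ and a diagonal selection.
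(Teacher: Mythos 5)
The paper does not prove this theorem; it is quoted from \cite{JNPS18} (building on Kifer \cite{Kif90}) and used as a black box, so there is no in-paper argument to compare against. Your outline reproduces the standard Kifer scheme and most of it is sound: $Q$ convex and $1$-Lipschitz from H\"older and the sup-norm bound; the upper bound via exponential Chebyshev, a finite cover of a compact, and passage to closed sets through exponential tightness; goodness of $I$, with the small caveat that $R\notin C_b(X)$, so one must feed the truncations $R\wedge N$ into the variational formula to obtain $\langle R,\mu\rangle\leq I(\mu)+c$ and hence compact sublevel sets; and the tilting mechanism, including the concentration $\zeta_\theta\to\mu_V$ in $\mathbf{Q}_\theta^V$-probability deduced from the upper bound for the tilted family together with the uniqueness of the zero of $I^V=I+Q(V)-\langle V,\cdot\rangle$. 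This already yields the lower bound at any measure of the form $\mu_V$ with $V\in\mathcal{V}$.

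The genuine gap is the step you yourself flag as ``the hard part'': given $\mu\in G$ with $I(\mu)<\infty$ and $\delta>0$, exhibit $V\in\mathcal{V}$ with $\mu_V\in G$ and $I(\mu_V)\leq I(\mu)+\delta$. Your proposed route — density of $\mathcal{V}$ in $C(K)$, ``continuity of $V\mapsto\mu_V$ along $\mathcal{V}$,'' and a diagonal selection — does not close this. Continuity of $V\mapsto\mu_V$ on $\mathcal{V}$ does in fact hold (a subsequence argument using goodness of $I$ and uniqueness of the minimizer of $I^V$), but it only tells you what happens near potentials you already control; it does not show that the set $\{\mu_V:V\in\mathcal{V}\}$ approximates an arbitrary $\mu$ with $I(\mu)<\infty$ while keeping $I(\mu_V)$ under control. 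For the density-and-continuity route to work you would need $\mu$ to be an equilibrium state of some $W\in C_b(X)$ that $\mathcal{V}$ approximates well, but non-exposed points of $I$ need not be equilibrium states at all, and uniqueness is hypothesized only on $\mathcal{V}$. The approximation lemma that Kifer's argument actually rests on (Lemma 2.1 of \cite{Kif90}, extended to the non-compact setting in \cite{JNPS18}) is a convex-duality statement proved by a minimax/separation argument on the compact convex sublevel set of $I$, with the density of $\mathcal{V}$ consumed to localize to a basic weak neighbourhood cut out by functions from $\mathcal{V}$; it is not a selection argument. Your sketch is missing this lemma, and the substitute you propose does not supply it.
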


\section{A priori estimates of solutions}
Several standard estimates about the solution  $\Phi_{0, t, h}(w_0)$ of the stochastic Navier-Stokes equation \eqref{Settings-NS-family} with time symbol $h\in\To$ are collected in the following Proposition \ref{bounds}. Note that the constant $C$ in the bounds does not depend on $h$ due to the boundedness of the periodic force $f$. Let $\|f\|_{\infty}=\sup_{t\in\R}\|f\|$ and  $\mathcal{B}_0 = \sum_{k=1}^{d}\|g_k\|^2$ for $f\in C_b(\R, H)$  and $g_k\in H$. 

\begin{proposition}\label{bounds}
There is an $\eta_0=\eta_0(\nu, \|f\|_{\infty}, \mathcal{B}_0)$ so that for any $\eta\in(0,\eta_0]$ and $h\in\To$, there is a constant $C>0$ independent of $h$ such that 
\begin{enumerate}
\item The following estimates hold: 
	\begin{align}\label{eq: est1}
		&\mathbf{E}_{w_0} \exp \left(\eta\left\|\Phi_{0, t, h}\right\|^{2}\right) \leq C \exp \left(\eta e^{-\nu t}\left\|w_0\right\|^{2}\right),\\\label{enstrophy}
		&\mathbf{E}_{w_0} \exp \left(\eta \sup _{t \geq 0}\left(\left\|\Phi_{0, t, h}\right\|^{2}+\nu \int_{0}^{t}\left\|\Phi_{0, r, h}\right\|_{1}^{2} d r-Ct\right)\right)\leq C \exp \left(\eta\left\|w_0\right\|^{2}\right),\\\label{mixing-moments}
		&\mathbf{E}_{w_0} \left[\|\Phi_{0, t, h}\|^2 \exp \left(\eta\left\|\Phi_{0, t, h}\right\|^{2}\right)\right]\leq C(t)\exp \left(\eta \left\|w_0\right\|^{2}\right)
	\end{align}
\item For any $h_1, h_2\in \To$,  we have 
	\begin{align}\label{continuousonhull}
		\mathbf{E}_{w_0}\left\|\Phi_{0, t, h_1} - \Phi_{0, t, h_2}\right\|^2\leq Ce^{r t}\exp\left({\eta\|w_0\|^2}\right)\sup_{t\in\R}\|f(\b_th_1)-f(\b_th_2)\|^2.
	\end{align}
\item For any $R>0$ and $w_1, w_2\in B_R^H$, we have 
	\begin{align}\label{continuous-initial-condition}
		\mathbf{E}\|\Phi_{0, t, h}(w_1)-\Phi_{0, t, h}(w_2)\|^2\leq C\|w_1 - w_2\|^{2}e^{\eta R^2+rt}.
	\end{align}
\item For any $m\geq 1$, 
	\begin{align}\label{contraction-polynomial}
		\begin{aligned}
		&\mathbf{E}_{w_0}\|\Phi_{0, t, h}\|^{2m}\leq e^{-\nu mt}\|w_0\|^{2m}+ C(m), \\
		&\mathbf{E}_{w_0}\|\Phi_{0, t, h}\|_{2}^{2m}\leq C (m, t)(1+\|w_0\|^{12m}).
		\end{aligned}
	\end{align}
\end{enumerate}
\end{proposition}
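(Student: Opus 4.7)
The plan is to derive all estimates by applying It\^o's formula to suitable functionals of the solution and exploiting the two structural features of the 2D vorticity equation: the dissipation $\nu\|w\|_1^2$ coming from $\Delta$, and the antisymmetry identity $\langle B(\mathcal{K}w,w),w\rangle = 0$ which annihilates the nonlinear contribution in the basic energy balance. Throughout, the $h$-independence of constants is automatic once one bounds $f(\beta_\cdot h)$ by $\|f\|_\infty$.

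For the exponential bounds \eqref{eq: est1} and \eqref{enstrophy}, I would apply It\^o's formula to $\varphi(w) = e^{\eta\|w\|^2}$. The drift produces the favorable term $-2\eta\nu\|w\|_1^2 e^{\eta\|w\|^2}$ (using $\langle B,w\rangle=0$) together with error terms $2\eta\langle f,w\rangle + \eta\mathcal B_0 + 2\eta^2\sum_i\langle g_i,w\rangle^2$ multiplied by $e^{\eta\|w\|^2}$. Using Poincar\'e ($\|w\|_1\ge\|w\|$) and Cauchy--Schwarz, these error terms can be absorbed by $\nu\eta\|w\|_1^2\,e^{\eta\|w\|^2}$ provided $\eta\le\eta_0$ with $\eta_0$ chosen depending only on $\nu,\|f\|_\infty,\mathcal B_0$; this is the single quantitative choice that determines $\eta_0$ throughout. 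Taking expectations and applying Gr\"onwall yields \eqref{eq: est1}. For the supremum bound \eqref{enstrophy}, I would form the exponential supermartingale
\begin{equation*}
M_t = \exp\!\Big(\eta\|w(t)\|^2 + \eta\nu\!\int_0^t\!\|w(r)\|_1^2\,dr - \eta Ct - 2\eta^2\!\int_0^t\!\!\sum_i\langle g_i,w(r)\rangle^2 dr\Big),
\end{equation*}
verify it is a supermartingale for $\eta\le\eta_0$, control the quadratic variation term by the enstrophy integral, and apply Doob's maximal inequality. Estimate \eqref{mixing-moments} follows from \eqref{eq: est1} by replacing $\eta$ with a slightly larger $\eta'$ and using $\|w\|^2\le C_{\eta'-\eta}\,e^{(\eta'-\eta)\|w\|^2}$.

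For the continuity estimates \eqref{continuousonhull} and \eqref{continuous-initial-condition}, let $v = \Phi_{0,t,h_1}(w_1)-\Phi_{0,t,h_2}(w_2)$; the noise cancels and $v$ solves a random PDE driven by $f(\beta_\cdot h_1)-f(\beta_\cdot h_2)$ plus the bilinear difference. Applying It\^o (in fact the pathwise chain rule) to $\|v\|^2$, the bilinear cross-terms are handled by the standard 2D bound $|\langle B(\mathcal Kw,v),v\rangle|\le C\|w\|_1\|v\|\|v\|_1$ (interpolation plus Ladyzhenskaya), which after absorbing $\nu\|v\|_1^2$ gives a Gr\"onwall inequality with exponential coefficient $C\|w\|_1^2$. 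Invoking \eqref{enstrophy} to handle $\int_0^t\|w\|_1^2\,dr$ produces the factor $e^{\eta\|w_0\|^2 + rt}$. Finally, \eqref{contraction-polynomial} for $\|w\|^{2m}$ follows by applying It\^o to $\|w\|^{2m}$, using $-2m\nu\|w\|^{2m-2}\|w\|_1^2 \le -m\nu\|w\|^{2m} + C$ after Poincar\'e and Young.

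The main technical obstacle is the $H_2$ polynomial bound in \eqref{contraction-polynomial}. This cannot be obtained by a single application of It\^o because the nonlinear term $B(\mathcal Kw,w)$ is only controlled in $H_2$ after paying a price in even higher norms. The standard route is a bootstrap: first obtain \eqref{eq: est1} and the enstrophy estimate \eqref{enstrophy}; then apply It\^o to $\|w\|_2^2$, use the bilinear estimate $|\langle B(\mathcal Kw,w),\Delta^2 w\rangle|\le C\|w\|_1^{a}\|w\|_2^b\|w\|_3^c$ obtained via Sobolev multiplication and Kato--Ponce, absorb the top-order term by $\nu\|w\|_3^2$, and close the inequality using the already-established $L^{2m}$ bounds on lower norms together with the almost-sure regularity $w\in L^2_{\mathrm{loc}}(H_3)$ of Navier--Stokes solutions. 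The iteration of this bootstrap to reach the exponent $12m$ in \eqref{contraction-polynomial} is the delicate accounting step; all other items are essentially immediate from the energy method outlined above.
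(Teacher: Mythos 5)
The paper does not give a self-contained proof of this proposition; it simply cites \cite{HM06} for \eqref{eq: est1}--\eqref{enstrophy}, \cite{HM08} for \eqref{continuousonhull}--\eqref{continuous-initial-condition}, and \cite{Ner19}, \cite{KS12} for \eqref{mixing-moments} and \eqref{contraction-polynomial}, noting that $h$-uniformity comes for free from $f\in C_b(\R,H_2)$. Your proposal reconstructs exactly the arguments of those references: the It\^o/exponential-martingale method for \eqref{eq: est1}--\eqref{enstrophy}, the pathwise energy estimate on the difference $v$ with Gr\"onwall and the enstrophy bound for items (2)--(3), and the parabolic bootstrap for the $H_2$ moment. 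Two minor remarks. For \eqref{mixing-moments}, your device $\|w\|^2\le C_\delta e^{\delta\|w\|^2}$ combined with \eqref{eq: est1} at level $\eta+\delta$ does close, but one must choose $\delta=\delta(t)$ small enough that $(\eta+\delta)e^{-\nu t}\le\eta$; this forces $C_\delta\sim\delta^{-1}\to\infty$ as $t\to0$, which is admissible since the statement allows $C(t)$. For the $H_2$ bound in \eqref{contraction-polynomial}, your bootstrap is on the right track, but since $w_0\in H$ only, the standard device is to run the It\^o estimates on time-weighted quantities such as $t\|w(t)\|_1^2$ and $t^2\|w(t)\|_2^2$ (parabolic smoothing), which is where the $t$-dependence and the exponent $12m$ come from; you flag this as delicate without spelling it out, which is acceptable for a plan but should be made explicit in a full proof.
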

Estimates \eqref{eq: est1} and \eqref{enstrophy} can be proved as in \cite{HM06}. Inequalities \eqref{continuousonhull} and \eqref{continuous-initial-condition} can be proved as in \cite{HM08}. One can prove \eqref{mixing-moments} and \eqref{contraction-polynomial} as \cite{Ner19}. See also \cite{KS12}. 

\vskip0.05in

\addtocontents{toc}{\protect\setcounter{tocdepth}{1}}
\section{Growth condition, uniform irreducibility and uniform Feller property}
\subsection{The Growth Condition}
The following growth condition is useful when working with the non-conservative Feynman-Kac operator $P^V_{0, t, h}$ in a non-compact phase space, which was first introduced in \cite{JNPS18}  in the time homogeneous setting. 
\begin{proposition}\label{Growth-Condition}
Assume $V\in B_b(H\times\To)$. Recall that $\mathfrak{m}_{\eta}(w) = e^{\eta\|w\|^2}$ and $\mkm_q(w) = 1+\|w\|^{2q}$.  Then there are $R_0>0$ and $q=q(V)\geq 1$ such that
\begin{align*}
	&M_{\eta}: = \sup_{t\geq 0, h\in\To}\frac{\|P_{0,t,h}^V\mathfrak{m}_{\eta}\|_{L_{\mathfrak{m}_{\eta}}^{\infty}(H)}}{\|P_{0, t, h}^V\id_H\|_{R_0}}<\infty.\\
	&M_{q}: = \sup_{t\geq 0, h\in\To}\frac{\|P_{0,t,h}^V\mkm_{q}\|_{L_{\mkm_{q}}^{\infty}(H)}}{\|P_{0, t, h}^V\id_H\|_{R_0}}<\infty.\\
	&M_{H_2}: = \sup_{t\geq 0, h\in\To}\frac{\|P_{0,t,h}^VN_2\|_{L_{\mkm_{\eta}}^{\infty}(H)}}{\|P_{0, t, h}^V\id_H\|_{R_0}}<\infty, \quad N_2(w) = \|w\|_2^2,
\end{align*}
for all $\eta\in(0, \eta_0]$. Here $\dh_{R_0}$ is the $L^{\infty}$ norm on $B_{R_0}^{H_2}$.  In particular, we have 
\begin{align*}
	P_{0,t,h}^V\mathfrak{m}_{\eta}(w)\leq M_{\eta}\|P_{0, t, h}^V\id_H\|_{R_0}\mathfrak{m}_{\eta}(w), \,\text{ for all } t\geq0, h\in\To, w\in H. 
\end{align*}
\end{proposition}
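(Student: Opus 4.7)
The proposition establishes that the Feynman--Kac operator applied to the Lyapunov weights $\mkm_\eta$, $\mkm_q$, and $N_2$ is dominated, modulo the weight itself, by the sup of $P^V_{0,t,h}\id_H$ on the fixed Sobolev ball $B_{R_0}^{H_2}$, uniformly in $t\ge 0$ and $h\in\To$. All three bounds follow the same pattern, so I will describe the approach for $M_\eta$.

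The upper estimate of the numerator is immediate: using $|V|\le\|V\|_\infty$ and the exponential moment bound \eqref{eq: est1},
\[P^V_{0,t,h}\mkm_\eta(w)\le e^{t\|V\|_\infty}\E_w\mkm_\eta(\Phi_{0,t,h})\le C e^{t\|V\|_\infty}\mkm_\eta(w),\]
so the $L^\infty_{\mkm_\eta}$-norm of $P^V_{0,t,h}\mkm_\eta$ grows at most like $e^{t\|V\|_\infty}$. The heart of the argument is to produce a matching exponential lower bound on $\|P^V_{0,t,h}\id_H\|_{R_0}$, so that the ratio stays bounded uniformly in $t,h$.

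My plan is a one-step self-referential decomposition. By the cocycle property, write $P^V_{0,t,h}\mkm_\eta(w)=P^V_{0,1,h}\bigl(P^V_{0,t-1,\b_1h}\mkm_\eta\bigr)(w)$ and split the outer expectation according to whether $\Phi_{0,1,h}(w)\in B_{R_0}^{H_2}$. On the ``good'' event, control the inner integrand by the previous value of the ratio, $P^V_{0,t-1,\b_1h}\mkm_\eta(u)\le M_\eta(t-1)\mkm_\eta(u)\|P^V_{0,t-1,\b_1h}\id_H\|_{R_0}$; on the complementary event, use \eqref{eq: est1} together with Chebyshev's inequality to extract a tail factor that can be made small by taking $R_0$ large. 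Simultaneously, the Chapman--Kolmogorov identity combined with uniform irreducibility (Proposition \ref{Uniform-Irreducibility}) gives
\[\|P^V_{0,t,h}\id_H\|_{R_0}\ge e^{-\|V\|_\infty}p_0\,\|P^V_{0,t-1,\b_1h}\id_H\|_{R_0},\]
where $p_0>0$ is the uniform lower bound on $P_{0,1,h}(w_\ast,B_{R_0}^{H_2})$ from a reference point $w_\ast\in B_{R_0}^{H_2}$. Dividing the two estimates produces a linear recursion of the form $M_\eta(t)\le\alpha M_\eta(t-1)+\beta$. Choosing $R_0$ large makes $\alpha<1$ by exploiting the dissipative contraction $\mkm_\eta(\Phi_{0,1,h})^{e^{-\nu}}/\mkm_\eta(w)$ from \eqref{eq: est1}, and iterating yields $\sup_{t}M_\eta(t)<\infty$.

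The bound for $M_q$ follows by the same scheme, replacing \eqref{eq: est1} by the polynomial estimate \eqref{contraction-polynomial} and choosing $q=q(V)$ large enough that the analogous single-step contraction $\alpha<1$ holds after absorbing the factor $e^{\|V\|_\infty}$. The bound for $M_{H_2}$ uses the instantaneous $H_2$-regularization bound implicit in \eqref{contraction-polynomial} to reduce, after one time unit, to the already-established $M_\eta$-estimate applied to $\E_{\Phi_{0,1,h}}\|\cdot\|_2^2$. The main obstacle is the self-referential nature of the inequality: closing the recursion requires coordinating the size of $R_0$ with $\|V\|_\infty$ and the dissipative rate $\nu$, so that the irreducibility gain on the denominator strictly dominates the tail loss on the numerator.
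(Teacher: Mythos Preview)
Your recursion does not close. Write $M_\eta(t)=\sup_{h,w}\frac{P^V_{0,t,h}\mkm_\eta(w)}{\mkm_\eta(w)\|P^V_{0,t,h}\id_H\|_{R_0}}$. In your one-step split, on both the good and bad events you bound the inner integrand by $P^V_{0,t-1,\b_1h}\mkm_\eta(u)\le M_\eta(t-1)\,\mkm_\eta(u)\,\|P^V_{0,t-1,\b_1h}\id_H\|_{R_0}$, so the total numerator is at most
\[
M_\eta(t-1)\,\|P^V_{0,t-1,\b_1h}\id_H\|_{R_0}\cdot P^V_{0,1,h}\mkm_\eta(w)\ \le\ C\,e^{\|V\|_\infty}\,M_\eta(t-1)\,\|P^V_{0,t-1,\b_1h}\id_H\|_{R_0}\,\mkm_\eta(w),
\]
with the constant $C$ coming from \eqref{eq: est1}. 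Dividing by your denominator bound $\|P^V_{0,t,h}\id_H\|_{R_0}\ge e^{-\|V\|_\infty}p_0\|P^V_{0,t-1,\b_1h}\id_H\|_{R_0}$ yields $M_\eta(t)\le (Ce^{2\|V\|_\infty}/p_0)\,M_\eta(t-1)$, and since $C\ge 1$, $p_0\le 1$, no choice of $R_0$ makes $\alpha<1$. The dissipative contraction in \eqref{eq: est1} gives $\mkm_\eta(w)^{e^{-\nu}}$, which is not $\delta\mkm_\eta(w)$ with a uniform $\delta<1$; the good/bad split only helps the tail term, not the multiplicative coefficient in front of $M_\eta(t-1)$. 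The same obstruction arises for $M_q$: the additive constant in the Foster--Lyapunov bound $P_{0,1,h}\mkm_q\le e^{-\nu q}\mkm_q+C$ is $\ge 1$, so after dividing by $\mkm_q(w)$ and taking sup you again get $\alpha\ge e^{2\|V\|_\infty}(e^{-\nu q}+C)/p_0>1$. Moreover, your denominator inequality itself is not established: extracting the \emph{supremum} over $B_{R_0}^{H_2}$ from the Chapman--Kolmogorov integral requires landing near the (unknown) argmax of $P^V_{0,t-1,\b_1h}\id_H$ and knowing the function is close to its max there---i.e.\ equicontinuity, which in this paper is the uniform Feller property and is proved \emph{using} the growth condition.

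The paper proceeds differently. The missing ingredient is a hitting-time argument: one first shows hyper-exponential recurrence to $B_{R_0}^{H_2}$, namely $\sup_h\E_w e^{\gamma\tau_h(R_0)}\le C\mkm_q(w)$ for $\gamma=\|V\|_\infty$ and suitable $q,R_0$. Combined with the strong Markov property (stop at $\tau_h(R_0)$, pay $e^{\|V\|_\infty\tau_h(R_0)}$, then bound by $\|P^V_{\tau,t,h}\id_H\|_{R_0}\le\|P^V_{0,t,h}\id_H\|_{R_0}$), this gives $P^V_{0,t,h}\id_H(w)\le C\mkm_q(w)\|P^V_{0,t,h}\id_H\|_{R_0}$ for \emph{all} $w\in H$. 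Only then does the Foster--Lyapunov recursion work: peeling off the \emph{last} step and using $P^V_{0,1,\b_{k-1}h}\mkm_q\le e^{\|V\|_\infty}(e^{-\nu q}\mkm_q+C)$ produces a \emph{pointwise} inequality $P^V_{0,k,h}\mkm_q(w)\le \delta\,P^V_{0,k-1,h}\mkm_q(w)+C'P^V_{0,k,h}\id_H(w)$ with $\delta<1$, whose iterated form combines with the hitting-time step to yield $M_q<\infty$. The bound $M_\eta<\infty$ then follows from $M_q<\infty$ via a level-set split on $\|\Phi_{0,k,h}\|$ using \eqref{mixing-moments}, and $M_{H_2}<\infty$ from $M_\eta<\infty$ by one-step regularization, as you sketched.
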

The proof of Proposition \ref{Growth-Condition} is based on \cite{Ner19} where the time homogeneous case was proved, which is based on the following hyper-exponential recurrence property of the solutions to the Navier-Stokes system, see also \cite{JNPS18, KS12}.
\begin{proposition}\label{prop-hyperexponential-recurrence}
For any $R>0, h\in\To$, let $\tau_{h}(R)$ be the first hitting time of the solution $\Phi_{0, t, h}$ to the set $B_R^{H_2}$:
\begin{align*}
	\tau_{h}(R) = \inf\lp t\geq 0:  \Phi_{0, t, h}\in B_R^{H_2}\rp.
\end{align*}
 Then for any $\g>0$, there is $q_0=q_0(\g)>\g/\nu$ so that for any $q\geq q_0$, there are positive numbers $ R, C$ depending on $q$ such that
 \begin{align}\label{hyper-H2-0}
	\sup_{h\in\To}\E_{w}\exp\left(\g\tau_{h}(R)\right)\leq C\mkm_q(w), \, w\in H. 
\end{align}
Here $\nu>0$ is the viscosity. 
\end{proposition}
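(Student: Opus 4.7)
My plan is to prove Proposition \ref{prop-hyperexponential-recurrence} by a two-stage argument: first establish hyper-exponential recurrence to a large $H$-ball via a Khasminskii-type Lyapunov drift estimate, and then upgrade the recurrence to an $H_2$-ball by iterating a short-time regularization of the Navier-Stokes semigroup together with the strong Markov property.

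Stage 1 (recurrence in $H$). Fix $\gamma>0$ and choose the smallest integer $q_0$ with $\nu q_0>\gamma$; take $q\ge q_0$. Applying It\^o's formula to $\mkm_q(w)=1+\|w\|^{2q}$, the dissipative energy identity $d\|\Phi_{0,t,h}\|^{2}= (-2\nu\|\Phi_{0,t,h}\|_{1}^{2}+2\la f(\b_t h),\Phi_{0,t,h}\ra+\mathcal{B}_0)\,dt + (\mathrm{martingale})$, together with Poincar\'e's inequality $\|w\|_1^2\geq \|w\|^2$, yields the pointwise drift bound $\mathfrak{L}_h \mkm_q(w)\leq -\nu q\,\|w\|^{2q}+C(q)$, where $\mathfrak{L}_h$ is the (time-inhomogeneous) generator. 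Hence outside a large ball $B_{R_1}^{H}$ one has $\mathfrak{L}_h\mkm_q \leq -\gamma\,\mkm_q$. A standard Dynkin/Khasminskii argument (take $\sigma_h(R_1)\wedge N$ as stopping time, apply It\^o to $e^{\gamma t}\mkm_q(\Phi_{0,t,h})$, and send $N\to\infty$) then gives, uniformly in $h\in\To$,
\begin{equation*}
\E_w\,e^{\gamma\,\sigma_h(R_1)}\leq C\,\mkm_q(w),\qquad \sigma_h(R_1):=\inf\{t\ge 0:\Phi_{0,t,h}\in B_{R_1}^H\}.
\end{equation*}

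Stage 2 (one-shot regularization to $H_2$). Given $R_1$, I will show that for a deterministic time $t_0=t_0(\gamma)$ and sufficiently large $R=R(R_1,t_0)$ the probability
\begin{equation*}
p_0:=\inf_{w\in B_{R_1}^H,\ h\in\To}\mathbf{P}_w\bigl\{\Phi_{0,t_0,h}\in B_{R}^{H_2}\bigr\}
\end{equation*}
can be made as close to $1$ as desired. This follows from the second inequality of \eqref{contraction-polynomial}, which gives $\E_w\|\Phi_{0,t_0,h}\|_2^{2}\leq C(t_0)(1+R_1^{12})$ uniformly in $h$, so Markov's inequality yields $\mathbf{P}_w\{\|\Phi_{0,t_0,h}\|_2>R\}\leq C(t_0)(1+R_1^{12})R^{-2}$, which is arbitrarily small for large $R$.

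Stage 3 (iteration). Set $\sigma_0=\sigma_h(R_1)$, and inductively $\tau_k=\sigma_{k-1}+t_0$ and $\sigma_k=\inf\{t\geq\tau_k:\Phi_{0,t,h}\in B_{R_1}^H\}$ for $k\ge 1$. By the strong Markov property applied at $\sigma_{k-1}$, each trial $\Phi_{0,\tau_k,h}\in B_R^{H_2}$ succeeds with conditional probability at least $p_0$, so $\tau_h(R)\leq \tau_N$ for a geometric number $N$ of trials with success parameter $p_0$. Combining this with Stage 1 (applied afresh at each $\sigma_k$, using that $\E\mkm_q(\Phi_{0,t_0,h})\leq C(1+R_1^{2q})$ on $\{w\in B_{R_1}^H\}$ by \eqref{contraction-polynomial}), one obtains a geometric bound of the form
\begin{equation*}
\E_w\,e^{\gamma\,\tau_h(R)}\ \leq\ C\,\mkm_q(w)\,\sum_{n\geq 0}\bigl((1-p_0)\,Ce^{\gamma t_0}\bigr)^{n},
\end{equation*}
which is finite once $p_0$ is chosen large enough (by Stage 2) so that $(1-p_0)Ce^{\gamma t_0}<1$.

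The main obstacle I anticipate is the quantitative bookkeeping in Stage 3: the strong Markov conditioning mixes the random waiting times $\sigma_k-\tau_{k-1}$, the deterministic waits $t_0$, and the geometric number of trials, and one must verify that the Lyapunov moment $\E_w\mkm_q(\Phi_{0,\tau_k,h})$ stays uniformly bounded in $k$ (which it does because every ``re-start'' happens from a point of $B_{R_1}^H$) and then tune $R$ and $t_0$ so that the resulting geometric series converges. Uniformity in $h\in\To$ is built in throughout because the constants in Proposition \ref{bounds} and in \eqref{contraction-polynomial} depend on $f$ only via $\|f\|_\infty$, and the Lyapunov drift inequality is independent of $h$.
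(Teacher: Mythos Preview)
Your proposal is correct and follows the same two-stage strategy as the paper: first establish hyper-exponential recurrence to an $H$-ball via a Lyapunov argument, then upgrade to an $H_2$-ball using one-step regularization and the strong Markov property with geometric trials. The implementation of each stage differs slightly. For Stage~1 the paper works in discrete time, iterating the one-step contraction $P_{0,1,h}\|w\|^{2q}\le e^{-\nu q}\|w\|^{2q}+C(q)$ from \eqref{contraction-polynomial} to obtain tail bounds $\mathbf{P}_w(\overline\tau_h(r)>k)\le e^{-\nu qk}r^{-2q}\|w\|^{2q}$, whereas you use a continuous-time Khasminskii argument applied to $e^{\gamma t}\mkm_q$; both extract the same exponential moment, and yours is arguably cleaner. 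For Stage~3 the paper bounds $\mathbf{P}_w(\tau_h(R)\ge M)$ by splitting into $\mathbf{P}_w(\hat n>k)+\mathbf{P}_w(\tau_{k,h}\ge M)$ and optimizing over $k$, which decouples the failure indicators $\id_{\Phi\notin B_R^{H_2}}$ from the Lyapunov weights $\mkm_q(\Phi_{0,\tau_k,h})$; your direct geometric-sum formula must handle this coupling explicitly (e.g.\ via H\"older, or via $\id_{\|\Phi\|_2>R}\le R^{-2}\|\Phi\|_2^2$ combined with moment bounds), so the ``bookkeeping obstacle'' you anticipate is real but routine, and the paper's splitting is one clean way around it.
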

 For the sake of completeness, we provide proofs for both propositions in the current time inhomogeneous setting. The translation identity \eqref{translation-identity} and uniform in $h$ bounds from Proposition \ref{bounds} are the  crucial ingredients when dealing with time inhomogeneity throughout the proof. We first give the proof of Proposition \ref{Growth-Condition} based on Proposition \ref{prop-hyperexponential-recurrence}, whose proof is given at the end of this subsection. 
\begin{proof}[Proof of Proposition \ref{Growth-Condition}]
Since we are estimating the ration of $P_{0, t, h}^{V}$, we can assume 
\[m_V=\inf_{(w, h)\in H\times\To} V(w, h)\geq 0.\]
 Otherwise we can replace it by $V- m_V$. The proof is divided into four steps. 

\vskip0.05in

{\it Step 1: Proof of $M_q<\infty$ when $\mkm_q$ is replaced by $\id_H$}. Here we aim to show the existence of $q_0, R_0\geq 1$ such that 
\begin{align}\label{M-idH-bound}
	\sup_{t\geq 0, h\in\To}\frac{\|P_{0,t,h}^V\id_H\|_{L_{\mkm_{q}}^{\infty}(H)}}{\|P_{0, t, h}^V\id_H\|_{R_0}}<\infty.
\end{align}
In view of Proposition \ref{prop-hyperexponential-recurrence}, we pick any $q, R_0$ the numbers corresponding to $\g =\|V\|_{\infty}$, with $\tau_{h}(R_0)$ be the first hitting time of the ball $B_{R_0}^{H_2}$. Now we write 
\begin{align}\label{M-idH-bound-1}
	P_{0, t, h}^V\id_H(w) = \E_{(w, h)}\Xi_{t}^V =  \E_{(w, h)}\id_{G_{t, h}}\Xi_{t}^V + \E_{(w, h)}\id_{G_{t, h}^c}\Xi_{t}^V := I_1 + I_2, 
\end{align}
where $\Xi_{t}^V=\exp\left(\int_0^tV(H_r)dr\right)$ and $G_{t, h} = \lp\tau_{h}(R_0)>t\rp$. Since $V\geq 0$ and $\tau_{h}(R_0)>t$ on $G_{t, h}$, we obtain 
\begin{align}\label{M-idH-bound-2}
	 I_1\leq \E_{(w, h)}\Xi_{\tau_{h}(R_0)}^V\leq \E_{w}\exp\left(\g\tau_{h}(R_0)\right)\leq C\mkm_q(w)\leq C\mkm_q(w)\|P_{0, t, h}^V\id_H\|_{R_0}, 
\end{align}
by Proposition \ref{prop-hyperexponential-recurrence}. By the strong Markov inequality and the fact that $\Phi_{0, \tau_h(R_0), h}\in B_{R_0}^{H_2}$, we have 
\begin{align}\label{M-idH-bound-3}
\begin{split}
	I_2 &= \E_{(w, h)}\left[\id_{G_{t, h}^c}\Xi_{\tau_h(R_0)}^V\E_{(w, h)}\left[\exp\left(\int_{\tau_h(R_0)}^tV(H_r)dr\right)\Big|\mathcal{F}_{\tau_h(R_0)}\right]\right]\\
	& = \E_{(w, h)}\left[\id_{G_{t, h}^c}\Xi_{\tau_h(R_0)}^VP_{\tau_h(R_0), t, h}^V\id_{H}(\Phi_{0, \tau_h(R_0), h})\right]\\
	&\leq \E_{w}\left[\id_{G_{t, h}^c}\exp(\g\tau_h(R_0))P_{0, t, h}^V\id_{H}(\Phi_{0, \tau_h(R_0), h})\right]\\
	&\leq \E_{w}\left[\exp(\g\tau_h(R_0))\right]\|P_{0, t, h}^V\id_{H}\|_{R_0}\leq C\mkm_{q}(w)\|P_{0, t, h}^V\id_{H}\|_{R_0}, 
\end{split}
\end{align}
 by Proposition \ref{prop-hyperexponential-recurrence} again. By combining \eqref{M-idH-bound-1}-\eqref{M-idH-bound-3} we obtain the desired \eqref{M-idH-bound}. 

\vskip0.05in

The proof of {\it Step 1} is complete. 

\vskip0.05in

{\it Step 2: Proof of $M_q<\infty$}.  We first show the estimate for discrete time and then pass to the continuous time case by evolution property.  Noting that $\d:= e^{\g-\nu q}<1$  by Proposition \ref{prop-hyperexponential-recurrence}, hence by translation identity \eqref{translation-identity} one has  
\begin{align*}
	P_{0, k, h}^V\mkm_q(w) &= P_{0, k-1, h}^V\left(P_{0, 1,\b_{k-1}h}^V\mkm_q\right)(w)\\
	&\leq P_{0, k-1, h}^V\left( e^{\g}P_{0, 1,\b_{k-1}h}\mkm_q\right)(w) \leq e^{\g}\left(e^{- \nu q}P_{0, k-1, h}^V\mkm_q(w) + C(q) P_{0, k-1, h}^V\id_{H}\right)\\
	&\leq  \d P_{0, k-1, h}^V\mkm_q(w)+C(q,\g)P_{0, k, h}^V\id_{H}(w)
\end{align*}
where we used estimate \eqref{contraction-polynomial} in the penultimate step and monotonicity of $P_{0, k, h}^V\id_{H}$ in $k$ in the last step. Iterating this inequality we obtain 
\begin{align*}
	P_{0, k, h}^V\mkm_q(w)\leq \d^k\mkm_q(w) + \frac{C}{1-\d}P_{0, k, h}^V\id_{H}(w), 
\end{align*}
where the constant $C$ does not depend on $w, h, k$. Combining this with \eqref{M-idH-bound}, we obtain $M_q<\infty$ in the discrete time case. 

\vskip0.05in

For the continuous time case, denoting $[t]$ the integer part of $t$, by \eqref{contraction-polynomial} and \eqref{translation-identity}, we have 
\begin{align*}
	P^V_{[t], t, h}\mkm_q(w) = P^V_{0, t-[t], \b_{[t]} h}\mkm_q(w) \leq e^{\g}P_{0, t-[t], \b_{[t]} h}\mkm_q(w)\leq  e^{\g}\mkm_q(w)+C, \, w\in H, h\in\To.
\end{align*}
Hence 
\begin{align*}
	P^V_{0, t, h}\mkm_q(w) &= P^V_{0, [t], h}\left(P^V_{[t], t, h}\mkm_q\right)(w) \leq e^{\g}P^V_{0, [t], h}\mkm_q(w) + CP^V_{0, [t], h}\id_{H}(w)\\
	&\leq C\mkm_q(w)\|P_{0, [t], h}^V\id_{H}\|_{R_0}\leq C\mkm_q(w)\|P_{0, t, h}^V\id_{H}\|_{R_0}
\end{align*}
by the proved discrete time case, estimate \eqref{M-idH-bound}, and monotonicity of $t\to\|P_{0, t, h}^V\id_{H}\|_{R_0}$. Therefore $M_q<\infty$ in the continuous time as well.

\vskip0.05in

The proof of {\it Step 2} is complete. 

\vskip0.05in 

{\it Step 3: Proof of $M_{\eta}<\infty$}. As in the proof of the previous step, we only need to show the discrete time case. The passage to the continuous time is similar by invoking \eqref{eq: est1}. 

\vskip0.05in

Let $A>0$ and note that 
\begin{align}\label{M-eta-bound-1}
\begin{split}
	P_{0, k, h}^V\mkm_{\eta}(w) &= \E_{(w, h)}\left[\id_{\|\Phi_{0, k, h}\|^2\leq A}\Xi_{k}^V\mkm_{\eta}(\Phi_{0, k, h})\right] +  \E_{(w, h)}\left[\id_{\|\Phi_{0, k, h}\|^2\geq A}\Xi_{k}^V\mkm_{\eta}(\Phi_{0, k, h})\right]\\
	&:= I_{k, h} + J_{k, h}.
\end{split}
\end{align}
It follow from \eqref{M-idH-bound} that 
\begin{align*}
	P_{0, k, h}^V\id_H(w) \leq C\mkm_{q}(w)\|P_{0, k, h}^V\id_H\|_{R_0}\leq C\mkm_{\eta}(w)\|P_{0, k, h}^V\id_H\|_{R_0}, \, (w, h)\in H\times\To. 
\end{align*}
Therefore 
\begin{align}\label{M-eta-bound-2}
	I_{k, h}\leq e^{\eta A}P_{0, k, h}^V\id_H(w)\leq C\mkm_{\eta}(w)\|P_{0, k, h}^V\id_H\|_{R_0}, 
\end{align}
where $C$ is independent of $k, h, w$. By \eqref{translation-identity}, \eqref{mixing-moments} and denoting $F(w)=\|w\|^2\mkm_{\eta}(w)$ for $w\in H$, we have 
\begin{align}\label{M-eta-bound-3}
\begin{split}
	J_{k, h}\leq A^{-1}P_{0, k, h}^VF(w) &=  A^{-1}P_{0, k-1, h}^V\left( P_{0, 1,\b_{k-1}h}^VF\right) (w)\\
	&\leq e^{\g} A^{-1}P_{0, k-1, h}^V\left( P_{0, 1,\b_{k-1}h}F\right) (w)\\
	&\leq A^{-1}CP_{0, k-1, h}^V\mkm_{\eta}(w). 
\end{split}
\end{align}
Combining \eqref{M-eta-bound-1}-\eqref{M-eta-bound-3} and choosing $A$ large such that $\d:= A^{-1}C<1$, we obtain 
\begin{align*}
	P_{0, k, h}^V\mkm_{\eta}(w)\leq C\mkm_{\eta}(w)\|P_{0, k, h}^V\id_H\|_{R_0} + \d P_{0, k-1, h}^V\mkm_{\eta}(w). 
\end{align*}
By iteration, one has 
\begin{align*}
	P_{0, k, h}^V\mkm_{\eta}(w)\leq \frac{C}{1-\d}\mkm_{\eta}(w)\|P_{0, k, h}^V\id_H\|_{R_0} + \d^q \mkm_{\eta}(w). 
\end{align*}
Hence $M_{\eta}<\infty$.

\vskip0.05in

The proof of {\it Step 3} is complete. 

\vskip0.05in

{\it Step 4: Proof of $M_{H_2}<\infty$}. This follows from \eqref{contraction-polynomial} and $M_{\eta}<\infty$ proved in the previous step. Indeed, since $\mkm_q\leq C\mkm_{\eta}$, one has  
\begin{align*}
	P_{0, k, h}^VN_2(w) &= P_{0, k-1, h}^V(P_{0, 1, \b_{k-1} h}^VN_2)(w)\leq e^{\g}P_{0, k-1, h}^V(P_{0, 1, \b_{k-1} h}N_2)(w) \\
	&\leq CP_{0, k-1, h}^V\mkm_{\eta}(w)\leq C\mkm_{\eta}(w)\|P_{0, k-1, h}^V\id_H\|_{R_0}\leq C\mkm_{\eta}(w)\|P_{0, k, h}^V\id_H\|_{R_0}. 
\end{align*}
Hence $M_{H_2}<\infty$, which finishes the proof of {\it Step 4}. 

\vskip0.05in

The proof of Proposition \ref{Growth-Condition} is complete. 
\end{proof}

\vskip0.05in

\begin{proof}[Proof of Proposition \ref{prop-hyperexponential-recurrence}]
The proof is divided into two steps. First we prove hyper-exponential recurrence in $H$ and then we show the desired recurrence in $H_2$. 

\vskip0.05in

{\it Step 1:  Hyper-exponential recurrence in $H$.} For any $r>0$, denote $\overline{\tau}_h(r)$ the first hitting time 
\begin{align*}
	\overline{\tau}_h(r)= \inf\lp t\geq 0:  \Phi_{0, t, h}\in B_r^{H}\rp.
\end{align*}
We aim to show that for any $\g>0$, there is $q_0=q_0(\g)$ so that for any $q\geq q_0$, there are positive numbers $ r, \overline{C}$ depending on $q$ such that
 \begin{align}\label{hyper-H-0}
	\sup_{h\in\To}\E_{w}\exp\left(\g\overline{\tau}_h(r)\right)\leq \overline{C}\mkm_q(w), \, w\in H. 
\end{align}
Denote by $N_0^{2q}(w) = \|w\|^{2q}, \,w\in H$. It follows from \eqref{contraction-polynomial} that 
\begin{align}\label{hyper-H-1}
	P_{0, 1, h}N_0^{2q}(w) = \E_{w}\|\Phi_{0, 1, h}\|^{2q}\leq \d(\|w\|^{2q}\vee r), \, (w, h)\in H\times\To, 
\end{align}
where $\d = e^{-\nu q}<1$ and $r=e^{\nu q}C>1$, with the constant $C=C(q)$ from \eqref{contraction-polynomial}. Let 
\begin{align*}
	R_{k, h}(w) = \E_w\left(\id_{\{\overline{\tau}_h(r)>k\}}\|\Phi_{0, k, h}\|^{2q}\right).
\end{align*}
Note that $\id_{\{\overline{\tau}_h(r)>k\}}\geq \id_{\{\overline{\tau}_h(r)>k+1\}}$ and on the event $\{\overline{\tau}_h(r)>k\}$ one has $\|\Phi_{0, k, h}\|^{2q}>r^{2q}>r$. Therefore by Markov inequality, \eqref{translation-identity} and \eqref{hyper-H-1}, we have 
\begin{align*}
	R_{k+1, h}(w) &\leq  \E_w\left(\id_{\overline{\tau}_h(r)>k}\|\Phi_{0, k+1, h}\|^{2q}\right) 
	=  \E_w\left(\id_{\overline{\tau}_h(r)>k}\E_w\left[\|\Phi_{0, k+1, h}\|^{2q}|\mathcal{F}_k\right]\right)\\
	& = \E_w\left(\id_{\overline{\tau}_h(r)>k}P_{k, k+1, h}N_0^{2q}(\Phi_{0, k, h})\right) 
	= \E_w\left(\id_{\overline{\tau}_h(r)>k}P_{0, 1, \b_kh}N_0^{2q}(\Phi_{0, k, h})\right) \\
	&\leq \d\E_w\left(\id_{\overline{\tau}_h(r)>k}\|\Phi_{0, k, h}\|^{2q}\right)=\d R_{k, h}(w).
\end{align*}
By iteration, one has 
\begin{align*}
	R_{k, h}(w) \leq \d^{k}\|w\|^{2q},
\end{align*}
which in turn implies
\begin{align*}
	\mathbf{P}_{w}(\overline{\tau}_h(r)>k) \leq r^{-2q}R_{k, h}(w) \leq \d^{k}r^{-2q}\|w\|^{2q}, \, k\geq 0, w\in H.  
\end{align*}
Hence for any $\g>0$, by choosing $q>\g/\nu$ large such that $e^{\g}\d=e^{\g- \nu q}<1$, we obtain the desired \eqref{hyper-H-0}:
\begin{align}\label{hyper-H-1}
\begin{split}
	\E_{w}\exp\left(\g\overline{\tau}_h(r)\right)
	&\leq 1+\sum_{k=1}^{\infty}e^{\g k}\mathbf{P}_{w}(\overline{\tau}_h(r)>k-1)\\
	&\leq 1+r^{-2q}\|w\|^{2q}\sum_{k=1}^{\infty}e^{\g k}\d^{k-1}\leq C\mkm_q(w), 
\end{split}
\end{align}
for some constant $C$ independent of $h\in\To$. 

\vskip0.05in

{\it Step 2:  Hyper-exponential recurrence in $H_2$}. First it follows from \eqref{contraction-polynomial} and Markov inequality that for any $p\in (0, 1)$, there is a large $R>0$ such that 
\begin{align}\label{hyper-H2-1}
	P_{0, 1, h}\id_{H\setminus B_R^{H_2}}(w) = \mathbf{P}_{w}\left(\|\Phi_{0, 1, h}\|_2\geq R\right)\leq R^{-2}\E_{w}\|\Phi_{0, 1, h}\|_2^2\leq p, 
\end{align}
for all $h\in\To$ and $w\in B_r^H$, where $r$ is the radius from \eqref{hyper-H-0} with corresponding $\overline{\tau}_h(r)$. Define recursively the following sequence of stopping times 
\begin{align*}
	\tau_{0, h}^{\prime} = \overline{\tau}_h(r), \, \tau_{n, h}^{\prime} = \inf\lp t\geq \tau_{n-1, h}^{\prime}+1: \Phi_{0, t, h}\in B_r^H\rp,  \, \text{ and } \tau_{n, h} = \tau_{n, h}^{\prime}+1, \, \text{ for } n\geq 1. 
\end{align*}
Note that $\tau_{0, h}^{\prime}$ is the first hitting time of the ball $B_r^H$, and $\tau_{n, h}^{\prime}$ is the first recurrence time after $\tau_{n-1, h}^{\prime}+1$.  Also let 
\begin{align*}
	\hat{n} = \min\lp n\geq 0: \Phi_{0, \tau_{n, h}, h}\in B_{R}^{H_2} \rp
\end{align*}
be the first time the solution entering the ball $B_{R}^{H_2}$ among the sequence $\tau_{n, h}$. Note that for any $k, M\geq 1$, since $\lp\tau_h(R)>\tau_{k, h}\rp\subset \lp \hat{n}>k\rp$, we have 
\begin{align}\label{hyper-H2-4}
\begin{split}
	\mathbf{P}_{w}(\tau_h(R)\geq M) 
	&= \mathbf{P}_{w}(\tau_h(R)\geq M, \tau_{k, h}< M) + \mathbf{P}_{w}(\tau_h(R)\geq M, \tau_{k, h}\geq M)\\
	&\leq \mathbf{P}_{w}(\tau_h(R)>\tau_{k, h}) +  \mathbf{P}_{w}(\tau_{k, h}\geq M)\\
	&\leq \mathbf{P}_{w}(\hat{n}>k) +  \mathbf{P}_{w}(\tau_{k, h}\geq M).
\end{split}
\end{align}
In what follows we estimate the probabilities on the right hand side. 
\vskip0.05in

It follows from translation identity \eqref{translation-identity}, estimate \eqref{hyper-H2-1}, the fact $\Phi_{0, \tau_{k, h}^{\prime}, h}\in B_r^H$ and the strong Markov property that 
\begin{align*}	
	\mathbf{P}_{w}(\hat{n}>k)
	&=\E_{w}\left[\id_{\lp\hat{n}>k-1\rp}\id_{H\setminus B_R^{H_2}}(\Phi_{0, \tau_{k, h}, h})\right]\\
	&=\E_{w}\left[\id_{\lp\hat{n}>k-1\rp}\E_{w}\left[\id_{H\setminus B_R^{H_2}}(\Phi_{0, \tau_{k, h}, h})|\mathcal{F}_{\tau_{k, h}^{\prime}}\right]\right]\\
	&=\E_{w}\left[\id_{\lp\hat{n}>k-1\rp}P_{\tau_{k, h}^{\prime}, \tau_{k, h},h}\id_{H\setminus B_R^{H_2}}(\Phi_{0, \tau_{k, h}^{\prime}, h})\right]\\
	&= \E_{w}\left[\id_{\lp\hat{n}>k-1\rp}P_{0, 1, \b_{\tau_{k, h}^{\prime}}h}\id_{H\setminus B_R^{H_2}}(\Phi_{0, \tau_{k, h}^{\prime}, h})\right]\leq p\E_{w}\left[\id_{\lp\hat{n}>k-1\rp}\right] = p  \mathbf{P}_{w}(\hat{n}>k-1), 
\end{align*}
which, by iteratively using the strong Markov property, implies 
\begin{align}\label{hyper-H2-2}
	\mathbf{P}_{w}(\hat{n}>k)\leq p^k, \, \text{ for } w\in H, k\geq 0. 
\end{align}
In a similar fashion, by Markov's inequality, the recurrence \eqref{hyper-H-0} with constant $\overline{C}$, and the strong Markov property, we have for any $M\geq 1$, 
\begin{align*}
	\mathbf{P}_{w}(\tau_{k, h}\geq M)
	&\leq e^{-\g M}\E_w \exp(\g\tau_{k, h}) 
	= e^{-\g M}\E_w\left[\E_w\left[\exp(\g(\tau_{k, h}^{\prime}+1))|\mathcal{F}_{\tau_{k-1, h}}\right] \right]\\
	& = e^{-\g M}e^{\g}\E_w\left[\exp(\g\tau_{k-1, h})\E_{\Phi_{0, \tau_{k-1,h},h}}\exp(\g\overline{\tau}_{\b_{\tau_{k-1,h}}h}(r))\right]\\
	&\leq e^{-\g M}\overline{C}e^{\g}\E_w\left[\exp(\g\tau_{k-1, h})\mkm_q(\Phi_{0, \tau_{k-1,h},h})\right]\\
	& = e^{-\g M}\overline{C}e^{\g}\E_w\left[\exp(\g\tau_{k-1, h})\E_{\Phi_{0, \tau_{k-1,h}^{\prime},h}}\mkm_q(\Phi_{0, \tau_{k-1,h},h})\right]\\
	&\leq e^{-\g M}\overline{C}e^{\g}(e^{- \nu q}r^{2q}+C(q))\E_w\exp(\g\tau_{k-1, h}),
\end{align*}
where we used the fact that $\tau_{k-1, h}$ is $\mathcal{F}_{ \tau_{k-1,h}^{\prime}}$ measurable in the last equality, and 
\eqref{contraction-polynomial} with $\Phi_{0, \tau_{k-1,h}^{\prime},h}\in B_r^H$ in the last inequality.  Hence by iteratively using the strong Markov inequality, we obtain 
\begin{align}\label{hyper-H2-3}
	\mathbf{P}_{w}(\tau_{k, h}\geq M)\leq e^{-\g M}C^k\mkm_q(w),\, w\in H, h\in\To, 
\end{align}
with $C=\overline{C}e^{\g}(e^{- \nu q}r^{2q}+C(q))$ independent of $w, M, h$. 

\vskip0.05in

Now combining \eqref{hyper-H2-4}-\eqref{hyper-H2-3}, one has 
\begin{align*}
	\mathbf{P}_{w}(\tau_h(R)\geq M) \leq p^k + e^{-\g M}C^k\mkm_q(w). 
\end{align*}
Choosing $k = \frac{\g M}{2\ln C}$  and $R$ large such that $p = 1/C$ by \eqref{hyper-H2-1}, then we have 
\begin{align}\label{hyper-H2-5}
	\mathbf{P}_{w}(\tau_h(R)\geq M) \leq 2e^{-\g M/2}\mkm_q(w)
\end{align}
for any $\g>0, M\geq 1$ and $(w, h)\in H\times\To$. This implies the desired hyper-exponential recurrence \eqref{hyper-H2-0} by following the same reasoning as in \eqref{hyper-H-1}. Note that to obtain \eqref{hyper-H2-0} with any $\g>0$ we can replace the $\g$ in \eqref{hyper-H2-5} by $4\g$. 

\vskip0.05in

The proof of Proposition \ref{prop-hyperexponential-recurrence} is complete. 
\end{proof}

\vskip0.05in

\subsection{Uniform  Irreducibility}
In this subsection we prove the uniform irreducibility, which follows from the well-known controllability results of Agrachev and Sarychev \cite{AS05,AS06,GHM18}, the cocycle property \eqref{Feynman-Kac-cocycle} and standard compactness arguments. 

\begin{proposition}[Uniform  Irreducibility ]\label{Uniform-Irreducibility}
Let $V\in B_b(H\times\To)$. Given any  $\r, r>0, R\geq 1$, as long as 
 $\ell\geq 2\ln R$, then there is $p=p(V, \r, r, \ell)>0$ such that 
\begin{align}\label{eq-Uniform-Irreducibility}
\inf_{(w_0, h, w)\in B_{R}^{H}\times\To\times B_{\r}^{H_2}}P_{0, \ell, h}^V(w_0, B^{H,H_2}_{r}(w))\geq p
\end{align}
Moreover, there is a large $L_0=L_0(\ell, R)$ such that for any $L\geq L_0$, there holds 
\begin{align}\label{irreducible-large-H2ball}
\inf_{(w_0, h, w)\in B_{R}^{H}\times\To\times B_{\r}^{H_2}}P_{0, \ell, h}^V(w_0, B^{H, B_{L}^{H_2}}_{r}(w))\geq p/2.
\end{align}
If $\ds m_V=\inf_{w\in H} V(w)\geq 0$, then one can choose $p = p(\r, r)$ that is independent of $V, \ell$. 
\end{proposition}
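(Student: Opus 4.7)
The plan is to reduce the estimate to the case of vanishing potential and then combine two classical ingredients: dissipativity of the Navier--Stokes flow in the first half of the time interval, and the Agrachev--Sarychev approximate controllability (\cite{AS05,AS06,GHM18}) in the second half. Boundedness of $V$ gives the pointwise estimate
\begin{align*}
P_{0, \ell, h}^V(w_0, A) \geq e^{\ell m_V} P_{0, \ell, h}(w_0, A), \quad A\in\mathcal{B}(H),
\end{align*}
so it suffices to prove \eqref{eq-Uniform-Irreducibility} for the plain Markov evolution with some constant $p_0=p_0(\rho, r, \ell, R) > 0$; the general bound follows with $p = e^{\ell m_V} p_0$, and the independence of $V, \ell$ when $m_V \geq 0$ is automatic.

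For the Markov case, I would split $\ell = \ell_1 + \ell_2$ with $\ell_1 := \lceil \ln R \rceil$ (so that $\ell_2 \geq \ln R$ thanks to $\ell \geq 2 \ln R$) and apply Chapman--Kolmogorov,
\begin{align*}
P_{0, \ell, h}(w_0, B_r^{H, H_2}(w)) = \int_H P_{0, \ell_2, \beta_{\ell_1} h}(u, B_r^{H, H_2}(w))\,P_{0, \ell_1, h}(w_0, du).
\end{align*}
For the first factor, the deterministic bound $\|\Phi^d_{t, h}(w_0)\|^2 \leq e^{-\nu t}\|w_0\|^2 + C$ places the deterministic flow inside a fixed ball $B_{r_0}^H$ after time $\ell_1$, and a small-Brownian-tube argument (together with \eqref{eq: est1}) gives a uniform lower bound $\inf_{(w_0, h) \in B_R^H \times \To} P_{0, \ell_1, h}(w_0, B_{r_0}^H) \geq p_1 > 0$. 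For the second factor, the Agrachev--Sarychev controllability for 2D Navier--Stokes under the H\"ormander condition $A_\infty = H$ ensures that for each $(u, w, h) \in B_{r_0}^H \times B_\rho^{H_2} \times \To$ one has $P_{0, \ell_2, \beta_{\ell_1}h}(u, B_r^{H, H_2}(w)) > 0$; a covering/compactness argument in $u \in B_{r_0}^H$ and $w \in B_\rho^{H_2}$ (both compact in $H$) and $h \in \To$, together with the Feller property of $P_{0, \ell_2, h}$ and its joint continuity in $h$ coming from \eqref{continuousonhull}, upgrades this pointwise positivity to a uniform lower bound $p_2 = p_2(\rho, r, \ell)$. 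Combining the two halves yields $p_0 = p_1 p_2$.

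The second estimate \eqref{irreducible-large-H2ball} follows from \eqref{eq-Uniform-Irreducibility} by subtraction: the moment bound \eqref{contraction-polynomial} combined with Markov's inequality gives $P_{0, \ell, h}(w_0, H \setminus B_L^{H_2}) \leq C(\ell)(1 + R^{12})/L^2$, and the Feynman--Kac weight is bounded above by $e^{\ell M_V}$, so choosing $L_0$ large enough that $e^{\ell M_V} C(\ell)(1 + R^{12})/L_0^2 \leq p/2$ yields $P_{0, \ell, h}^V(w_0, B_r^{H, B_L^{H_2}}(w)) \geq p - p/2 = p/2$ for all $L \geq L_0$.

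The main obstacle is the uniform positivity in the Agrachev--Sarychev step: the controllability theorem yields only pointwise positivity of transition probabilities for each target, so the uniform bound in $(u, w, h)$ requires carefully combining compactness, the lower-semicontinuity of $(u, h) \mapsto P_{0, \ell_2, \beta_{\ell_1}h}(u, G)$ for open $G$ (inherited from joint continuity of the Navier--Stokes flow in initial data and time symbols), and a finite covering of $B_\rho^{H_2}$ by $H_2$-neighborhoods whose shrinkage is controlled by the continuity of $w \mapsto B_r^{H, H_2}(w)$ in $H$.
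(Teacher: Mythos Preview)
Your overall strategy---reduce to $V=0$ via $P^V \geq e^{\ell m_V} P$, then combine dissipativity on a first sub-interval with Agrachev--Sarychev controllability on a second via Chapman--Kolmogorov, and obtain \eqref{irreducible-large-H2ball} by subtracting an $H_2$-tail estimate---matches the paper's. However, there is a genuine gap in the compactness step: the set $B_{r_0}^H$ is a closed ball in the infinite-dimensional Hilbert space $H$ and is \emph{not} compact, so your ``covering/compactness argument in $u \in B_{r_0}^H$'' cannot by itself produce a uniform lower bound over $u$. Lower semicontinuity of $(u,h)\mapsto P_{0,\ell_2,\beta_{\ell_1}h}(u,G)$ for open $G$ is not enough on a non-compact domain.

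The paper avoids this by choosing the intermediate set to be an $H_2$-ball rather than an $H$-ball: instead of using the deterministic $H$-norm dissipativity plus a small-Brownian-tube argument, it uses the $H_2$ moment bound \eqref{contraction-polynomial} together with Markov's inequality to obtain
\[
\inf_{(w_0,h)\in B_R^H\times\To} P_{0,t_0,h}\big(w_0, B_{r_0}^{H_2}\big) \geq \tfrac12.
\]
Since $B_{r_0}^{H_2}$ \emph{is} compact in $H$ (via the compact embedding $H_2\hookrightarrow H$), the uniform controllability lower bound then follows by a genuine compactness argument with $u\in B_{r_0}^{H_2}$, $w\in B_\rho^{H_2}$, $h\in\To$ all ranging over compact sets. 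A secondary issue: your split $\ell_1=\lceil\ln R\rceil$ does not guarantee $e^{-\nu\ell_1}R^2$ is bounded for general $\nu>0$, so the dissipativity step as written may fail; this becomes moot once you switch to the $H_2$-moment route, which is also what ties naturally to the hypothesis $\ell\geq 2\ln R$.
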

\begin{proof}
First of all, we claim the existence a large time $t_0$ depending on $R$ and a large number $r_0>0$ independent of $w_0, R, h, t_0$, such that 
\begin{align} \label{UI-1}
\inf_{(w_0, h)\in B^{H}_{R}\times\To}P_{0, t_0, h}(w_0, B_{r_0}^{H_2})\geq 1/2.
\end{align}
 Let $N(w) = \|w\|^2$ and $N_2(w) = \|w\|_2^2$.  By estimate \eqref{contraction-polynomial}, we have for any $t\geq 1$, and $w_0\in B^{H}_{R}$, 
\begin{align*}
P_{0, t, h}N_2(w_0) &= P_{0, t-1, h}(P_{0, 1,\b_{t-1}h}N_2)(w_0) \leq C\left(P_{0, t-1, h}N^6(w_0)+1\right)\\
&\leq C(e^{-6(t-1)}\|w_0\|^{12}+1)\leq C(e^{-6(t+1)}R^{12}+1)
\end{align*}
where $C$ is a constant independent of $t, h, R$. Since $P_{0, t, h}(w_0, \cdot)$ is concentrated on $H_2$, by choosing $t_0$ large so that $e^{-6(t_0+1)}R^{12}\leq 1$ and choosing a constant $r_0>2\sqrt{C}$, we have by Markov inequality that 
\begin{align}\label{Br0H2}
P_{0, t_0, h}(w_0, B_{r_0}^{H_2})&=1-P_{0, t_0, h}(w_0, H_2\setminus B_{r_0}^{H_2})
\geq 1- \frac{(P_{0, t_0, h}N_2(w_0))^2}{r_0^2}\geq 1/2.
\end{align}

\vskip0.05in 

Next we show the existence of $p_0>0$, depending only on $r_0, r, \r$ such that 
\begin{align}\label{UI-2}
\inf_{(w_0, h, w)\in B_{r_0}^{H_2}\times\To\times B_{\r}^{H_2}}P_{0,1, h}(w_0, B^{H,H_2}_{r}(w))\geq p_0.
\end{align}
 Let 
\[\O_1 =\{X\in C([0,1], \mathbb{R}^d) : X(0) =0\}\]
be the restriction of $\O$ on $[0, 1]$.  In view of the controllability results of Agrachev and Sarychev \cite{AS05,AS06,GHM18} (see \cite{L22} for a verification in the time inhomogeneous case), the Navier-Stokes system \eqref{NS} is approximately controllable. That is, for any $(w_0, v, h)\in H^2\times\To$ and $r>0$, if we replace $f(t, x)$ by $f(\b_th, x)$ in $\eqref{NS}$, then there is a piecewise smooth map $X = X_{w_0, v, h, r}\in \O_1$ such that $\Phi_{0, 1,h}(w_0, X) \in \ob_{r/4}^{H}(v),$
where $\Phi_{0, t, h}(w_0, X)$ is the solution of \eqref{NS} with symbol $h$ and $W(t)$ replaced by $X(t)$.  
Since $\Phi_{0, 1, h}(w_0, \cdot): \O_1\to H$ is continuous, there is an $\varepsilon = \varepsilon(w_0, v, h, r)>0$ such that for any $Y\in\ob_{\varepsilon}^{\O_1}(X)$ one has 
\[\|\Phi_{0, 1,h}(w_0, Y) - \Phi_{0, 1,h}(w_0, X)\|<r/4.\]
Hence by triangle inequality and regularizing property of the Navier-Stokes systems,  we obtain that $Y\in\ob_{\varepsilon}^{\O_1}(X)$ implies $\Phi_{0, 1,h}(w_0, Y) \in \ob_{r/2}^{H,H_2}(v)$. 

\vskip0.05in 

Since $B^{H_2}_{\rho}$ is compact in $H$, its open cover $\lp\ob_{r/2}^{H, H_2}(v)\rp_{v\in B^{H_2}_{\rho}}$ has a finite subcover $\lp\ob_{r/2}^{H, H_2}(v_k)\rp_{k=1}^K$ where the positive integer $K=K(r, \r)$. Let $\varepsilon_k = \varepsilon(w_0, v_k, h, r)$ and $X_k$ be the path from the controllability result corresponding to $v_k$. Note that $\mathbf{P}(\ob_{\varepsilon_k}^{\O_1}(X_k))>0$ for each $1\leq k\leq K$ since the Wiener measure has a full support. Hence 
\[p(w_0, h, r, \r): = \min_{1\leq k\leq K}\mathbf{P}(\ob_{\varepsilon_k}^{\O_1}(X_k))>0.\]
Now for any $w\in B^{H_2}_{\rho}$, there is some $v_k$ such that $w\in \ob_{r/2}^{H, H_2}(v_k)$. Therefore $\ob_{r/2}^{H, H_2}(v_k)\subset B^{H, H_2}_{r}(w)$. It then follows that 
\begin{align*}
P_{0,1, h}(w_0, B^{H,H_2}_{r}(w))\geq P_{0,1, h}(w_0,\ob_{r/2}^{H, H_2}(v_k)) \geq \mathbf{P}(\ob_{\varepsilon_k}^{\O_1}(X_k))\geq p(w_0, h, r, \r), \quad (w_0, h)\in H\times\To. 
\end{align*}
This combined with a continuity and compactness argument similar to the proof of Lemma 4.4 in \cite{L22} implies the existence of a number $p_0=p_0(r_0, r,\r)>0$ such that 
\[\inf_{(w_0, h)\in B_{r_0}^{H_2}\times\To}P_{0,1, h}(w_0, B^{H,H_2}_{r}(w))\geq p_0, \]
where $B_{r_0}^{H_2}$  is from \eqref{Br0H2}. 

\vskip0.05in 

Lastly, it follows from the Chapman-Kolmogorov relation and inequalities \eqref{UI-1} and  \eqref{UI-2} that 
\begin{align}\label{irreducible-P}
\begin{split}
P_{0, t_0+1, h}(w_0, B^{H,H_2}_{r}(w)) &= \int_{H}P_{0, 1,\b_{t_0}h}(u, B^{H,H_2}_{r}(w)) P_{0, t_0, h}(w_0, du)\\
&\geq  \int_{B_{r_0}^{H_2}}P_{0, 1,\b_{t_0}h}(u, B^{H,H_2}_{r}(w)) P_{0, t_0, h}(w_0, du)\geq p_0/2
\end{split}
\end{align}
for any $(w_0, h, w)\in B_{R}^{H}\times\To\times B_{\r}^{H_2}$. 
Hence 
\[P^V_{0, t_0+1, h}(w_0, B^{H,H_2}_{r}(w))\geq e^{(t_0+1)m_V}P_{0, t_0+1, h}(w_0, B^{H,H_2}_{r}(w)) \geq \frac{p_0e^{(t_0+1)m_V}}{2}.\]
The inequality \eqref{eq-Uniform-Irreducibility} then follows with $\ell = t_0+1\geq 2\ln R$ and $p = \frac{p_0e^{(t_0+1)m_V}}{2}$.  

\vskip0.05in

For any $L>0$, we have by  estimate \eqref{contraction-polynomial}
\begin{align*}
P_{0, t_0+1, h}(w_0, H\setminus B_{L}^{H_2})\leq L^{-2}\E_{w_0}\|\Phi_{0,t_0+1, h}\|_2^2\leq CL^{-2},
\end{align*}
 where the constant $C=C(R, t_0)$ does not depend on $h$. There is $L_0=L_0(R)$ such that for all $L\geq L_0$, 
 \[\sup_{(w_0, h)\in B_{R}^H\times\To}P_{0, t_0+1, h}(w_0, H\setminus B_{L}^{H_2})\leq p_0/4.\]
 Hence by \eqref{irreducible-P}, we deduce 
\begin{align*}
P_{0, t_0+1, h}(w_0, B^{H,B_{L}^{H_2}}_{r}(w)) 
&= P_{0, t_0+1, h}(w_0, B^{H, H_2}_{r}(w)) - P_{0, t_0+1, h}(w_0, B^{H,H_2\setminus B_{L}^{H_2}}_{r}(w))\\
&\geq p_0/2-p_0/4=p_0/4, \quad \text{ for all } (w_0, h, w)\in B_{R}^{H}\times\To\times B_{\r}^{H_2}. 
\end{align*}
Inequality \eqref{irreducible-large-H2ball} then follows. The proof is complete. 
\end{proof}
\subsection{Uniform Feller Property}
In this subsection we prove the uniform Feller property. We follow the arguments presented in \cite{NPX23} and use the cocycle property of the inhomogeneous Feynman-Kac family.
\begin{proposition}[Uniform Feller Property]\label{Uniform-Feller}
Assume $A_{\infty}=H, f\in C_b(\R, H_2)$ in \eqref{Settings-NS-family}. For any $V\in C_{b}^{1,0}(H\times\To)$ and $\phi\in C_b^1(H\times\To)$ there is an $R_0=R_0(V)>0$ such that for any $R>0$ the family 
\[\left\{\frac{P_{0, t, h}^V\phi(\cdot, \b_th)}{\|P_{0, t, h}^V\id_H\|_{R_0}}, t\geq0, h\in\To\right\}\]
is uniformly Lipschitz on $B_R^{H}$ with a uniform bound on the Lipschitz constants depending on 
\[R, \|\phi\|_{\infty}, \|V\|_{\infty}, \sup_{h\in\To}\|\nabla\phi(\cdot, h)\|_{\infty}, \sup_{h\in\To}\|\nabla V(\cdot, h)\|_{\infty}.\]
 In particular, the family is uniformly equicontinuous on $(B_R^{H_2}, \|\cdot\|)$ for any $R>0$.  
\end{proposition}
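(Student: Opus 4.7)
The strategy follows the framework of \cite{NPX23} adapted to the time inhomogeneous setting. The target is a gradient estimate
$$\bigl\|\nabla[P_{0, t, h}^V\phi(\cdot, \b_th)](w)\bigr\| \leq C(R, V, \phi)\,\|P_{0, t, h}^V\id_H\|_{R_0}, \qquad w \in B_R^H,$$
uniformly in $t \geq 0$ and $h \in \To$. Integrating along line segments then gives the uniform Lipschitz bound with constant depending only on the listed quantities; since $B_R^{H_2} \subset B_R^H$ carries the subspace $H$-topology, uniform equicontinuity on $(B_R^{H_2}, \dh)$ is automatic.

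First I would reduce to a short-time problem via the cocycle property \eqref{Feynman-Kac-cocycle}. For $t \geq 1$, writing
$$P_{0, t, h}^V\phi(\cdot, \b_th)(w) = P_{0, 1, h}^V \psi(w), \qquad \psi(u) := P_{0, t-1, \b_1h}^V\phi(\cdot, \b_th)(u),$$
reduces everything to estimating the gradient of $P_{0, 1, h}^V\psi$ on $B_R^H$. By Proposition \ref{Growth-Condition}, $|\psi(u)| \leq C(V,\phi)\, \mkm_q(u)\, \|P_{0, t-1, \b_1h}^V\id_H\|_{R_0}$, and the last factor is comparable to $\|P_{0, t, h}^V\id_H\|_{R_0}$ up to the uniform two-sided bound $e^{m_V} \leq P_{0, 1, \b_{t-1}h}^V\id_H \leq e^{M_V}$ that follows from the cocycle identity and the boundedness of $V$.

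Next I would apply a Bismut--Elworthy--Li type formula combined with Malliavin integration by parts to control $\nabla[P_{0, 1, h}^V\psi](w)\cdot \xi$. Setting $\Xi := \exp\bigl(\int_0^1 V(\H_r)\,dr\bigr)$, direct differentiation of $P_{0, 1, h}^V\psi(w) = \E[\Xi\,\psi(\Phi_{0, 1, h}(w))]$ produces two terms: one of the form $\E[\Xi\,\nabla\psi(\Phi_{0, 1, h})\cdot J_{0, 1, h}^w\xi]$, and one of the form $\E\bigl[\Xi\,\psi(\Phi_{0, 1, h})\int_0^1 \nabla V(\H_r)\cdot J_{0, r, h}^w\xi\, dr\bigr]$, where $J_{0, r, h}^w\xi$ denotes the linearization of $\Phi_{0, r, h}$ in the initial datum. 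The second term is harmless since $V \in C_b^{1,0}$ and the noise law is $h$-independent. For the first term, the H\"ormander condition $A_\infty = H$ from \cite{HM11} supplies a Malliavin shift $v \in L^2([0, 1]; \R^d)$ such that the Malliavin derivative of $\Phi_{0, 1, h}(w)$ in direction $v$ matches $J_{0, 1, h}^w\xi$ up to a residual $\rho$ of arbitrarily small amplitude. Integration by parts then replaces the $\nabla\psi$ factor by a stochastic integral paired against $\psi$ itself, producing a cost scaling as $\|\psi\|_{\infty}$ rather than $\|\nabla\psi\|_{\infty}$, modulo a remainder controlled by $\E\|\rho\|\cdot \|\nabla\psi\|_{\infty}$ which can be iterated and absorbed. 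Combining these with the uniform moment bounds from Proposition \ref{bounds} yields
$$\bigl\|\nabla[P_{0, 1, h}^V\psi](w)\bigr\| \leq C(R, V)\, \|\psi\|_{L_{\mkm_q}^\infty}\, \mkm_q(w),$$
uniformly in $h$ and $t$, which in view of the preceding reduction closes the argument for $t \geq 1$. The case $t \in [0, 1]$ is covered by the same short-time estimate applied directly to $\phi$ in place of $\psi$.

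The main obstacle is the Malliavin integration by parts in the Feynman-Kac setting: the exponential weight $\Xi$ depends on the initial datum and injects the $\nabla V$-piece above, so the bookkeeping must be organized so that only one spatial gradient is traded for a stochastic integral and the Malliavin covariance matrix of $\Phi_{0, 1, h}$ remains uniformly invertible in $h \in \To$. This uniformity is inherited from the fact that the noise coefficients $g_i$ and the brackets $A_\infty$ do not depend on $h$; the $h$-dependence enters only through the bounded Lipschitz forcing $f(\b_t h, \cdot)$, whose influence on linearizations and moment bounds is uniform in $h$ by Proposition \ref{bounds}.
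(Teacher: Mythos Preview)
Your reduction to a single unit interval has a genuine gap. The claim that the Malliavin shift on $[0,1]$ produces a residual $\rho$ ``of arbitrarily small amplitude'' is incorrect for degenerate noise: under the H\"ormander condition $A_\infty = H$ the system is only \emph{asymptotically} strong Feller in the Hairer--Mattingly sense, so on any fixed finite interval the best achievable residual $\mathfrak{R}_1 = J_{0,1,h}\xi - A_{0,1,h}v$ satisfies $\E\|\mathfrak{R}_1\|^2 \leq C(a)\,e^{\eta\|w\|^2}e^{-2a}$ with constants depending on $a$ (Proposition~\ref{errorbounds}); this is bounded, not small, at $t=1$. Consequently your displayed one-step estimate $\|\nabla[P_{0,1,h}^V\psi](w)\| \leq C\|\psi\|_{L_{\mkm_q}^\infty}\mkm_q(w)$, with no $\|\nabla\psi\|$-dependence, cannot hold: it would be an instantaneous smoothing bound. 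The phrase ``which can be iterated and absorbed'' is where the entire difficulty lies. Iterating means controlling $\nabla\psi$ with $\psi = P^V_{0,t-1,\b_1 h}\phi$ at the random, unbounded location $\Phi_{0,1,h}(w)$, and each step contributes an $e^{\|V\|_\infty}$ factor from the Feynman--Kac weight together with an $O(1)$ residual factor; you have not shown that this recursion closes.

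The paper's proof avoids this by working directly on the full interval $[0,t]$ with a single Hairer--Mattingly control $v$ defined over all of $[0,t]$, so that the residual $\mathfrak{R}_{r,h}$ genuinely decays like $e^{-ar}$ as $r\to\infty$. The gradient $\langle\nabla\P_t^V\phi,\xi\rangle$ is decomposed as in \eqref{gradient-set-up} into three pieces $I_1,I_2,I_3$; the Skorokhod integral in $I_1$ and the $\nabla V$-term in $I_2$ are broken into contributions over $[2n,2n+2]$, and on each piece the Markov property at time $2n$ (or $r^*$) combined with the growth condition (Proposition~\ref{Growth-Condition}) and the translation identity \eqref{translation-identity} converts the remaining Feynman--Kac action into $C\mkm_q(\Phi_{0,2n,h})\|P^V_{0,t,h}\id_H\|_{R_0}$. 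The resulting series $\sum_n e^{(2\|V\|_\infty - a)n}$ is then summed by choosing $a>2\|V\|_\infty$, giving Proposition~\ref{gradient-estimate-modulo-growth-by-potential}: $\|\nabla\P_t^V\phi(w,h)\| \leq C\mkm_\eta(w)\|P_{0,t,h}^V\id\|_{R_0}\bigl(\|\phi\|_\infty + e^{-ct}\|\nabla\phi\|_\infty\bigr)$. Note that the $e^{-ct}\|\nabla\phi\|_\infty$ term survives---this is exactly the asymptotically strong Feller structure---but since $\phi\in C_b^1(H\times\To)$ it is uniformly bounded in $t$, and that is what yields the uniform Lipschitz constant on $B_R^H$.
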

Here $\nabla$ is the gradient in the $H$ component. For convenience we will denote 
\[\|\nabla V\|_{\infty} := \sup_{h\in \To}\|\nabla V(\cdot, h)\|_{\infty}, \, \text{ for } V\in C_{b}^{1,0}(H\times\To). \]
Proposition \ref{Uniform-Feller} is a direct consequence of the following estimate.
\begin{proposition}\label{gradient-estimate-modulo-growth-by-potential}
Assume $A_{\infty}=H, f\in C_b(\R, H_2)$ in \eqref{Settings-NS-family}. Then for any $V\in C_{b}^{1,0}(H\times\To), \phi\in C_b^1(H\times\To)$ and $c, \eta>0$, there are $R_0=R_0(V)$ and a constant $C$ independent of $h$ such that 
\begin{align}\label{prop-gradient}
	\|\nabla \P^V_t\phi(w, h)\|\leq C\exp(\eta\|w\|^2)\|P_{0, t, h}^V\id\|_{R_0}\left(\|\phi\|_{\infty}+e^{-ct}\|\nabla\phi\|_{\infty}\right), 
\end{align}
for any $(w, h)\in H\times\To$ and $t\geq 0$.  
\end{proposition}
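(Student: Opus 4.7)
The plan is to adapt the asymptotic strong Feller / Malliavin calculus approach of Hairer--Mattingly \cite{HM06,HM11}, as carried out in the time-homogeneous degenerate setting by Nersesyan--Peng--Xu \cite{NPX23}, to the periodic time-inhomogeneous family \eqref{Settings-NS-family}. Fix a unit vector $\xi\in H$ and $(w,h)\in H\times\To$. Writing $\Xi_t^V=\exp\bigl(\int_0^t V(\H_r)\,dr\bigr)$ and denoting by $J_{0,r,h}\xi$ the solution of the linearized Navier--Stokes equation along the trajectory $\Phi_{0,\cdot,h}(w)$ in direction $\xi$, differentiation of the Feynman--Kac expression gives
\begin{align*}
\nabla_\xi \P_t^V\phi(w,h)
= \E\Bigl[\Xi_t^V\bigl\langle\nabla\phi(\Phi_{0,t,h},\beta_t h),\,J_{0,t,h}\xi\bigr\rangle
+ \Xi_t^V\phi(\H_t)\int_0^t\bigl\langle\nabla V(\H_r),J_{0,r,h}\xi\bigr\rangle dr\Bigr].
\end{align*}
Here $\nabla$ refers to the $H$-component only, and the first term will produce the factor $\|\nabla\phi\|_\infty$ while the second, after Malliavin integration by parts, will produce $\|\phi\|_\infty$.

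The key step is a Malliavin splitting: by the H\"ormander bracket condition $A_\infty=H$ and the Hairer--Mattingly construction, one builds an adapted control $v=v_{t,h,\xi}\in L^2_a([0,t];\mathbb{R}^d)$ together with a residual process $\rho_s=J_{0,s,h}\xi-\mathcal{D}^v\Phi_{0,s,h}$ (where $\mathcal{D}^v$ is the Malliavin derivative in the direction $v$) satisfying the bounds
\begin{align*}
\E\int_0^t|v_r|^2\,dr \;+\; \sup_{0\le s\le t}\E\|\rho_s\|^2 \;\le\; C\,e^{\eta\|w\|^2},\qquad
\E\|\rho_t\|^2 \;\le\; C\,e^{\eta\|w\|^2}e^{-2ct}.
\end{align*}
In the time-homogeneous case these bounds are standard; for the present setting I would invoke the construction used in \cite{LL22,L22}, which verifies uniformity in the time-symbol $h\in\To$ by exploiting continuity of the flow with respect to $h$ and the uniformity of the a priori estimates in Proposition \ref{bounds}.

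Substituting $J_{0,s,h}\xi=\mathcal{D}^v\Phi_{0,s,h}+\rho_s$ into the derivative formula and applying Malliavin integration by parts on the $\mathcal{D}^v$-parts yields, after rearrangement,
\begin{align*}
\nabla_\xi\P_t^V\phi(w,h)
&= \E\Bigl[\Xi_t^V\phi(\H_t)\!\int_0^t v_r\cdot dW_r\Bigr]
+ \E\bigl[\Xi_t^V\langle\nabla\phi(\H_t),\rho_t\rangle\bigr]\\
&\quad + \E\Bigl[\Xi_t^V\phi(\H_t)\int_0^t\langle\nabla V(\H_r),\rho_r\rangle\,dr\Bigr].
\end{align*}
The first and third summands contribute the $\|\phi\|_\infty$ piece: Cauchy--Schwarz followed by the growth condition of Proposition \ref{Growth-Condition} (applied with potential $V$ or $2V$ after squaring $\Xi_t^V$) converts $\E[(\Xi_t^V)^2 \mkm_{\eta'}(\Phi_{0,t,h})]$ into a bound of the form $C\,e^{\eta\|w\|^2}\|P^V_{0,t,h}\mathbf{1}_H\|_{R_0}^2$, after choosing $R_0$ corresponding to $2V$ and $\eta'$ slightly larger than $\eta$. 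The middle summand is the one producing the decay: Cauchy--Schwarz against $\|\rho_t\|$ together with the same growth argument yields the factor $e^{-ct}\|\nabla\phi\|_\infty\|P_{0,t,h}^V\mathbf{1}_H\|_{R_0}e^{\eta\|w\|^2}$. Summing the three estimates gives \eqref{prop-gradient}.

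The hard step is Step 2: producing the control $v$ with the two estimates above, uniformly in $h\in\To$. In the homogeneous case this is the heart of \cite{HM11}, with $v$ constructed by solving an approximate-inverse problem for the Malliavin matrix in a well-chosen subspace generated by the Lie bracket family $A_\infty$. In the present time-inhomogeneous setting one must check that the spectral gap of the Malliavin matrix and the operator norms involved in its regularization depend only on bounded quantities of the drift $f(\beta_r h,\cdot)$, which do not see $h$ thanks to the periodicity and the uniform bound $f\in C_b(\mathbb{R},H_2)$. Once this uniformity is in hand, the rest is a mechanical bookkeeping combining the a priori moment bounds of Proposition \ref{bounds} with the growth condition of Proposition \ref{Growth-Condition}.
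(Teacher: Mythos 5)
Your overall framework is right: Malliavin integration by parts with a control $v$ and residual $\mathfrak{R}_s = J_{0,s,h}\xi - \mathcal{D}^v\Phi_{0,s,h}$, the three-term decomposition of $\nabla_\xi\P_t^V\phi$, and the uniformity-in-$h$ of the error bounds (which the paper records as Proposition \ref{errorbounds}). However, the way you propose to close the estimates has a genuine gap.

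The problem is the global Cauchy--Schwarz step. You propose to bound, say, $\bigl|\E[\Xi_t^V\phi(\H_t)\int_0^t v\cdot dW]\bigr|$ by splitting off $(\Xi_t^V)^2 = \Xi_t^{2V}$ and invoking Proposition \ref{Growth-Condition} for the doubled potential $2V$. That yields a bound proportional to $\sqrt{\|P_{0,t,h}^{2V}\id_H\|_{R_0}}$, and this is \emph{not} controlled by $\|P_{0,t,h}^{V}\id_H\|_{R_0}$: Jensen's inequality gives $\E_w[\Xi_t^{2V}] \ge (\E_w[\Xi_t^V])^2$, i.e.\ the inequality goes the wrong way, and for potentials with large trajectory-wise variance the gap is exponential in $t$. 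So the estimate you write down produces the wrong normalizer, and the quotient $\|\nabla\P_t^V\phi\|/\|P^V_{0,t,h}\id\|_{R_0}$ need not be bounded along this route. The paper instead avoids any squaring of $\Xi_t^V$ by slicing $[0,t]$ into the dyadic intervals $[2n-2,2n]$, applying the Markov property at time $2n$ to write $\E[\Xi_t^V\phi(\H_t)\cdot(\ldots)] = \E[\Xi_{2n}^V\,(\ldots)\,\P_{t-2n}^V\phi(\H_{2n})]$, bounding the prefactor $\Xi_{2n}^V$ crudely by $e^{2\|V\|_\infty n}$, using Proposition \ref{Growth-Condition} together with the cocycle/translation identity to show $|\P_{t-2n}^V\phi(\H_{2n})| \le C\|\phi\|_\infty\mkm_q(\Phi_{0,2n,h})\|P_{0,t,h}^V\id_H\|_{R_0}$, and then using the \emph{local} exponential decay of the error terms (Proposition \ref{errorbounds}, which decays like $e^{-an}$ on the $n$th block). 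Summing the geometric series works precisely because one is free to take the decay rate $a$ larger than $2\|V\|_\infty$. This dyadic-Markov slicing is the essential mechanism; a global Cauchy--Schwarz cannot replace it.

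Two secondary points. First, the control $v$ constructed in \cite{HM11} is \emph{not} adapted (on each block $[2n,2n+1)$ it involves the future Jacobian and Malliavin matrix), so $\int_0^t v\cdot dW$ is a Skorokhod integral, not an It\^o integral; your bound $\E\int_0^t|v_r|^2dr < \infty$ does not by itself give moment bounds for the Skorokhod integral, and one needs the blockwise estimate \eqref{skorokhod}. Second, it is the blockwise decay $\E\|\mathfrak{R}_r\|^2\lesssim e^{\eta\|w\|^2}e^{-2ar}$ and $\E|\int_{2n}^t v\,dW|^2\lesssim e^{\eta\|w\|^2}e^{-2an}$ that are needed; the aggregated bound $\sup_{s\le t}\E\|\mathfrak{R}_s\|^2 \le Ce^{\eta\|w\|^2}$ is too weak to defeat the exponential growth coming from the Feynman--Kac weight.
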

This estimate is proved through Malliavin calculus. In the time homogeneous case this type of gradient estimate was proved in \cite{NPX23} recently. Since the current work is in a time inhomogeneous setting,  we include a detailed proof below, which basically follows from the scheme in \cite{NPX23}.   The time inhomogeneity is overcome by the translation identity \eqref{translation-identity} and the cocycle property \eqref{Feynman-Kac-cocycle}. The proof of Proposition \ref{gradient-estimate-modulo-growth-by-potential} is giving in subsection \ref{Proof-of-gradient-estimate}, after we give some basic facts about Malliavin calculus in subsection \ref{Malliavin-calculus-facts}. 

\subsubsection{Malliavin calculus preliminaries}\label{Malliavin-calculus-facts}
Recall that $\Phi_{0, t, h}(w_0, W) = \Phi_{0, t, h}(w_0, W\id_{[0, t]})$ is the solution of the stochastic Navier-Stokes equation at time $t$, where $W$ is the Wiener process. The Malliavin derivative of this random variable in the direction of $v\in \mathrm{L}^2([0,\infty),\mathbb{R}^d)$ is 
\begin{align*}
	D^v\Phi_{0, t, h}(w_0) : = \lim_{\varepsilon\rightarrow 0}\frac{\mathrm{\Phi}_{0, t, h}(w_0, W+\varepsilon V)-\mathrm{\Phi}_{0, t, h}(w_0, W)}{\varepsilon}, 
\end{align*}
where $V(t) = \int_0^tv(r)dr$. The above limit exists almost surely and one has 
\begin{align*}
	D^v\Phi_{0, t, h}(w_0) = \int_0^t J_{r, t, h}Gv(r)dr, 
\end{align*}
where $J_{r, t, h}$ is the Jacobian that solves the linearized equation 
\begin{align*}
	 \partial_tJ_{r, t, h}\xi = \nu\mathrm{\Delta} J_{r, t, h}\xi+\widetilde{B}(\Phi_{0, t, h} ,J_{r, t, h}\xi),\quad  t>r, \quad J_{r ,r, h}\xi = \xi,
\end{align*}
for $\xi\in H$. See \cite{HM11, Nua06} and references therein for more about Malliavin calculus. 

\vskip0.05in

For any $0\leq \tau \leq t$, the Malliavin operator is defined as $M_{\tau, t, h}: = A_{\tau, t, h}A_{\tau, t, h}^*$, where the random operator  $A_{\tau, t, h}: \mathrm{L}^2([0,\infty),\mathbb{R}^d)\to H$ is given by
\begin{align*}
	A_{\tau, t, h}v:=  \int_{\tau}^t J_{r, t, h}Gv(r)dr,  \, v\in \mathrm{L}^2([0,\infty),\mathbb{R}^d), 
\end{align*}
and $A_{\tau, t, h}^*$ is its adjoint by the relation 
\[\la A_{\tau, t, h}v, u\ra_{H}=\la v, A_{\tau, t, h}^*u\ra_{\mathrm{L}^2([0,\infty),\mathbb{R}^d)}, \, u\in H, v\in \mathrm{L}^2([0,\infty),\mathbb{R}^d).\]
It is known that the Tikhonov regularization  $\widetilde{M}_{\tau, t, h}:= M_{\tau, t, h}+\beta$, is  invertible for any $\beta>0$. 

\vskip0.05in

Denote $\Xi_{t}^V=\exp\left(\int_0^tV(H_r)dr\right)$, and $v_{0, t}=v\id_{[0, t]}$, and set   
\[\mathfrak{R}_{t, h} = J_{0, t, h} \xi-A_{0, t, h} v_{0, t},\] 
which is the error between the infinitesimal variation on the Wiener path $W$ by $v$ and the variation on the initial condition by $\xi$ of the solution process.  Applying the integration by parts formula, chain rule and product rule for the Malliavin derivative \cite{Nua06}, for any  $w, \xi\in H$ with $\|\xi\|=1$, and $\E = \E_{(w, h)}$,  we have
\begin{align} \label{gradient-set-up}
\begin{split}
\Big\langle\nabla &\P_{t}^V \phi(w, h), \xi\big\rangle
	=\E\left[\Xi_{t}^V\phi(H_t)\int_0^t\nabla V(H_r)J_{0, r, h}\xi dr + \Xi_{t}^V\nabla\phi(H_t)J_{0, t, h}\xi\right]\\
	&\begin{aligned} 
		&= \E\left[\Xi_{t}^V\phi(H_t)\int_0^t\nabla V(H_r)D^v\Phi_{0, r, h}dr + \Xi_{t}^V\nabla\phi(H_t)D^v\Phi_{0, t, h}\right]\\
		&\qquad+\E\left[\Xi_{t}^V\phi(H_t)\int_0^t\nabla V(H_r)\mathfrak{R}_{r, h} dr + \Xi_{t}^V\nabla\phi(H_t)\mathfrak{R}_{t, h}\right]
	\end{aligned}\\
	& \begin{aligned}
		&= \E\left[D^v\left(\Xi_{t}^V\phi(H_t)\right)\right]+\E\left[\Xi_{t}^V\phi(H_t)\int_0^t\nabla V(H_r)\mathfrak{R}_{r, h} dr\right]+\E\left[\Xi_{t}^V\nabla\phi(H_t)\mathfrak{R}_{t, h}\right]
	\end{aligned}\\
	& \begin{aligned}
		&= \E\left[\Xi_{t}^V\phi(H_t)\int_0^tv(r)dW(r)\right]+\E\left[\Xi_{t}^V\phi(H_t)\int_0^t\nabla V(H_r)\mathfrak{R}_{r, h} dr\right]+ \E\left[\Xi_{t}^V\nabla\phi(H_t)\mathfrak{R}_{t, h}\right]
	\end{aligned}\\
	&:= I_1+I _2+ I_3, 
\end{split}
\end{align}
where we used the fact that $D^v\Phi_{0, t, h} = A_{0, t}v_{0, t}$. The gradient inequality \eqref{prop-gradient} will be established if one can choose an appropriate control $v$ to have desired bounds on $I_j, j=1, 2, 3$. Here we choose essentially the same control $v$ as in \cite{HM11}, 
\begin{align*}
v(r) = \left\{
\begin{array}{ll}
A_{2n}^*\widetilde{M}_{2n}^{-1}J_{2n}\mathfrak{R}_{2n, h}&\text{ for } r\in[2n, 2n+1), n\geq 0,\\
0&\text{ for }r\in[2n+1, 2n+2), n\geq 0,
\end{array}\right.
\end{align*}
where $J_{n}=J_{n, n+1, h}$,  $A_{n}=A_{n, n+1, h}$, $M_{n}=A_{n} A_{n}^{*}$, $\widetilde{M}_{n}=\beta+M_{n}$. Note that the control is not always adapted, hence the integral in $I_1$ is understood as a Skorokhod integral. 

\vskip0.05in

The following result follows from the framework developed in \cite{HM11}. We refer interested readers to \cite{L22} for a verification in the current setting. 
\begin{proposition}\label{errorbounds}
For any  $\eta>0$  and $a>0$ there are constants $\b>0$ and $C = C(\eta, a)$, so that
\begin{align}\label{error-decay}
	&\mathbf{E}\|\mathfrak{R}_{t}\|^2 \leq C\exp(\eta\|w\|^2)e^{-2at}, \, \forall t\geq 0,\\\label{skorokhod}
	&\mathbf{E}\left|\int_{2n}^{t} v(r) d W(r)\right|^{2} \leq C\exp(\eta\|w\|^2)e^{-2an}, \, \forall t\in [2n, 2n+2], 
\end{align}
for  all $n\geq 0$, $h\in\To, w\in H$.

\end{proposition}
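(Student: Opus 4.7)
The plan is to follow the two-block Malliavin calculus scheme of Hairer--Mattingly \cite{HM11}, with uniformity in the time symbol $h\in\To$ provided by the translation identity \eqref{translation-identity} and the $C_b(\R,H_2)$ regularity of $f$, as already carried out in \cite{L22}. The Tikhonov parameter $\b$ appearing in $\widetilde M_{\tau,t,h}=M_{\tau,t,h}+\b$ will be tuned at the end so that a prescribed exponential decay rate $e^{-2a}$ per pair of half-blocks is achieved.

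First I would establish the one-step reduction. Using $J_{0,2n+1,h}=J_{2n}J_{0,2n,h}$, the fact that $v$ vanishes outside $[2n,2n+1)$, and $M_{2n}\widetilde M_{2n}^{-1}=I-\b\widetilde M_{2n}^{-1}$, one obtains
\[
\mathfrak{R}_{2n+1,h}=J_{2n}\mathfrak{R}_{2n,h}-A_{2n}v=J_{2n}\mathfrak{R}_{2n,h}-M_{2n}\widetilde M_{2n}^{-1}J_{2n}\mathfrak{R}_{2n,h}=\b\,\widetilde M_{2n}^{-1}J_{2n}\mathfrak{R}_{2n,h},
\]
while on the passive block $[2n+1,2n+2)$ the control is zero and $\mathfrak{R}_{2n+2,h}=J_{2n+1,2n+2,h}\mathfrak{R}_{2n+1,h}$. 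Combining the two, applying Cauchy--Schwarz, and inserting the H\"ormander-driven spectral moment bound of \cite{HM11},
\[
\E\bigl(\|\widetilde M_{2n}^{-1}\|^{2p}\,\big|\,\mathcal{F}_{2n}\bigr)\leq C_{p,\varepsilon}\,\b^{-2p\varepsilon}\,\mkm_\eta(\Phi_{0,2n,h}),
\]
together with exponential moments of the Jacobian $J_{2n}$ and the Lyapunov bound \eqref{eq: est1} from Proposition~\ref{bounds}, yields the recursion
\[
\E\|\mathfrak{R}_{2n+2,h}\|^2\leq C\,\b^{2-2\varepsilon}\,\mkm_\eta(w)\,\E\|\mathfrak{R}_{2n,h}\|^2.
\]
Iterating from $\mathfrak{R}_{0,h}=\xi$ with $\|\xi\|=1$ and choosing $\b$ small enough that $C\b^{2-2\varepsilon}<e^{-4a}$ gives \eqref{error-decay} at the sampled times $t=2n$; for general $t$ one writes $\mathfrak{R}_{t,h}=J_{2n,t,h}\mathfrak{R}_{2n,h}-A_{2n,t,h}v$ with $2n=2\lfloor t/2\rfloor$ and absorbs the bounded-interval factors into the constant.

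For the Skorokhod bound \eqref{skorokhod}, the subtlety is that on $[2n,2n+1)$ the integrand $v$ is anticipating through $\widetilde M_{2n}^{-1}$. I would apply the Meyer identity
\[
\E\Bigl|\int_{2n}^{t}v(r)dW(r)\Bigr|^{2} = \E\int_{2n}^t|v(r)|^2 dr+\E\int_{2n}^t\!\!\int_{2n}^t \mathrm{Tr}\bigl(D_s v(r)\,D_r v(s)^*\bigr)ds\,dr.
\]
The first term is controlled by $\|A_{2n}\|^2\|\widetilde M_{2n}^{-1}\|^2\|J_{2n}\|^2\|\mathfrak{R}_{2n,h}\|^2$, while the Malliavin trace reduces to analogous products with $DM_{2n}$ and $DJ_{2n}$ inserted; all factors admit Hairer--Mattingly type moment estimates, again uniform in $h$ by \eqref{translation-identity}. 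This reduces the Skorokhod bound to the same geometric recursion as above and delivers the $e^{-2an}$ decay.

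The hard part is exactly the spectral estimate on $\|\widetilde M_{2n}^{-1}\|^{2p}$ together with the moment bound on $DM_{2n}$; these form the technical core of the Hairer--Mattingly theory and depend on the iterated Lie-bracket structure \eqref{A-infty} via Norris's lemma. In the present time-inhomogeneous setting no new idea is required: conditionally on $\mathcal{F}_{2n}$ the dynamics on $[2n,2n+1]$ depends on the symbol only through the shifted value $\b_{2n}h\in\To$ and the state $\Phi_{0,2n,h}$, so the autonomous bound lifts to one uniform in $h\in\To$ by compactness of $\To$ and the regularity of $f$.
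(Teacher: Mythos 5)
The paper does not prove this proposition in-text; it records that the estimates ``follow from the framework developed in \cite{HM11}'' and refers to \cite{L22} for a verification in the present time-inhomogeneous setting, so there is no in-paper proof to compare against. Your sketch attempts the same Hairer--Mattingly two-block Malliavin scheme, your algebraic one-step reduction $\mathfrak{R}_{2n+1,h}=\beta\widetilde M_{2n}^{-1}J_{2n}\mathfrak{R}_{2n,h}$ is correct, and the idea that the translation identity \eqref{translation-identity} propagates the autonomous estimates uniformly over $h\in\To$ is exactly what makes the time-inhomogeneous version go through.

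However, the central spectral estimate you invoke is misstated, and the recursion you write would not close. The claimed conditional bound $\E\bigl(\|\widetilde M_{2n}^{-1}\|^{2p}\,\big|\,\F_{2n}\bigr)\leq C_{p,\varepsilon}\,\beta^{-2p\varepsilon}\,\mathfrak{m}_\eta(\Phi_{0,2n,h})$ cannot hold: $M_{2n}=A_{2n}A_{2n}^*$ is a compact nonnegative self-adjoint operator on the infinite-dimensional $H$, so its spectrum accumulates at $0$, $\widetilde M_{2n}=M_{2n}+\beta$ has spectrum accumulating at $\beta$, and $\|\widetilde M_{2n}^{-1}\|=\beta^{-1}$ almost surely --- not $O(\beta^{-\varepsilon})$ for $\varepsilon<1$. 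What Hairer--Mattingly actually prove (and what \cite{L22} verifies here) is a conditional moment bound on the \emph{projected} inverse, of the form $\E\bigl(\|(\beta+M_{2n})^{-1/2}\Pi_N\|^{2p}\,\big|\,\F_{2n}\bigr)$, for a suitable finite-rank Fourier projection $\Pi_N$ determined by the Lie-bracket condition $A_\infty=H$. The per-block contraction then requires splitting $J_{2n}\mathfrak{R}_{2n,h}$ into $\Pi_N$ and $\Pi_N^{\perp}$ pieces: the low-mode part is controlled by the projected spectral estimate, while the high-mode part uses only the trivial bound $\beta\|\widetilde M_{2n}^{-1}\|\leq 1$ together with the exponential smallness of $\Pi_N^{\perp}J_{2n}$ coming from viscous dissipation in the Navier--Stokes linearization. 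Without this low/high decomposition, Cauchy--Schwarz plus your stated inequality does not produce a factor $C\beta^{2-2\varepsilon}<1$, and the analogous correction is needed in the trace term of the Skorokhod isometry for \eqref{skorokhod}.
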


\subsubsection{Proof of Proposition \ref{gradient-estimate-modulo-growth-by-potential}}\label{Proof-of-gradient-estimate}
We now prove Proposition \ref{gradient-estimate-modulo-growth-by-potential} by combining inequality \eqref{gradient-set-up}, Proposition \ref{errorbounds} and the growth estimates from Proposition \ref{Growth-Condition}, through giving desired estimates on $I_j, j=1,2,3$ in inequality \eqref{gradient-set-up}. Let $R_0$ and $q$ be the numbers as in Proposition \ref{Growth-Condition} and assume $V\geq 0$ without loss of generality. The proof is divided into three steps. 

\vskip0.05in

{\it Step 1: Estimate on $I_1$}. The idea is to use the decay in \eqref{skorokhod} to control the growth resulted by the potential $V$ on each single interval.  Let $N_t$ be the largest integer such that $2N_t\leq t$. We can rewrite $I_1$ as follows, 
\begin{align*}
	I_1 = \E\left[\Xi_{t}^V\phi(H_t)\int_0^tv(r)dW(r)\right] 
	&= \sum_{n=1}^{N_t}\E\left[\Xi_{t}^V\phi(H_t)\int_{2n-2}^{2n}v(r)dW(r)\right]+ \E\left[\Xi_{t}^V\phi(H_t)\int_{2N_t}^tv(r)dW(r)\right]\\
	&:= I_{1, 1} + I_{1, 2}. 
\end{align*}
By the construction of $v$, we know that for each $1\leq n\leq N_t$, the integral $\int_{2n-2}^{2n}v(r)dW(r)$ is $\F_{2n}$ measurable. Therefore by the Markov property, we have 
\begin{align}\label{I11-0}
\begin{split}
	\E\left[\Xi_{t}^V\phi(H_t)\int_{2n-2}^{2n}v(r)dW(r)\right] 
	&= \E\left[\int_{2n-2}^{2n}v(r)dW(r)\E\left[\Xi_{t}^V\phi(H_t)|\F_{2n}\right]\right] \\
	& = \E\left[\Xi_{2n}^V\int_{2n-2}^{2n}v(r)dW(r)\P_{t-2n}^{V}\phi(\H_{2n})\right].
\end{split}
\end{align}

By Proposition \ref{Growth-Condition},
\begin{align*}
	\left|\P_{t-2n}^{V}\phi(w, h)\right| = \left|P^V_{0, t-2n, h}\phi(\cdot, \b_{t-2n}h)(w)\right|
	&\leq \|\phi\|_{\infty}P^V_{0, t-2n, h}\id_{H}(w)\\
	&\leq C\|\phi\|_{\infty}\mkm_q(w)\|P_{0, t-2n, h}^V\id_{H}\|_{R_0}.
\end{align*}
Therefore by $V\geq 0$ and translation identity \eqref{translation-identity}, 
\begin{align}\label{I11-special}
\begin{split}
	\left|\P_{t-2n}^{V}\phi(\H_{2n})\right|\leq C\|\phi\|_{\infty}\mkm_q(\Phi_{0, 2n, h})\|P_{0, t-2n, \b_{2n}h}^V\id_{H}\|_{R_0}
	&= C\|\phi\|_{\infty}\mkm_q(\Phi_{0, 2n, h})\|P_{2n, t, h}^V\id_{H}\|_{R_0}\\
	&\leq C\|\phi\|_{\infty}\mkm_q(\Phi_{0, 2n, h})\|P_{0, t, h}^V\id_{H}\|_{R_0}.
\end{split}
\end{align}

Combining this with \eqref{I11-0}, estimate \eqref{contraction-polynomial}, H\"older's inequality, and \eqref{skorokhod} we obtain 
\begin{align*}
	\Bigg|\E\Big[\Xi_{t}^V\phi(H_t)\int_{2n-2}^{2n}v(r)&dW(r)\Big]\Bigg|
	\leq C\|\phi\|_{\infty}e^{2\|V\|_{\infty}n}\|P_{0, t, h}^V\id_{H}\|_{R_0}\left|\E\left[\int_{2n-2}^{2n}v(r)dW(r)\mkm_q(\Phi_{0, 2n, h})\right]\right|\\
	&\leq C\|\phi\|_{\infty}e^{2\|V\|_{\infty}n}\|P_{0, t, h}^V\id_{H}\|_{R_0}\left(\E\left|\int_{2n-2}^{2n}v(r)dW(r)\right|^2\right)^{\frac12}\left(\E\mkm_{2q}(\Phi_{0, 2n, h})\right)^{\frac12}\\
	&\leq C\|\phi\|_{\infty}e^{(2\|V\|_{\infty}-a)n}\|P_{0, t, h}^V\id_{H}\|_{R_0}\mkm_{\eta}(w).
\end{align*}
As a result, 
\begin{align*}
	\left|I_{1, 1}\right| \leq C\|\phi\|_{\infty}\|P_{0, t, h}^V\id_{H}\|_{R_0}\mkm_{\eta}(w)\sum_{n=1}^{N_t}e^{(2\|V\|_{\infty}- a)n}. 
\end{align*}
On the other hand, again by $V\geq 0$ and estimate \eqref{skorokhod}, one has 
\begin{align*}
	\left|I_{1, 2}\right|&=\left|\E\left[\Xi_{t}^V\phi(H_t)\int_{2N_t}^tv(r)dW(r)\right]\right|\\
	&\leq \|\phi\|_{\infty}e^{\|V\|_{\infty}t}\E\left|\int_{2N_t}^tv(r)dW(r)\right|\leq Ce^{(\|V\|_{\infty}-a)n}. 
\end{align*}
Therefore we obtain 
\begin{align}\label{I1}
 \left|I_1\right|\leq \left|I_{1, 1}\right| + \left|I_{1, 2}\right|\leq C\|\phi\|_{\infty}\|P_{0, t, h}^V\id_{H}\|_{R_0}\mkm_{\eta}(w)\sum_{n=1}^{N_t}e^{(2\|V\|_{\infty}- a)n}. 
\end{align}
The proof of {\it Step 1} is complete. 

\vskip0.05in

{\it Step 2: Estimate on $I_2$}. The calculation of $I_2$ is similar to that of $I_1$. The difference is that now we will use \eqref{error-decay} and $\F_{r*}$ measurability of $\mathfrak{R}_{r, h}$, where $r^*=r$ if $\lceil r\rceil$, the smallest integer greater than or equal to $r$, is even, and $r^*=\lceil r\rceil$ if $\lceil r\rceil$ is odd. We first rewrite 
\begin{align*}
	I_2& = \E\left[\Xi_{t}^V\phi(H_t)\int_0^t\nabla V(H_r)\mathfrak{R}_{r, h} dr\right] \\
	&=  \E\left[\Xi_{t}^V\phi(H_t)\int_0^{N_t}\nabla V(H_r)\mathfrak{R}_{r, h} dr\right]+\E\left[\Xi_{t}^V\phi(H_t)\int_{N_t}^t\nabla V(H_r)\mathfrak{R}_{r, h} dr\right]\\
	&:= I_{2,1} + I_{2, 2}, 
\end{align*}
where $N_t$ is the largest even integer such that $N_t\leq t$. 
By the Markov property, we have 
\begin{align*}
	I_{2,1} &= \E\int_0^{N_t}\E\left[\Xi_{t}^V\phi(H_t)\nabla V(H_r)\mathfrak{R}_{r, h}|\F_{r^*}\right]dr\\
	& = \E\int_0^{N_t}\Xi_{r^*}^V\nabla V(H_r)\mathfrak{R}_{r, h}\P^V_{t- r^*}\phi(\H_{r^*})dr
\end{align*}
Similar to the arguments in deriving \eqref{I11-special}, by Proposition \ref{Growth-Condition}, $V\geq 0$ and translation identity \eqref{translation-identity}, and boundedness of $\phi$, we have
\begin{align*}
	\left|\P^V_{t- r^*}\phi(\H_{r^*})\right|\leq C\|\phi\|_{\infty}\mkm_q(\Phi_{0, r^*, h})\|P_{0, t-r^*, \b_{r^*}h}^V\id_{H}\|_{R_0}
	&= C\|\phi\|_{\infty}\mkm_q(\Phi_{0, r^*, h})\|P_{r^*, t, h}^V\id_{H}\|_{R_0}\\
	&\leq C\|\phi\|_{\infty}\mkm_q(\Phi_{0, r^*, h})\|P_{0, t, h}^V\id_{H}\|_{R_0}.
\end{align*}
Hence by estimate \eqref{contraction-polynomial}, H\"older's inequality and \eqref{error-decay}, one has 
\begin{align*}
	\left|I_{2,1}\right|
	&\leq C\|\phi\|_{\infty}\|\nabla V\|_{\infty}\|P_{0, t, h}^V\id_{H}\|_{R_0}\int_{0}^{N_t}e^{\|V\|_{\infty}r^*}\E\left[\|\mathfrak{R}_{r, h}\|\mkm_q(\Phi_{0, r^*, h})\right]dr\\
	&\leq C\|\phi\|_{\infty}\|P_{0, t, h}^V\id_{H}\|_{R_0}\int_{0}^{N_t}e^{\|V\|_{\infty}r^*}\left(\E\|\mathfrak{R}_{r, h}\|^2\right)^{\frac12}\left(\E\mkm_{2q}(\Phi_{0, r^*, h})\right)^{\frac12}dr\\
	&\leq C\|\phi\|_{\infty}\|P_{0, t, h}^V\id_{H}\|_{R_0}\exp(\eta\|w\|^2)\int_{0}^{N_t}e^{\|V\|_{\infty}r^*-ar}dr\\
	&\leq C\|\phi\|_{\infty}\|P_{0, t, h}^V\id_{H}\|_{R_0}\exp(\eta\|w\|^2)\int_{0}^{N_t}e^{(2\|V\|_{\infty}-a)r}dr,\\
\end{align*}
since $r^*\leq r+1$. 

\vskip0.05in

Now we look at $I_{2, 2}$. Noting that by \eqref{error-decay} and $\|P_{0, t, h}^V\id_{H}\|_{R_0}\geq 1$,  we have 
\begin{align*}
	|I_{2, 2}| 
	&\leq C\|\phi\|_{\infty}e^{\|V\|_{\infty}t}\int_{N_t}^t\E\|\mathfrak{R}_{r, h}\| dr\\
	&\leq C\|\phi\|_{\infty}\|P_{0, t, h}^V\id_{H}\|_{R_0}\exp(\eta\|w\|^2) e^{\|V\|_{\infty}t}\int_{N_t}^{\infty}e^{-ar}dr\\
	&\leq C\|\phi\|_{\infty}\|P_{0, t, h}^V\id_{H}\|_{R_0}\exp(\eta\|w\|^2) e^{(2\|V\|_{\infty}-a)t}
\end{align*}
Combining estimates for $I_{2,1}$ and $I_{2, 2}$ we obtain 
\begin{align}\label{I2}
	 \left|I_2\right|\leq C\|\phi\|_{\infty}\|P_{0, t, h}^V\id_{H}\|_{R_0}\exp(\eta\|w\|^2)\left(\int_{0}^{N_t}e^{(2\|V\|_{\infty}-a)r}dr + e^{(2\|V\|_{\infty}-a)t}\right). 
\end{align}

{\it Step 3: Estimate on $I_3$}. The calculation for $I_3$ follows directly from \eqref{error-decay}. Indeed, 
\begin{align}\label{I3}
\begin{split}
	\left|I_{3}\right|&=\left|\E\left[\Xi_{t}^V\nabla\phi(H_t)\mathfrak{R}_{t, h}\right]\right|\leq C\|\nabla \phi\|_{\infty}e^{\|V\|_{\infty}t}\E\|\mathfrak{R}_{t, h}\|\\
	&\leq C\|\nabla \phi\|_{\infty}\|P_{0, t, h}^V\id_{H}\|_{R_0}\exp(\eta\|w\|^2) e^{(2\|V\|_{\infty}-a)t}
\end{split}
\end{align}

\vskip0.05in

Now combining \eqref{gradient-set-up} and \eqref{I1}-\eqref{I3}, for any $c>0$, we can choose $a>0$ such that $2\|V\|_{\infty}-a = -c$, then we obtain the desired gradient estimate \eqref{prop-gradient}
\begin{align*}
	\|\nabla \P^V_t\phi(w, h)\|\leq C\exp(\eta\|w\|^2)\|P_{0, t, h}^V\id\|_{R_0}\left(\|\phi\|_{\infty}+e^{-ct}\|\nabla\phi\|_{\infty}\right), 
\end{align*}
where $C=C(c, \eta, \|V\|_{\infty}, \|\nabla V\|_{\infty})>0$ is independent of $h\in\To$.  

\vskip0.05in

The proof of Proposition \ref{gradient-estimate-modulo-growth-by-potential} is complete. 
\qed

\end{document}